\documentclass[preprint,hidelinks,onefignum,onetabnum]{siamart220329}


\synctex=1


\usepackage{lipsum}
\usepackage{amsfonts}
\usepackage{graphicx}
\usepackage{epstopdf}
\usepackage{algorithmic}
\usepackage{hyperref}       
\usepackage{url}            
\usepackage{booktabs}       
\usepackage{amsfonts}       
\usepackage{nicefrac}       
\usepackage{microtype}      
\usepackage{xcolor}         
\usepackage{array}
\usepackage{caption}
\usepackage{amsfonts}       
\usepackage{amssymb}
\usepackage{comment}
\usepackage{tcolorbox}
\usepackage{empheq}
\usepackage{algorithmic}
\usepackage{caption}
\usepackage{subcaption}
\usepackage{dsfont}
\usepackage{enumitem}
\usepackage{placeins}
\usepackage{multirow}
\usepackage{makecell}
\usepackage{xcolor,colortbl}
\usepackage{stmaryrd}
\usepackage{cleveref}
\usepackage[title]{appendix}


\usepackage{todonotes,tikz}


\DeclareMathOperator{\Tr}{Tr}

\newcommand{\mbf}[1]{\mathbf{#1}}

\newcommand{\rank}[0]{\mathtt{rank}}
\newcommand{\prank}[0]{\mathtt{rankproj}}

\newcommand{\mc}[1]{\mathcal{#1}}

\newcommand{\ip}[2]{\langle #1,#2 \rangle}
\newcommand{\vertiii}[1]{{\left\vert\kern-0.25ex\left\vert\kern-0.25ex\left\vert #1 
		\right\vert\kern-0.25ex\right\vert\kern-0.25ex\right\vert}}

\DeclareMathOperator*{\argmin}{arg\,min}
\newcommand{\supp}[0]{\mathtt{supp}}

\newcommand\mat[1]{\mathbf{#1}}
\newcommand\seq[2]{{#1}_{#2}}
\newcommand\matseq[2]{\seq{\mathbf{#1}}{#2}}
\newcommand\setDBfactor[1]{\Sigma^{#1}}
\newcommand\DBsupport[1]{\mat{S}_{#1}}
\newcommand\setButterfly[1]{\cB^{#1}}
\newcommand\setblocklowrank[1]{\mathcal{A}^{#1}}
\DeclareMathAlphabet\mathbfcal{OMS}{cmsy}{b}{n}
\newcommand\rankonesuppseq[1]{\bU_{#1}}

\newcommand\tleft{\mathrm{left}}
\newcommand\tright{\mathrm{right}}

\newcommand{\colrange}{\mathrm{colspan}}
\newcommand{\rowrange}{\mathrm{rowspan}}

\usepackage{mathtools} 

\newcommand*{\Z}{\mathbb{Z}}
\newcommand*{\RR}{\mathbb{R}}

\newcommand*{\CC}{\mathbb{C}}

\newcommand*{\NN}{\mathbb{N}}

\newcommand*{\bA}{\mathbf{A}}
\newcommand*{\bB}{\mathbf{B}}
\newcommand*{\bC}{\mathbf{C}}
\newcommand*{\bD}{\mathbf{D}}
\newcommand*{\bE}{\mathbf{E}}
\newcommand*{\bF}{\mathbf{F}}

\newcommand*{\bH}{\mathbf{H}}
\newcommand*{\bL}{\mathbf{L}}
\newcommand*{\bI}{\mathbf{I}}
\newcommand*{\bK}{\mathbf{K}}
\newcommand*{\bN}{\mathbf{N}}
\newcommand*{\bM}{\mathbf{M}}
\newcommand*{\bP}{\mathbf{P}}
\newcommand*{\bQ}{\mathbf{Q}}
\newcommand*{\bR}{\mathbf{R}}
\newcommand*{\bS}{\mathbf{S}}

\newcommand*{\bU}{\mathbf{U}}
\newcommand*{\bV}{\mathbf{V}}
\newcommand*{\bW}{\mathbf{W}}
\newcommand*{\bX}{\mathbf{X}}
\newcommand*{\bY}{\mathbf{Y}}
\newcommand*{\bZ}{\mathbf{Z}}

\newcommand*{\cB}{\mathcal{B}}

\newcommand*{\cM}{\mathcal{M}}
\newcommand*{\cO}{\mathcal{O}}
\newcommand*{\cP}{\mathcal{P}}

\newcommand*{\cT}{\mathcal{T}}

\newcommand\pattern{{\boldsymbol{\pi}}}
\newcommand\arch{{\boldsymbol{\beta}}}
\newcommand\factarch{\Sigma}

\newcommand\paramfour{{(a,b,c,d)}}

\newcommand\intset[1]{\llbracket #1 \rrbracket}

\newcommand{\ttheta}{{\Tilde{\pattern}}}
\newcommand\col[1]{{:, {#1}}}
\newcommand\row[1]{{{#1}, :}}

\newcommand\one[1]{\mbf{1}_{#1}}

\newcommand\treelevel[2]{{#1}(#2)}

\newcommand\rankprojerrsq[2]{\varepsilon_{#1}^2(#2)}


\newcommand\matindex[3]{{#1}[#2,#3]}
\newcommand\transpose[1]{{#1}^\top}

\newcommand{\ksf}{Kronecker-sparse factor}
\newcommand{\ksfs}{Kronecker-sparse factors}

\newcommand\splitarch[2]{{#1}_{#2}}


\newcommand\partitionrow{\cP^{\mathrm{row}}}
\newcommand\partitioncol{\cP^{\mathrm{col}}}

\newcommand\treerow{T^{\textrm{row}}}
\newcommand\treecol{T^{\textrm{col}}}

\newcommand\ER[1]{\textcolor{blue}{#1}}
\newcommand\TL[1]{\textcolor{red}{#1}}

\newcommand\HL[1]{\textcolor{blue}{#1}}

\newcommand\revision[1]{\textcolor{black}{#1}}

\newcolumntype{R}{>{$}r<{$}} %
\newcolumntype{V}[1]{>{[\;}*{#1}{R@{\;\;}}R<{\;]}} %
\ifpdf
  \DeclareGraphicsExtensions{.eps,.pdf,.png,.jpg}
\else
  \DeclareGraphicsExtensions{.eps}
\fi


\newsiamremark{remark}{Remark}
\newsiamremark{hypothesis}{Hypothesis}
\crefname{hypothesis}{Hypothesis}{Hypotheses}
\newsiamthm{claim}{Claim}
\crefname{claim}{Claim}{Claim}
\newsiamthm{example}{Example}
\crefname{example}{Example}{Example}
\newsiamthm{prop}{Proposition}
\crefname{prop}{Proposition}{Proposition}
\newsiamthm{conjecture}{Conjecture}
\crefname{conjecture}{Conjecture}{Conjecture}
\newsiamthm{assume}{Assumption}
\crefname{assume}{Assumption}{Assumption}

\makeatletter 
\@mparswitchfalse%
\makeatother
\normalmarginpar

\headers{Butterfly factorization with error guarantees
}{Q.-T. Le, L. Zheng, E. Riccietti, R. Gribonval}

\title{Butterfly factorization with error guarantees\thanks{Submitted to the editors DATE. \textsuperscript{\textdagger}Equal contribution, the first two co-authors are listed alphabetically.
\textsuperscript{1}{ENS de Lyon, CNRS, Inria, Université Claude Bernard Lyon 1, LIP, UMR 5668, 69342, Lyon cedex 07, France};
\textsuperscript{2}{valeo.ai, Paris, France};
\textsuperscript{3}{Inria, CNRS, ENS de Lyon, Université Claude Bernard Lyon 1, LIP, UMR 5668, 69342, Lyon cedex 07, France (quoc-tung.le@tse-fr.eu, leonzheng314@gmail.com, elisa.riccietti@ens-lyon.fr, remi.gribonval@inria.fr). This work has been finalised while Tung Le was a postdoctoral researcher at Toulouse School of Economics, France and L\'{e}on Zheng a researcher at Huawei research center, Paris, France.}
\funding{This project was supported in part by the AllegroAssai ANR project ANR-19-CHIA-0009, by the
CIFRE fellowship N°2020/1643, and by the SHARP ANR project ANR-23-PEIA-0008 funded in the context of the France 2030 program. Tung Le was supported by AI Interdisciplinary Institute ANITI funding, through the French "Investments for the Future – PIA3" program under the grant agreement ANR-19-PI3A0004, and Air Force Office of Scientific Research, Air Force Material Command, USAF, under grant numbers FA8655-22-1-7012.}}}

\author{Quoc-Tung Le\textsuperscript{\textdagger,1}, Léon Zheng\textsuperscript{\textdagger,1,2}, Elisa Riccietti \textsuperscript{1}, Rémi Gribonval \textsuperscript{3}}

\usepackage{amsopn}


\ifpdf
\hypersetup{
  pdftitle={Butterfly factorization with error guarantees},
  pdfauthor={Q.-T. Le, L. Zheng, E. Riccietti, R. Gribonval}
}
\fi




\begin{document}
\maketitle
\begin{abstract}
    In this paper, we investigate the butterfly factorization problem, i.e., the problem of approximating a matrix by a product of sparse and structured factors. 
 We propose a new formal mathematical description of such factors, that encompasses many different variations of butterfly factorization with different choices of the prescribed sparsity patterns. Among these 
 \revision{choices}
 we identify those that ensure that the factorization problem admits an optimum, thanks to a new property called ``chainability". 
     For those supports we propose a new butterfly
     algorithm that yields an approximate solution to the butterfly factorization problem and that is supported by stronger theoretical guarantees than existing factorization methods.
    Specifically, we show that the ratio of the approximation error by the minimum value is bounded by a constant, independent of the target matrix.
\end{abstract}

\section{Introduction}
\label{sec:intro}
Algorithms for the rapid evaluation of linear operators are important tools in many domains like scientific computing, signal processing, and machine learning. In such applications, where a very large number of parameters is involved, the \emph{direct} computation of the matrix-vector multiplication hardly scales due to its quadratic complexity in the matrix size. Many existing works therefore rely on analytical or algebraic assumptions on the considered matrix to approximate the evaluation of matrix-vector multiplication with a subquadratic complexity. Examples of such structures include low-rank matrices, hierarchical matrices \cite{hackbusch2015hierarchical}, fast multipole methods \cite{engheta1992fast}, etc.

Among these different structures, 
previous
work has identified another class of matrices that can be compressed for accelerating matrix multiplication. It is the class of so-called \emph{butterfly} matrices \cite{michielssen1996multilevel,o2007new,candes2009fast}, and includes many matrices appearing in scientific computing problems, like kernel matrices associated with special function transforms \cite{o2007new,ying2009sparse} or Fourier integral operators \cite{candes2009fast,demanet2012fast,li2015multiscale}. Such matrices satisfy a certain low-rank property, named the \emph{complementary low-rank} property \cite{li2015butterfly}: it has been shown that if specific submatrices of a target matrix $\mathbf{A}$ of size $n \times n$ are numerically low-rank, then $\mathbf{A}$ can be compressed by successive hierarchical low-rank approximations of these submatrices, 
and that as a result
it can be approximated by a sparse factorization 
\begin{equation*}
    \hat{\mathbf{A}} = \matseq{X}{1} \ldots \matseq{X}{L}
\end{equation*}
with $L = \mathcal{O}(\log n)$
factors $\mathbf{X}_\ell$ having at most $\mathcal{O}(n)$ nonzero entries for each  $\ell \in \intset{L} := \{1, \ldots, L \}$.
This sparse factorization, called in general \emph{butterfly factorization}, would then yield a fast algorithm for the approximate evaluation of the matrix-vector multiplication by $\mathbf{A}$, in $\mathcal{O}(n \log n)$ complexity. 

An alternative definition of the butterfly factorization refers to a sparse matrix factorization with \emph{specific} constraints on the sparse factors. According to \cite{dao2019learning,dao2020kaleidoscope,le2022fastlearning,zheng2023efficient,dao2022monarch,lin2021deformable}, a matrix $\mathbf{A}$ admits a certain butterfly factorization if, up to some row and column permutations, it can be factorized into a certain number of factors $\matseq{X}{1}, \ldots, \matseq{X}{L}$ for a prescribed number $L \geq 2$, such that each factor $\matseq{X}{\ell}$ for $\ell \in \intset{L}$ satisfies a so-called \emph{fixed-support} constraint, i.e., 
the support of $\matseq{X}{\ell}$, denoted $\supp(\matseq{X}{\ell})$, is included in the support of a prescribed binary matrix $\matseq{S}{\ell}$.
The different existing butterfly factorizations only vary by their number of factors $L$, and their choice of binary matrices $\matseq{S}{1}, \ldots, \matseq{S}{L}$. Let us give some examples of such factorizations.
\begin{enumerate}
    \item \textbf{Square dyadic butterfly factorization} \cite{dao2019learning,dao2020kaleidoscope,le2022fastlearning,zheng2023efficient}. It is defined for matrices of size $n \times n$ where $n$ is a power of two. The number of factors is $L := \log_2 n$. For $\ell \in \intset{L}$, the factor $\matseq{X}{\ell}$ is of size $n \times n$, and satisfies the support constraint $\supp(\matseq{X}{\ell}) \subseteq \supp(\matseq{S}{\ell})$, where
    \begin{equation*}
        \forall \ell \in \intset{L}, \quad \matseq{S}{\ell} := \mat{I}_{2^{\ell - 1}} \otimes \mat{1}_{2 \times 2} \otimes \mat{I}_{n / 2^\ell}. 
    \end{equation*}
    Here, $\mat{I}_n$ denotes the identity matrix of size $n$, $\mat{1}_{p \times q}$ denotes the matrix of size $p \times q$ full of ones, and $\otimes$ denotes the Kronecker product. 
    This butterfly factorization appears in many structured linear maps commonly used in machine learning and signal processing, like the Hadamard matrix, or the discrete Fourier transform (DFT) matrix (up to the bit-reversal permutation of column indices). Other structured matrices like circulant matrix, Toeplitz matrix or Fastfood transform \cite{yang2015deep} can be written as a product of matrices admitting such a butterfly factorization, up to matrix transposition \cite{dao2020kaleidoscope}. 
    This factorization is also used to design structured random orthogonal matrices \cite{parker1995random}, and for quadrature rules on the hypersphere \cite{munkhoeva2018quadrature}.

    \item \textbf{Monarch factorization} \cite{dao2022monarch}. A Monarch factorization parameterized by two integers $p$, $q$ decomposes a matrix $\mathbf{A}$ of size $m \times n$ into $L := 2$ factors $\matseq{X}{1}$, $\matseq{X}{2}$ such that $\supp(\matseq{X}{\ell}) \subseteq \supp(\matseq{S}{\ell})$ for $\ell = 1, 2$ where
    \begin{equation*}
        \matseq{S}{1} := \mat{1}_{p \times q} \otimes \mat{I}_{\frac{m}{p}}, \quad \matseq{S}{2} := \mat{I}_q \otimes  \mat{1}_{\frac{m}{p} \times \frac{n}{q}}.
    \end{equation*}
    Here, we assume that $p$, $q$ divides $m$, $n$ respectively.
    The DFT matrix of size $n \times n$ admits such a factorization for $p = q$, up to a column permutation. Indeed, according to the Cooley-Tukey algorithm, computing the discrete Fourier transform of size $n$ is equivalent to performing $p$ discrete Fourier transforms of size $n / p$ first, and then $n / p$ discrete Fourier transforms of size $p$, see, e.g., equations (14) and (21) in \cite{duhamel1990fast}.



    \item \textbf{Deformable butterfly factorization} \cite{lin2021deformable}. Previous conventional butterfly factorizations can be generalized as follows. Given an integer $L \geq 2$, a matrix $\mat{A}$ admits a deformable butterfly factorization parameterized by a list of tuples $(p_\ell, q_\ell, r_\ell, s_\ell, t_\ell)_{\ell=1}^L$ if $\mat{A} = \matseq{X}{1} \ldots \matseq{X}{L}$ where each factor $\matseq{X}{\ell}$ for $\ell \in \intset{L}$ is of size $p_\ell \times q_\ell$ and has a support included in $\supp(\mat{S}_\ell)$, defined as:
    \begin{equation*}
        \forall \ell \in \intset{L}, \quad \mat{S}_{\ell} := \mat{I}_{\frac{p_\ell}{r_{\ell} t_{\ell}}} \otimes \mat{1}_{r_\ell \times s_\ell} \otimes \mat{I}_{t_\ell}.
    \end{equation*}
    Here, it is assumed that $\frac{p_\ell}{r_\ell t_\ell} = \frac{q_\ell}{s_\ell t_\ell}$ is an integer, for each $\ell \in \intset{L}$.
\end{enumerate}


In all these examples the fixed-support constraint on each 
factor $\mat{X}$ takes the form $\supp(\mat{X}) \subseteq \supp(\mat{I}_a \otimes \mat{1}_{b \times c} \otimes \mat{I}_d)$ for some  integer parameters $(a, b, c, d)$. \Cref{fig:DBfactorillu}
 illustrates the sparsity pattern $\mat{S}_\pattern \coloneqq \mat{I}_a \otimes \mat{1}_{b \times c} \otimes \mat{I}_d$ of a 
factor associated with the tuple
$\pattern = (a,b,c,d)$, 
that we call a {\em pattern}. 
One of the main benefits of choosing such fixed-support constraints \emph{instead of an arbitrary sparse support} is its \emph{block structure} that enables efficient implementation on specific hardware like Intelligence 
Processing Unit (IPU) \cite{shekofteh2023reducing} or GPU \cite{dao2019learning, dao2022monarch,gonon2024inference}, with practical speed-up for matrix multiplication. 

Such butterfly factorizations have been used in some machine learning applications.
In line with recent works \cite{dao2019learning,dao2020kaleidoscope,dao2022monarch,lin2021deformable}, the parameterization 
\eqref{eq:butterfly-factorization-intro} can be used to construct a
generic representation for structured matrices that is not only expressive, 
but also
differentiable and thus compatible with 
machine learning
pipelines involving gradient-based optimization of parameters given training samples. 
The expected benefits then range from a more compressed storage and better generalization properties (thanks to the reduced number of parameters) to possibly faster implementations.
For instance:
\begin{itemize}
    \item The square dyadic butterfly factorization was used to replace hand-crafted structures in speech processing models or channel shuffling in certain convolutional neural networks, or to learn a latent permutation \cite{dao2020kaleidoscope}.

    \item The Monarch parameterization \cite{dao2022monarch} of 
    certain weight matrices in transformers for vision or language tasks led to speed-ups of training and inference time.

    \item Certain choices of deformable butterfly 
    parameterizations 
    \cite{lin2021deformable} of 
    kernel weights in convolutional layers, for vision tasks, led to similar performance as the original convolutional neural network with fewer parameters.

\end{itemize}

\begin{figure}[t]
	\centering
	\includegraphics[width=\textwidth]{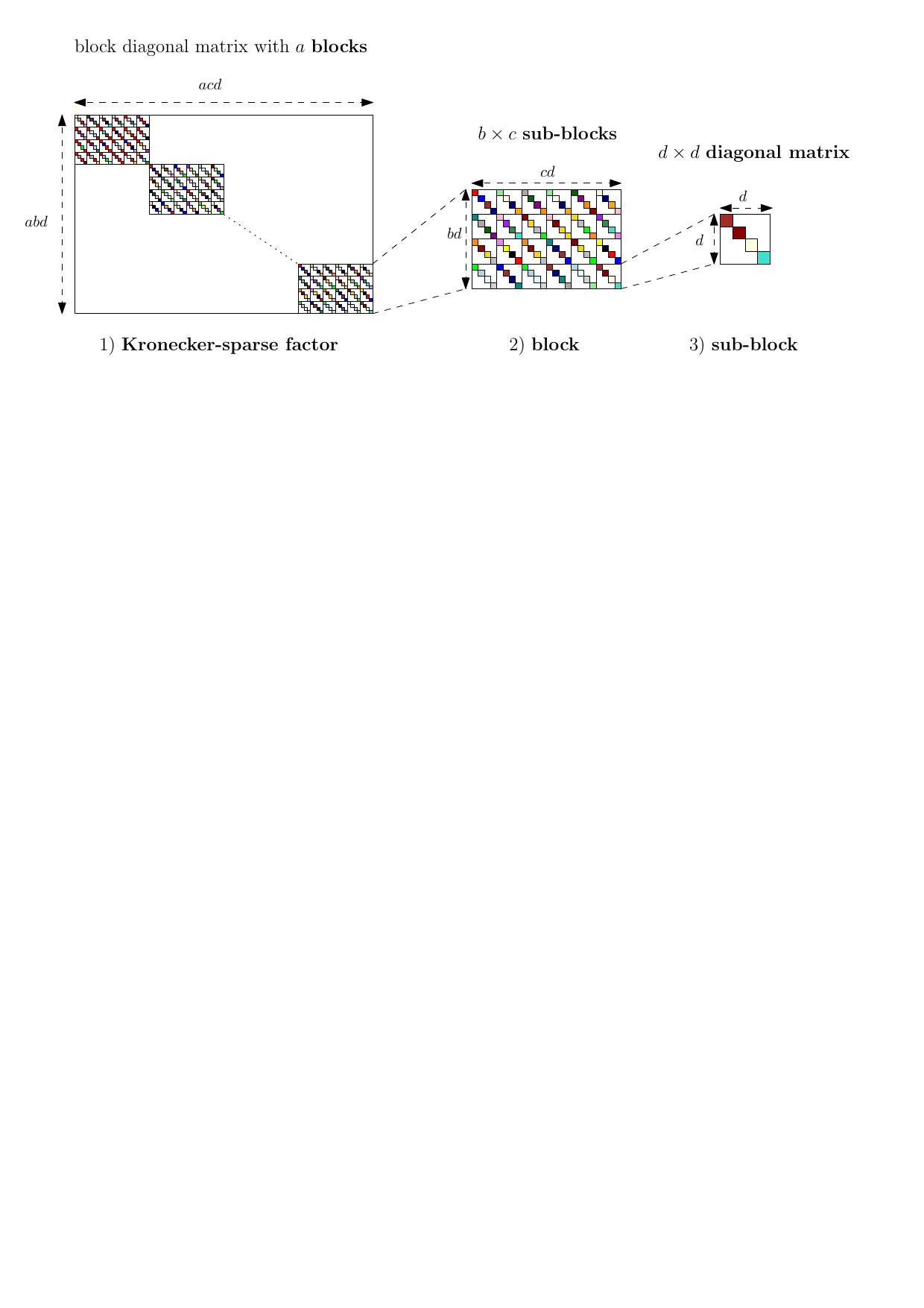}
	\caption{Illustration of the support of a 
factor with pattern $\pattern=(a,b,c,d)$.
 The colored squares indicate the indices belonging to the support. The sub-figures (1), (2), (3) illustrate respectively the concepts of factor, block and sub-block.} 
\label{fig:DBfactorillu}
\end{figure}

        
            
            

\subsection{Problem formulation and contributions}

This paper focuses on the problem of approximating a target matrix $\mat{A}$ by a product of 
structured sparse
factors associated with a given {\em architecture} $\arch = (\pattern_\ell)_{\ell=1}^L$: 
\begin{equation}
\label{eq:butterfly-approximation-pb}
    E^\arch(\bA):= \inf_{(\matseq{X}{\ell})_{\ell=1}^L} \| \mat{A} - \matseq{X}{1} \ldots \matseq{X}{L} \|_F^2 = \inf_{\bB} \|\bA - \bB\|_F^2,
\end{equation}
where $\bB$ is a butterfly matrix (cf.~\eqref{eq:butterfly-factorization-intro} below and \Cref{def:dbarchitecture}), 
each $\matseq{X}{\ell}$ is a 
factor with sparsity pattern prescribed by $\pattern_\ell$, and
 $\| \cdot \|_F$ is the Frobenius norm. We will call these factors ``\ksf", due to the Kronecker-structure of their sparsity pattern. 
 Several methods have been proposed to address this butterfly factorization problem, but we argue that they either lack guarantees of success, or only have partial guarantees. We fix this issue here by introducing a new butterfly algorithm endowed with theoretical guarantees.

\begin{table}[t]
\caption{Existing architectures $\arch$ in the literature. ${(\star)}$ 
Note that \cite{lin2021deformable} did not explicitly state constraints on $(a_\ell, b_\ell, c_\ell, d_\ell)_{\ell=1}^L$ for deformable butterfly factorization, because they use an alternative description of the sparsity patterns of the factors. See \Cref{example:ex_chainable} for more details.}
 
\label{tab:butterfly-parametrization}
\begin{center}
    \resizebox{\linewidth}{!}{
  \begin{tabular}{ccccc} 
    \toprule
   \bf Architectures &  $L$ & \bf Size 
   & \bf Values of $\arch = (\pattern_\ell)_{\ell=1}^L$ & \bf Chainable? \\ \midrule
    Low-rank matrix & 2 & $m \times n$ & {$(1, m, r, 1)$, $(1, r, n, 1)$}& Yes\\
    \midrule
    Monarch \cite{dao2022monarch} &  2 & $m \times n$ & {$(1, p, q, m/p)$, $(q, m/p, n/q, 1)$} & Yes\\ 
    \midrule
    Square dyadic butterfly \cite{dao2019learning} &  any
    & $2^{L} \times 2^{L}$ & $(2^{\ell-1}, 2, 2, 2^{L - \ell})_{\ell = 1}^{L}$ & Yes \\
    \midrule
    ($\star$) Deformable butterfly \cite{lin2021deformable} &  any 
    & $m \times n$ & $(a_\ell, b_\ell, c_\ell, d_\ell)_{\ell=1}^L$ & Yes \\
    \midrule
    Kaleidoscope \cite{dao2020kaleidoscope} & even & $2^{L/2} \times 2^{L/2}$ &  $\pattern_\ell = \begin{cases}
        (2^{\ell-1}, 2, 2, 2^{L/2 - \ell}) & \text{if } \ell \leq L/2\\
        (2^{L - \ell}, 2, 2, 2^{\ell - L/2 - 1}) & \text{if } \ell > L/2\\
    \end{cases}$ & No \\   \bottomrule
  \end{tabular}
  }
\end{center}
\label{tab:existingparameterization}
\end{table}

More precisely, the main contributions of this paper are:

\begin{enumerate}
\item To introduce, via the definition of a \ksf{}, a formal mathematical description of the 
``deformable butterfly factors" 
of 
\cite{lin2021deformable}. While we owe \cite{lin2021deformable} the original idea of extending previous butterfly factorizations, the mathematical formulation of the prescribed supports as  Kronecker products is a novelty that allows a theoretical study of the corresponding butterfly factorization, as done in this paper. Moreover, our parameterization uses 4 parameters and removes the redundancy in the original 5-parameter description of deformable butterfly factors of \cite{lin2021deformable}. {\Cref{tab:butterfly-parametrization} summarizes the main characteristics of existing butterfly architectures covered by our framework.}
    \item To define  the {\em chainability} of an architecture $\arch$ (\Cref{def:chainableseqDBPs}), which is basically a ``stability" property that ensures that a product of \ksfs{} is still a \ksf{}. We prove that Problem \eqref{eq:butterfly-approximation-pb} admits an optimum when $\arch$ is chainable (\Cref{cor:optimum-exists-in-bf}). 
    \item To characterize \emph{analytically} the set of butterfly matrices with architecture $\arch$, 
\begin{equation}
\label{eq:butterfly-factorization-intro}
    \setButterfly{\arch} := \left \{ \matseq{X}{1} \ldots \matseq{X}{L} \, | \,
    \supp(\matseq{X}{\ell}) \subseteq \supp(\mat{S}_{\pattern_\ell})
    \,\ \forall \ell \in \intset{L} \right \},
\end{equation}
    for a chainable $\arch$, in terms of low-rank properties of certain submatrices of $\mat{A}$ (\Cref{cor:characterizationofDBmatrix}) that are equivalent to a {generalization} of the complementary low-rank property (\Cref{def:complementary-low-rank,cor:equivalence-clr}).
    \item {To define the {\em redundancy} of a chainable architecture (\Cref{def:redundant}). Intuitively,  
    a chainable architecture $\arch$ is redundant {if} one can replace it with a ``compressed'' (non-redundant) one $\arch'$ such that $\setButterfly{\arch} = \setButterfly{\arch'}$ (\Cref{prop:procedure-for-redundant-arch}). Thus, from the perspective of accelerating linear operators, redundant architectures have no practical interest.
    }
    \item To propose a new butterfly algorithm (\Cref{algo:modifedbutterflyalgo}) able to provide an approximate solution to Problem \eqref{eq:butterfly-approximation-pb} 
        {\em for {non-redundant} chainable architectures}. {Compared to previous similar algorithms, this algorithm introduces a new orthogonalization step that is key to obtain approximation guarantees.} {The algorithm can be easily extended to redundant chainable architectures, with the same theoretical guarantee {(see 
        \Cref{rem:redundantcase})}. }
    \item 
    To prove that, {for a chainable $\arch$,} \Cref{algo:modifedbutterflyalgo} outputs {butterfly factors} $(\seq{\hat{\mat{X}}}{\ell})_{\ell=1}^L$ 
    such that 
    \begin{equation}
        \label{eq:error-bound-intro}
        \| \mathbf{A} - \seq{\hat{\mat{X}}}{1} \ldots \seq{\hat{\mat{X}}}{L} \|_F \leq C_\arch \cdot \inf_{{(\mat{X}_{\ell})_{\ell=1}^L }
        } \| \mat{A} -  \mat{X}_{1} \ldots \mat{X}_{L} \|_F,
    \end{equation}
    where $C_\arch \geq 1$ depends \emph{only} on $\arch$ 
   (\Cref{cor:mainresults}), {see \Cref{tab:constant} for examples}. {To the best of our knowledge, this is the first time such a bound is established for a butterfly approximation algorithm.}


\end{enumerate}


\subsection{Outline}
\Cref{section:related-work} discusses related work.
{\Cref{section:preliminaries} introduces some preliminaries on two-factor matrix factorization with fixed-support constraints. {This is also where we setup our general notations.}
\Cref{sec:DB-factorization} formalizes the definition of deformable
butterfly factorization associated with $\arch$, and introduces the \emph{chainability} and \emph{non-redundancy} conditions for an architecture $\arch$, that will be at the core of the proof of error guarantees on {our proposed} 
butterfly algorithm.
\Cref{sec:hierarchical} extends 
{an existing} hierarchical algorithm,
{currently expressed only for dyadic}
butterfly factorization, to any chainable $\arch$. 
For non-redundant chainable $\arch$, \Cref{section:normalizedbutterflyfactorization} introduces novel orthonormalization operations in the {proposed} butterfly algorithm.
This allows us to establish in \Cref{sec:error} our main results on the control of the approximation error and the low-rank characterization of butterfly matrices associated with chainable $\arch$.
\Cref{sec:experimentbutterfly} proposes some numerical experiments about the proposed butterfly algorithm. 
The most technical proofs are deferred to the appendices.}

\begin{table}[t]
	\caption{The approximation ratio $C_\arch$ (see \Cref{eq:error-bound-intro}) of \Cref{algo:modifedbutterflyalgo} with a selection of chainable architectures $\arch$ from \Cref{tab:existingparameterization}. 
	}
	\begin{center}
    \resizebox{\linewidth}{!}{
		\begin{tabular}{ccccc} \toprule
			\bf Parameterization & \bf Size & \bf $L=|\arch|$& \bf $C_\arch$ in \eqref{eq:mainresults} - \Cref{cor:mainresults} & \bf $C_\arch$ in \eqref{eq:betterbound} - \Cref{cor:mainresults} \\ \midrule
			Low rank matrix & $m \times n$ & 2 & $1$ & $1$\\
			\midrule
			Monarch \cite{dao2022monarch} & $m \times n$ & 2 & $1$ & $1$\\ 
			\midrule
			\makecell{Square dyadic\\ butterfly \cite{dao2019learning}} & $n \times n$ & $\log n$ & $\log n - 1$ & $\sqrt{\log n - 1}$ \\
			\midrule
			\makecell{\revision{Chainable}\\\revision{deformable butterfly}\\$(a_\ell, b_\ell, c_\ell, d_\ell)_{\ell=1}^L$} & \makecell{$m \times n$,\\$m=a_1b_1d_1$,\\$n=a_Lc_Ld_L$} & $L$ & ${\max(L,\,2)} - 1$ & $\sqrt{{\max(L,\,2)} - 1}$\\
			\bottomrule
		\end{tabular}
  }
      \label{tab:constant}
	\end{center}
\end{table}

\section{Related work} 
\label{section:related-work}
Several methods have been proposed to address the butterfly factorization problem \eqref{eq:butterfly-approximation-pb}, but we argue that they either lack guarantees of success, or only have partial guarantees.

{\bf \em First-order methods.} Optimization methods based on gradient descent \cite{dao2019learning} or alternating least squares \cite{lin2021deformable} are not suitable for Problem \eqref{eq:butterfly-approximation-pb} and lack guarantees of success, because of the non-convexity of the objective function. In fact, the problem of approximating a given matrix by the product of factors with fixed-support constraints, as it is the case for \eqref{eq:butterfly-approximation-pb}, is generally NP-hard and might even lead to numerical instability even for $L = 2$ factors, as shown in \cite{le2022spurious}.
In contrast, we show that the minimum of \eqref{eq:butterfly-approximation-pb} always exists for chainable $\arch$.


{\bf \em Hierarchical approach for butterfly factorization.} 
For the specific choice of $\arch$ corresponding to a square dyadic butterfly factorization, i.e., with the architecture $\arch = (2^{\ell-1}, 2, 2, 2^{L-\ell})_{\ell=1}^L$, 
there exists an efficient hierarchical algorithm for Problem \eqref{eq:butterfly-approximation-pb}, endowed with {\em exact} recovery guarantees \cite{le2022fastlearning,zheng2023efficient}.
The hierarchical approach performs successive two-factor matrix factorizations, until the desired number of factors $L$ is obtained. In the case of square dyadic butterfly factorization, it is shown that each two-factor matrix factorization in the hierarchical procedure can be solved optimally by computing the best rank-one approximation of some specific submatrices \cite{le2022spurious}, which leads to an overall $\mathcal{O}(n^2)$ complexity for approximating a matrix of size $n \times n$ with the hierarchical procedure. In fact, the hierarchical algorithm in \cite{le2022fastlearning, zheng2023efficient}  can be seen as a variation of previous butterfly algorithms \cite{li2015butterfly}, with the novelty that it works for \emph{any} hierarchical order under which successive two-layers matrix factorizations are performed, whereas existing butterfly algorithms \cite{michielssen1996multilevel,o2007new,candes2009fast} were only focusing on some \emph{specific} hierarchical orders \cite{liu2021butterfly}. However, the question of controlling the approximation error of the algorithm was left as an open question in \cite{zheng2023efficient}. Moreover, it was 
not known in the literature if the hierarchical algorithm \cite{zheng2023efficient,le2022fastlearning} could be extended to architectures $\arch$ beyond the square dyadic one. Both questions are answered positively here.

{\bf \em Butterfly algorithms and the complementary low-rank property.}
Butterfly algorithms \cite{michielssen1994multilevel,michielssen1996multilevel,o2007new,candes2009fast,li2015butterfly,li2015multiscale,liu2021butterfly} 
look for an approximation of a target matrix $\mathbf{A}$ by a sparse factorization $\hat{\mathbf{A}} = \matseq{X}{1} \ldots \matseq{X}{L}$, assuming that $\mat{A}$ satisfies the so-called \emph{complementary low-rank property}, formally introduced in \cite{li2015butterfly}. This low-rank property assumes that the rank of certain submatrices of $\mat{A}$ restricted to some specific blocks is numerically low and that these blocks satisfy some conditions described by a hierarchical partitioning of the row and column indices, using the notion of \emph{cluster tree} \cite{hackbusch2015hierarchical}. Then, the butterfly algorithm leverages this low-rank property to approximate the target matrix by a data-sparse representation, by performing successive low-rank approximation of specific submatrices. The literature in numerical analysis describes many linear operators associated with matrices satisfying the complementary low-rank property, such as kernel matrices encountered in electromagnetic or acoustic scattering problems \cite{michielssen1994multilevel,michielssen1996multilevel,guo2017butterfly}, special function transforms \cite{oneil2010203}, spherical harmonic transforms \cite{tygert2010fast} or Fourier integral operators \cite{candes2009fast,ying2009sparse,li2015multiscale,li2017interpolative,li2018multidimensional}.

The formal definition of the complementary low-rank property currently given in the literature only considers cluster trees that are dyadic \cite{li2015butterfly} or quadtrees \cite{li2018multidimensional}. In this work, we give a more general definition of the complementary low-rank property that considers arbitrary cluster trees. To the best of our knowledge, this allows us to give the first formal characterization of the set of matrices admitting a (deformable) butterfly factorization associated with an architecture $\arch$, as defined in \eqref{eq:butterfly-factorization-intro}, using this extended definition of the complementary low-rank property. 
In particular, this shows that the definition in \eqref{eq:butterfly-factorization-intro} is more general than the previous definitions of the complementary low-rank property that were restricted to dyadic trees or quadtrees \cite{li2015butterfly,li2018multidimensional}.





{\bf \em Existing error bounds for butterfly algorithms.}
Several existing
butterfly algorithms \cite{o2007new,candes2009fast,li2015butterfly,li2015multiscale} are guaranteed to provide an approximation error $\| \mathbf{A} - \hat{\mathbf{A}} \|_F$ equal to zero, when $\mathbf{A}$ satisfies \emph{exactly} the complementary low-rank property, i.e., the best low-rank approximation errors of the submatrices described by the property are \emph{exactly} zero \cite{o2007new,li2015butterfly}. However, when these submatrices are only approximately low-rank (with a positive best low-rank approximation error), existing butterfly factorization algorithms {\em are not} guaranteed to provide an approximation $\hat{\mathbf{A}}$ with the \emph{best} approximation error. 
To the best of our knowledge, the 
only existing error bound in the literature  is based on a butterfly algorithm that performs successive low-rank approximation of blocks $\mathbf{M}$ \cite{liu2021butterfly}. However, this bound does not compare   the approximation error $\| \mathbf{A} - \hat{\mathbf{A}} \|_F$ to the \emph{best} approximation error, that is, the minimal error $\| \mathbf{A} - \mathbf{A}^* \|_F$ with $\mathbf{A}^*$ satisfying \emph{exactly} the complementary low-rank property. Moreover, in contrast to our algorithm, the algorithm proposed in \cite{liu2021butterfly}   is not designed for butterfly factorization problems with a \emph{fixed} architecture. In \cite{liu2021butterfly} the architecture is the result of the stopping criterion that imposes a given accuracy on the low-rank approximations of the blocks.  We discuss this further in   \Cref{sec:comparison}.  
In this paper, we thus propose 
the first error bound for butterfly factorization that compares the approximation error to the minimal approximation error, cf.~\eqref{eq:error-bound-intro}.

\section{Preliminaries} 
\label{section:preliminaries}

{Following the hierarchical approach \cite{le2016flexible,le2022fastlearning,zheng2023butterfly}, our analysis of the butterfly factorization problem \eqref{eq:butterfly-approximation-pb} with \emph{multiple} factors in general ($L \geq 2$) relies on the analysis of the simplest setting with only $L=2$ factors. This setting is studied in \cite{le2022spurious} and after setting up our general notations we recall some important results that will be used in the rest of the paper.}

\subsection{Notations} \label{sec:notations}
The set $\intset{a,b}$ is the set of integers $\{a, a+1, \ldots, b \}$ for $a \leq b$, and
$\intset{a} := \intset{1, a}$. The notation $a \mid b$ means that $a$ divides $b$. 
$A \times B$ is the Cartesian product of two sets $A$ and $B$. $|A|$ is the cardinal of a set $A$.
By abuse of notation, for any matrix $\mat{X}$ and any binary matrix $\mat{S}$, the support constraint $\supp(\mat{X}) \subseteq \supp(\mat{S})$ is simply written as $\supp(\mat{X}) \subseteq \mat{S}$. 
$\mat{X}[i, :]$ and $\mat{X}[:, j]$ are the $i$-th row and the $j$-th column of $\mat{X}$, respectively. $\mat{X}[i,j]$ is the entry of $\mat{X}$ at the $i$-th row and $j$-th column. $\mat{X}[I, :]$ and $\mat{X}[:, J]$ are the submatrices of $\mat{X}$ restricted to a subset of row indices $I$ and a subset of column indices $J$, respectively. $\mat{X}[I, J]$ is the submatrix of $\mat{X}$ restricted to both $I$ and $J$. $\transpose{\mat{X}}$ is the transposed matrix of $\mat{X}$. $\mbf{0}_{m \times n}$ 
(resp.~$\mbf{1}_{m \times n}$) is the $m \times n$ matrix full of zeros (resp.~of ones). The indicator (column) vector of a subset $R \subseteq \intset{m}$ is denoted $\mbf{1}_R$. The rank of a matrix $\mat{M}$ is denoted $\rank(\mat{M})$.
Finally, for any matrix $\mat{X}$, we denote $\rankprojerrsq{r}{\mat{X}} := \min_{\mat{Y}:\  \rank(\mat{Y}) \leq r} \|\mat{X}-\mat{Y}\|_F^2$, and $\prank_r(\mat{X})$ is defined as the collection of all $\mat{Y}$ of rank at most $r$ achieving the minimum. All these matrices have the same Frobenius norm, denoted by $\|\prank_r(\mat{X})\|_F$.

\subsection{Two-factor, fixed-support matrix factorization}
{Given two binary matrices $\mbf{L}, \mbf{R}$, }the problem of \emph{fixed-support matrix factorization} (FSMF) with two factors is formulated as:
\begin{equation}
	\label{eq:FSMF}
	\underset{(\bX, \bY)}{\inf} \quad \| {\bA} - \bX \bY\|_F^2, 
	\; \text{with } \supp(\bX) \subseteq \bL, \supp(\bY) \subseteq \bR.
\end{equation}
{While Problem \eqref{eq:FSMF} is NP-hard\footnote{{and  does not always admit an optimum: the infimum may not be achieved \cite[Remark A.1]{le2022spurious}.}} in general \cite[Theorem 2.4]{le2022spurious}, it becomes tractable under certain conditions on $(\bL,\bR)$}. 
To describe one of these conditions, we recall the following definitions.

\begin{definition}[Rank-one contribution supports \cite{le2022spurious,zheng2023efficient}]
	\label{def:rankonesupportcontribution}
	The \emph{rank-one contribution supports} of two binary matrices $\mbf{L} \in \{0,1\}^{m \times r}, \mbf{R} \in \{0,1\}^{r \times n}$  is the tuple {$\varphi(\mbf{L}, \mbf{R})$} of $r$ binary matrices defined by: 
	\begin{equation*}
		\varphi(\mbf{L}, \mbf{R}) := (\rankonesuppseq{i})_{i=1}^r, \quad \text{ where }\quad \bU_i:= \mbf{L}[\col{i}]\mbf{R}[\row{i}] \in \{0,1\}^{m \times n}.
	\end{equation*}
\end{definition}
\Cref{fig:rankonecontribution} illustrates the notion of \emph{rank-one contribution supports} in  \Cref{def:rankonesupportcontribution}.
\begin{remark}
    The binary matrix $\mbf{L}[\col{i}]\mbf{R}[\row{i}]$ for $i \in \intset{r}$ encodes the \emph{support constraint} of  $\bX[\col{i}]\bY[\row{i}]$ for each $(\mat{X}, \mat{Y})$ such that $\supp(\mat{X}) \subseteq \mat{L}$, $\supp(\mat{Y}) \subseteq \mat{R}$.
\end{remark}
\begin{figure}[h]
    \centering
    \includegraphics[width=\textwidth]{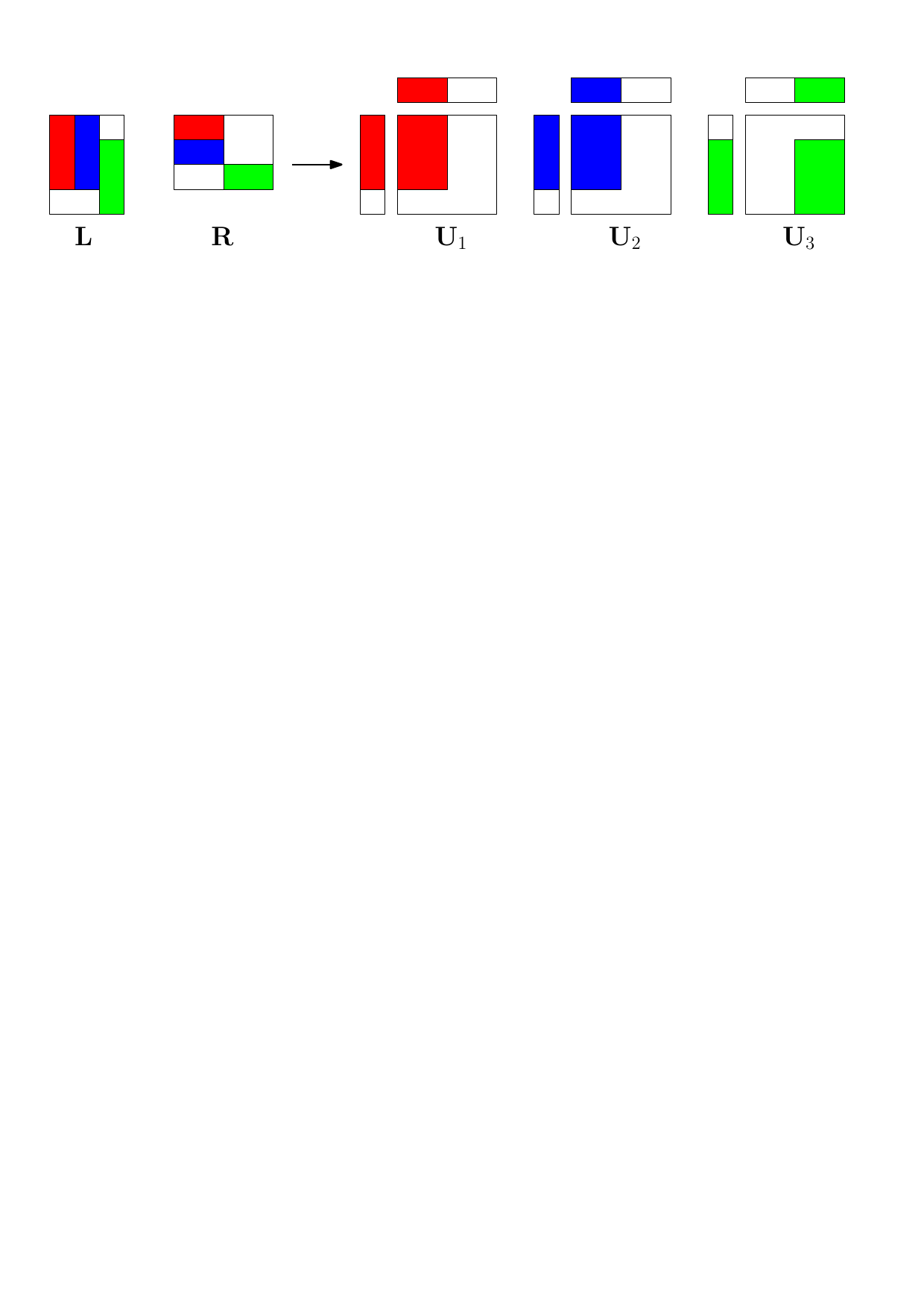}
    \caption{An example of support constraints $(\bL,\bR)$ and the supports of the corresponding rank-one contributions. Colored parts indicate indices inside the support constraints $\bL, \bR$ and $\bU_i$ for $i \in \intset{3}$. $\{1,2\}$ and $\{3\}$ are the two equivalence classes (\Cref{def:classequivalence}).}
    \label{fig:rankonecontribution}
\end{figure}

The rank-one supports $( \rankonesuppseq{i} )_{i=1}^r$ defines an equivalence relation and its induced equivalence classes on the set of indices $\intset{r}$, as illustrated in \Cref{fig:rankonecontribution}.

\begin{definition}[Equivalence classes of rank-one supports, representative rank-one supports \cite{le2022spurious}] 
\label{def:classequivalence}
Given $\mbf{L} \in \{0,1\}^{m \times r}, \mbf{R} \in \{0,1\}^{r \times n}$, denoting $(\rankonesuppseq{i})_{i=1}^r {=} \varphi(\mbf{L}, \mbf{R})$, define an equivalence relation on the index set $\intset{r}$ of the rows of $\bL$ / columns of $\bR$ as: 
\begin{equation*}
    i \sim j \iff \rankonesuppseq{i} = \rankonesuppseq{j}.
\end{equation*}
This yields a partition of the index set $\intset{r}$ into equivalence classes, denoted $\mathcal{P}(\mat{L}, \mat{R})$. 
For each $P \in \mathcal{P}(\mat{L}, \mat{R})$, denote $\rankonesuppseq{P}$ a
representative rank-one support, $R_P \subseteq \intset{m}$ and $C_P \subseteq \intset{n}$ the supports of rows and columns in $\rankonesuppseq{P}$, respectively, i.e., $\supp(\mat{U}_P) = R_P \times C_P$, and denote $|P|$ the cardinal of the equivalence class $P$.
\end{definition}

We now recall a sufficient condition on the binary support matrices $(\bL,\bR)$ for which corresponding instances of Problem \eqref{eq:FSMF} 
can be solved in polynomial time via \Cref{algo:algorithm1}.

\begin{theorem}[Tractable support constraints of Problem \eqref{eq:FSMF} {\cite[Theorem 3.3]{le2022spurious}}]
	\label{theorem:tractablefsmf}
If all components $\mat{U}_i$ 
 of $\varphi(\bL,\bR)$ are pairwise disjoint or identical, then \Cref{algo:algorithm1} yields an optimal solution of Problem \eqref{eq:FSMF}, 
    and the infimum of Problem \eqref{eq:FSMF} is :
	\begin{equation}
    \label{eq:explicit-formula-fsmf}
		\inf_{\supp(\mat{X}) \subseteq \mat{L}, \supp(\mat{Y}) \subseteq \mat{R}} 
		\| \mat{A} - \mat{X} \mat{Y} \|_F^2  
		= \sum_{P \in \cP(\bL,\bR)}
		\min_{\bB, \rank(\bB) \leq |P|}\|\bA[R_P,C_P] - \bB\|_F^2 + c,
	\end{equation}
    where\footnote{Note that $\bL\bR$ is a product of two binary matrices.} $c := \displaystyle\sum_{(i,j) \notin \supp(\bL\bR)} \bA[i,j]^2$ is a constant depending only on $(\bA,\bL,\bR)$.
\end{theorem}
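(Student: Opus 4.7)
The plan is to reduce Problem~\eqref{eq:FSMF} under the ``disjoint or identical'' hypothesis to a collection of \emph{independent} best low-rank approximation subproblems---one per equivalence class $P \in \mathcal{P}(\bL,\bR)$---and then invoke the Eckart--Young theorem on each. The guiding observation is the rank-one expansion $\bX\bY = \sum_{i=1}^r \bX[\col{i}]\bY[\row{i}]$, whose $i$-th term is supported in $\bU_i$; grouping by equivalence classes turns this into a decomposition that matches the block pattern induced by $(\bU_i)_{i=1}^r$.

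First I would note that the hypothesis forces the blocks $\{R_P \times C_P\}_{P \in \mathcal{P}(\bL,\bR)}$ to be pairwise disjoint, with union equal to $\supp(\bL\bR)$. Consequently,
\[
\|\bA - \bX\bY\|_F^2 = c + \sum_{P \in \mathcal{P}(\bL,\bR)} \bigl\|\bA[R_P, C_P] - (\bX\bY)[R_P, C_P]\bigr\|_F^2.
\]
Next I would show $(\bX\bY)[R_P, C_P] = \bX[R_P, P]\,\bY[P, C_P]$: indeed, for $i \notin P$, $\bU_i \neq \bU_P$ and the hypothesis yields $\bU_i \cap (R_P \times C_P) = \emptyset$, so $\bX[\col{i}]\bY[\row{i}]$ vanishes on $R_P \times C_P$. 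Crucially, since $R_P$ (resp.~$C_P$) is by definition the row (resp.~column) support of $\bL[\col{i}]$ (resp.~$\bR[\row{i}]$) for any $i \in P$, the sub-blocks $\bL[R_P, P]$ and $\bR[P, C_P]$ are all-ones, so $\bX[R_P, P]$ and $\bY[P, C_P]$ can be chosen freely (other entries of $\bX[\col{P}]$ and $\bY[\row{P}]$ must be zero, consistent with the support constraints). Therefore $(\bX\bY)[R_P, C_P]$ ranges over all matrices of rank at most $|P|$.

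The subproblems decouple across $P$ because the column index sets $\{P\}_P$ partition $\intset{r}$, so the choices of $\bX[\col{P}], \bY[\row{P}]$ for different classes live on disjoint column indices of $\bX$ and disjoint row indices of $\bY$. Combining this decoupling with Eckart--Young yields
\[
\inf_{\supp(\bX)\subseteq\bL,\ \supp(\bY)\subseteq\bR} \|\bA - \bX\bY\|_F^2 = c + \sum_{P \in \mathcal{P}(\bL,\bR)} \min_{\rank(\bB) \leq |P|} \|\bA[R_P, C_P] - \bB\|_F^2,
\]
which is \eqref{eq:explicit-formula-fsmf}, with the infimum attained by the truncated SVDs of the blocks $\bA[R_P, C_P]$---exactly what \Cref{algo:algorithm1} is expected to compute before placing the resulting factors into the prescribed positions of $\bX$ and $\bY$.

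The main technical point is the \emph{achievability} step: verifying rigorously that any tuple $(\bB_P)_P$ of rank-$\leq|P|$ targets can be simultaneously realized as $((\bX\bY)[R_P, C_P])_P$ by a single admissible pair $(\bX, \bY)$. This rests on the combinatorial fact---enabled precisely by the ``disjoint or identical'' hypothesis---that $\bL[R_P, P]$ and $\bR[P, C_P]$ are unconstrained (all-ones), and that distinct equivalence classes occupy disjoint column indices of $\bX$ and disjoint row indices of $\bY$, so there is no cross-interference between the block subproblems.
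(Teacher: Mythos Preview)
Your proposal is correct and follows essentially the same approach as the paper's proof: decompose $\|\bA-\bX\bY\|_F^2$ over the disjoint blocks $R_P\times C_P$, establish $(\bX\bY)[R_P,C_P]=\bX[R_P,P]\,\bY[P,C_P]$ via the rank-one expansion, observe that the classes $P$ partition $\intset{r}$ so the subproblems decouple, and reduce each to a best rank-$|P|$ approximation. Your explicit verification that $\bL[R_P,P]$ and $\bR[P,C_P]$ are all-ones (hence the sub-blocks are unconstrained) makes the achievability direction more transparent than in the paper, which only states that $\bX[R_P,P]\,\bY[P,C_P]$ has rank at most $|P|$ and leaves the converse implicit.
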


Equation \eqref{eq:explicit-formula-fsmf} was not proved in \cite{le2022spurious}, so we provide a complete proof of \Cref{theorem:tractablefsmf} in \Cref{appendix:proofpreliminaries}. 
The main idea is the following.


\begin{proof}[Sketch of proof]
    For $(\bX,\bY)$ such that $\supp(\bX) \subseteq \bL$ and $\supp(\bY) \subseteq \bR$:
    \begin{equation}
        \label{eq:lossdecomposition}
        \|\bA - \bX\bY\|_F^2 = \sum_{P \in \cP(\bL,\bR)}\|\bA[R_P,C_P] - \bX[R_P,P]\bY[P,C_P]\|_F^2 + c,
    \end{equation}
    because the fact that the components $\mat{U}_i$ of $\varphi(\mat{L}, \mat{R})$ are pairwise disjoint or identical implies that the blocks of indices $R_P \times C_P$ are pairwise disjoint. 
    Thus, minimizing the left-hand-side is equivalent to minimizing each summand in the {right-hand side}, which is equivalent to finding the best rank-$|P|$ approximation of the matrix $\bA[R_P,C_P]$ for each $P \in \cP(\bL,\bR)$.
\end{proof}

\begin{remark}
    \label{rem:nonuniquesolution}
     Best low-rank approximation in line~\ref{line:svd} of \Cref{algo:algorithm1} can be computed via truncated singular value decomposition (SVD).
    {Note that the definition of $\hat{\mat{H}}, \hat{\mat{K}}$ in this line is not unique, because, for instance, the product $\hat{\mat{H}} \hat{\mat{K}}$ is invariant to some rescaling of columns and rows.}
\end{remark}

\begin{algorithm}[t]
	\centering
	\caption{Two-factor fixed-support matrix factorization} 
	\label{algo:algorithm1}
	\begin{algorithmic}[1]
        \REQUIRE $\bA \in \CC^{m \times n}$, $\bL \in \{0, 1 \}^{m \times r}$, $\bR  \in \{0, 1 \}^{r \times n}$
        \ENSURE $(\bX,\bY)$ such that $\supp(\mat{X}) \subseteq \mat{L}$, $\supp(\mat{Y}) \subseteq \mat{R}$
		\STATE $(\mat{X}, \mat{Y}) \gets (\mbf{0}_{m \times r}, \mat{0}_{r \times n})$
        \FOR {$P \in \mathcal{P}(\mat{L}, \mat{R})$ (cf.~\Cref{def:classequivalence})}
		      \STATE {$(\bX[R_P,P], \bY[P,C_P]) \gets$ $(\hat{\bH}, \hat{\bK}) \in \underset{\substack{\mat{H} \in \CC^{|R_P| \times |P|} \\ \mat{K} \in \CC^{|P| \times |C_P|}}}{\arg\min} \| \bA[R_P,C_P] - \mat{H} \mat{K} \|_F$}{\label{line:svd}}
		\ENDFOR
		\RETURN $(\bX,\bY)$
	\end{algorithmic}
\end{algorithm}


\section{Deformable butterfly factorization}
\label{sec:DB-factorization}

%
%
This section presents a mathematical formulation of the deformable butterfly {factorization} \cite{lin2021deformable} associated with a sequence of patterns $\arch := (\pattern_\ell)_{\ell=1}^L$ called an architecture.
We then introduce the notions of \emph{chainability} and \emph{non-redundancy} of an architecture, that are crucial conditions for constructing a butterfly algorithm for Problem \eqref{eq:butterfly-approximation-pb} with error guarantees.



\subsection{A mathematical formulation for \ksfs{}}

Many butterfly factorizations \cite{dao2019learning,dao2020kaleidoscope,le2022fastlearning,zheng2023efficient,dao2022monarch,lin2021deformable} take the form $\mat{A} = \matseq{X}{1} \ldots \matseq{X}{L}$ with 
$\supp(\matseq{X}{\ell}) \subseteq \mat{I}_{a_\ell} \otimes \mat{1}_{b_\ell \times c_\ell} \otimes \mat{I}_{d_\ell}$ for $\ell \in \intset{L}$, for some parameters $(a_\ell, b_\ell, c_\ell, d_\ell)_{\ell=1}^L$, cf.~\Cref{sec:intro}. We therefore introduce the following definition.

\begin{definition}[\ksfs{} and their sparsity patterns]
	\label{def:dbfactorfour}
For $a, b, c, d \in \NN$, a \emph{\ksf{}} 
{of pattern} $\boldsymbol{\pi} := \paramfour$ (or {$\boldsymbol{\pi}$-factor}) is a matrix in $\RR^{m \times n}$ or $\CC^{m \times n}$,
where $m: = abd$, $n: = acd$, such that
    its support is included in $\DBsupport{\pattern} := \mbf{I}_{a} \otimes \mbf{1}_{b \times c} \otimes \mbf{I}_{d} \in \{0,1\}^{m \times n}$.
The tuple $\boldsymbol{\pi}$ will be called an \emph{elementary 
Kronecker-sparse pattern}, or simply a \emph{pattern}. The set of all $\boldsymbol{\pi}$-factors is denoted by $\factarch^{\pattern}$. 
\end{definition} 


\Cref{fig:DBfactorillu} illustrates the support of a $\pattern$-factor, for a given pattern $\pattern = (a, b, c, d)$. A $\pattern$-factor matrix is block diagonal with $a$ blocks in total. By definition, each block in the diagonal has support included in $\mbf{1}_{b \times c} \otimes \bI_d$. 
\revision{
Each block is a \emph{block matrix} partitioned into $b \times c$ \emph{sub-blocks}, and every 
\emph{sub-block} is a diagonal matrix of fixed dimensions $d \times d$.}

\begin{example}
\label{example:typicalDBfactors}
    The following matrices are $\pattern$-factors for certain choices of $\pattern$.
	\begin{enumerate}
		\item {\bf Dense matrix}: Any matrix of size $m \times n$ is a $(1,m,n,1)$-factor.
		\item {\bf Diagonal matrix}:  Any diagonal matrix of size $m \times m$ is either a $(m,1,1,1)$-factor or $(1, 1, 1, m)$-factor.
	\end{enumerate}
    
    \begin{enumerate}
        \setcounter{enumi}{2}
        \item {\bf Factors in a square dyadic butterfly factorization} \cite{dao2019learning,dao2020kaleidoscope,le2022fastlearning,zheng2023efficient}: the pattern of the $\ell$-th factor is $\pattern_\ell = (2^{\ell - 1}, 2, 2, 2^{L - \ell})$ for $\ell \in \intset{L}$.
        \item {\bf Factors in a Monarch factorization} \cite{dao2022monarch}: the patterns of the two factors are $\pattern_1 = (1, p, q, m/p)$, $\pattern_2 = (q, m/p, n/q, 1)$ for
        {$p,q$ such that} $p \mid m$ and $r \mid n$.
    \end{enumerate}
\end{example}

\begin{lemma}[{Sparsity level} of {a $\boldsymbol{\pi}$-factor}]
	\label{lemma:numberofparameters}
	For ${\boldsymbol{\pi}} = (a,b,c,d)$, the number of nonzero entries of a {$\boldsymbol{\pi}$}-factor of size $m \times n$ is at most $\|\pattern\|_0:= abcd = mc = nb$.
\end{lemma}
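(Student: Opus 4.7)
The plan is to directly count the number of nonzero entries in the binary sparsity pattern $\mat{S}_{\pattern} = \mat{I}_a \otimes \mat{1}_{b \times c} \otimes \mat{I}_d$, and then verify the three claimed equalities.

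First, I would recall the standard fact that for any two (binary) matrices $\mat{U}, \mat{V}$, the number of nonzero entries of the Kronecker product satisfies $\|\mat{U} \otimes \mat{V}\|_0 = \|\mat{U}\|_0 \cdot \|\mat{V}\|_0$, since each nonzero entry of $\mat{U}$ generates a copy of $\mat{V}$ scaled by a nonzero scalar in the Kronecker product, and zero entries of $\mat{U}$ generate zero blocks. Applying this twice to $\mat{S}_\pattern$, and using $\|\mat{I}_a\|_0 = a$, $\|\mat{1}_{b \times c}\|_0 = bc$, and $\|\mat{I}_d\|_0 = d$, yields
\begin{equation*}
    \|\mat{S}_\pattern\|_0 = a \cdot bc \cdot d = abcd.
\end{equation*}
Since a $\pattern$-factor $\mat{X}$ satisfies $\supp(\mat{X}) \subseteq \supp(\mat{S}_\pattern)$ by \Cref{def:dbfactorfour}, it has at most $abcd$ nonzero entries.

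It then remains to check the identities $abcd = mc = nb$, which follow immediately from the size constraints $m = abd$ and $n = acd$ given in \Cref{def:dbfactorfour}: indeed $mc = (abd)c = abcd$ and $nb = (acd)b = abcd$.

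There is no real obstacle here; the only subtlety is being careful to observe that a $\pattern$-factor is allowed to have \emph{any} support included in $\mat{S}_\pattern$, so the stated bound is an upper bound (hence ``at most''), attained when all entries of $\mat{S}_\pattern$ correspond to nonzero entries of $\mat{X}$.
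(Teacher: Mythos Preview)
Your proof is correct and follows essentially the same approach as the paper's one-line proof, which simply states that the cardinal of $\supp(\mat{I}_a \otimes \mat{1}_{b \times c} \otimes \mat{I}_d)$ is $abcd = mc = nb = \frac{mn}{ad}$. You have merely spelled out the justification via multiplicativity of $\|\cdot\|_0$ under Kronecker products and the size identities $m=abd$, $n=acd$ in more detail.
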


\begin{remark}
\label{rem:abuse-notation}
    \revision{With an abuse of notation we use the shorthand $\|\pattern\|_0 := \| \text{vec}(\mathbf{S}_\pattern) \|_0$, where $\text{vec}(\cdot)$ is the vectorization operator than concatenates the columns of a matrix into a vector, and $\| \cdot \|_0$ is the $\ell_0$-norm of a vector (the number of its nonzero entries).}
\end{remark}

\begin{proof}
    {The cardinal of $\supp(\mbf{I}_{a} \otimes \mbf{1}_{b \times c} \otimes \mbf{I}_{d})$} is $abcd = mc = nb = \frac{mn}{ad}$.
\end{proof}

{A $\pattern$-factor is sparse if it has few nonzero entries compared to its size, i.e., if $\|\boldsymbol{\pi}\|_0 \ll mn$, or equivalently\footnote{\revision{The number of nonzero entries in a $\pattern$-Kronecker sparse factor of size $m \times n$ with $\pattern = (a,b,c,d)$ is at most $abcd = mn / ad$, because $m=abd$ and $n=acd$. Therefore, the sparsity level is $\frac{mn / ad}{mn} = \frac{1}{ad}$.}} if} $ad \gg \mathcal{O}(1)$. Given a number of factors $L \geq 1$, a sequence of patterns $\arch := (\pattern_\ell)_{\ell=1}^L$ parameterizes the set 
\begin{equation}
    \label{eq:tuple-of-butterfly-factors}
    \setDBfactor{\arch} := \setDBfactor{\pattern_1} \times \ldots \times \setDBfactor{\pattern_L}
\end{equation}
of $L$-tuples of $\pattern_\ell$-factors, $\ell = 1, \ldots, L$.
Since we are interested in matrix products $\matseq{X}{1} \ldots \matseq{X}{L}$ for $(\matseq{X}{\ell})_{\ell=1}^L \in \setDBfactor{\arch}$, we will only consider {sequences of patterns} $\arch$ such that the size of $\matseq{X}{\ell} \in \setDBfactor{\pattern_\ell}$ and $\matseq{X}{\ell + 1} \in \setDBfactor{\pattern_{\ell+1}}$ are compatible for computing the matrix product $\matseq{X}{\ell} \matseq{X}{\ell + 1}$, for each $\ell \in \intset{L - 1}$. In other words, we require that the sequence of patterns $\arch$ satisfies:
\begin{equation}
	\label{eq:condition-size-chainable}
	\forall \ell \in \intset{L-1}, \quad \underbrace{a_\ell c_\ell d_\ell}_{n_\ell} = \underbrace{a_{\ell + 1} b_{\ell + 1} d_{\ell + 1}}_{m_{\ell + 1}}.
\end{equation}
Therefore, under assumption \eqref{eq:condition-size-chainable}, a sequence $\arch$ can describe a factorization of the type $\mat{A} = \matseq{X}{1} \ldots \matseq{X}{L}$ such that $(\matseq{X}{\ell})_{\ell=1}^L \in \setDBfactor{\arch}$. We introduce the following terminology for such a sequence.

\begin{definition}[Butterfly architecture {and butterfly matrices}]
\label{def:dbarchitecture}
A sequence of patterns $\arch := (\boldsymbol{\pi}_\ell)_{\ell=1}^L$ 
is called a {\em (deformable) butterfly architecture, or simply an \emph{architecture}, when it satisfies  \eqref{eq:condition-size-chainable}.}
By analogy with deep networks, the number of factors is called the depth of the chain and denoted by  $|\arch|:= L$ 
and, using the notation
 $\|\pattern\|_0$ from \Cref{lemma:numberofparameters}, the number of parameters is denoted by
\begin{equation*}
    \|\arch\|_0 := \sum_{\ell = 1}^L \|\pattern_\ell\|_0.
\end{equation*}
\begin{remark}
    \revision{As in \Cref{rem:abuse-notation}, the shorthand $\|\arch\|_0$ is an abuse of notation.}
\end{remark}

\end{definition}
For any  architecture $\arch$, $\setButterfly{\arch}$ is the set of \emph{(deformable) butterfly matrices} associated with $\arch$, as defined in \eqref{eq:butterfly-factorization-intro}. We also say that any {$\mat{A}  \in \setButterfly{\arch}$} admits an \emph{exact (deformable) butterfly factorization} associated with the architecture $\arch$.
\Cref{tab:butterfly-parametrization} describes existing architectures fitting our framework.

The rest of this section introduces two important properties of an architecture $\arch$:
\begin{itemize}
    \item \emph{Chainability} will be shown (\Cref{cor:optimum-exists-in-bf})
    to ensure the existence of an optimum in \eqref{eq:butterfly-approximation-pb}, so that we can replace ``$\inf$''  by ``$\min$'' in \eqref{eq:butterfly-approximation-pb}. 
    We also show that, for any chainable architecture, one can exploit
    a hierarchical algorithm (\Cref{algo:recursivehierarchicalalgo}) that
    extends an algorithm from \cite{le2022fastlearning,zheng2023butterfly} to compute 
    an approximate solution to Problem \eqref{eq:butterfly-approximation-pb}.
    \item \emph{Non-redundancy} is an additional property satisfied by some
    chainable architectures $\arch$, that allows us to insert orthonormalization steps 
    in the hierarchical algorithm, in order to control the approximation error for Problem \eqref{eq:butterfly-approximation-pb} in the sense of \eqref{eq:error-bound-intro}.
    Non-redundancy 
    {plays the role of} an intermediate tool to design and analyze our algorithms. However, it should not be treated as an additional hypothesis, because we do propose a factorization method (cf.~
    \Cref{rem:redundantcase}),
    endowed with error guarantees, for {\em any chainable architecture, whether  redundant or not}.
\end{itemize}
Both conditions are first defined for the most basic architectures $\arch$
of depth $| \arch | = 2$, before being generalized to 
architectures $\arch$ of arbitrary depth $L \geq 2$.

\subsection{Chainability}


We start by defining this condition {in} the case of architectures of depth $L=2$. This definition is primarily introduced to ensure a key ``stability'' property given next in \Cref{prop:stability}. \revision{This property is a closedness condition with respect to matrix multiplication, which guarantees that the product of two Kronecker-sparse factors is also a Kronecker-sparse factor. Such a property will have many nice consequences for our analysis. We also introduce the operator $*$ between patterns $(\pattern_1, \pattern_2)$, to describe the sparsity pattern of the product between a $\pattern_1$-Kronecker sparse factor and a $\pattern_2$-Kronecker sparse factor.}
\begin{definition}[Chainable pair of patterns, operator {$*$} on patterns]
    \label{def:chainableDBfour}
 Two patterns 
 $\pattern_1 := (a_1, b_1, c_1, d_1)$ and $\pattern_2:= (a_2, b_2, c_2, d_2)$ are
 \emph{chainable} if:
	\begin{enumerate}
        \item 
        $\frac{a_1c_1}{a_2} = \frac{b_2d_2}{d_1}$ and this quantity\footnote{As we will see, it plays the role of a rank, hence the choice of $r$ to denote it.}, denoted $r(\pattern_1,\pattern_2)$, is an integer;
		\item $a_1 \mid a_2$ and $d_2 \mid d_1$.
	\end{enumerate}
We also say that the pair $(\pattern_1,\pattern_2)$ is chainable.
Observe that we always have $r(\pattern_1,\pattern_2) = c_1(a_1/a_2) = b_2(d_2/d_1) \leq \min(b_2,c_1)$, and $a_1c_1d_1=a_2b_2d_2$.
    We 
    define the operator $*$ on the set of chainable pairs of patterns as follows: if $(\pattern_1,\pattern_2)$ is chainable, then 
    \begin{equation}
    \label{eq:operatorDBPfour}
        \pattern_1 * \pattern_2 := \left(a_1, \frac{b_1d_1}{d_2}, \frac{a_2c_2}{a_1}, d_2\right) \in \mathbb{N}^4.
    \end{equation}
\end{definition}
Note that even though the definition of $r(\pattern_1,\pattern_2)$ involves the quotient $a_1/a_2$ (and $d_2/d_1$), assumption 2 in \Cref{def:chainableDBfour} is indeed that $a_1$ divides $a_2$ (and $d_2$ divides $d_1$).
\begin{remark}
    The order $(\pattern_1, \pattern_2)$ in the definition matters, {\em i.e.},  this property is not symmetric: the chainability of $(\pattern_1, \pattern_2)$ does not imply that of $(\pattern_2, \pattern_1)$. Moreover, by the first condition of \Cref{def:chainableDBfour}, a chainable pair is indeed an architecture in the sense of \Cref{def:dbarchitecture}.
\end{remark}


\Cref{def:chainableDBfour} comes with the following two key results.

\begin{proposition}
\label{prop:stability}
    If $(\pattern_1, \pattern_2)$ is chainable, then:
    \begin{equation}
    \label{eq:DB-support}
        \DBsupport{\pattern_1} \DBsupport{\pattern_2} = r(\pattern_1,\pattern_2) \DBsupport{\pattern_1 * \pattern_2}.
    \end{equation}
\end{proposition}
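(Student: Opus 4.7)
The plan is to express both $\mathbf{S}_{\pattern_1}$ and $\mathbf{S}_{\pattern_2}$ as Kronecker products of five aligned building blocks, so that the mixed-product property of Kronecker products applies factor-by-factor. Denote $r := r(\pattern_1,\pattern_2)$ and set $\alpha := a_2/a_1 = c_1/r$ and $\beta := d_1/d_2 = b_2/r$; chainability guarantees that both are integers, and it yields the identities $c_1 = r\alpha$, $d_1 = \beta d_2$, $a_2 = a_1\alpha$, and $b_2 = r\beta$.

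First I would use the elementary factorizations $\mathbf{I}_{mn} = \mathbf{I}_m \otimes \mathbf{I}_n$ and $\mathbf{1}_{p_1 p_2 \times q_1 q_2} = \mathbf{1}_{p_1 \times q_1} \otimes \mathbf{1}_{p_2 \times q_2}$ to rewrite
\begin{equation*}
    \mathbf{S}_{\pattern_1} = \mathbf{I}_{a_1} \otimes \mathbf{1}_{b_1 \times \alpha} \otimes \mathbf{1}_{1 \times r} \otimes \mathbf{I}_{\beta} \otimes \mathbf{I}_{d_2},
\end{equation*}
\begin{equation*}
    \mathbf{S}_{\pattern_2} = \mathbf{I}_{a_1} \otimes \mathbf{I}_{\alpha} \otimes \mathbf{1}_{r \times 1} \otimes \mathbf{1}_{\beta \times c_2} \otimes \mathbf{I}_{d_2}.
\end{equation*}
The crucial bookkeeping step is to put the factor $\mathbf{1}_{1 \times r}$ on the right in the decomposition of the central block of $\mathbf{S}_{\pattern_1}$, and $\mathbf{1}_{r \times 1}$ on the left in the central block of $\mathbf{S}_{\pattern_2}$, so that in the five-way splitting the column sizes on the left match the row sizes on the right slot-by-slot.

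Once the splittings are aligned, I would apply the mixed-product property to each of the five positions. The outer identities $\mathbf{I}_{a_1}$ and $\mathbf{I}_{d_2}$ are preserved; the products $\mathbf{1}_{b_1 \times \alpha}\mathbf{I}_\alpha = \mathbf{1}_{b_1 \times \alpha}$ and $\mathbf{I}_\beta \mathbf{1}_{\beta \times c_2} = \mathbf{1}_{\beta \times c_2}$ remain, and the central factor collapses to the scalar $\mathbf{1}_{1 \times r}\mathbf{1}_{r \times 1} = r \in \RR$. Pulling this scalar out and using associativity together with the recombination $\mathbf{1}_{b_1 \times \alpha} \otimes \mathbf{1}_{\beta \times c_2} = \mathbf{1}_{b_1\beta \times \alpha c_2}$ yields
\begin{equation*}
    \mathbf{S}_{\pattern_1}\mathbf{S}_{\pattern_2} = r\,\bigl(\mathbf{I}_{a_1} \otimes \mathbf{1}_{b_1\beta \times \alpha c_2} \otimes \mathbf{I}_{d_2}\bigr) = r\,\mathbf{S}_{\pattern_1 * \pattern_2},
\end{equation*}
since $b_1\beta = b_1 d_1/d_2$ and $\alpha c_2 = a_2 c_2/a_1$ match the definition of $\pattern_1 * \pattern_2$ in \eqref{eq:operatorDBPfour}. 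The only subtle step is the initial alignment of the five slots; after that, everything reduces to formal applications of the mixed-product rule and associativity of $\otimes$.
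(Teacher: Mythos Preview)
Your proof is correct and follows essentially the same approach as the paper's: both decompose $\mathbf{S}_{\pattern_1}$ and $\mathbf{S}_{\pattern_2}$ into aligned Kronecker factors (your $\alpha,\beta$ are the paper's $q,s$) and apply the mixed-product rule so that the central slot collapses to the scalar $\mathbf{1}_{1\times r}\mathbf{1}_{r\times 1}=r$. The only cosmetic difference is that you keep a five-slot alignment throughout, whereas the paper groups the outer pairs and applies the mixed-product rule on three slots; the underlying computation is identical.
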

The proof is deferred to \Cref{app:stability-chainable-pair}. 
    The equality \eqref{eq:DB-support} was proved in \cite[Lemma 3.4]{zheng2023efficient} for the choice $\pattern_1 = (2^{\ell - 1}, 2, 2, 2^{L - \ell})$ and $\pattern_2 = (2^{\ell}, 2, 2, 2^{L - \ell - 1})$, for any integer $L \geq 2$ and $\ell \in \intset{L-1}$. \Cref{prop:stability} extends \eqref{eq:DB-support} to \emph{all} chainable pairs $(\pattern_1, \pattern_2)$.

Chainability and \Cref{def:dbfactorfour} imply that
$\forall (\matseq{X}{1}, \matseq{X}{2}) \in \setDBfactor{\pattern_1} \times \setDBfactor{\pattern_2}, \, \matseq{X}{1} \matseq{X}{2} \in \setDBfactor{(\pattern_1 * \pattern_2)}$, i.e., a product of \ksfs{} with chainable patterns $(\pattern_1,\pattern_2)$ is still a \ksf{}, with pattern $\pattern_1 * \pattern_2$. Moreover, the matrix supports corresponding to a pair of chainable patterns also satisfy useful many interesting properties related to \cref{def:classequivalence} and \cref{theorem:tractablefsmf}, as shown in the following result proved in \cref{app:prodDBBis}:


\begin{lemma}
	\label{lem:prodDBBis}
 If $\arch := (\pattern_1, \pattern_2)$ is chainable then (with the notations of \Cref{def:classequivalence}) for each $P \in \cP(\bS_{\pattern_1}, \bS_{\pattern_2})$ we have
 \begin{enumerate}
 \item $R_P = \supp(\bS_{\pattern_1}[\col{i}])$ and $C_P = \supp(\bS_{\pattern_2}[\row{i}])$ for every $i \in P$.
    \item  The sets $R_P \times C_P$, $P \in \cP(\bS_{\pattern_1}, \bS_{\pattern_2})$ are pairwise disjoint.
    \item $|P| = r(\pattern_1,\pattern_2)$, $|R_P| = b_1$ and $|C_P| = c_2$ (with $\pattern_i = (a_i,b_i,c_i,d_i)$). 
\item $\supp(\bS_{\pattern_1 * \pattern_2}) = \supp(\bS_{\pattern_1}\bS_{\pattern_2})= \cup_{P \in \cP} R_P \times C_P$.
\end{enumerate}
\end{lemma}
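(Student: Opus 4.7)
The plan is to describe the supports of columns of $\bS_{\pattern_1}$ and rows of $\bS_{\pattern_2}$ explicitly via a mixed-radix encoding of indices dictated by the Kronecker products, and then to exploit the chainability conditions $a_1 \mid a_2$, $d_2 \mid d_1$ and the integrality of $r := r(\pattern_1, \pattern_2) = c_1/(a_2/a_1) = b_2/(d_1/d_2)$ to relate the two encodings through a common refinement. Writing $k_a := a_2/a_1$ and $k_d := d_1/d_2$, chainability yields the integer factorizations $c_1 = r k_a$ and $b_2 = r k_d$, which are exactly what enables the five coordinates below to be well-defined.

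First, I would parametrize the column index $i$ of $\bS_{\pattern_1}$ as a triple $(\alpha_1, \gamma_1, \delta_1) \in \intset{a_1} \times \intset{c_1} \times \intset{d_1}$ via mixed radix, and symmetrically the row index of $\bS_{\pattern_2}$ as $(\alpha_2, \beta_2, \delta_2) \in \intset{a_2} \times \intset{b_2} \times \intset{d_2}$. The Kronecker structure of $\bS_{\pattern_1} = \bI_{a_1} \otimes \mbf{1}_{b_1 \times c_1} \otimes \bI_{d_1}$ directly gives $\supp(\bS_{\pattern_1}[\col{i}]) = \{(\alpha_1, \beta, \delta_1) : \beta \in \intset{b_1}\}$, which has cardinality $b_1$ and depends only on $(\alpha_1, \delta_1)$. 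Analogously $\supp(\bS_{\pattern_2}[\row{i}])$ has cardinality $c_2$ and depends only on $(\alpha_2, \delta_2)$. Since each column of $\bS_{\pattern_1}$ and each row of $\bS_{\pattern_2}$ is nonzero, the outer product decomposition $\bU_i = \bS_{\pattern_1}[\col{i}]\, \bS_{\pattern_2}[\row{i}]$ immediately gives item~1 together with $|R_P|=b_1$ and $|C_P|=c_2$ from item~3.

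Next I would further split $\gamma_1 \leftrightarrow (\gamma_1', \rho) \in \intset{k_a} \times \intset{r}$, $\delta_1 \leftrightarrow (\rho', \delta_1') \in \intset{k_d} \times \intset{d_2}$ on the $\pattern_1$-side, and $\alpha_2 \leftrightarrow (\tilde{\alpha}_1, \tilde{\gamma}_1) \in \intset{a_1} \times \intset{k_a}$, $\beta_2 \leftrightarrow (\tilde{\rho}, \tilde{\rho}') \in \intset{r} \times \intset{k_d}$ on the $\pattern_2$-side, and compare the two mixed-radix expansions of $i-1$ coefficient-by-coefficient. A direct computation (using $c_1 d_1 = k_a b_2 d_2$ and $d_1 = k_d d_2$) shows that $\tilde{\alpha}_1 = \alpha_1$, $\tilde{\gamma}_1 = \gamma_1'$, $\tilde{\rho} = \rho$, $\tilde{\rho}' = \rho'$ and $\delta_2 = \delta_1'$, so that both triples are in bijection with the quintuple $(\alpha_1, \gamma_1', \rho, \rho', \delta_2) \in \intset{a_1}\times\intset{k_a}\times\intset{r}\times\intset{k_d}\times\intset{d_2}$, whose cardinality is indeed $a_1 c_1 d_1 = a_2 b_2 d_2$. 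In this refinement, $(\alpha_1,\delta_1)$ is a function of $(\alpha_1,\rho',\delta_2)$ and $(\alpha_2,\delta_2)$ is a function of $(\alpha_1,\gamma_1',\delta_2)$, so the equivalence class of $i$ is indexed by the four coordinates $(\alpha_1,\gamma_1',\rho',\delta_2)$, and $\rho \in \intset{r}$ is the only free coordinate within a class. Hence $|P|=r$, finishing item~3.

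For item~2, two distinct classes $P\neq P'$ differ in at least one of these four determining coordinates, and whichever coordinate differs forces either the $R$-parts or the $C$-parts of the two classes to be disjoint, hence so are the rectangles $R_P \times C_P$ and $R_{P'} \times C_{P'}$. For item~4, \Cref{prop:stability} gives $\supp(\bS_{\pattern_1 * \pattern_2}) = \supp(\bS_{\pattern_1}\bS_{\pattern_2})$; and since $\bS_{\pattern_1}\bS_{\pattern_2} = \sum_i \bS_{\pattern_1}[\col{i}]\,\bS_{\pattern_2}[\row{i}]=\sum_i \bU_i$ is a sum of nonnegative matrices where no cancellation can occur, $\supp(\bS_{\pattern_1}\bS_{\pattern_2}) = \cup_i \supp(\bU_i) = \cup_{P\in\cP} R_P \times C_P$. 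The main technical burden in this plan is the mixed-radix bookkeeping that establishes the common 5-coordinate refinement — this is precisely where the three chainability conditions are each used — once it is in place, all four items drop out cleanly.
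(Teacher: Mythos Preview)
Your proof is correct, but it takes a genuinely different route from the paper's. The paper argues item~2 by the structural observation that the column supports of $\bS_{\pattern_1}$ (and the row supports of $\bS_{\pattern_2}$) are pairwise identical or disjoint, since they are Kronecker products of columns of identity and all-ones matrices; this immediately forces the $\bU_i$ to be pairwise identical or disjoint. For $|P|=r$ the paper then uses a counting trick: from the rank-one decomposition $\sum_{P\in\cP}|P|\,\bU_P = \bS_{\pattern_1}\bS_{\pattern_2}$, \Cref{prop:stability} gives $\bS_{\pattern_1}\bS_{\pattern_2} = r\,\bS_{\pattern_1*\pattern_2}$, and disjointness of the $\bU_P$ then forces every $|P|$ to equal $r$. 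Items~1 and the cardinalities $|R_P|=b_1$, $|C_P|=c_2$ are read off directly from \Cref{def:dbfactorfour}, and item~4 is argued exactly as you do.

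Your approach instead builds an explicit five-coordinate mixed-radix bijection $i\leftrightarrow(\alpha_1,\gamma_1',\rho,\rho',\delta_2)$, from which the equivalence classes, their cardinality $r$, and the disjointness of the rectangles all drop out by inspection. This is more hands-on but also more informative: it produces a concrete parametrization of $\cP(\bS_{\pattern_1},\bS_{\pattern_2})$ that the paper only develops later (cf.~\Cref{def:formula-partition} and \Cref{lem:invariance-partition-col} in the appendix), and it establishes $|P|=r$ without invoking \Cref{prop:stability}. The paper's argument is shorter and more conceptual, leveraging the stability result it has already proved; yours is self-contained at the level of index arithmetic and makes transparent exactly where each chainability hypothesis ($a_1\mid a_2$, $d_2\mid d_1$, $r\in\NN$) is consumed.
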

\begin{lemma}[Associativity of $*$]
	\label{lem:associativity}
	If $(\pattern_1, \pattern_2)$ and $(\pattern_2, \pattern_3)$ are chainable, then
    \begin{enumerate}
        \item $(\pattern_1, \pattern_2 * \pattern_3)$ and $(\pattern_1 * \pattern_2, \pattern_3)$ are chainable;
        \item $r(\pattern_1, \pattern_2*\pattern_3)=r(\pattern_1,\pattern_2)$ and $r(\pattern_1 * \pattern_2, \pattern_3)=r(\pattern_2,\pattern_3)$; 
        \item $\pattern_1 * (\pattern_2 * \pattern_3) = (\pattern_1 * \pattern_2) * \pattern_3$.
    \end{enumerate}
\end{lemma}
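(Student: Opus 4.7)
The plan is to prove all three assertions by direct computation, using the explicit coordinates of the patterns. Write $\pattern_i = (a_i, b_i, c_i, d_i)$ for $i=1,2,3$, and unpack the hypotheses: chainability of $(\pattern_1, \pattern_2)$ gives $a_1 \mid a_2$, $d_2 \mid d_1$, and $a_1 c_1 / a_2 = b_2 d_2 / d_1 \in \mathbb{N}$; chainability of $(\pattern_2, \pattern_3)$ gives $a_2 \mid a_3$, $d_3 \mid d_2$, and $a_2 c_2 / a_3 = b_3 d_3 / d_2 \in \mathbb{N}$. By \Cref{eq:operatorDBPfour}, $\pattern_2 * \pattern_3 = (a_2, b_2 d_2/d_3, a_3 c_3 / a_2, d_3)$ and $\pattern_1 * \pattern_2 = (a_1, b_1 d_1/d_2, a_2 c_2 / a_1, d_2)$.

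For assertion~1 together with~2, I will check the two conditions of \Cref{def:chainableDBfour} for each of the two candidate pairs. For $(\pattern_1, \pattern_2 * \pattern_3)$, the first condition becomes $a_1 c_1/a_2 = (b_2 d_2/d_3) d_3 / d_1 = b_2 d_2/d_1$, which holds and equals the integer $r(\pattern_1, \pattern_2)$; this simultaneously yields $r(\pattern_1, \pattern_2 * \pattern_3) = r(\pattern_1, \pattern_2)$. The divisibility condition $a_1 \mid a_2$ is given, and $d_3 \mid d_1$ follows by transitivity from $d_3 \mid d_2 \mid d_1$. Symmetrically for $(\pattern_1 * \pattern_2, \pattern_3)$: the first condition reads $a_1 (a_2 c_2/a_1) / a_3 = a_2 c_2 / a_3 = b_3 d_3 / d_2$, which is the integer $r(\pattern_2, \pattern_3)$, giving both chainability and $r(\pattern_1 * \pattern_2, \pattern_3) = r(\pattern_2, \pattern_3)$. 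For the divisibility condition, $a_1 \mid a_3$ follows from $a_1 \mid a_2 \mid a_3$, while $d_3 \mid d_2$ is given.

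For assertion~3, apply the formula for $*$ twice on each side and compare coordinates entry by entry. On the left, $\pattern_1 * (\pattern_2 * \pattern_3) = (a_1, b_1 d_1/d_3, a_2 (a_3 c_3/a_2)/a_1, d_3) = (a_1, b_1 d_1/d_3, a_3 c_3/a_1, d_3)$. On the right, $(\pattern_1 * \pattern_2) * \pattern_3 = (a_1, (b_1 d_1/d_2)(d_2/d_3), a_3 c_3/a_1, d_3) = (a_1, b_1 d_1/d_3, a_3 c_3/a_1, d_3)$. The two tuples coincide.

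There is no real obstacle here: the statement is a bookkeeping lemma, and the only care needed is to track the divisibility chains $a_1 \mid a_2 \mid a_3$ and $d_3 \mid d_2 \mid d_1$ so that all the quotients appearing in the definition of $*$ and in the chainability conditions remain integers. Once these transitivity observations are in place, the equalities $r(\pattern_1, \pattern_2 * \pattern_3) = r(\pattern_1, \pattern_2)$ and $r(\pattern_1 * \pattern_2, \pattern_3) = r(\pattern_2, \pattern_3)$ drop out as a byproduct of verifying the first chainability condition, and the associativity identity follows from a symmetric telescoping in which the middle indices $a_2$ and $d_2$ cancel out on both sides.
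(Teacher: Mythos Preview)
Your proof is correct and follows essentially the same approach as the paper's: a direct coordinate computation that unpacks the chainability conditions, verifies the divisibility chains $a_1 \mid a_2 \mid a_3$ and $d_3 \mid d_2 \mid d_1$, and checks that both iterated products reduce to $(a_1, b_1 d_1/d_3, a_3 c_3/a_1, d_3)$. The paper's proof in \Cref{appendix:associativityproof} is organized identically, so there is nothing to add.
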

The proof of \Cref{lem:associativity} is deferred to \Cref{appendix:associativityproof}.
We can now extend the definition of chainability to a general architecture $\arch$ of arbitrary depth {$L \geq 1$.} 


\begin{definition}[Chainable architecture]
\label{def:chainableseqDBPs}
    An architecture $\arch := (\pattern_\ell)_{\ell=1}^L$, {$L \geq 2$,}  is \emph{chainable} if $\pattern_\ell$ and $\pattern_{\ell + 1}$ are chainable for each $\ell \in \intset{L-1}$ in the sense of \Cref{def:chainableDBfour}. We then denote $\mathbf{r}(\arch) = (r(\pattern_\ell, \pattern_{\ell+1}))_{\ell = 1}^{L-1} \in \NN^{L-1}$.
    {By convention any architecture of depth $L=1$ is also chainable.}
    \label{def:dbchain}
\end{definition}
\begin{example}
\label{example:ex_chainable}
One can check that the square dyadic butterfly architecture (resp. the Monarch architecture), cf.~\Cref{example:typicalDBfactors}, are chainable, with $\mathbf{r}(\arch) =  (1, \ldots, 1)$ (resp.~$\mathbf{r}(\arch) = (1)$). 
{They are particular cases of the $5$-parameter deformable butterfly architecture of \cite{lin2021deformable}, which is chainable with $\mathbf{r}(\arch) = (1,\ldots,1)$.
{In contrast, the Kaleidoscope architecture {of depth $2L$ with $L \geq 2$} of \Cref{tab:butterfly-parametrization} is {\em not} chainable, because for $\ell = {L+1}$ we have $\pattern_\ell = {(2^{L-1},2,2,1)}$, $\pattern_{\ell+1} = {(2^{L-2},2,2,2)}$, and this pair is not chainable since $2^{L-1}$ does not divide $2^{L-2}$.}
}
\end{example}
We state in the following some useful properties of chainable architectures.
\begin{lemma}
\label{lem:suppdbfactorprod}
    If $\arch = (\pattern_\ell)_{\ell=1}^L$ {with $L \geq 2$} is chainable then $\setButterfly{\arch} \subseteq \setDBfactor{(\pattern_1 * \ldots * \pattern_L)}$, with
    \begin{equation}
        \label{eq:producttheta}
        \pattern_1 * \ldots * \pattern_L = \left( a_1, \frac{b_1 d_1}{d_L}, \frac{a_L c_L}{a_1}, d_L \right).
    \end{equation}
\end{lemma}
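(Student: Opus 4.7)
The plan is to proceed by induction on $L \geq 2$, combining the ``stability'' statement of \Cref{prop:stability} with the associativity of $*$ from \Cref{lem:associativity}.

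For the base case $L=2$, \Cref{prop:stability} gives $\bS_{\pattern_1}\bS_{\pattern_2} = r(\pattern_1,\pattern_2)\bS_{\pattern_1 * \pattern_2}$, hence $\supp(\bS_{\pattern_1}\bS_{\pattern_2}) = \supp(\bS_{\pattern_1 * \pattern_2})$. For any $(\bX_1,\bX_2) \in \setDBfactor{\pattern_1}\times\setDBfactor{\pattern_2}$ one has $\supp(\bX_1\bX_2) \subseteq \supp(\bS_{\pattern_1}\bS_{\pattern_2})$, so $\bX_1\bX_2 \in \setDBfactor{\pattern_1 * \pattern_2}$, which establishes $\setButterfly{(\pattern_1,\pattern_2)} \subseteq \setDBfactor{\pattern_1 * \pattern_2}$. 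A direct computation from \eqref{eq:operatorDBPfour} also confirms the closed-form \eqref{eq:producttheta} for $L=2$.

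For the inductive step, assume the claim holds for every chainable architecture of depth $L-1$, and let $\arch = (\pattern_\ell)_{\ell=1}^L$ be chainable of depth $L$. The sub-architecture $\arch' := (\pattern_\ell)_{\ell=1}^{L-1}$ is clearly chainable, so by the induction hypothesis $\bX_1\cdots\bX_{L-1} \in \setDBfactor{\pattern_1 * \cdots * \pattern_{L-1}}$ for every $(\bX_\ell)_{\ell=1}^{L-1} \in \setDBfactor{\arch'}$, and the pattern $\ttheta := \pattern_1 * \cdots * \pattern_{L-1}$ is well defined thanks to the associativity of $*$ (\Cref{lem:associativity}, applied iteratively). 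By a repeated application of the first item of \Cref{lem:associativity} to the chainable pairs $(\pattern_{L-2},\pattern_{L-1})$ and $(\pattern_{L-1},\pattern_L)$, then to $(\pattern_{L-3},\pattern_{L-2}*\pattern_{L-1})$ and $(\pattern_{L-2}*\pattern_{L-1},\pattern_L)$, etc., the pair $(\ttheta,\pattern_L)$ is itself chainable. Applying the base case to this pair yields
\begin{equation*}
    \bX_1\cdots\bX_L = (\bX_1\cdots\bX_{L-1})\bX_L \in \setDBfactor{\ttheta * \pattern_L} = \setDBfactor{\pattern_1 * \cdots * \pattern_L},
\end{equation*}
where the last equality uses the associativity of $*$ (\Cref{lem:associativity}, item 3).

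It remains to unroll the formula \eqref{eq:producttheta}. I would prove by induction on $k \in \intset{2,L}$ that
\begin{equation*}
    \pattern_1 * \cdots * \pattern_k = \left(a_1,\ \tfrac{b_1 d_1}{d_k},\ \tfrac{a_k c_k}{a_1},\ d_k\right).
\end{equation*}
The case $k=2$ is \eqref{eq:operatorDBPfour}. For the step from $k$ to $k+1$, writing the left tuple as $(a,b,c,d) = (a_1, b_1d_1/d_k, a_kc_k/a_1, d_k)$ and applying \eqref{eq:operatorDBPfour} to $(a,b,c,d)*(a_{k+1},b_{k+1},c_{k+1},d_{k+1})$ gives $(a_1,\ bd/d_{k+1},\ a_{k+1}c_{k+1}/a_1,\ d_{k+1})$, and $bd/d_{k+1} = b_1 d_1/d_{k+1}$ as desired; the chainability required at each step is guaranteed by the argument of the previous paragraph.

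The reasoning is essentially bookkeeping: the only substantive ingredients are the stability identity of \Cref{prop:stability} and the associativity of $*$ provided by \Cref{lem:associativity}. The mildly delicate point is verifying that the intermediate pairs $(\pattern_1 * \cdots * \pattern_{L-1},\pattern_L)$ remain chainable throughout the induction, but this is precisely what item 1 of \Cref{lem:associativity} delivers when applied recursively, so no real obstacle arises.
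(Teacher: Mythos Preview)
Your proof is correct and follows essentially the same approach as the paper: both argue by induction on $L$, with \Cref{prop:stability} furnishing the base case and \Cref{lem:associativity} justifying the well-definedness and chainability of the intermediate products, and both unroll the closed form \eqref{eq:producttheta} by a straightforward induction using \eqref{eq:operatorDBPfour}. Your write-up is in fact more explicit than the paper's partial proof, which simply asserts that the $L=2$ case ``extends to any $L\geq 2$ by an induction'' and defers the formula to the appendix.
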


\begin{remark}
  \revision{As a consequence, the number of parameters in the pattern \eqref{eq:producttheta} is at most $\|\pattern_1 * \ldots * \pattern_L \|_0 = b_1d_1a_Lc_L$ by \Cref{lemma:numberofparameters}. As a comparison, the number of parameters in the architecture $\arch = (\pattern_\ell)_{\ell=1}^L$ is at most  $\|\arch\|_0:= \sum_{\ell = 1}^L \|\pattern_\ell\|_0 = \sum_{\ell = 1}^L a_\ell b_\ell c_\ell d_\ell$, by \Cref{def:dbarchitecture}. {For example, with the square dyadic butterfly architecture $\|\beta\|_0 = \sum_{\ell=1}^L 2^{L+1} = L 2^{L+1} \ll 2^{2L}= b_1d_1a_Lc_L$ for large $L$}.
}
\end{remark}

\begin{proof}[Partial proof]
    \revision{
    \Cref{prop:stability} yields $\setButterfly{\arch} \subseteq \setDBfactor{(\pattern_1 * \ldots * \pattern_L)}$ when $L=2$. This extends to any $L \geq 2$ by an induction.
    We prove \eqref{eq:producttheta} in \Cref{appendix:productmultipleDBparam}.}
\end{proof}
\begin{remark}\label{rem:denseornot}
{As a consequence of this lemma, if the first pattern $\pattern_1$ of a chainable architecture $\arch$ satisfies $a_1 > 1$ then all matrices in $\setButterfly{\arch}$ have a support included in $\mathbf{S}_{\pattern_1 * \ldots * \pattern_L}$, which has zeroes outside its main block diagonal structure (see \Cref{fig:DBfactorillu}). A similar remark holds when $d_L > 1$, and in both cases we conclude that $\setButterfly{\arch}$ does not contain any dense matrix where all entries are nonzero. 
In contrast, when $a_1 = d_L = 1$, it is known for specific architectures that some dense matrices do belong to $\setButterfly{\arch}$. This is notably the case when $\arch$ is the square dyadic butterfly architecture or the Monarch architecture (see \Cref{example:typicalDBfactors}): then we have $\pattern_1 * \ldots * \pattern_L = (a_1, m, n, d_L) = (1, m, n, 1)$ for some integers $m, n$, and indeed the Hadamard (or the DFT matrix, up to bit-reversal permutation of its columns, cf.~\cite{dao2019learning}) is a dense matrix belonging to $\setButterfly{\arch}$.}
\end{remark}

Next we state an essential property of chainable architectures. It builds on and extends \Cref{lem:associativity}, and corresponds to a form of {\em stability} under pattern multiplication that will serve as a cornerstone to support the introduction of hierarchical algorithms.
\begin{lemma}
	\label{lem:seq-qchainability}
    If $\arch=(\pattern_\ell)_{\ell=1}^L$ is chainable then for each $1 \leq q \leq s < t \leq L$, the patterns $(\pattern_{q} * \ldots * \pattern_s)$ and $(\pattern_{s+1} * \ldots * \pattern_t)$ are well-defined and chainable with $r(\pattern_{q} * \ldots * \pattern_s, \pattern_{s+1} * \ldots * \pattern_t) = r(\pattern_s, \pattern_{s+1})$. 
\end{lemma}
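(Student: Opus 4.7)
The plan is to reduce the claim to a mechanical verification of Definition~\ref{def:chainableDBfour} applied to the explicit patterns for $A := \pattern_q * \ldots * \pattern_s$ and $B := \pattern_{s+1} * \ldots * \pattern_t$ supplied by formula \eqref{eq:producttheta}.

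First I would invoke \Cref{lem:suppdbfactorprod} on each contiguous sub-sequence: since any contiguous sub-chain of a chainable architecture is itself chainable (the chainability conditions concern only consecutive pairs), the lemma guarantees that $A$ is well-defined with
\[
A = \bigl(a_q,\; b_q d_q / d_s,\; a_s c_s / a_q,\; d_s\bigr),
\]
and similarly $B = \bigl(a_{s+1},\; b_{s+1} d_{s+1} / d_t,\; a_t c_t / a_{s+1},\; d_t\bigr)$. The degenerate cases $q = s$ and $t = s+1$ are handled by the same formulas under the natural convention that a single-pattern ``product'' returns that pattern.

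Next, writing $A = (a_A, b_A, c_A, d_A)$ and $B = (a_B, b_B, c_B, d_B)$, I would check the two chainability conditions of \Cref{def:chainableDBfour} by direct substitution. For condition~1,
\[
\frac{a_A c_A}{a_B} = \frac{a_q \cdot (a_s c_s / a_q)}{a_{s+1}} = \frac{a_s c_s}{a_{s+1}}, \qquad \frac{b_B d_B}{d_A} = \frac{(b_{s+1} d_{s+1}/d_t) \cdot d_t}{d_s} = \frac{b_{s+1} d_{s+1}}{d_s},
\]
and the chainability of $(\pattern_s, \pattern_{s+1})$ inside $\arch$ states exactly that these two quantities are equal and equal to the integer $r(\pattern_s, \pattern_{s+1})$. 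This also immediately yields $r(A, B) = a_A c_A / a_B = r(\pattern_s, \pattern_{s+1})$, which is the desired identity. For condition~2, one needs $a_A \mid a_B$ and $d_B \mid d_A$, i.e.\ $a_q \mid a_{s+1}$ and $d_t \mid d_s$; both follow by transitivity from the chainability of $\arch$, using the successive divisibilities $a_q \mid a_{q+1} \mid \cdots \mid a_{s+1}$ and $d_t \mid d_{t-1} \mid \cdots \mid d_s$.

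The only place where care is required is the application of \Cref{lem:suppdbfactorprod} to sub-chains of length one (when $q = s$ or $t = s+1$): there the lemma as stated does not formally apply, but the claimed formula reduces to the original pattern and everything proceeds unchanged. Once these trivial cases are dispatched, the argument is a routine unpacking of definitions and uses no ingredient beyond \Cref{def:chainableDBfour} and the explicit product formula \eqref{eq:producttheta}.
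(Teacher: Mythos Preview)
The proposal is correct and follows essentially the same approach as the paper: both write out $A$ and $B$ explicitly via \eqref{eq:producttheta}, then verify the two conditions of \Cref{def:chainableDBfour} by direct substitution, using the chainability of $(\pattern_s,\pattern_{s+1})$ for the first condition and the transitive divisibility chains $a_q \mid \cdots \mid a_{s+1}$ and $d_t \mid \cdots \mid d_s$ for the second. Your explicit treatment of the degenerate single-pattern cases $q=s$ and $t=s+1$ is a nice addition that the paper leaves implicit.
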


The proof is deferred to \Cref{appendix:seq-qchainabilityproof}.  


\subsection{Non-redundancy}
A first version of our proposed hierarchical factorization algorithm (expressed recursively 
in \Cref{algo:recursivehierarchicalalgo})
will be applicable to any chainable architecture $\arch$. However, establishing approximation guarantees as in  \Cref{eq:error-bound-intro} will require a variant of this algorithm (\Cref{algo:modifedbutterflyalgo}) involving certain orthonormalization steps, which are only well-defined if the architecture $\arch$ satisfies an additional {\em non-redundancy} condition. Fortunately, any redundant architecture $\arch$ can be transformed (\Cref{prop:procedure-for-redundant-arch}) into an expressively equivalent architecture $\arch'$ ({\em i.e.}, $\setButterfly{\arch'} = \setButterfly{\arch}$) with reduced number of parameters ($\|\arch'\|_0 \leq \|\arch'\|_0$) thanks to \Cref{algo:redundancyremoval} below. This will be instrumental in introducing the proving approximation guarantees of the final butterfly algorithm \Cref{algo:modifedbutterflyalgo} applicable to {\em any (redundant or not) chainable architecture}.

To define redundancy of an architecture we begin by considering elementary pairs.
\begin{definition}[Redundant architecture]
\label{def:redundant}
	A chainable pair of patterns $\pattern_1 = (a_1, b_1, c_1, d_1)$ and $\pattern_2 = (a_2, b_2, c_2, d_2)$ is \emph{redundant} if $r(\pattern_1,\pattern_2) \geq \min(b_1, c_2)$ (i.e., if $a_1 c_1 \geq a_2 c_2$ or $b_2d_2 \geq b_1d_1$).
    A chainable architecture $\arch = (\pattern_\ell)_{\ell=1}^L$, {$L = |\arch| \geq 1$}, is \emph{redundant} if there exists $\ell \in \intset{L-1}$ such that $(\pattern_{\ell}, \pattern_{\ell+1})$ is redundant. {Observe that by definition, any chainable architecture with $|\arch|=1$ is non-redundant.}
\end{definition}
\begin{remark}
   By \Cref{def:chainableDBfour} we always have $r(\pattern_1,\pattern_2) \leq \min(b_2,c_1)$ for a chainable pair $(\pattern_1, \pattern_2)$, hence a redundant one satisfies $\min(b_1,c_2) \leq r(\pattern_1,\pattern_2) \leq \min(b_2,c_1)$. A \emph{non-redundant} one satisfies $r(\pattern_1, \pattern_2) \leq \min(b_1-1,c_2-1,b_2,c_1)$.
\end{remark}
\begin{lemma}
		\label{lem:partialproductnonredundant}
    If
  $\arch = (\pattern_\ell)_{\ell=1}^L$ is chainable and non-redundant then, for any \revision{$1 \leq q \leq s < t \leq L$}, the pair $(\pattern_{q} * \ldots * \pattern_{s}, \pattern_{s+1} * \ldots * \pattern_t)$ is chainable and non-redundant. 
\end{lemma}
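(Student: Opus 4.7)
The plan is to leverage \Cref{lem:seq-qchainability} for chainability (and the explicit value of the $r$ quantity), and leverage \Cref{lem:suppdbfactorprod} for the shape of the partial products, so that what truly needs proof is the non-redundancy inequality. I will then reduce this inequality to two monotonicity statements on the sequences $(a_\ell c_\ell)_\ell$ and $(b_\ell d_\ell)_\ell$ that follow directly from non-redundancy of each consecutive pair $(\pattern_\ell,\pattern_{\ell+1})$.

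Concretely, \Cref{lem:seq-qchainability} already gives that $(\pattern_q * \ldots * \pattern_s, \pattern_{s+1} * \ldots * \pattern_t)$ is chainable with $r(\pattern_q * \ldots * \pattern_s, \pattern_{s+1} * \ldots * \pattern_t) = r(\pattern_s,\pattern_{s+1})$. By \Cref{lem:suppdbfactorprod}, writing $\pattern_\ell = (a_\ell,b_\ell,c_\ell,d_\ell)$, we have
\begin{equation*}
\pattern_q * \ldots * \pattern_s = \left(a_q,\tfrac{b_q d_q}{d_s},\tfrac{a_s c_s}{a_q},d_s\right),\qquad \pattern_{s+1} * \ldots * \pattern_t = \left(a_{s+1},\tfrac{b_{s+1} d_{s+1}}{d_t},\tfrac{a_t c_t}{a_{s+1}},d_t\right).
\end{equation*}
Non-redundancy of this pair amounts (by the remark following \Cref{def:redundant}) to showing the strict inequality
\begin{equation*}
r(\pattern_s,\pattern_{s+1}) < \min\!\left(\tfrac{b_q d_q}{d_s},\,\tfrac{a_t c_t}{a_{s+1}}\right).
\end{equation*}

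To control each side, I use that for every $\ell \in \intset{1,L-1}$ the pair $(\pattern_\ell,\pattern_{\ell+1})$ is non-redundant, i.e., $r(\pattern_\ell,\pattern_{\ell+1}) < \min(b_\ell,c_{\ell+1})$. Combined with the identities $r(\pattern_\ell,\pattern_{\ell+1}) = b_{\ell+1} d_{\ell+1}/d_\ell = a_\ell c_\ell/a_{\ell+1}$ from \Cref{def:chainableDBfour}, this yields the two strict monotonicity properties
\begin{equation*}
b_{\ell+1} d_{\ell+1} < b_\ell d_\ell \quad\text{and}\quad a_\ell c_\ell < a_{\ell+1} c_{\ell+1},\qquad \ell \in \intset{1,L-1}.
\end{equation*}
Iterating the first inequality from $\ell=q$ to $\ell=s$ (possible since $q\le s$) gives $b_{s+1} d_{s+1} < b_q d_q$, hence $r(\pattern_s,\pattern_{s+1}) = b_{s+1}d_{s+1}/d_s < b_q d_q/d_s$. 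Iterating the second inequality from $\ell=s$ to $\ell=t-1$ (possible since $s<t$) gives $a_s c_s < a_t c_t$, hence $r(\pattern_s,\pattern_{s+1}) = a_s c_s/a_{s+1} < a_t c_t/a_{s+1}$. This gives exactly the required strict inequality, proving non-redundancy.

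There is no real obstacle: chainability is handed to us by the previously established \Cref{lem:seq-qchainability}, and the non-redundancy reduction is a direct telescoping of strict inequalities produced by the hypothesis. The only slightly delicate point is to remember to use both equivalent expressions of $r(\pattern_\ell,\pattern_{\ell+1})$ (one in terms of $a,c$ and one in terms of $b,d$) to obtain the two monotonicities in the correct direction, and to check the index ranges $q\le s<t$ so that each iteration is nonempty.
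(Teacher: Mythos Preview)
Your proof is correct and follows essentially the same approach as the paper's: both use \Cref{lem:seq-qchainability} for chainability, the explicit partial-product formulas from \Cref{lem:suppdbfactorprod}, and then derive the strict monotonicities $b_{\ell+1}d_{\ell+1}<b_\ell d_\ell$ and $a_\ell c_\ell<a_{\ell+1}c_{\ell+1}$ from non-redundancy of each consecutive pair before telescoping. The paper only spells out the $b$-side and says ``similarly'' for the $c'$-side, whereas you write out both, but the argument is otherwise identical.
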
	
The proof is deferred to \Cref{appendix:partialproductnonredundantproof}.
\begin{example}
    \label{example:redundant}
    The architecture $\arch := (\pattern_1, \pattern_2) := \left( (1, m, r, 1), (1, r, n, 1) \right)$ is chainable, with $r(\pattern_1, \pattern_2) = r$. The set $\setButterfly{\arch}$
    is the set of $m \times n$ matrices of rank at most $r$. $(\pattern_1, \pattern_2)$ is redundant if $r \geq \min(m, n)$. We observe that on this example redundancy corresponds to the case where $\setButterfly{\arch}$ is the set of {\em all} $m \times n$ matrices.
\end{example}
A (chainable and) redundant architecture is as expressive as a smaller chainable architecture with less parameters. This is first proved for pairs, i.e., $\arch := (\pattern_1, \pattern_2)$.

In order to do this we need the following result, 
characterizing precisely the set of matrices $\setButterfly{\arch}=\{\bX_1\bX_2 \, | \,  \bX_i \in \setDBfactor{\pattern_i}, i \in \intset{2} \}$ as the set of matrices having a support included in $\DBsupport{\pattern_1*\pattern_2}$ and with selected low-rank blocks. It is proved in \cref{app:prodDB2}.

    \begin{lemma}
    \label{lem:prodDB2}
   Let $\arch := (\pattern_1, \pattern_2)$ be chainable, and (with the notations of \Cref{def:classequivalence}) 
   consider the following set of matrices of size equal to those in $\setButterfly{\arch}$:
     \begin{equation}
        \label{eq:prodDBRank}
        \setblocklowrank{\arch}
        := \{\bA: \rank\big(\bA[R_P,C_P]\big) \leq r(\pattern_1,\pattern_2),\ \forall P \in \cP(\bS_{\pattern_1}, \bS_{\pattern_2})
        \}.
    \end{equation}
    We have
     \begin{equation}
    \label{eq:prodDB}
        \setButterfly{\arch} = \setDBfactor{\pattern_1 * \pattern_2} \cap 
        \setblocklowrank{\arch}.
    \end{equation}
    \end{lemma}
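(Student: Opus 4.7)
The plan is to prove the two inclusions in \eqref{eq:prodDB} separately, using \Cref{lem:prodDBBis} as the main workhorse. Throughout, let $\cP := \cP(\bS_{\pattern_1}, \bS_{\pattern_2})$, let $r := r(\pattern_1, \pattern_2)$, and write $\pattern_i = (a_i, b_i, c_i, d_i)$.

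For the forward inclusion $\setButterfly{\arch} \subseteq \setDBfactor{\pattern_1 * \pattern_2} \cap \setblocklowrank{\arch}$, fix $\bA = \bX_1 \bX_2$ with $\bX_i \in \setDBfactor{\pattern_i}$. The support inclusion $\bA \in \setDBfactor{\pattern_1 * \pattern_2}$ follows immediately from \Cref{prop:stability} (or \Cref{lem:suppdbfactorprod}). For the rank condition, fix $P \in \cP$. The key observation is that, by the pairwise disjointness of the blocks $R_P \times C_P$ (\Cref{lem:prodDBBis}, item 2), for any $P' \neq P$ we must have $R_{P'} \cap R_P = \emptyset$ or $C_{P'} \cap C_P = \emptyset$. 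Combined with item 1 of \Cref{lem:prodDBBis} (which locates the columns of $\bX_1$ inside $R_{P'}$ and the rows of $\bX_2$ inside $C_{P'}$ for indices in $P'$), this implies that $\bX_1[R_P, i] \bX_2[i, C_P] = 0$ for every $i \notin P$. Expanding the matrix product therefore yields the block identity $\bA[R_P, C_P] = \bX_1[R_P, P]\, \bX_2[P, C_P]$, whose rank is at most $|P| = r$ by item 3.

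For the reverse inclusion, fix $\bA \in \setDBfactor{\pattern_1 * \pattern_2} \cap \setblocklowrank{\arch}$. The strategy is to build $\bX_1$ and $\bX_2$ blockwise. For each $P \in \cP$, since $\rank(\bA[R_P, C_P]) \leq r = |P|$, pick a rank factorization $\bA[R_P, C_P] = \bH_P \bK_P$ with $\bH_P \in \CC^{|R_P| \times |P|}$ and $\bK_P \in \CC^{|P| \times |C_P|}$. Define $\bX_1$ by setting $\bX_1[R_P, P] := \bH_P$ and all other entries to $0$, and similarly $\bX_2[P, C_P] := \bK_P$ with zeros elsewhere. By item 1 of \Cref{lem:prodDBBis}, the supports $R_P$ (resp.~$C_P$) coincide with the row supports of the columns of $\bS_{\pattern_1}$ indexed by $P$ (resp.~the column supports of the rows of $\bS_{\pattern_2}$ indexed by $P$), so $\supp(\bX_1) \subseteq \bS_{\pattern_1}$ and $\supp(\bX_2) \subseteq \bS_{\pattern_2}$.

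It remains to verify $\bX_1 \bX_2 = \bA$. Outside of $\cup_{P} R_P \times C_P = \supp(\bS_{\pattern_1 * \pattern_2})$ (item 4), both sides vanish: $\bA$ by hypothesis, and $\bX_1 \bX_2$ since $\supp(\bX_1 \bX_2) \subseteq \supp(\bS_{\pattern_1}\bS_{\pattern_2}) = \supp(\bS_{\pattern_1 * \pattern_2})$. Inside a block $R_P \times C_P$, I reuse the vanishing argument from the forward direction (the disjointness forces $\bX_1[R_P, i]\bX_2[i, C_P] = 0$ for $i \notin P$) to obtain $(\bX_1 \bX_2)[R_P, C_P] = \bH_P \bK_P = \bA[R_P, C_P]$. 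The main (and only) delicate point is precisely this cross-term cancellation; everything else is bookkeeping with \Cref{lem:prodDBBis}. Once this point is handled, both inclusions are established and \eqref{eq:prodDB} follows.
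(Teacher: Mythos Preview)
Your proof is correct. The cross-term cancellation you flag as the delicate point is exactly \Cref{lemma:submatrixequality}, so you have essentially re-derived that lemma inline from the disjointness of the blocks $R_P\times C_P$.

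The paper takes a different route: rather than proving the two inclusions by direct construction, it invokes \Cref{theorem:tractablefsmf} with $(\bL,\bR)=(\bS_{\pattern_1},\bS_{\pattern_2})$ and argues that $\bA\in\setButterfly{\arch}$ iff the FSMF minimum $\min_{(\bX,\bY)\in\setDBfactor{\arch}}\|\bA-\bX\bY\|_F^2$ vanishes, then reads off from the explicit formula \eqref{eq:explicit-formula-fsmf} that this happens iff each low-rank residual is zero (the rank conditions) and the support term $c$ vanishes (the support condition). Your argument is more elementary and self-contained---it does not need the optimization layer or the explicit error formula---while the paper's proof is shorter because the heavy lifting (including the block identity you rediscover) was already packaged into \Cref{theorem:tractablefsmf} and \Cref{lemma:submatrixequality}. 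Both arrive at the same place; yours makes the constructive content explicit, theirs leverages the tractable-FSMF machinery already in hand.
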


\begin{lemma}
\label{lemma:consequence-redundant}
    Consider a chainable pair $\arch = (\pattern_1, \pattern_2)$. If $\arch$ is redundant, then the single-factor architecture $\arch' = (\pattern_1 * \pattern_2)$ satisfies : 
    \begin{enumerate}
        \item $\setButterfly{\arch} = \setDBfactor{\pattern_1 * \pattern_2} = \setButterfly{\arch'}$.
        \item $\|\arch'\|_0 = \|\pattern_1 * \pattern_2\|_0 < \|\pattern_1\|_0 + \|\pattern_2\|_0 = \|\arch\|_0$.
    \end{enumerate} 
\end{lemma}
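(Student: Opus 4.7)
The plan is to treat the two assertions separately, leveraging the earlier machinery about chainable pairs—in particular \Cref{lem:prodDB2} and \Cref{lem:prodDBBis}—to handle the set-equality, and a direct computation (combined with the identity $a_1 c_1 d_1 = a_2 b_2 d_2$ from chainability) for the strict inequality on parameter counts.

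For the first assertion, I would first observe that the second equality $\setDBfactor{\pattern_1 * \pattern_2} = \setButterfly{\arch'}$ is immediate from \eqref{eq:butterfly-factorization-intro} applied to a single-factor architecture. Next, the inclusion $\setButterfly{\arch} \subseteq \setDBfactor{\pattern_1 * \pattern_2}$ is given by \Cref{lem:suppdbfactorprod} (it also follows directly from \Cref{prop:stability}). The heart of the argument is the reverse inclusion under redundancy. I would invoke \Cref{lem:prodDB2}, which expresses $\setButterfly{\arch}$ as the intersection of $\setDBfactor{\pattern_1 * \pattern_2}$ with the block-low-rank set $\setblocklowrank{\arch}$; hence it suffices to show that the block-rank constraints are vacuous. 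But \Cref{lem:prodDBBis} gives $|R_P| = b_1$ and $|C_P| = c_2$ for every equivalence class $P$, so every submatrix $\bA[R_P, C_P]$ has rank at most $\min(b_1, c_2)$. Redundancy, which by \Cref{def:redundant} means $r(\pattern_1, \pattern_2) \geq \min(b_1, c_2)$, then shows that the rank constraint is automatically satisfied, concluding this part.

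For the second assertion, I would compute $\|\pattern_1 * \pattern_2\|_0$ by plugging \eqref{eq:operatorDBPfour} into \Cref{lemma:numberofparameters}, obtaining the clean expression $\|\pattern_1 * \pattern_2\|_0 = a_2 b_1 c_2 d_1$. Using the chainability identity $a_2 b_2 d_2 = a_1 c_1 d_1$ to rewrite $\|\pattern_2\|_0 = a_2 b_2 c_2 d_2 = a_1 c_1 c_2 d_1$, the difference reduces to
\begin{equation*}
\|\pattern_1\|_0 + \|\pattern_2\|_0 - \|\pattern_1 * \pattern_2\|_0 = a_2 d_1 \big(r(\pattern_1, \pattern_2)(b_1 + c_2) - b_1 c_2\big),
\end{equation*}
after factoring $r(\pattern_1, \pattern_2) = a_1 c_1 / a_2$. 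Assuming WLOG $b_1 \leq c_2$, redundancy gives $r(\pattern_1, \pattern_2) \geq b_1 \geq 1$, so $r(\pattern_1, \pattern_2)(b_1 + c_2) \geq b_1^2 + b_1 c_2 > b_1 c_2$, yielding the strict inequality.

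I do not expect any serious obstacle: the first part is essentially a packaging of \Cref{lem:prodDB2} and \Cref{lem:prodDBBis}, and the second part is a direct computation. The only point requiring some care is to keep track of the divisibility conditions from \Cref{def:chainableDBfour} to ensure that all the ratios appearing in $\pattern_1 * \pattern_2$ and in $r(\pattern_1, \pattern_2)$ are positive integers; but this is precisely what chainability guarantees, and has already been verified in the preceding results.
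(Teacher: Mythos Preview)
Your argument for the first assertion is essentially identical to the paper's: both invoke \Cref{lem:prodDB2} to reduce to showing that $\setblocklowrank{\arch}$ is the whole ambient space, and both use \Cref{lem:prodDBBis} together with the definition of redundancy to conclude that the rank constraints are vacuous.

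For the second assertion your computation is correct, but the paper's route is shorter. Rather than factoring the difference and bounding $r(\pattern_1,\pattern_2)(b_1+c_2)-b_1c_2$, the paper uses the equivalent form of redundancy stated in \Cref{def:redundant} (namely $a_1c_1 \geq a_2c_2$ or $b_2d_2 \geq b_1d_1$) to observe directly that $\|\pattern_1*\pattern_2\|_0 = a_2c_2b_1d_1$ is bounded above by \emph{one} of the two summands $\|\pattern_1\|_0 = a_1b_1c_1d_1$ or $\|\pattern_2\|_0 = a_2b_2c_2d_2$, whence the strict inequality is immediate since the other summand is positive. Your factorization approach works just as well and makes the gap $a_2d_1(r(b_1+c_2)-b_1c_2)$ explicit, but the paper's observation avoids the case analysis on $\min(b_1,c_2)$ and the intermediate rewriting of $\|\pattern_2\|_0$.
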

\begin{proof}
By \Cref{lem:prodDB2}   we have $\setButterfly{\arch} = \setDBfactor{\pattern_1 * \pattern_2}  \cap \setblocklowrank{\arch}$ and by \Cref{lem:prodDBBis} we have $|R_P|=b_1, |C_P|=c_2$ for each $P \in \cP(\bS_{\pattern_1}, \bS_{\pattern_2})$.
    The first claim follows from the fact that $\setblocklowrank{\arch}$ is the set of all matrices of appropriate size: indeed for any such matrix $\mat{A}$, the block $\bA[R_P,C_P]$ is of size $b_1 \times c_2$ hence its rank is at most $\min(b_1,c_2)$ which is smaller than or equal to $r(\pattern_1, \pattern_2)$ since $\arch$ is redundant. By definition of $\setblocklowrank{\arch}$ this shows that $\mat{A} \in \setblocklowrank{\arch}$.
    For the second claim, by \Cref{def:dbarchitecture} of $\|\arch\|_0$ and $\|\arch'\|_0$, we only need to prove the strict inequality. Since $(\pattern_1, \pattern_2)$ is (chainable and) redundant, we have 
    either $a_1c_1 \geq a_2c_2$ or $b_2d_2 \geq b_1d_1$, 
    hence by \Cref{lemma:numberofparameters} and \Cref{eq:operatorDBPfour} we obtain
        $\|\pattern_1 * \pattern_2\|_0 = a_2c_2b_1d_1 < a_1c_1b_1d_1 + a_2c_2b_2d_2 = \|\pattern_1\|_0 + \|\pattern_2\|_0.$
\end{proof}

\Cref{lemma:consequence-redundant} serves as a basis to define \Cref{algo:redundancyremoval}, which replaces any chainable (and possibly redundant) architecture by a ``smaller'' non-redundant one.
    
    \begin{algorithm}[t]
	   \centering
	   \caption{Architecture redundancy removal algorithm}
	   \label{algo:redundancyremoval}
	    \begin{algorithmic}[1]
		    \REQUIRE A chainable $\arch = (\pattern_\ell)_{\ell=1}^L$
      \ENSURE A chainable and non-redundant
      $\arch' = (\pattern_\ell')_{\ell=1}^{L'}$ ($1 \leq L'\leq L$) 
		    \STATE $\arch' \gets \arch$.
		    \WHILE{
      $\arch'$ is redundant (cf.~\Cref{def:redundant})}
                \STATE $(\pattern_{\ell}')_{\ell=1}^{L'} \gets \arch'$
		      \STATE $\ell \gets$ an integer $\ell$ such that  $(\pattern_{\ell}', \pattern_{\ell + 1}')$ is redundant (cf.~\Cref{def:redundant})
		      \STATE $\arch' \gets (\pattern_1', \ldots, \pattern_{\ell-1}', \pattern_\ell' * \pattern_{\ell + 1}', \pattern_{\ell+2}', \ldots, \pattern_{L'}')$ 
        \label{line:update-redundancy}
		    \ENDWHILE
            \RETURN $\arch'$
	   \end{algorithmic}
    \end{algorithm}
  
\begin{proposition}
    \label{prop:procedure-for-redundant-arch}
   For any chainable architecture $\arch = (\pattern_\ell)_{\ell=1}^L$, 
    \Cref{algo:redundancyremoval} stops in finitely many iterations and returns an
    architecture     $\arch'$ such that:
    \begin{enumerate}
        \item $\arch'$ is chainable and non-redundant, and  
        either {a single factor architecture}  $\arch' = (\pattern_1 * \ldots * \pattern_L)$, or {a multi-factor one} $\arch' = (\pattern_1 * \ldots * \pattern_{\ell_1}, \pattern_{\ell_1+1} * \ldots * \pattern_{\ell_2}, \ldots, \pattern_{\ell_p+1} * \ldots * \pattern_L)$ for some indices $1 \leq \ell_1 < \ldots < \ell_p < L$ with $p \in \intset{1, L-1}$;
     \item $\setButterfly{\arch'} = \setButterfly{\arch}$;
        \item $\| \arch' \|_0 \leq \| \arch \|_0$. 
    \end{enumerate}
\end{proposition}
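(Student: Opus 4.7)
The plan is to proceed by induction on the iterations of the \textbf{while} loop in \Cref{algo:redundancyremoval}, tracking the following invariant. After the $k$-th iteration, $\arch'$ is a chainable architecture of the form
\begin{equation*}
    \arch' = \bigl(\pattern_{j_{i-1}+1} * \ldots * \pattern_{j_i}\bigr)_{i=1}^{L'},
\end{equation*}
where $0 = j_0 < j_1 < \ldots < j_{L'} = L$ are integers (hence $1 \leq L' \leq L - k$), and moreover $\setButterfly{\arch'} = \setButterfly{\arch}$ and $\|\arch'\|_0 \leq \|\arch\|_0$. At initialization ($k = 0$), this invariant holds trivially with $j_i = i$. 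If I can establish that this invariant is preserved by each iteration, then termination combined with the stopping criterion immediately gives claims 1--3 of the proposition.

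For termination, each iteration strictly decreases $L'$ by one. Since any chainable architecture of depth $L' = 1$ is non-redundant (by the observation included in \Cref{def:redundant}), the while loop must exit in at most $L - 1$ iterations; non-redundancy of the returned $\arch'$ is then a direct consequence of the stopping criterion.

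To propagate the structural part of the invariant across one iteration at index $\ell$, I would use the inductive assumption to identify $\pattern_\ell' = \pattern_{j_{\ell-1}+1} * \ldots * \pattern_{j_\ell}$ and $\pattern_{\ell+1}' = \pattern_{j_\ell + 1} * \ldots * \pattern_{j_{\ell+1}}$ as partial products of the original chainable architecture $\arch$. The pair $(\pattern_\ell', \pattern_{\ell+1}')$ is then chainable thanks to \Cref{lem:seq-qchainability}, so the merge in line~\ref{line:update-redundancy} is well-defined; iterated applications of \Cref{lem:associativity} yield
\begin{equation*}
\pattern_\ell' * \pattern_{\ell+1}' = \pattern_{j_{\ell-1}+1} * \ldots * \pattern_{j_{\ell+1}},
\end{equation*}
so the updated architecture retains the invariant's structural form, with the index $j_\ell$ simply dropped from the list. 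Chainability between the new merged pattern and its neighbors in the updated architecture again follows from \Cref{lem:seq-qchainability} applied to $\arch$.

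Finally, to propagate the preservation of $\setButterfly{\arch'}$ and of the inequality $\|\arch'\|_0 \leq \|\arch\|_0$, I would invoke \Cref{lemma:consequence-redundant} at each iteration. Since the pair $(\pattern_\ell', \pattern_{\ell+1}')$ is redundant by the choice made in the while loop, claim (1) of that lemma gives $\{\mat{X}\mat{Y} \,:\, \mat{X} \in \setDBfactor{\pattern_\ell'},\, \mat{Y} \in \setDBfactor{\pattern_{\ell+1}'}\} = \setDBfactor{\pattern_\ell' * \pattern_{\ell+1}'}$, so substituting in the definition of $\setButterfly{\cdot}$ shows that $\setButterfly{\arch'}$ is unchanged; claim (2) gives $\|\pattern_\ell' * \pattern_{\ell+1}'\|_0 < \|\pattern_\ell'\|_0 + \|\pattern_{\ell+1}'\|_0$, so $\|\arch'\|_0$ strictly decreases at each step, giving claim 3 upon termination. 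The main subtlety throughout is the chainability bookkeeping: ensuring that each pair $(\pattern_\ell', \pattern_{\ell+1}')$ selected for merging is chainable so that the update is well-defined. This is handled uniformly by \Cref{lem:seq-qchainability} applied to contiguous partitions of the original chainable $\arch$, which is precisely the role played by that lemma.
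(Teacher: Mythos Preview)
Your proposal is correct and follows essentially the same approach as the paper's proof: both use \Cref{lem:seq-qchainability} to ensure chainability is preserved across iterations, and \Cref{lemma:consequence-redundant} to propagate $\setButterfly{\arch'} = \setButterfly{\arch}$ and the strict decrease of $\|\arch'\|_0$. Your version is more explicit about the inductive invariant tracking the partial-product structure of $\arch'$, which the paper simply calls ``an easy but tedious induction left to the reader.''
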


\begin{proof}
    \Cref{algo:redundancyremoval} terminates since $| \arch' |$ decreases at each iteration.
    At each iteration, 
    the updated $\arch'$ is obtained by replacing a chainable redundant pair $(\pattern_{\ell}', \pattern_{\ell + 1}')$ by a single pattern $(\pattern_\ell' * \pattern_{\ell + 1}')$. The architecture $\arch'$ remains chainable by \Cref{lem:seq-qchainability} and by chainability of $\arch$, hence the algorithm can continue with no error. 
    Due to the condition of the ``while'' loop, the returned $\arch'$ is either non-redundant with $|\arch'|>1$, or $| \arch' | = 1$ in which case it is in fact also non-redundant by \Cref{def:redundant}. 
        This yields the first condition (a formal proof of the final form of $\arch'$ can be done by an easy but tedious induction left to the reader). Moreover, a straightforward consequence of
        \Cref{lemma:consequence-redundant} is that the update of $\arch'$ in line \ref{line:update-redundancy} does not change $\setButterfly{\arch'}$, and it strictly decreases $\| \arch' \|_0$ if the condition of the ``while" loop is met at least once (otherwise the algorithm outputs $\arch' = \arch$). This yields the two other properties. 
\end{proof}

{
In particular, \Cref{algo:redundancyremoval} applied to a redundant architecture $\arch$ in \Cref{example:redundant} returns $\arch' = \left ( (1, m, r, 1) * (1, r, n , 1) \right) = \left( (1, m, n, 1) \right)$. 
}

\subsection{Constructing a chainable architecture for a target matrix size}\label{sec:archfromsize}

It is natural to wonder what (non-redundant) chainable architectures $\arch = (\pattern_\ell)_{\ell=1}^L$ allow to implement dense matrices of a prescribed size, \revision{in the sense that the class of butterfly matrices $\setButterfly{\arch}$ is sufficiently large to contain at least one dense matrix.} This is the object of the next lemma, which is proved in  \Cref{app:archfromsize}.

\begin{lemma}
\label{lem:archfromsize}
Consider integers $m,n,L \geq 2$. If $\arch = (\pattern_\ell)_{\ell=1}^L$ is a chainable architecture such that $\setButterfly{\arch}$ is made of $m \times n$ matrices and contains at least one dense matrix, then there exists a factorization of $m$ (resp.~of $n$) into $L$ integers $q_\ell$ (resp.~$p_\ell$), $1 \leq \ell \leq L$, and a sequence of $L-1$ integers $r_\ell$, $1 \leq \ell \leq L-1$ such that: with the convention $r_0=r_L=1$, for each $1 \leq \ell \leq L$ we have $\pattern_\ell = (a_\ell,b_\ell,c_\ell,d_\ell)$ with
 \begin{align}
        a_\ell &= \prod_{j=1}^{\ell-1} p_j,
    \quad \text{and}\  
    d_\ell = \prod_{j=\ell+1}^{L} q_j, \label{eq:ADAsProduct}\\
    b_\ell &= q_\ell r_{\ell-1}, 
    \quad \text{and}\ 
    c_\ell = 
    p_\ell r_\ell. 
    \label{eq:BCAsProduct}
\end{align}
Vice-versa, any architecture defined as above with integers such that $n = \prod_{\ell=1}^L p_\ell$ and $m = \prod_{\ell=1}^L q_\ell$ is chainable, and the set $\setButterfly{\arch} \subseteq \RR^{m \times n}$ contains at least one dense matrix. 

The architecture $\arch$ is non-redundant if, and only if, $r_1 < q_1$, $r_{L-1} < p_L$, and
\begin{equation}
    \label{eq:NonRedundancyConditionAnnex}
     \frac{1}{p_\ell} < \frac{r_\ell}{r_{\ell-1}} < q_\ell,\ 2 \leq \ell \leq L-1.
\end{equation}
\end{lemma}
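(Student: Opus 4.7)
The plan is to prove the forward direction (existence of the parameter factorization), then the converse (chainability and density), and finally the non-redundancy criterion. All three steps are essentially routine unrollings of the definitions, once the parameter setup is in place.

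For the forward direction, I would first apply \Cref{rem:denseornot} to conclude from the existence of a dense matrix in $\setButterfly{\arch}$ that $a_1 = 1$ and $d_L = 1$. The divisibility conditions in \Cref{def:chainableDBfour} then justify defining positive integers $p_\ell := a_{\ell+1}/a_\ell$ and $q_{\ell+1} := d_\ell/d_{\ell+1}$ for $\ell \in \intset{L-1}$, together with $r_\ell := r(\pattern_\ell, \pattern_{\ell+1})$ and the convention $r_0 = r_L = 1$. A telescoping product using $a_1 = d_L = 1$ yields \eqref{eq:ADAsProduct}. The two equal expressions for $r_\ell$ in \Cref{def:chainableDBfour} (namely $a_\ell c_\ell / a_{\ell+1}$ and $b_{\ell+1} d_{\ell+1}/d_\ell$) force $c_\ell = p_\ell r_\ell$ for $\ell \in \intset{L-1}$ and $b_\ell = q_\ell r_{\ell-1}$ for $\ell \in \intset{2,L}$. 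Finally, setting $q_1 := m/d_1$ and $p_L := n/a_L$ gives positive integers (since $m = a_1 b_1 d_1 = b_1 d_1$ and $n = a_L c_L d_L = a_L c_L$) that match $b_1 = q_1 = q_1 r_0$ and $c_L = p_L = p_L r_L$, completing \eqref{eq:BCAsProduct}; these choices also give $\prod_\ell q_\ell = q_1 d_1 = m$ and $\prod_\ell p_\ell = a_L p_L = n$.

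For the converse, chainability follows by direct substitution: the quotients $a_\ell c_\ell/a_{\ell+1}$ and $b_{\ell+1}d_{\ell+1}/d_\ell$ both collapse to $r_\ell$, and the divisibilities $a_\ell \mid a_{\ell+1}$, $d_{\ell+1} \mid d_\ell$ are immediate from the product formulas; one also checks $a_1 b_1 d_1 = m$ and $a_L c_L d_L = n$. To exhibit a dense matrix in $\setButterfly{\arch}$, I would apply \Cref{prop:stability} iteratively with \Cref{lem:suppdbfactorprod} to obtain $\pattern_1 * \ldots * \pattern_L = (1, m, n, 1)$, so that the matrix product $\mathbf{S}_{\pattern_1} \cdots \mathbf{S}_{\pattern_L}$ equals $\bigl(\prod_\ell r_\ell\bigr)\, \mathbf{1}_{m\times n}$, which has all entries strictly positive. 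Each entry $(\bX_1 \cdots \bX_L)[i,j]$ is a polynomial in the free entries of the factors, whose monomials are indexed by admissible paths $(i_0{=}i, i_1,\dots,i_L{=}j)$ with $(i_{\ell-1}, i_\ell) \in \supp(\mathbf{S}_{\pattern_\ell})$; since variables belonging to different factors are disjoint, distinct paths contribute distinct monomials with coefficient one, and the number of them equals the positive integer $(\mathbf{S}_{\pattern_1} \cdots \mathbf{S}_{\pattern_L})[i,j]$. Hence each of the $mn$ entry-polynomials is not identically zero, so the union of their zero loci has Lebesgue measure zero in the space of factor tuples, and a generic tuple produces a dense butterfly matrix.

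The non-redundancy criterion follows by unrolling \Cref{def:redundant} pair by pair. The pair $(\pattern_\ell, \pattern_{\ell+1})$ is non-redundant iff $r_\ell < \min(b_\ell, c_{\ell+1}) = \min(q_\ell r_{\ell-1},\, p_{\ell+1} r_{\ell+1})$, i.e., both $r_\ell < q_\ell r_{\ell-1}$ and $r_\ell < p_{\ell+1} r_{\ell+1}$. Applying this for $\ell \in \intset{L-1}$ with $r_0 = r_L = 1$, the boundary cases ($\ell = 1$ first inequality and $\ell = L-1$ second) yield $r_1 < q_1$ and $r_{L-1} < p_L$, while the interior inequalities rearrange (after reindexing $\ell + 1 \to \ell$ in the second group) into $1/p_\ell < r_\ell/r_{\ell-1} < q_\ell$ for $2 \leq \ell \leq L-1$, which is exactly \eqref{eq:NonRedundancyConditionAnnex}. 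The only genuinely non-routine step is the density argument in the converse: the crux is that admissible paths correspond bijectively to distinct monomials in the entry polynomials, relying both on the positivity of $\mathbf{S}_{\pattern_1}\cdots\mathbf{S}_{\pattern_L}$ (ensuring at least one monomial per entry) and on the disjointness of variable sets between factors (preventing monomial collapse); once this is in place, the generic-position conclusion is standard.
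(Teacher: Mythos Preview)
Your proposal is correct. The forward direction and the non-redundancy argument match the paper's proof essentially line by line (the paper sets $a_{L+1}:=n$, $d_0:=m$ so that $p_L$ and $q_1$ are defined uniformly via the same quotient formulas, but this is purely cosmetic).

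The one place where your route genuinely diverges is the density part of the converse. You already compute $\mathbf{S}_{\pattern_1}\cdots\mathbf{S}_{\pattern_L}=\bigl(\prod_\ell r_\ell\bigr)\mathbf{1}_{m\times n}$, but then do not use it directly: instead you invoke a generic-position argument (each entry is a nonzero polynomial in the free parameters, hence a generic tuple of factors yields a dense product). This is correct, but it is working much harder than necessary. The paper simply observes that each $\mathbf{S}_{\pattern_\ell}$ is itself a $\pattern_\ell$-factor (its support is trivially contained in itself), so the product $\mathbf{S}_{\pattern_1}\cdots\mathbf{S}_{\pattern_L}$ lies in $\setButterfly{\arch}$ and is already dense. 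Your argument proves the stronger statement that \emph{almost every} tuple in $\setDBfactor{\arch}$ produces a dense butterfly matrix, which is nice to know but not needed here; the paper's one-line construction suffices for the lemma as stated.
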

The proof of the following corollary is straightforward and left to the reader.
\begin{corollary}
Consider integers $m,n \geq 2$ and their integer factorizations into $L \geq 2$ integers $p_\ell,q_\ell$ as in \Cref{lem:archfromsize}. \begin{itemize}
    \item If $p_\ell \geq 2$ and $q_\ell \geq 2$ for every $1 \leq \ell \leq L$, then there exists a choice of integers $r_\ell$, $1 \leq \ell \leq L-1$ such that the construction of \Cref{lem:archfromsize} is non-redundant.
\item If either $q_1=1$, $p_L=1$, or $p_\ell q_\ell=1$ for some $2 \leq \ell \leq L-1$, then no choice of $r_\ell$ allows us to obtain a non-redundant architecture.
\end{itemize}
\end{corollary}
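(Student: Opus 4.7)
The plan is to apply the non-redundancy criterion from \Cref{lem:archfromsize}, namely that one must simultaneously have $r_1 < q_1$, $r_{L-1} < p_L$, and $1/p_\ell < r_\ell/r_{\ell-1} < q_\ell$ for every $2 \leq \ell \leq L-1$, where each $r_\ell$ is a positive integer (the $r_\ell$ appear as factors of the dimensions $b_\ell = q_\ell r_{\ell-1}$ and $c_\ell = p_\ell r_\ell$, so $r_\ell \geq 1$).

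For the first item, assuming $p_\ell \geq 2$ and $q_\ell \geq 2$ for every $1 \leq \ell \leq L$, I would try the constant choice $r_\ell = 1$ for every $1 \leq \ell \leq L-1$. The boundary conditions $r_1 = 1 < q_1$ and $r_{L-1} = 1 < p_L$ follow directly from $q_1 \geq 2$ and $p_L \geq 2$. For the interior conditions (relevant only when $L \geq 3$), $r_\ell/r_{\ell-1} = 1$, and $p_\ell, q_\ell \geq 2$ yields $1/p_\ell \leq 1/2 < 1 < 2 \leq q_\ell$. Hence \eqref{eq:NonRedundancyConditionAnnex} is satisfied and the resulting architecture is non-redundant.

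For the second item, I would show that each listed failure mode contradicts at least one of the non-redundancy conditions, irrespective of the choice of positive integers $r_\ell$. If $q_1 = 1$, then $r_1 < q_1 = 1$ admits no positive integer solution; symmetrically, if $p_L = 1$, then $r_{L-1} < 1$ is impossible. If $p_\ell q_\ell = 1$ for some $2 \leq \ell \leq L-1$, then $p_\ell = q_\ell = 1$, so the required sandwich $1 < r_\ell/r_{\ell-1} < 1$ has no solution.

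There is no substantive obstacle: the argument reduces to reading off the constraints from \Cref{lem:archfromsize} and observing that the integrality and positivity of the $r_\ell$ make $r_\ell \geq 1$ the binding constraint in each impossibility case. The only care needed is to handle $L = 2$ separately (only the boundary conditions apply) and to note that the constant choice $r_\ell = 1$ is the simplest witness for the sufficiency direction.
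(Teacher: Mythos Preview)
Your proof is correct and is exactly the straightforward verification the paper has in mind (the paper explicitly leaves this proof to the reader). The choice $r_\ell=1$ for the sufficiency direction and the direct contradiction with $r_\ell\geq 1$ for each impossibility case are the natural arguments.
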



As a  consequence, given a matrix size $m \times n$, a chainable architecture compatible with this matrix size  can be built via the following steps:
    \begin{enumerate}
        \item choosing integer sequences $\boldsymbol{p} := (p_\ell)_{\ell=1}^L$, $\boldsymbol{q} := (q_\ell)_{\ell=1}^L$ that factorize $n$ and $m$ (optionally: with the condition that $p_\ell \geq2$ and $q_\ell \geq 2$ for every $\ell$);
        \item choosing an integer sequence $\boldsymbol{r} := (r_\ell)_{\ell=1}^{L-1}$
        (optionally: with the condition that $r_1<q_1$, $r_{L-1} < p_L$, and \eqref{eq:NonRedundancyConditionAnnex} holds with $r_0=r_L:=1$ by convention); 
        \item defining of $\pattern_\ell =  (a_\ell,b_\ell,c_\ell,d_\ell)$ using \eqref{eq:ADAsProduct}-\eqref{eq:BCAsProduct}.
    \end{enumerate}
    Instead of imposing non-redundancy constraints, it is also possible to build a possibly redundant architecture and to exploit the redundancy removal algorithm (\Cref{algo:redundancyremoval}).
    
\begin{remark}
    \revision{Given $\boldsymbol{p}$, $\boldsymbol{q}$ and $\boldsymbol{r}$, the number of parameters in the architecture $\arch := (\pattern_\ell)_{\ell=1}^L$ defined by \eqref{eq:ADAsProduct} and \eqref{eq:BCAsProduct} is at most $\| \arch \|_0 = \sum_{\ell=1}^L p_1 \ldots p_\ell q_{\ell+1} \ldots q_L r_{\ell-1} r_\ell$.}
\end{remark}
    
\begin{remark}
Mathematically oriented readers may notice that for prime $m$ and/or $n$ there are few compatible butterfly architectures. Extending the concepts of this paper to such dimensions would allow to cover known fast transforms for prime dimensions \cite{rader_discrete_1968}. Nevertheless, typical matrix dimensions in practical applications are composite and lead to many more choices. For instance, the dimensions of weight matrices in the vision transformer architecture \cite{dosovitskiy2021an} are commonly 768, 1024, 1280, 3072, 4096, 5120, which enable many possible choices of chainable architectures for implementing dense matrices, beyond the Monarch architecture \cite{dao2022monarch} that was used specifically to accelerate such neural networks. This will be illustrated in \Cref{sec:numrectangle}.
\end{remark}
\section{Hierarchical algorithm under the chainability condition}
\label{sec:hierarchical}
We show how the hierarchical algorithm in \cite{le2022fastlearning,zheng2023efficient}, initially introduced for specific (square dyadic) architectures, can be directly extended to the case where  $\arch$ is \emph{any} chainable architecture. 
{The case where $L = |\arch|=1$ is trivial since Problem~\eqref{eq:butterfly-approximation-pb} is then simply solved by setting $\mat{X}_1$ to be a copy of $\mat{A}$ where all entries outside of the prescribed support are set to zero. We thus focus on $L \geq 2$ and} 
start with $\arch$ of depth $L = 2$ before considering arbitrary $L \geq 2$.



\subsection{Case with $L=2$ factors}
\label{subsec:L=2}
Problem \eqref{eq:butterfly-approximation-pb} with an architecture $\arch = (\pattern_1, \pattern_2)$ is simply an instance of Problem \eqref{eq:FSMF} with $(\bL, \bR) = (\bS_{\pattern_1}, \bS_{\pattern_2})$. 

\begin{lemma}
	\label{lem:qperfectcovering}
Consider the pair of supports $(\mat{L}, \mat{R}) = (\bS_{\pattern_1}, \bS_{\pattern_2})$ associated to any (\emph{not necessarily chainable}) architecture $(\pattern_1, \pattern_2)$.
 The assumptions of \Cref{theorem:tractablefsmf} hold, hence
for \emph{any} architecture $\arch$ of depth $|\arch|=2$, the two-factor fixed-support matrix factorization algorithm (\Cref{algo:algorithm1}) returns an optimal solution to the corresponding instance of Problem \eqref{eq:butterfly-approximation-pb}.

\end{lemma}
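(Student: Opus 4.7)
The plan is to unpack the Kronecker structure of $\mathbf{S}_{\pattern_1} = \mathbf{I}_{a_1} \otimes \mathbf{1}_{b_1 \times c_1} \otimes \mathbf{I}_{d_1}$ and $\mathbf{S}_{\pattern_2} = \mathbf{I}_{a_2} \otimes \mathbf{1}_{b_2 \times c_2} \otimes \mathbf{I}_{d_2}$ at the level of individual columns/rows, and then to verify directly that the rank-one contribution supports satisfy the pairwise-disjoint-or-identical hypothesis of \Cref{theorem:tractablefsmf}. Once this is established, \Cref{algo:algorithm1} optimally solves the FSMF instance \eqref{eq:FSMF} with $(\mathbf{L},\mathbf{R}) = (\mathbf{S}_{\pattern_1}, \mathbf{S}_{\pattern_2})$, which is exactly the restated instance of Problem \eqref{eq:butterfly-approximation-pb}.

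First I would index the inner dimension $r$ by a triple: writing $i - 1 = \alpha_1 c_1 d_1 + \gamma_1 d_1 + \delta_1$ with $\alpha_1 \in \intset{0, a_1 - 1}$, $\gamma_1 \in \intset{0, c_1 - 1}$, $\delta_1 \in \intset{0, d_1 - 1}$, a direct inspection of the Kronecker product shows that the support $R_i$ of the $i$-th column of $\mathbf{S}_{\pattern_1}$ is the set of $b_1$ row indices $\alpha_1 b_1 d_1 + \beta_1 d_1 + \delta_1$ with $\beta_1 \in \intset{0, b_1 - 1}$; in particular $R_i$ depends on $i$ \emph{only through} $(\alpha_1, \delta_1)$. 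Similarly, writing $i - 1 = \alpha_2 b_2 d_2 + \beta_2 d_2 + \delta_2$, the support $C_i$ of the $i$-th row of $\mathbf{S}_{\pattern_2}$ depends only on $(\alpha_2, \delta_2)$.

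Next I would observe that as $(\alpha_1, \delta_1)$ varies over $\intset{0, a_1 - 1} \times \intset{0, d_1 - 1}$, the sets $R_i$ form a partition of $\intset{m}$ into $a_1 d_1$ disjoint blocks of size $b_1$. Consequently, for any two indices $i, j \in \intset{r}$, either $R_i = R_j$ or $R_i \cap R_j = \emptyset$; the analogous dichotomy holds for $C_i$ and $C_j$. Since $\supp(\mathbf{U}_i) = R_i \times C_i$, it follows immediately that if $\mathbf{U}_i \neq \mathbf{U}_j$ then at least one of $R_i \neq R_j$ or $C_i \neq C_j$ holds, which by the dichotomy forces disjointness of the corresponding Cartesian products. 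Thus all $\mathbf{U}_i$ are pairwise disjoint or identical.

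With this in hand, the hypothesis of \Cref{theorem:tractablefsmf} is satisfied for the pair $(\mathbf{S}_{\pattern_1}, \mathbf{S}_{\pattern_2})$, so \Cref{algo:algorithm1} produces an optimal pair $(\mathbf{X}_1, \mathbf{X}_2)$ solving \eqref{eq:FSMF}, which concludes the proof. The only mildly delicate step is the bookkeeping of the Kronecker indexing in the first part; crucially, nowhere do we use chainability of $(\pattern_1, \pattern_2)$, so the statement holds for an arbitrary pair of patterns with compatible inner dimension.
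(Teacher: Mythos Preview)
Your proof is correct and follows essentially the same approach as the paper's: both establish that the column supports of $\mathbf{S}_{\pattern_1}$ (resp.\ row supports of $\mathbf{S}_{\pattern_2}$) are pairwise disjoint or identical, and then conclude that the rank-one contribution supports $\mathbf{U}_i = R_i \times C_i$ inherit the same dichotomy. The only difference is presentational: the paper states the disjoint-or-identical property of column/row supports by reference to the Kronecker block structure (as in the proof of \Cref{lem:prodDBBis}), whereas you carry out the explicit mixed-radix index decomposition to verify it directly.
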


\begin{proof}


   As in the proof of \Cref{lem:prodDBBis}, the column supports $\{ \matindex{\bS_{\pattern_1}}{:}{j} \}_{j}$ (resp.~the row supports $\{ \matindex{\bS_{\pattern_2}}{i}{:} \}_{i}$) are pairwise disjoint or identical. Hence   the components $\bU_i$ of 
    $\varphi(\bS_{\pattern_1}, \bS_{\pattern_2})$ are pairwise disjoint or identical
     (if their  
    column {\em and} row supports coincide).
    
  \end{proof}

\subsection{Case with $L \geq 2$ factors}
Consider now Problem \eqref{eq:butterfly-approximation-pb} associated with a \emph{chainable} architecture $\arch$ of depth $L \coloneqq |\arch| \geq 2$, and a given target matrix $\mat{A}$. A first proposition of 
hierarchical algorithm, introduced in \Cref{algo:recursivehierarchicalalgo}, is a direct adaptation to our framework of previous algorithms \cite{le2022fastlearning,zheng2023efficient}. It computes an approximate solution by performing successive two-factor matrix factorization  in a certain hierarchical order that is described by a so-called factor-bracketing tree \cite{zheng2023efficient}. Further refinements of the algorithm will later be added to obtain approximation guarantees.

\begin{definition}[Factor-bracketing tree \cite{zheng2023efficient}]
\label{def:factor-bracketing-tree}
    A factor-bracketing tree of $\intset{L}$ for a given integer $L$ is a binary tree such that:
    \begin{itemize}
        \item each node {is} an interval $\intset{q,t} := \{ q, q + 1, \ldots, t \}$ for $1 \leq q \leq t \leq L$;
        \item the root is $\intset{L}$;
        \item every non-leaf node $\intset{q,t}$ for $q<t$ has $\intset{q,s}$ as its left child and $\intset{s+1,t}$ as its right child, for a certain $s$ such that $q \leq s < t$;
        \item a leaf {is} a singleton $\intset{q,q}$ for some $q \in \intset{1, L}$.
    \end{itemize}
    Such a tree has exactly $(L-1)$ non-leaf nodes and $L$ leaves.
\end{definition}

{Before exposing the limitations of \Cref{algo:recursivehierarchicalalgo} and proposing fixes, let us briefly explain how it works with a focus on its main step in line~\ref{line:recursivemainlinea}.}
Consider any factor-bracketing tree $\mathcal{T}$. \Cref{algo:recursivehierarchicalalgo} computes a matrix $\matseq{\mat{X}}{\intset{q,t}} \in \setDBfactor{\pattern_{q} * \ldots * \pattern_t}$ for each node  $\intset{q,t}$ in a recursive manner. $\pattern_{q} * \ldots * \pattern_t$ is well-defined for any $1 \leq q \leq t \leq L$ because $\arch$ is chainable.
At the root node, we set $\matseq{\mat{X}}{\intset{1,L}} := \mat{A}$. At each non-leaf node $\intset{q,t}$ whose matrix $\bX_{\intset{q,t}}$ is already computed during the hierarchical procedure, and with children $\intset{q,s}$ and $\intset{s+1,t}$, {at line~\ref{line:recursivemainlinea}} we compute $(\bX_{\intset{q,s}}, \bX_{\intset{s+1,t}}) \in \setDBfactor{\pattern_{q} * \ldots * \pattern_s} \times \setDBfactor{\pattern_{s+1} * \ldots * \pattern_t}$ that is solution to the following instance of {the Fixed Support Matrix Factorization Problem}~\eqref{eq:FSMF}:
\begin{equation}
	\label{eq:fsmf-DB}
	\begin{aligned}
		\text{Minimize} \quad  &\| \mathbf{X}_{\intset{q,t}} - \mathbf{X}_{\intset{q,s}} \mathbf{X}_{\intset{s+1,t}} \|_F \\
		\text{Subject to} \quad  & \supp(\mathbf{X}_{\intset{q,s}}) \subseteq \DBsupport{\pattern_{q} * \ldots * \pattern_s},\\
		&\supp(\mathbf{X}_{\intset{s+1,t}}) \subseteq \DBsupport{\pattern_{s+1} * \ldots * \pattern_t}.
	\end{aligned}
\end{equation}
Indeed, by \Cref{lem:qperfectcovering}, Problem \eqref{eq:fsmf-DB} is solved by the two-factor fixed support matrix factorization algorithm (\Cref{algo:algorithm1}), which yields line~\ref{line:fsmf} in \Cref{algo:recursivehierarchicalalgo}. After computing $(\bX_{\intset{q,s}}, \bX_{\intset{s+1,t}})$, we repeat recursively the procedure on these two matrices independently, as per lines \ref{line:left-recursion} and \ref{line:right-recursion}, until we obtain the butterfly factors $(\matseq{\mat{X}}{\intset{\ell,\ell}})_{\ell=1}^L \in \setDBfactor{\arch}$ that yield an approximation $\hat{\mat{A}} := \matseq{\mat{X}}{\intset{1,1}} \ldots \matseq{\mat{X}}{\intset{L,L}} \in \setButterfly{\arch}$ of $\mat{A}$. In conclusion, \Cref{algo:recursivehierarchicalalgo} is a greedy algorithm that seeks the optimal solution at each two-factor matrix factorization problem during the hierarchical procedure.

\begin{algorithm}[t]
	\centering
	\caption{{Hierarchical} factorization algorithm -- recursive version
	} 
	\label{algo:recursivehierarchicalalgo}
	\begin{algorithmic}[1]
		\REQUIRE $\mat{A} \in \CC^{m \times n}$, chainable $\arch = (\pattern_\ell)_{\ell = 1}^L$, factor-bracketing tree
  $\cT$
    \ENSURE $\texttt{factors} \in  \setDBfactor{\arch}$
		\IF{$L = 1$} 
		\RETURN $(\bA \odot \bS_{\pattern_1})$\hfill \(\triangleright\)\COMMENT{$\odot$ is the Hadamard product.}
		\ENDIF
		\STATE $\intset{1, s}, \intset{s+1,L} \gets$ two children of the root $\intset{1,L}$ of $\cT$
		\STATE $(\cT_{\tleft}, \cT_{\tright}) \gets$ the corresponding left and right subtrees of $\cT$
		\STATE $(\pattern_{\tleft}, \pattern_{\tright}) \gets (\pattern_1 * \ldots * \pattern_{s}, \pattern_{s+1} * \ldots * \pattern_L)$
		\STATE \label{line:recursivemainlinea} {($\bX_{\intset{1, s}}, \bX_{\intset{s+1,L}}) \gets$ \Cref{algo:algorithm1}$(\bA, \bS_{\pattern_{\tleft}}, \bS_{\pattern_{\tright}})$}{\label{line:fsmf}}
		\STATE $\texttt{left\_factors} \gets$ \Cref{algo:recursivehierarchicalalgo}($\bX_{\intset{1, s}}, (\pattern_1, \ldots, \pattern_s), \cT_{\tleft}$){\label{line:left-recursion}}
		\STATE $\texttt{right\_factors} \gets$ \Cref{algo:recursivehierarchicalalgo}($\bX_{\intset{s+1,L}}, (\pattern_{s+1}, \ldots, \pattern_L), \cT_{\tright}$){\label{line:right-recursion}}
        \STATE $\texttt{factors} \gets \texttt{left\_factors} \cup \texttt{right\_factors}$
		\RETURN $\texttt{factors}$
	\end{algorithmic}
\end{algorithm}

\subsection{\Cref{algo:recursivehierarchicalalgo} does not satisfy the theoretical guarantee \eqref{eq:error-bound-intro}}
However, the control of the approximation error in the form of \eqref{eq:error-bound-intro} for \Cref{algo:recursivehierarchicalalgo} in its current form is {\em impossible}, as illustrated in the following example.

\begin{example}
	\label{example:illposedbX1} 
    Consider $\arch = (\pattern_1, \pattern_2, \pattern_3) = \left( (2^{\ell-1}, 2, 2, 2^{3-\ell}) \right)_{\ell=1}^3$, which is the square dyadic architecture of depth $L=3$.
    Define $\bA := (\bD\bS_{\pattern_1})\bS_{\pattern_2}\bS_{\pattern_3}$  where $\bD$ is the diagonal matrix with diagonal entries $(0, 1, 1, 1, 0, 1, 1, 1)$.
Hence, $\bA \in \setButterfly{\arch}$, meaning that any algorithm with a theoretical guarantee \eqref{eq:error-bound-intro} must output
butterfly factors whose product is exactly $\bA$.  
However, we claim that this is not the case of \Cref{algo:recursivehierarchicalalgo} with the so-called left-to-right factor-bracketing tree of $\intset{1, 3}$
(defined as the tree where each left child is a singleton).
To see why, let us apply this algorithm to $\mat{A}$.
\begin{enumerate}
    \item In the first step, the hierarchical algorithm applies {the two-factor fixed support matrix factorization algorithm (\Cref{algo:algorithm1})} with input $(\bA, \bS_{\pattern_1}, \DBsupport{\pattern_2 * \pattern_3})$, and returns  ($\matseq{X}{\intset{1,1}}, \matseq{X}{\intset{2,3}}) \in \setDBfactor{\pattern_1} \times \setDBfactor{\pattern_2 * \pattern_3}$. 
 
    \item At the second step (which is the last one),  
    { \Cref{algo:algorithm1}}
    is applied to the input $(\bX_{\intset{2,3}}, \DBsupport{\pattern_2}, \DBsupport{\pattern_3})$, and returns  $(\matseq{X}{\intset{2,2}}, \matseq{X}{\intset{3,3}}) \in \setDBfactor{\pattern_2} \times \setDBfactor{\pattern_3}$.
\end{enumerate}

By construction, the first and the fifth row of $\mat{A}$ are null, so the first step can return \emph{many possible optimal solutions} $(\matseq{X}{\intset{1,1}}, \matseq{X}{\intset{2,3}})$ for the considered instance of Problem \eqref{eq:FSMF}, such as $\matseq{X}{\intset{1,1}} := \bD\bS_{\pattern_1}$ 
and 
 \begin{equation*}
     \matseq{X}{\intset{2,3}} = \begin{pmatrix}
         \bB & \mbf{0}_{4 \times 4}\\
         \mbf{0}_{4 \times 4} & \bB
     \end{pmatrix} \quad \text{with} \quad \bB = \begin{pmatrix}
         \alpha & \beta & \gamma & \delta \\
         1 & 1 & 1 & 1 \\
         1 & 1 & 1 & 1 \\
         1 & 1 & 1 & 1 \\
     \end{pmatrix},
 \end{equation*}
where the scalars
 $\alpha, \beta, \gamma, \delta$ can be arbitrary.
%
 %
 Then, with the choice $(\alpha, \beta, \gamma, \delta) = (1, -1, 1, -1)$, one can check that 
 the second step of the procedure will always output $\matseq{X}{\intset{2,2}}$ and $\matseq{X}{\intset{3,3}}$ such that $\matseq{X}{\intset{2,2}} \matseq{X}{\intset{3,3}} \neq \matseq{X}{\intset{2,3}}$ and $\matseq{X}{\intset{1,1}} \matseq{X}{\intset{2,2}} \matseq{X}{\intset{3,3}} \neq \mat{A}$. In conclusion, this example\footnote{At first sight, this seems to contradict the so-called exact recovery property \cite{le2022fastlearning,zheng2023efficient} of \Cref{algo:recursivehierarchicalalgo} in the case of square dyadic butterfly factorization. This is not the case, since the statement of these exact recovery results 
 includes a technical assumption excluding matrices with zero columns/rows \cite[Theorem 3.10]{zheng2023efficient}, which is not satisfied by $\mat{A}$ here.} shows that the output of \Cref{algo:recursivehierarchicalalgo} cannot satisfy the theoretical guarantee \eqref{eq:error-bound-intro}.
\end{example}




The inability to establish an error bound as in \eqref{eq:error-bound-intro} for \Cref{algo:recursivehierarchicalalgo}
is due to the ambiguity for the choice of optimal factors $(\matseq{X}{\intset{1,s}}, \matseq{X}{\intset{s+1,L}})$ returned by the two-factor fixed support matrix factorization algorithm called at line~\ref{line:fsmf} in \Cref{algo:recursivehierarchicalalgo}, cf.~\Cref{rem:nonuniquesolution}.
At each iteration, there are \emph{multiple} optimal pairs of factors, and the choice at line~\ref{line:fsmf} 
impacts subsequent factorizations in the recursive procedure.
To guarantee an error bound of the type \eqref{eq:error-bound-intro},
\Cref{section:normalizedbutterflyfactorization} proposes a revision of \Cref{algo:recursivehierarchicalalgo}, where, among all the possible choices, the modified algorithm selects \emph{specific} input matrices at lines \ref{line:left-recursion} and \ref{line:right-recursion}.

\section{Butterfly algorithm with error guarantees}
\label{section:normalizedbutterflyfactorization}
{We now propose a}
modification of the hierarchical algorithm (\Cref{algo:recursivehierarchicalalgo}) {using} 
\emph{orthonormalization operations} that are novel in the context of butterfly factorization. It is based on an unrolled version of \Cref{algo:recursivehierarchicalalgo} {and will be endowed with error guarantees stated and proved in the next section.}




\subsection{{Butterfly algorithm with orthonormalization}}
\label{sec:modifiedalgo}

The 
factor-bracketing tree $\cT$ in \Cref{algo:recursivehierarchicalalgo} 
{describes in which order the successive $L - 1$ two-factor matrix factorization steps are performed, where $L := | \arch |$.} 
An equivalent way to describe this hierarchical order is to store a permutation $\sigma := (\sigma_\ell)_{\ell = 1}^{L - 1}$ of $\intset{L-1}$, by saving each splitting index $s \in \intset{L-1}$ that corresponds to the maximum integer in the left child $\intset{q,s}$ of each non-leaf node $\intset{q,t}$ of $\mathcal{T}$ (cf.~\Cref{def:factor-bracketing-tree}). 
We can then propose a non-recursive version of \Cref{algo:recursivehierarchicalalgo}, described in \Cref{algo:modifedbutterflyalgo},
where for any non-empty integer interval we use the shorthand
\begin{equation}\label{def:patterninterval}
    \pattern_{\intset{p,q}} := \pattern_{p} * \ldots * \pattern_{q}.
\end{equation} 
Skipping (for now) lines~\ref{line:beginfor1}-\ref{line:endfor2}, these two algorithms are equivalent when $\mathcal{T}$ and $\sigma$ {match}, 
and thus the new version still suffers from the pitfall highlighted in \Cref{example:illposedbX1} regarding error guarantees. This can however be overcome by introducing additional \emph{pseudo-orthonormalization operations} (lines~\ref{line:beginfor1}-\ref{line:endfor2}),
involving orthogonalization of certain blocks of the matrix, as explicitly described in   \Cref{algo:exchange} in \Cref{app:orthonormalDB}. 


\begin{algorithm}[ht]
	\centering
	\caption{Butterfly factorization algorithm -- unrolled version \HL{(with pseudo-orthonormalization)}. \\
 NB: removing \HL{blue code} yields a non-recursive equivalent to \Cref{algo:recursivehierarchicalalgo}, applicable even to {\em redundant} $\arch$, but not endowed with the guarantees of \Cref{theorem:mainresults,theorem:bettermainresults}.) }
	\label{algo:modifedbutterflyalgo}
	\begin{algorithmic}[1]
    \REQUIRE $\mat{A} \in \CC^{m \times n}$, \HL{ non-redundant}, chainable $\arch = (\pattern_\ell)_{\ell=1}^L$, permutation $(\sigma_\ell)_{\ell = 1}^{L - 1}$ of $\intset{L-1}$
    \ENSURE $\texttt{factor} \in \setDBfactor{\arch}$
        {
        \IF {L = 1}
        \RETURN $(\mat{A} \odot 
        {\mat{S}_{\pattern_1}}
        )
        $ \hfill \(\triangleright\)\COMMENT{$\odot$ is the Hadamard product.}
        \ENDIF
        }
		\STATE $\texttt{partition} \gets ( {I}_1)$ where we denote ${I}_1 = 
  \intset{1,L}
  $ 
  
  {\label{line:partition_init}}
        \STATE $\texttt{factors} \gets (\mat{A})$ {\label{line:factorinit}}
		\FOR{$J = 1, \ldots, L - 1$}
		\STATE $({I}_j)_{j=1}^J \gets \texttt{partition}$ {\label{line:partition_update}}
		\STATE $(\matseq{X}{{I}_j})_{j=1}^J \gets \texttt{factors}${\label{line:factors_before_ortho}}
        \STATE {$s:=\sigma_J$}{\label{line:s}}
  \STATE {$j \gets $ the unique $j \in \intset{J}$ such that ${I}_j := \intset{q,t} \ni s$ 
  }{\label{line:defjmodifiedalgo}}\\
        %
		\color{blue}
        \FOR{$k = 1, \ldots, j-1$}{\label{line:beginfor1}}
		\STATE {$(\matseq{X}{{I}_k}, \matseq{X}{{I}_{k+1}}) \gets $ \Cref{algo:exchange}$(\pattern_{{I}_k}, \pattern_{{I}_{k+1}}, \matseq{X}{{I}_k}, \matseq{X}{{I}_{k+1}}, \texttt{column})$}{\label{line:colorthonormal}}
		\ENDFOR {\label{line:endfor1}} 
		\FOR{$k = J, \ldots, j+1$}{\label{line:beginfor2}}
		\STATE {$(\matseq{X}{{I}_{k-1}}, \matseq{X}{{I}_{k}}) \gets $ \Cref{algo:exchange}$(\pattern_{{I}_{k-1}}, \pattern_{{I}_k}, \matseq{X}{{I}_{k-1}}, \matseq{X}{{I}_{k}}, \texttt{row})$}{\label{line:roworthonormal}} 
		\ENDFOR
  {\label{line:endfor2}}
		\color{black}\\
		\STATE $(\pattern_{\tleft}, \pattern_{\tright}) \gets (\pattern_{q} * \ldots * \pattern_{s}, \pattern_{s + 1} * \ldots * \pattern_t)$
		\STATE {$(\matseq{X}{\intset{q,s}}, \matseq{X}{\intset{s+1,t}}) \gets$ \Cref{algo:algorithm1}$(
    {\matseq{X}{{I}_j}}
  , \DBsupport{\pattern_{\tleft}}, \DBsupport{\pattern_{\tright}})$}{\label{line:fsmfmodifiedalgo}}
		\STATE $\texttt{partition} \gets ({I}_1, \ldots, {I}_{j-1}, \intset{q,s}, \intset{s+1,t}, {I}_{j+1}, \ldots, {I}_J)$ {\label{line:setpartition}}
		\STATE $\texttt{factors} \gets (\matseq{X}{{I}_1}, \ldots, \matseq{X}{{I}_{j-1}}, \matseq{X}{\intset{q,s}}, \matseq{X}{\intset{s+1,t}}, \matseq{X}{{I}_{j+1}}, \ldots, \matseq{X}{{I}_J})${\label{line:list_factors_at_end}}
		\ENDFOR
		\RETURN $\texttt{factors}$
	\end{algorithmic}
\end{algorithm}



{
These pseudo-orthonormalization operations in this new butterfly algorithm (\Cref{algo:modifedbutterflyalgo})
rescale the butterfly factors $(\matseq{X}{{I}_k})_{k=1}^J$ without changing their product, in order to make a specific choice of $\matseq{X}{\intset{q,t}}$ given as input to \Cref{algo:algorithm1} at line~\ref{line:fsmfmodifiedalgo} during 
subsequent steps of the algorithm, 
while constructing factors $\matseq{X}{{I}_1}, \ldots, \matseq{X}{{I}_{j-1}}, 
{\matseq{X}{{I}_{j+1}}} \ldots, \matseq{X}{{I}_J}$ that are pseudo-orthonormal in the following sense.
}

{
\begin{definition}[Left and right $r$-unitary factors]
    \label{def:q-unitary}
    Consider a pattern $\pattern = (a,b,c,d)$ and  $r \in \NN$. A $\pattern$-factor $\bX$ {(cf~\Cref{def:dbfactorfour})} is left-{$r$}-unitary (resp.~right-{$r$}-unitary) if $r \mid c$ (resp.~$r \mid b$) and for any {$\pattern'$}-factor $\bY$ satisfying $r(\pattern, \pattern') = r$ (resp.~$r(\pattern', \pattern) = r$), we have: $\|\bX\bY\|_F = \|\bY\|_F$ (resp.~$\|\bY\bX\|_F = \|\bY\|_F$). 
\end{definition}
}

\begin{remark}\label{rem:leftrightnotplainunitary}
    {The {notions of left/right-$r$-unitary factor introduced  in \Cref{def:q-unitary} are relaxed versions of the usual column/row orthonormality}. In particular, a left/right-$r$-unitary factor is only required to preserve the Frobenius norm of a set of chainable factors upon left/right matrix multiplication. Therefore, a $\pattern=(a,b,c,d)$-factor (cf.~\Cref{def:dbfactorfour}) with orthonormal columns (resp.~rows) is left-$r$-unitary (resp.~right-$r$-unitary) for any $r \mid c$ (resp.~$r \mid b$). The other implication is not true since $\frac{1}{\sqrt{2}} \bI_2 \otimes \mbf{1}_{2 \times 2}$ is a left-$1$-unitary $(2,2,2,1)$-factor but it is not a column orthonormal matrix. We name our operation \emph{pseudo-orthonormalization} to avoid confusion with the usual orthonormalization operation.}   

    {More importantly, left/right-$r$-unitary notions also share certain properties with column/rows orthonormality such as the stability under matrix multiplication and norm preserving upon both left and right multiplication. These properties will be detailed in \Cref{app:orthonormalDB}.} 
\end{remark}

Using \Cref{def:q-unitary}, we can describe the purpose of the pseudo-orthonormalization operation used in the new butterfly algorithm (\Cref{algo:modifedbutterflyalgo}) as follows:
\begin{lemma}
    \label{lemma:role-pseudo-orthogonality}
   At the $J$-th iteration of \Cref{algo:modifedbutterflyalgo}, denote\footnote{Observe that by construction $t_i = q_{i+1}-1$ whenever $i+1 \in \intset{J}$.} $I_i = \intset{q_i,t_i}$ for any $i \in \intset{J}$. 
    After pseudo-orthonormalization operations (cf.~line~\ref{line:beginfor1}-\ref{line:endfor2} - \Cref{algo:modifedbutterflyalgo}), the $\pattern_{I_i}$-factor $\bX_{{I}_i}$ for $i = 1, \ldots, j - 1$ is left-$r(\pattern_{t_i},\pattern_{t_i+1})$-unitary,
    and the $\pattern_{I_i}$-factor $\bX_{{I}_i}$ for $i = j + 1, \ldots, J$ is right-$r(\pattern_{q_{i}-1}, \pattern_{q_{i}})$-unitary, where $j$ is the integer defined in line \ref{line:defjmodifiedalgo}.
    
\end{lemma}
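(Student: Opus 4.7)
The plan is to reduce the claim to the correctness of \Cref{algo:exchange} (stated in \Cref{app:orthonormalDB}), which I will assume. Specifically, the key property is that given a chainable pair $(\pattern, \pattern')$ and factors $(\bX, \bY) \in \setDBfactor{\pattern} \times \setDBfactor{\pattern'}$, a call to \Cref{algo:exchange} in \texttt{column} mode returns $(\bX', \bY')$ with the same product $\bX'\bY' = \bX\bY$ and supports included in those of the inputs, but with $\bX'$ left-$r(\pattern, \pattern')$-unitary. Symmetrically, \texttt{row} mode makes $\bY'$ right-$r(\pattern, \pattern')$-unitary. The proof then amounts to tracking, for each sub-interval $I_i$, which iteration modifies the associated factor and which property it installs.

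For the column loop (lines \ref{line:beginfor1}--\ref{line:endfor1}), I would argue by a forward induction on $k$. At iteration $k \in \intset{1, j-1}$, the pair $(\pattern_{I_k}, \pattern_{I_{k+1}})$ is chainable with $r(\pattern_{I_k}, \pattern_{I_{k+1}}) = r(\pattern_{t_k}, \pattern_{t_k+1})$ by \Cref{lem:seq-qchainability} (using $t_k + 1 = q_{k+1}$). Applying the stated property of \Cref{algo:exchange} in \texttt{column} mode, the updated $\bX_{I_k}$ is left-$r(\pattern_{t_k}, \pattern_{t_k+1})$-unitary. Crucially, each subsequent iteration $k' > k$ in this loop touches only the pair $(\bX_{I_{k'}}, \bX_{I_{k'+1}})$, and hence does not modify $\bX_{I_k}$, so the property established at step $k$ is preserved until the end of the loop.

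The row loop (lines \ref{line:beginfor2}--\ref{line:endfor2}) is handled by a reverse induction on $k = J, J-1, \ldots, j+1$. Again by \Cref{lem:seq-qchainability}, $r(\pattern_{I_{k-1}}, \pattern_{I_k}) = r(\pattern_{t_{k-1}}, \pattern_{t_{k-1}+1}) = r(\pattern_{q_k-1}, \pattern_{q_k})$, so the \texttt{row}-mode call at iteration $k$ installs the desired right-unitary property on $\bX_{I_k}$. Since subsequent iterations $k' < k$ only operate on $(\bX_{I_{k'-1}}, \bX_{I_{k'}})$ with $k' < k$, they never modify $\bX_{I_k}$, preserving the property. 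Finally, I must check that the row loop does not destroy the left-unitary properties established by the column loop: indeed the row loop only updates factors with indices in $\intset{j, J}$, never touching $\bX_{I_i}$ for $i < j$.

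The only mildly subtle point, which I would phrase as the main step to be careful about, is that index $j$ (the interval containing $s$, defined in line \ref{line:defjmodifiedalgo}) plays the role of a pivot that is allowed to be modified by both loops (once from the right at iteration $k=j-1$ of the column loop, and once from the left at iteration $k = j+1$ of the row loop), and that the claim makes no assertion about $\bX_{I_j}$, which is precisely the factor to be fed into \Cref{algo:algorithm1} at line \ref{line:fsmfmodifiedalgo}. Apart from this bookkeeping, no further computation is needed beyond the two inductions above combined with \Cref{lem:seq-qchainability} and the contract of \Cref{algo:exchange}.
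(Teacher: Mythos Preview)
Your proposal is correct and matches the paper's own argument essentially line-for-line: the paper (\Cref{app:role-pseudo-orthogonality}) also reduces the claim to the contract of \Cref{algo:exchange} (stated and proved as \Cref{lemma:exchange}) together with \Cref{lem:seq-qchainability} to identify $r(\pattern_{I_i},\pattern_{I_{i+1}}) = r(\pattern_{t_i},\pattern_{t_i+1})$ via $t_i+1=q_{i+1}$. Your additional bookkeeping about non-interference between loop iterations (and between the column and row loops) is left implicit in the paper but is a welcome clarification.
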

This result plays a key role in deriving a guarantee for \Cref{algo:modifedbutterflyalgo} in \Cref{sec:error}. It is proved in \Cref{app:role-pseudo-orthogonality}.


\begin{remark}\label{rem:redundantcase}
As detailed in  \Cref{app:orthonormalDB}  the orthonormalization operations are well-defined  only under the non-redundancy assumption. When the architecture $\arch$ is redundant, by the redundancy removal algorithm (\Cref{algo:redundancyremoval}) we can reduce it to a non-redundant architecture $\arch'$  {that is} expressively equivalent to $\arch$ (i.e, $\setButterfly{\arch'} = \setButterfly{\arch}$) and apply the algorithm to it. This yields an approximation $\mat{A} \approx \prod_{\ell = 1}^{L'} \bX_\ell'$ with $L' :=  |\arch'|$ and $(\matseq{X}{\ell'})_{\ell'=1}^{L'} \in \setDBfactor{\arch'}$. By \Cref{lemma:consequence-redundant} we can then construct $(\matseq{\mat{X}}{\ell})_{\ell=1}^L \in \setDBfactor{\arch}$ 
that yields an approximation $\mat{A} \approx \prod_{\ell = 1}^{L} \bX_\ell$ with the same approximation error as $\prod_{\ell = 1}^{L'} \bX'_\ell$.
Therefore, in the following we only consider non-redundant chainable architectures.
\end{remark}


\subsection{Complexity analysis}
\label{subsec:complexity}
It is not hard to see that the proposed butterfly algorithm (\Cref{algo:modifedbutterflyalgo}) 
has polynomial complexity with respect to the sizes of the butterfly factors and the target matrix, since they only perform {a polynomial number of} standard matrix operations such as matrix multiplication, QR and SVD decompositions. 
\begin{theorem}[Complexity analysis]
    \label{theorem:complexity}
    Consider a chainable architecture $\arch = (\pattern_\ell)_{\ell = 1}^L$ where $\pattern_\ell = (a_\ell,b_\ell,c_\ell,d_\ell)$, and a target matrix $\bA$ of size ${m \times n}$. Define 
    \[
    M_\arch := \max_{\ell \in \intset{L}} a_\ell c_\ell,
    \quad 
    N_\arch = \max_{\ell \in \intset{L}}b_\ell d_\ell.
    \]
    When $\arch$ is non-redundant we have $M_\arch \leq m$ and $N_\arch \leq n$.
     With 
    the vector $\mathbf{r}(\arch)$ of~\Cref{def:chainableseqDBPs}, the complexity is bounded by:
    \begin{itemize}
        \item $\mathcal{O}(\|\mathbf{r}(\arch)\|_1M_\arch N_\arch)$ for  \Cref{algo:recursivehierarchicalalgo} 
        \item $\mathcal{O}\left((\|\mathbf{r}(\arch)\|_1 + |\arch|^2\|\mathbf{r}(\arch)\|_\infty)mn\right)$ for \Cref{algo:modifedbutterflyalgo}
        (with a non-redundant $\arch$).
    \end{itemize}
\end{theorem}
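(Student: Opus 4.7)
The plan is as follows. First I will establish the size bounds under non-redundancy. By \Cref{def:redundant} and the remark that follows it, a chainable pair $(\pattern_1,\pattern_2)$ is non-redundant if and only if $a_1 c_1 < a_2 c_2$ \emph{and} $b_2 d_2 < b_1 d_1$. Applied to every consecutive pair of a non-redundant chainable $\arch = (\pattern_\ell)_{\ell=1}^L$, this forces $(a_\ell c_\ell)_\ell$ to be strictly increasing and $(b_\ell d_\ell)_\ell$ strictly decreasing, so $M_\arch = a_L c_L$ and $N_\arch = b_1 d_1$. Combined with the defining size relations $m = a_1 b_1 d_1$ and $n = a_L c_L d_L$ this yields $M_\arch \leq a_L c_L d_L = n$ and $N_\arch \leq a_1 b_1 d_1 = m$, hence in particular $M_\arch N_\arch \leq mn$, which is the form needed below.

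Second, I will analyze one iteration of \Cref{algo:recursivehierarchicalalgo}. At iteration $J$, with splitting index $s := \sigma_J$ inside a node $\intset{q,t}$, the algorithm invokes \Cref{algo:algorithm1} on the pair $(\mat{S}_{\pattern_{\intset{q,s}}}, \mat{S}_{\pattern_{\intset{s+1,t}}})$. By \Cref{lem:seq-qchainability} this pair is chainable with rank $r(\pattern_{\intset{q,s}}, \pattern_{\intset{s+1,t}}) = r(\pattern_s,\pattern_{s+1}) =: r_{\sigma_J}$. \Cref{lem:prodDBBis} then yields blocks $[R_P,C_P]$ of size $(b_q d_q/d_s)\times(a_t c_t/a_{s+1})$, and counting the nonzeros of $\mat{S}_{\pattern_{\intset{q,t}}}$ via \eqref{eq:producttheta} gives $|\cP(\mat{S}_{\pattern_{\intset{q,s}}}, \mat{S}_{\pattern_{\intset{s+1,t}}})| = a_{s+1} d_s$ equivalence classes. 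Truncated SVD on each block costs $\mathcal{O}(r_{\sigma_J}\,|R_P|\,|C_P|)$, so the total iteration cost is
\[
\mathcal{O}\bigl(a_{s+1} d_s \cdot r_{\sigma_J} \cdot (b_q d_q/d_s)(a_t c_t/a_{s+1})\bigr) = \mathcal{O}(r_{\sigma_J} \cdot b_q d_q \cdot a_t c_t) \leq \mathcal{O}(r_{\sigma_J} M_\arch N_\arch).
\]
Summing over $J \in \intset{L-1}$, and using that $\sigma$ is a permutation so $\sum_J r_{\sigma_J} = \|\mathbf{r}(\arch)\|_1$, yields the bound $\mathcal{O}(\|\mathbf{r}(\arch)\|_1 M_\arch N_\arch)$ for \Cref{algo:recursivehierarchicalalgo}.

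Third, for \Cref{algo:modifedbutterflyalgo} (applied under non-redundancy) the extra cost comes from the pseudo-orthonormalization calls at lines~\ref{line:colorthonormal} and~\ref{line:roworthonormal}, each an invocation of \Cref{algo:exchange}. At iteration $J$ there are at most $2J \leq 2L = \mathcal{O}(|\arch|)$ such calls, each on an adjacent pair $(\mat{X}_{I_k},\mat{X}_{I_{k+1}})$ whose patterns are partial products and whose chaining rank is bounded by $\|\mathbf{r}(\arch)\|_\infty$ by iterated application of \Cref{lem:associativity}. Reading the cost of \Cref{algo:exchange} (defined in \Cref{app:orthonormalDB}) as a sequence of block-wise QR factorizations and matrix products, one call costs $\mathcal{O}\bigl(\|\mathbf{r}(\arch)\|_\infty (\mathrm{nnz}(\mat{X}_{I_k}) + \mathrm{nnz}(\mat{X}_{I_{k+1}}))\bigr)$, and the total nonzero count of the current partition $(\mat{X}_{I_k})_{k=1}^J$ at any iteration is $\mathcal{O}(M_\arch N_\arch)=\mathcal{O}(mn)$ by the first paragraph. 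Hence each iteration contributes $\mathcal{O}(|\arch|\,\|\mathbf{r}(\arch)\|_\infty\,mn)$ to orthonormalization, and summing over $L-1$ iterations yields $\mathcal{O}(|\arch|^2\,\|\mathbf{r}(\arch)\|_\infty\,mn)$. Combined with the FSMF cost $\mathcal{O}(\|\mathbf{r}(\arch)\|_1 M_\arch N_\arch) \leq \mathcal{O}(\|\mathbf{r}(\arch)\|_1\,mn)$ this gives the stated bound.

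The main obstacle is the per-call cost analysis of \Cref{algo:exchange}, since its detailed description lives in the appendix. One has to verify that the block-wise QR decompositions and re-distributions it performs respect the Kronecker-sparse structure of both factors and scale linearly with their nonzero counts, with a multiplicative constant at most $r(\pattern_{I_k},\pattern_{I_{k+1}}) \leq \|\mathbf{r}(\arch)\|_\infty$. Once this bound is settled, aggregating it over the $\mathcal{O}(|\arch|^2)$ exchange calls and combining with the FSMF cost of the previous paragraph is routine bookkeeping.
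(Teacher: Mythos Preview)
Your proposal is correct and follows essentially the same approach as the paper: it decomposes the cost into (i) the FSMF calls via \Cref{algo:algorithm1}, each costing $\mathcal{O}(r_s\,b_q d_q\,a_t c_t)\leq\mathcal{O}(r_s M_\arch N_\arch)$ and summing to $\mathcal{O}(\|\mathbf{r}(\arch)\|_1 M_\arch N_\arch)$, and (ii) the pseudo-orthonormalization calls via \Cref{algo:exchange}, each costing $\mathcal{O}\bigl(r(\pattern_{I_k},\pattern_{I_{k+1}})(\|\pattern_{I_k}\|_0+\|\pattern_{I_{k+1}}\|_0)\bigr)$, aggregated over $\mathcal{O}(|\arch|)$ calls per iteration and $|\arch|-1$ iterations. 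The monotonicity argument for $M_\arch\leq n$, $N_\arch\leq m$ under non-redundancy is also the paper's.

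One phrasing deserves tightening. You write that ``the total nonzero count of the current partition $(\mat{X}_{I_k})_{k=1}^J$ at any iteration is $\mathcal{O}(M_\arch N_\arch)$ by the first paragraph.'' The first paragraph only shows that \emph{each} factor satisfies $\|\pattern_{I_k}\|_0=b_{q_k}d_{q_k}\,a_{t_k}c_{t_k}\leq M_\arch N_\arch\leq mn$; it does not bound the sum over $k$, which can be as large as $J\cdot mn$. This is harmless because your stated per-iteration bound $\mathcal{O}(|\arch|\,\|\mathbf{r}(\arch)\|_\infty\,mn)$ already accounts for the factor $|\arch|$ from the number of calls, so the argument goes through with the per-factor bound alone (this is exactly how the paper proceeds). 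Also, the number of calls to \Cref{algo:exchange} at iteration $J$ is exactly $(j-1)+(J-j)=J-1$, not $2J$; your looser count is fine for the big-$\mathcal{O}$ conclusion.
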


The proof of \Cref{theorem:complexity} is in \Cref{appendix:complexity}. {The complexity bounds in \Cref{theorem:complexity} are generic for any matrix size $m,n$, chainable $\arch$ and factor-bracketing tree $\mathcal{T}$ (or equivalent permutation $\sigma$). {They can be} 
improved for specific $\arch$. For example, in the case of the square dyadic butterfly, \cite{le2022fastlearning,zheng2023efficient} showed that the complexity of the hierarchical algorithm (\Cref{algo:recursivehierarchicalalgo}) is $\mathcal{O}(n^2)$ where $n = 2^L$ instead of $\mathcal{O}(\|\mathbf{r}(\arch)\|_1n^2) = \mathcal{O}(n^2\log n)$. This is optimal in the sense that it already matches the space complexity of the target matrix.}

\section{Guarantees on the approximation error}
\label{sec:error}

One of the main contributions of this paper is to show that the new butterfly algorithm (\Cref{algo:modifedbutterflyalgo}) outputs an approximate solution to Problem \eqref{eq:butterfly-approximation-pb} that satisfies an error bound of the type \eqref{eq:error-bound-intro}. 
\subsection{Main results}
Our error bounds are based on the following relaxed problem.
\begin{definition}[{First-level factorization}]
 \label{def:firstlevelfactorization-new}
Given a chainable $\arch:= (\pattern_\ell)_{\ell=1}^L$ {with $L \geq 2$}, we define for {each splitting index} $s \in \intset{L-1}$ {the two-factor ``split'' architecture}:
 \begin{equation*}
     \splitarch{\arch}{s} := (\pattern_1 * \ldots * \pattern_s, \pattern_{s+1} * \ldots * \pattern_L).
 \end{equation*}

 {When $L=2$ we have $\splitarch{\arch}{1}=\arch$. For any target matrix $\mat{A}$ we consider the problem}
	\begin{equation}
		\label{eq:firstlevelfact-new}
        E^{\splitarch{\arch}{s}}(\mat{A}) := \min_{(\mbf{X},\mbf{Y}) \in \setDBfactor{\splitarch{\arch}{s}}} \|\mbf{A} - \mbf{X}\mbf{Y}\|_F = \min_{\bB \in \setButterfly{\splitarch{\arch}{s}}} \|\bA - \bB\|_F.
	\end{equation}
\end{definition}

The following two theorems are the central {theoretical} results of this paper. The first one bounds the approximation error of \Cref{algo:modifedbutterflyalgo} in the general case where $\sigma$ can be any permutation. The second one is a tighter bound specific to the case where $\sigma$ is the identity permutation,  corresponding to the so-called {\em unbalanced tree} of \cite{zheng2023efficient}. 

\begin{theorem}[{Approximation error, arbitrary permutation $\sigma$, \Cref{algo:modifedbutterflyalgo}}]
	\label{theorem:mainresults}
    Let $\arch$ be a \emph{non-redundant chainable} architecture {of depth $L \geq 2$}.
    For any target matrix $\mat{A}$ and permutation $\sigma$ of $\intset{L-1}$ with $L = | \arch |$, 
    \Cref{algo:modifedbutterflyalgo} with inputs $(\bA, \arch, \sigma)$ returns butterfly factors $(\bX_\ell)_{\ell = 1}^L \in \setDBfactor\arch$ such that
	\begin{equation}
	\label{eq:MainBoundTriangle}
        \|\mbf{A} - \matseq{X}{1} \ldots \matseq{X}{L} \|_F \leq \sum_{k = 1}^{L - 1} E^{\splitarch{\arch}
        {\sigma_k}
        }(\mbf{A}).
	\end{equation} 
\end{theorem}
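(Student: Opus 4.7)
The plan is to prove by induction on the iteration index $J \in \intset{0,L-1}$ of \Cref{algo:modifedbutterflyalgo} the stronger statement
\[
\|\bA - \hat{\bA}_J\|_F \leq \sum_{k=1}^{J} E^{\splitarch{\arch}{\sigma_k}}(\bA),
\]
where $\hat{\bA}_J$ denotes the product of the factors held by the algorithm at the end of iteration $J$, with the convention $\hat{\bA}_0 := \bA$. Specialising to $J = L-1$ will yield \eqref{eq:MainBoundTriangle}. The base case $J = 0$ is trivial, and by the triangle inequality the induction step reduces to proving the incremental bound $\|\hat{\bA}_{J-1} - \hat{\bA}_J\|_F \leq E^{\splitarch{\arch}{\sigma_J}}(\bA)$.

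The first ingredient is that the pseudo-orthonormalization operations at lines \ref{line:beginfor1}--\ref{line:endfor2} only rescale the factors without modifying their product, so the matrix fed to \Cref{algo:algorithm1} at line \ref{line:fsmfmodifiedalgo} still satisfies $\bX_{I_1} \cdots \bX_{I_J} = \hat{\bA}_{J-1}$. Writing $\bL := \bX_{I_1}\cdots \bX_{I_{j-1}}$ and $\bR := \bX_{I_{j+1}}\cdots \bX_{I_J}$ (with $j$ as defined in line \ref{line:defjmodifiedalgo}), the split at $s := \sigma_J$ modifies only $\bX_{I_j}$, so
\[
\hat{\bA}_{J-1} - \hat{\bA}_J = \bL\,\big(\bX_{I_j} - \bX_{\intset{q,s}}\bX_{\intset{s+1,t}}\big)\,\bR.
\]
By \Cref{lemma:role-pseudo-orthogonality}, each factor in $\bL$ (resp.~$\bR$) is left-$r$-unitary (resp.~right-$r$-unitary) with exactly the rank parameter required by \Cref{def:q-unitary} to preserve the Frobenius norm of any factor chainable with the pattern $\pattern_{\intset{q,t}}$ of $\bX_{I_j}$; iterating this property along the chainability sequence certified by \Cref{lem:seq-qchainability} yields the identity $\|\hat{\bA}_{J-1} - \hat{\bA}_J\|_F = \|\bX_{I_j} - \bX_{\intset{q,s}}\bX_{\intset{s+1,t}}\|_F$.

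The second and harder ingredient bounds this local error by the global two-factor error. By \Cref{lem:qperfectcovering}, the pair $(\bX_{\intset{q,s}}, \bX_{\intset{s+1,t}})$ is a minimiser of the two-factor fixed-support problem with target $\bX_{I_j}$ and supports $(\DBsupport{\pattern_{\intset{q,s}}}, \DBsupport{\pattern_{\intset{s+1,t}}})$, so it suffices to exhibit a \emph{single} admissible candidate $(\bY_1, \bY_2)$ in these support classes such that $\|\bX_{I_j} - \bY_1\bY_2\|_F \leq E^{\splitarch{\arch}{\sigma_J}}(\bA)$. The plan is to take an optimum $(\bU^{\star}, \bV^{\star})$ of \eqref{eq:firstlevelfact-new} at $s = \sigma_J$ and ``transport'' it through $\bL$ and $\bR$: since the left-$r$-unitarity of $\bL$ makes $\bL^\top$ act as a left inverse on patterns chainable from the right with $\pattern_{\intset{q,t}}$, and symmetrically for $\bR$, candidates built from $\bL^\top \bU^{\star}$ and $\bV^{\star}\bR^\top$---realigned into the target support classes via the associativity of $*$ (\Cref{lem:associativity}) and the pattern matchings $\pattern_{\intset{1,q-1}}*\pattern_{\intset{q,s}} = \pattern_{\intset{1,s}}$ and $\pattern_{\intset{s+1,t}}*\pattern_{\intset{t+1,L}} = \pattern_{\intset{s+1,L}}$---should yield $\|\bX_{I_j} - \bY_1\bY_2\|_F \leq \|\bA - \bU^{\star}\bV^{\star}\|_F = E^{\splitarch{\arch}{\sigma_J}}(\bA)$. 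The main obstacle is precisely this transport step: the scalar identity $\|\bL\bZ\bR\|_F = \|\bZ\|_F$ follows directly from pseudo-orthonormality, but constructing an admissible candidate pair in the prescribed support classes from an arbitrary $\bU^{\star}\bV^{\star} \in \setButterfly{\splitarch{\arch}{\sigma_J}}$ requires the non-redundancy assumption on $\arch$ (propagated to partial products via \Cref{lem:partialproductnonredundant}) to guarantee that every pseudo-orthonormalization is well-defined throughout the algorithm.
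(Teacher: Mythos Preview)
Your overall scaffolding matches the paper's: telescoping $\bA-\hat{\bA}_{L-1}$ and bounding each increment, together with the norm-preservation identity $\|\hat{\bA}_{J-1}-\hat{\bA}_J\|_F=\|\bX_{I_j}-\bX_{\intset{q,s}}\bX_{\intset{s+1,t}}\|_F$, is exactly right and is what the paper calls \Cref{lemma:role-pseudo-orthogonality} together with \Cref{lem:qunitarypreservesfrob}. The paper in fact proves the stronger equality $\|\hat{\bA}_{J-1}-\hat{\bA}_J\|_F=E^{\splitarch{\arch}{\sigma_J}}(\hat{\bA}_{J-1})$ (this is \Cref{lemma:relation-bl-bl+1}, via \Cref{lem:originalkeylemma-new}).

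The gap is in your ``second ingredient''. You want $\|\bX_{I_j}-\bY_1\bY_2\|_F\leq E^{\splitarch{\arch}{\sigma_J}}(\bA)$, and you propose to transport an optimum $(\bU^{\star},\bV^{\star})$ of the global problem for $\bA$ through $\bL^\top$ and $\bR^\top$. But $\bX_{I_j}$ is tied to $\hat{\bA}_{J-1}=\bL\bX_{I_j}\bR$, not to $\bA$. Even granting that left-$r$-unitarity gives $\bL^\top\bL=\bI$ on the right blocks (which is essentially what \Cref{theorem:orthonormalDBfactor} says), applying $\bL(\cdot)\bR$ to $\bX_{I_j}-\bL^\top\bU^{\star}\bV^{\star}\bR^\top$ lands you at $\|\hat{\bA}_{J-1}-\bL\bL^\top\bU^{\star}\bV^{\star}\bR^\top\bR\|_F$, and even if $\bL\bL^\top$, $\bR^\top\bR$ were honest projections this is a distance from $\hat{\bA}_{J-1}$, not from $\bA$. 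For $J\geq 2$ these differ, and there is no reason a priori why $\|\hat{\bA}_{J-1}-\bU^{\star}\bV^{\star}\|_F\leq\|\bA-\bU^{\star}\bV^{\star}\|_F$ (projection onto the non-convex set $\cB^{\splitarch{\arch}{\sigma_{J-1}}}$ does not contract distances to arbitrary points).

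What actually closes the gap in the paper is \Cref{lemma:relation-projectors}: if $\bN$ is a projection of $\bM$ onto $\cB^{\splitarch{\arch}{q}}$, then $E^{\splitarch{\arch}{s}}(\bN)\leq E^{\splitarch{\arch}{s}}(\bM)$ for every $s$. Applied recursively along $\bA=\bB_0\to\bB_1\to\cdots\to\bB_{J-1}$ this gives $E^{\splitarch{\arch}{\sigma_J}}(\hat{\bA}_{J-1})\leq E^{\splitarch{\arch}{\sigma_J}}(\bA)$. The proof of that lemma is not a transport argument: it is a blockwise spectral inequality (trace minus top eigenvalues is monotone under addition of a PSD matrix, \Cref{lemma:sub-additive}) combined with a nesting property of the block partitions across different splitting levels (\Cref{lemma:inclusion}) and orthogonality of the residual to the projection (\Cref{lemma:orthogonal-space}). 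This monotonicity is the genuinely missing idea in your proposal.
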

\begin{theorem}[{Approximation error, identity permutation $\sigma$, \Cref{algo:modifedbutterflyalgo}}]
	\label{theorem:bettermainresults}
	Assume that $\sigma$ is either the identity permutation, 
	$\sigma = (1, \ldots, L - 1)$ or its ``converse", $\sigma = (L-1,\ldots,1)$.
	Under the assumptions and with the notations of \Cref{theorem:mainresults}, \Cref{algo:modifedbutterflyalgo} with inputs $(\mat{A}, \arch, \sigma)$ 
	returns butterfly factors $(\matseq{X}{\ell})_{\ell=1}^L \in \setDBfactor{\arch}$ such that:
	\begin{equation}
		\label{eq:MainBoundPythagore}
        \|\mbf{A} - \matseq{X}{1} \ldots \matseq{X}{L} \|_F^2 \leq \sum_{k = 1}^{L - 1}[E^{\splitarch{\arch}{k}}(\mbf{A})]^2.
	\end{equation} 
\end{theorem}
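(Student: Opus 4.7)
The plan is to deduce the squared-sum bound from two ingredients: the per-step bound underlying \Cref{theorem:mainresults} and a pairwise orthogonality of the successive increments produced by \Cref{algo:modifedbutterflyalgo} when $\sigma$ is the identity (the reverse permutation is symmetric). First I would set up notation: let $\bA_J$ denote the product of the factors stored in \texttt{factors} at the end of iteration $J$, with $\bA_0 := \bA$. With the identity permutation, the partition at the start of iteration $J$ equals $(\intset{1,1},\dots,\intset{J-1,J-1},\intset{J,L})$, so $\bA_{J-1} = \bC_J \bR_J$, where $\bC_J := \bX_{\intset{1,1}} \cdots \bX_{\intset{J-1,J-1}}$ (in its state after the column pseudo-orthonormalization performed at iteration $J$, so that \Cref{lemma:role-pseudo-orthogonality} applies) and $\bR_J := \bX_{\intset{J,L}}$. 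Iteration $J$ replaces $\bR_J$ by an optimal two-factor approximation $\bX_{\intset{J,J}} \bX_{\intset{J+1,L}}$ produced by \Cref{algo:algorithm1}; setting $\bF_J := \bR_J - \bX_{\intset{J,J}}\bX_{\intset{J+1,L}} \in \setDBfactor{\pattern_J * \cdots * \pattern_L}$ (a vector space, by \Cref{lem:suppdbfactorprod} and chainability), we get $\bA_{J-1} - \bA_J = \bC_J \bF_J$. Thus $\bA - \bA_{L-1} = \sum_{J=1}^{L-1}\bC_J\bF_J$, and \Cref{theorem:mainresults} applied with $\sigma_J = J$ provides the per-step estimate $\|\bC_J\bF_J\|_F \leq E^{\splitarch{\arch}{J}}(\bA)$. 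Consequently, it suffices to establish the pairwise orthogonality $\langle \bC_{J_1}\bF_{J_1}, \bC_{J_2}\bF_{J_2}\rangle_F = 0$ for $1 \leq J_1 < J_2 \leq L-1$, since Pythagoras then yields \eqref{eq:MainBoundPythagore}.

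For the orthogonality, the key observation is that $\bC_{J_1}$ preserves the Frobenius inner product on a sufficiently large subspace. By \Cref{lemma:role-pseudo-orthogonality}, after the column pseudo-orthonormalization at iteration $J_2$, every factor $\bX_{\intset{\ell,\ell}}$ with $\ell < J_2$ is left-$r(\pattern_\ell,\pattern_{\ell+1})$-unitary; composing these and invoking \Cref{lem:seq-qchainability} makes $\bC_{J_1}$ left-$r(\pattern_{J_1-1},\pattern_{J_1})$-unitary (the case $J_1 = 1$ is trivial with $\bC_1 = \bI$). A polarization identity applied to the norm preservation then yields, for every $\bA', \bB' \in \setDBfactor{\pattern_{J_1} * \cdots * \pattern_L}$, the identity $\langle \bC_{J_1} \bA', \bC_{J_1} \bB' \rangle_F = \langle \bA', \bB' \rangle_F$. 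Writing $\bC_{J_2} = \bC_{J_1} \cdot \bX_{\intset{J_1,J_1}} \cdots \bX_{\intset{J_2-1,J_2-1}}$ lets us factor $\bC_{J_2} \bF_{J_2} = \bC_{J_1} \bX_{\intset{J_1,J_1}} \delta\bV$ with $\delta\bV := \bX_{\intset{J_1+1,J_1+1}} \cdots \bX_{\intset{J_2-1,J_2-1}} \bF_{J_2} \in \setDBfactor{\pattern_{J_1+1} * \cdots * \pattern_L}$. Applying inner-product preservation with $\bA' = \bF_{J_1}$ and $\bB' = \bX_{\intset{J_1,J_1}} \delta\bV$ (both in $\setDBfactor{\pattern_{J_1} * \cdots * \pattern_L}$) reduces the desired orthogonality to $\langle \bF_{J_1}, \bX_{\intset{J_1,J_1}} \delta\bV \rangle_F = 0$; this is precisely the first-order optimality condition for the two-factor split at iteration $J_1$, obtained by observing that, with $\bX_{\intset{J_1,J_1}}$ fixed, the optimal $\bX_{\intset{J_1+1,L}}$ is the orthogonal least-squares solution over the linear subspace $\setDBfactor{\pattern_{J_1+1} * \cdots * \pattern_L}$.

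Combining the per-step bound with the Pythagorean identity then gives $\|\bA - \bA_{L-1}\|_F^2 = \sum_{J=1}^{L-1} \|\bC_J \bF_J\|_F^2 \leq \sum_{J=1}^{L-1} [E^{\splitarch{\arch}{J}}(\bA)]^2$, as claimed in \eqref{eq:MainBoundPythagore}. The reverse permutation $\sigma = (L-1, \ldots, 1)$ is handled by the symmetric argument based on row pseudo-orthonormalization and right-$r$-unitary factors, with an analogous inner-product preservation on the right. The main obstacle is the careful bookkeeping to ensure that, at each invocation of inner-product preservation by $\bC_{J_1}$, the operands lie in the appropriate chainable subspace $\setDBfactor{\pattern_{J_1} * \cdots * \pattern_L}$; this is systematically provided by \Cref{lem:suppdbfactorprod} and by the state of the pseudo-orthonormalizations at iteration $J_2$ as encoded in \Cref{lemma:role-pseudo-orthogonality}.
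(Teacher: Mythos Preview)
Your overall strategy coincides with the paper's: telescope $\bA-\bA_{L-1}$ into increments, use left-$r$-unitarity of the left product (via polarization) to strip $\bC_{J_1}$, and conclude the Pythagorean identity from a first-order optimality condition at the split of iteration $J_1$; the per-step bound $\|\bC_J\bF_J\|_F\le E^{\splitarch{\arch}{J}}(\bA)$ is exactly the inequality established \emph{inside} the proof of \Cref{theorem:mainresults} (via \Cref{lemma:relation-bl-bl+1} and \Cref{lemma:relation-projectors}), so invoking that intermediate bound is fine.

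There is one genuine bookkeeping gap. When you write $\bC_{J_2}=\bC_{J_1}\,\bX_{\intset{J_1,J_1}}\cdots\bX_{\intset{J_2-1,J_2-1}}$, the factor sitting in $\bC_{J_2}$ is the \emph{pseudo-orthonormalized} version $\hat{\bX}_{J_1}$ produced by \Cref{algo:exchange} at iteration $J_1+1$, whereas the first-order optimality condition you appeal to (``with $\bX_{\intset{J_1,J_1}}$ fixed, the optimal $\bX_{\intset{J_1+1,L}}$ is the least-squares solution'') is stated for the \emph{raw} factor returned by \Cref{algo:algorithm1} at iteration $J_1$. These two matrices are different whenever the QR step in \Cref{algo:exchange} enlarges a rank-deficient column span, so the implication $\langle\bF_{J_1},\hat{\bX}_{J_1}\delta\bV\rangle=0$ does not follow immediately from the stationarity at the raw pair. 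The paper closes this gap by a block-wise case split: on each block $R_P\times C_P$, either the best rank-$|P|$ approximation is exact (so $\bF_{J_1}[R_P,C_P]=0$), or the raw $\bX_{\intset{J_1,J_1}}[R_P,P]$ has full column rank and then $\colrange(\hat{\bX}_{J_1}[R_P,P])=\colrange(\bX_{\intset{J_1,J_1}}[R_P,P])$. An equivalent (and arguably cleaner) patch for your argument is to observe that $(\hat{\bX}_{J_1},\bX'_{\intset{J_1+1,L}})$ is \emph{also} a global minimizer of the two-factor problem (same product, same value), so the first-order condition with respect to the second factor holds at that pair as well, yielding $\langle\bF_{J_1},\hat{\bX}_{J_1}\bW\rangle=0$ for every $\bW\in\setDBfactor{\pattern_{J_1+1}*\cdots*\pattern_L}$. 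Either way, you should make explicit that the factor appearing in the orthogonality step is $\hat{\bX}_{J_1}$, and justify why stationarity transfers from the raw factor to it.
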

{For $L=2$ both results yield $\|\mbf{A} - \matseq{X}{1}\matseq{X}{2} \|_F \leq E^{\arch}(\mat{A})$, i.e. the algorithm is optimal.}

Before 
proving these theorems in \Cref{sec:proof-mainresults-new}, we state and prove their main consequences: the quasi-optimality of  \Cref{algo:modifedbutterflyalgo}, a ``complementary low-rank'' characterization of butterfly matrices, {and the existence of an optimum for Problem \eqref{eq:butterfly-approximation-pb} when $\arch$ is chainable.}

\subsection{Quasi-optimality of \Cref{algo:modifedbutterflyalgo}}
\label{sec:quasi-optimality}
The theorems imply that butterfly factors obtained via \Cref{algo:modifedbutterflyalgo} satisfy an error bound of the form \eqref{eq:error-bound-intro}.

\begin{theorem}[{Quasi-optimality of \Cref{algo:modifedbutterflyalgo}}]
	\label{cor:mainresults}
    Let $\arch$ be \emph{any chainable} architecture of arbitrary depth $L \coloneqq |\arch| \geq 1$. For any target matrix $\mat{A}$, the outputs $(\matseq{X}{\ell})_{\ell = 1}^L$ of \Cref{algo:modifedbutterflyalgo} with inputs $(\mat{A}, \arch, \sigma)$ for arbitrary permutation $\sigma$ satisfy:
	\begin{equation}
    \label{eq:mainresults}
		\|\mbf{A} - \matseq{X}{1} \ldots \matseq{X}{L} \|_F \leq ({\max(L,\,2)} - 1) E^\arch(\mbf{A}).
	\end{equation} 
    When $\sigma$ is the identity permutation, the outputs also satisfy
    {the finer bound}:
    	\begin{equation}
		\label{eq:betterbound}
		\|\mbf{A} -  \matseq{X}{1} \ldots \matseq{X}{L} \|_F \leq \sqrt{{\max(L,\,2)} - 1} \, E^\arch(\mbf{A}).
	\end{equation} 
\end{theorem}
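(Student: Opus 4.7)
The plan is to derive this quasi-optimality corollary from \Cref{theorem:mainresults} and \Cref{theorem:bettermainresults} via a simple monotonicity argument, after reducing to the non-redundant setting through \Cref{rem:redundantcase} when needed. The key auxiliary inequality I would establish first is
\[
E^{\splitarch{\arch}{s}}(\mat{A}) \leq E^\arch(\mat{A}), \qquad \forall\, s \in \intset{L-1},
\]
which holds because any $(\matseq{X}{\ell})_{\ell=1}^L \in \setDBfactor{\arch}$ yields a valid split into two chainable \ksfs{} $(\matseq{X}{1}\ldots\matseq{X}{s},\ \matseq{X}{s+1}\ldots\matseq{X}{L}) \in \setDBfactor{\splitarch{\arch}{s}}$, by \Cref{lem:seq-qchainability} and \Cref{lem:suppdbfactorprod}. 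Therefore $\setButterfly{\arch} \subseteq \setButterfly{\splitarch{\arch}{s}}$ and minimizing over a larger set can only decrease the infimum.

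I would then split by cases on $L$ and on whether $\arch$ is redundant. If $L=1$, the algorithm trivially outputs $\mat{A}\odot\mat{S}_{\pattern_1}$, the fixed-support projection of $\mat{A}$, giving exact equality $\|\mat{A}-\mat{X}_1\|_F = E^\arch(\mat{A})$; since $\max(L,2)-1 = 1$, both \eqref{eq:mainresults} and \eqref{eq:betterbound} hold. If $L\geq 2$ and $\arch$ is non-redundant, \Cref{theorem:mainresults} yields
\[
\Bigl\|\mat{A} - \prod_{\ell=1}^L \mat{X}_\ell\Bigr\|_F \leq \sum_{k=1}^{L-1} E^{\splitarch{\arch}{\sigma_k}}(\mat{A}) \leq (L-1)\, E^\arch(\mat{A}),
\]
proving \eqref{eq:mainresults}. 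For identity $\sigma$, \Cref{theorem:bettermainresults} similarly yields $\|\mat{A} - \prod_\ell \mat{X}_\ell\|_F^2 \leq \sum_{k=1}^{L-1} [E^{\splitarch{\arch}{k}}(\mat{A})]^2 \leq (L-1)\,[E^\arch(\mat{A})]^2$, and taking square roots gives \eqref{eq:betterbound}.

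The remaining case $L\geq 2$ with $\arch$ redundant is handled via \Cref{rem:redundantcase}: apply \Cref{algo:redundancyremoval} to produce a non-redundant chainable $\arch'$ of depth $L' \leq L$ with $\setButterfly{\arch'} = \setButterfly{\arch}$ (so in particular $E^{\arch'}(\mat{A}) = E^\arch(\mat{A})$ by \Cref{prop:procedure-for-redundant-arch}), run \Cref{algo:modifedbutterflyalgo} on $(\mat{A},\arch',\sigma')$ for a valid permutation $\sigma'$, and lift the output back to $(\mat{X}_\ell)_{\ell=1}^L \in \setDBfactor{\arch}$ with identical product using \Cref{lemma:consequence-redundant} iteratively along the reduction steps. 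Applying the non-redundant case to $\arch'$ then yields the bound with constant $\max(L',2)-1 \leq \max(L,2)-1$, concluding the proof. I do not foresee any serious obstacle: the argument is essentially mechanical once \Cref{theorem:mainresults}, \Cref{theorem:bettermainresults}, and the redundancy-removal machinery are in place. The only mild subtlety is that the finer bound \eqref{eq:betterbound} is not expected to hold for arbitrary $\sigma$, which is why the statement restricts it to the identity permutation, where \Cref{theorem:bettermainresults} provides a Pythagorean-style estimate rather than a triangle inequality.
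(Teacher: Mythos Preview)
Your proposal is correct and follows essentially the same approach as the paper: the auxiliary inequality $E^{\splitarch{\arch}{s}}(\mat{A}) \leq E^\arch(\mat{A})$ is exactly \Cref{lem:lowerboundE-new}, and the case split on $L=1$ versus $L\geq 2$ combined with \Cref{theorem:mainresults}/\Cref{theorem:bettermainresults} matches the paper's argument. Your explicit treatment of the redundant case via \Cref{rem:redundantcase} and \Cref{prop:procedure-for-redundant-arch} is a welcome clarification, since the paper's written proof tacitly assumes non-redundancy and defers the redundant case to that remark.
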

{For $L \in \{1,2\}$ the output of \Cref{algo:modifedbutterflyalgo}  is thus indeed optimal.}
\Cref{tab:constant} summarizes the consequences of \Cref{cor:mainresults} for some standard examples of chainable $\arch$. {The constant $C_\arch$ scales linearly or sub-linearly with respect to ${L} = |\arch|$, the number of factors. Since most part of the existing architectures have length $\mathcal{O}(\log n)$ with $n$ the size of the matrix, the growth of $C_\arch$ is very slow in many practical cases.}

A result reminiscent of \Cref{cor:mainresults} appears in the quite different context of tensor train decomposition \cite[Corollary 2.4]{oseledets2011tensortrain}.
The proof of \Cref{cor:mainresults} has a similar structure and is based on the following lemma. First, we use the fact that the errors in \eqref{eq:firstlevelfact-new} lower bound the error in \eqref{eq:error-bound-intro}, by \Cref{def:firstlevelfactorization-new}.

\begin{lemma}
	\label{lem:lowerboundE-new}
    If the architecture $\arch$ of depth $L := | \arch |$ is chainable then 
    \begin{equation}
	   \label{eq:inclusionBtheta-new} 
    \forall s \in \intset{L-1}, \quad \setButterfly{\arch} \subseteq \setButterfly{\splitarch{\arch}{s}}.
\end{equation}
    Consequently, for any matrix $\mat{A}$ {the quantity $E^\arch(\mbf{A})$ defined in \eqref{eq:butterfly-approximation-pb} satisfies}: 
    \begin{equation}
        E^\arch(\mbf{A}) \geq \max_{1 \leq s \leq L - 1} E^{\splitarch{\arch}{s}}(\mat{A}).
    \end{equation}
\end{lemma}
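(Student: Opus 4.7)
The plan is to reduce the lemma to two already-established facts about chainable architectures: the associativity/well-definedness of the partial product $\pattern_q * \ldots * \pattern_t$ along a chainable sequence (\Cref{lem:seq-qchainability}) and the stability property stating that a product of \ksfs{} along a chainable sub-architecture is itself a \ksf{} with pattern given by the $*$-product (\Cref{lem:suppdbfactorprod}).

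First I would prove the set inclusion \eqref{eq:inclusionBtheta-new}. Fix $s \in \intset{L-1}$ and pick an arbitrary $\bB \in \setButterfly{\arch}$, so that $\bB = \bX_1 \cdots \bX_L$ with $\bX_\ell \in \setDBfactor{\pattern_\ell}$ for each $\ell \in \intset{L}$. Since $\arch$ is chainable, \Cref{lem:seq-qchainability} (applied with $q=1,\ t=L$) ensures that the two partial patterns $\pattern_1 * \ldots * \pattern_s$ and $\pattern_{s+1} * \ldots * \pattern_L$ are well-defined and that the pair $(\pattern_1 * \ldots * \pattern_s,\ \pattern_{s+1} * \ldots * \pattern_L)$ is chainable, i.e.\ $\splitarch{\arch}{s}$ is itself a chainable (two-factor) architecture. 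Moreover, the sub-architectures $(\pattern_1,\ldots,\pattern_s)$ and $(\pattern_{s+1},\ldots,\pattern_L)$ are chainable (as consecutive sub-sequences of a chainable sequence), so \Cref{lem:suppdbfactorprod} applied to each of them yields
\[
\bY_1 := \bX_1 \cdots \bX_s \in \setDBfactor{\pattern_1 * \ldots * \pattern_s},\qquad
\bY_2 := \bX_{s+1} \cdots \bX_L \in \setDBfactor{\pattern_{s+1} * \ldots * \pattern_L}.
\]
By associativity of the matrix product, $\bB = \bY_1 \bY_2$, hence $\bB \in \setButterfly{\splitarch{\arch}{s}}$ by the definition \eqref{eq:butterfly-factorization-intro}. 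This proves \eqref{eq:inclusionBtheta-new}.

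The consequence on the errors is then immediate from the set-theoretic monotonicity of an infimum: for any $s \in \intset{L-1}$, the inclusion $\setButterfly{\arch} \subseteq \setButterfly{\splitarch{\arch}{s}}$ implies
\[
E^{\arch}(\bA) \;=\; \inf_{\bB \in \setButterfly{\arch}} \|\bA-\bB\|_F^{\,\square} \;\geq\; \inf_{\bB \in \setButterfly{\splitarch{\arch}{s}}} \|\bA - \bB\|_F^{\,\square} \;=\; E^{\splitarch{\arch}{s}}(\bA),
\]
where the exponent $\square$ denotes whichever convention (squared or not) is used for the two errors\footnote{No special structure of the Frobenius norm is used; the argument depends only on the inclusion of the minimisation domains.}. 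Taking the maximum over $s \in \intset{L-1}$ on the right-hand side concludes the proof.

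There is no genuine obstacle here: the whole argument is a ``partial-product'' reformulation of $\bB = \bX_1 \cdots \bX_L$ as a two-factor product, and both ingredients (well-definedness of the $*$-product along the chain, and stability of \ksfs{} under such partial products) are already available from \Cref{lem:seq-qchainability,lem:suppdbfactorprod}. The only minor point to watch is the distinction between the squared Frobenius error used in \eqref{eq:butterfly-approximation-pb} and the non-squared one in \eqref{eq:firstlevelfact-new}; as the monotonicity of the infimum is insensitive to this, the inequality in the lemma holds under either convention.
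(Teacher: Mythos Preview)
Your proof is correct and follows essentially the same approach as the paper: decompose $\bB = \bX_1\cdots\bX_L$ into the two partial products $\bY_1 = \bX_1\cdots\bX_s$ and $\bY_2 = \bX_{s+1}\cdots\bX_L$, invoke \Cref{lem:suppdbfactorprod} to place each in the appropriate $\setDBfactor{\cdot}$, and deduce the error inequality from the resulting set inclusion. Your version is slightly more detailed (explicitly citing \Cref{lem:seq-qchainability} for well-definedness and spelling out the infimum monotonicity), but the argument is the same.
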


\begin{proof}
If $\mbf{B} \in \setButterfly{\arch}$, there exist $(\matseq{X}{\ell})_{\ell=1}^L \in \setDBfactor{\arch}$
 such that $\mbf{B} = \matseq{X}{1} \ldots \matseq{X}{L}$.
By \Cref{lem:suppdbfactorprod}, $\bX_1\ldots\bX_s \in \setDBfactor{(\pattern_1 * \ldots *\pattern_s)}$ and $\bX_{s+1}\ldots\bX_L \in \setDBfactor{(\pattern_{s+1} * \ldots * \pattern_L)}$. 
\end{proof}

\begin{proof}[Proof of \Cref{cor:mainresults}]
    We start by proving \eqref{eq:mainresults}. We consider two possibilities for the depth $L := |\arch|$ of the non-redundant, chainable architecture $\arch$:
    \begin{itemize}
        \item If $L = 1$: we have $\arch =\{\pattern\}$ for some pattern $\pattern$. The projection of $\bA$ onto $\setDBfactor{\pattern}$ is simply $\bA \odot \bS_{\pattern} \in \setDBfactor{\pattern}$, which is exactly the output computed by the algorithm. Hence the obtained factor $\bX_1$ satisfies $\|\mbf{A} - \matseq{X}{1}\|_F = E^{\arch}(\mbf{A})$.
        \item If $L \geq 2$: by \Cref{lem:lowerboundE-new} and \Cref{theorem:mainresults} $\|\mbf{A} - \matseq{X}{1} \ldots \matseq{X}{L} \|_F \leq (L-1) E^{\arch}(\mbf{A})$.
    \end{itemize}
    In both cases, we have $\|\mbf{A} - \matseq{X}{1} \ldots \matseq{X}{L} \|_F \leq ({\max(L,2)} - 1) E^{\arch}(\mbf{A})$.
    The proof for \eqref{eq:betterbound} is similar, the only difference being that: $\|\mbf{A} - \matseq{X}{1} \ldots \matseq{X}{L} \|_F^2 \leq ({\max(L,2)} - 1) [E^{\arch}(\mbf{A})]^2$. The result is proved by taking the square root on both sides. 
\end{proof}


\subsection{{Complementary} low-rank characterization of butterfly matrices}
\label{sec:low-rank-characterization-new}
{Another important consequence of}
\Cref{theorem:mainresults} is a characterization of matrices admitting an exact butterfly factorization associated with a chainable $\arch$.
{This allows (when $\arch$ is chainable) to {\em verify} whether or not a given matrix $\mat{A}$ admits exactly a butterfly factorization associated with $\arch$, by checking the rank of a polynomial number of specific submatrices of $\mat{A}$. This is feasible using SVDs, and contrasts with the \emph{synthesis} definition of $\setButterfly{\arch}$ given by \eqref{eq:butterfly-factorization-intro}, which is a priori harder to verify since it requires checking the {\em existence} of an exact factorization of $\mat{A}$.} 

\begin{definition}[Generalized complementary low-rank property]
\label{def:general-clr-new}
    Consider a chainable architecture $\arch := (\pattern_\ell)_{\ell=1}^L$.
    A matrix $\mat{A}$ satisfies the \emph{generalized complementary low-rank property} associated with $\arch$ if it satisfies:
         \begin{enumerate}
         \item $\supp(\mat{A}) \subseteq 
         \bS_{\pattern_1 * \ldots * \pattern_L}
         $;
         \item $\rank(\bA[R_P,C_P]) \leq r(\pattern_\ell, \pattern_{\ell + 1})$ for each $ P \in 
         \cP(
         \bS_{\pattern_1 * \ldots * \pattern_\ell},\bS_{\pattern_{\ell+1} * \ldots * \pattern_L}
         )$ and $\ell \in \intset{L-1}$ (with the notations of \Cref{def:classequivalence}, \Cref{def:chainableDBfour}).
     \end{enumerate}   
\end{definition}
\revision{An illustration of the complementary low-rank property is given in \Cref{fig:complementary-low-rank} of \Cref{app:low-rank-characterization}.}
We show in \Cref{cor:equivalence-clr} of \Cref{app:low-rank-characterization} that this generalized definition indeed coincides with the notion of a complementary low-rank property (\Cref{def:complementary-low-rank}) from the literature \cite{li2015butterfly}, for every architecture $\arch$ with patterns such that $a_1 = d_L=1$, i.e., architectures such that $\setButterfly{\arch}$ contains some dense matrices, see \Cref{rem:denseornot}.

The following results show that a matrix admits an exact butterfly factorization associated with $\arch$ if, and only if, it satisfies the associated generalized complementary low-rank property. Note that the complementary low-rank property induced by a chainable butterfly architecture requires the same low-rank constraint for all submatrices at the same level $\ell \in \intset{L}$, as opposed to the classical complementary low-rank property (\Cref{def:complementary-low-rank}) where these constraints can be different for the submatrices at a same level $\ell \in \intset{L}$.

\begin{corollary}[Characterization of $\setButterfly{\arch}$ for chainable $\arch$]
	\label{cor:characterizationofDBmatrix}
	If $\arch := (\pattern_\ell)_{\ell=1}^L$ is chainable with $L \geq 2$ then, with the notations of \Cref{def:firstlevelfactorization-new} and \Cref{lem:prodDB2}:
	\begin{equation}
    \label{eq:low-rank-characterization-new}
	   {\setButterfly{\arch} = \bigcap_{\ell = 1}^{L - 1} \setButterfly{\splitarch{\arch}{\ell}} = \setDBfactor{(\pattern_1 * \ldots * \pattern_L)} \cap \bigcap_{\ell=1}^{L-1} \setblocklowrank{\splitarch{\arch}{\ell}}.}
	\end{equation}
\end{corollary}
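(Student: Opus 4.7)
The plan is to split the two stated equalities and handle them independently, then address the first equality by proving the two inclusions separately, reducing the redundant case to the non-redundant one via the redundancy removal procedure.

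For the second equality, the key observation is that each split $\splitarch{\arch}{\ell} = (\pattern_1 * \ldots * \pattern_\ell,\, \pattern_{\ell+1} * \ldots * \pattern_L)$ is a chainable pair of patterns by \Cref{lem:seq-qchainability}. Applying \Cref{lem:prodDB2} to this pair yields
\begin{equation*}
    \setButterfly{\splitarch{\arch}{\ell}} = \setDBfactor{(\pattern_1 * \ldots * \pattern_\ell) * (\pattern_{\ell+1} * \ldots * \pattern_L)} \cap \setblocklowrank{\splitarch{\arch}{\ell}}.
\end{equation*}
By the associativity of $*$ (\Cref{lem:associativity}, extended inductively), the first factor set equals $\setDBfactor{(\pattern_1 * \ldots * \pattern_L)}$, which does not depend on $\ell$. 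Intersecting over $\ell \in \intset{L-1}$ and factoring out the common set immediately gives the second equality.

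For the forward inclusion $\setButterfly{\arch} \subseteq \bigcap_\ell \setButterfly{\splitarch{\arch}{\ell}}$ of the first equality, the argument is already contained in \Cref{lem:lowerboundE-new}. For the reverse inclusion, I first treat the non-redundant case. Assume $\arch$ is non-redundant and $\mat{A} \in \bigcap_{\ell=1}^{L-1} \setButterfly{\splitarch{\arch}{\ell}}$; then by definition of $E^{\splitarch{\arch}{\ell}}$ we have $E^{\splitarch{\arch}{\ell}}(\mat{A}) = 0$ for every $\ell \in \intset{L-1}$. Running \Cref{algo:modifedbutterflyalgo} on $(\mat{A}, \arch, \sigma)$ with any permutation $\sigma$ (for instance the identity) produces factors $(\mat{X}_\ell)_{\ell=1}^L \in \setDBfactor{\arch}$ which, by either \Cref{theorem:mainresults} or \Cref{theorem:bettermainresults}, satisfy $\|\mat{A} - \mat{X}_1 \ldots \mat{X}_L\|_F = 0$. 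Hence $\mat{A} = \mat{X}_1 \ldots \mat{X}_L \in \setButterfly{\arch}$.

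The remaining case is when $\arch$ is redundant. Apply \Cref{algo:redundancyremoval} to obtain a non-redundant chainable architecture $\arch' = (\pattern_{\intset{q_1,t_1}}, \ldots, \pattern_{\intset{q_{L'},t_{L'}}})$ with $\setButterfly{\arch'} = \setButterfly{\arch}$, via \Cref{prop:procedure-for-redundant-arch}. If $L'=1$, the hypothesis already gives $\mat{A} \in \setDBfactor{\pattern_1 * \ldots * \pattern_L} = \setButterfly{\arch'} = \setButterfly{\arch}$, using the second equality proved above. If $L' \geq 2$, observe that for each $\ell' \in \intset{L'-1}$, associativity of $*$ yields $\splitarch{\arch'}{\ell'} = \splitarch{\arch}{t_{\ell'}}$, so by hypothesis $\mat{A} \in \setButterfly{\splitarch{\arch'}{\ell'}}$. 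Applying the non-redundant case to $\arch'$ gives $\mat{A} \in \setButterfly{\arch'} = \setButterfly{\arch}$. The main conceptual subtlety — and really the only obstacle — is in this last step: one must verify that the splits of the reduced architecture $\arch'$ are a subfamily of the splits of $\arch$, which is precisely what associativity of $*$ ensures. Everything else is a direct application of results already established.
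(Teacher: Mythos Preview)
Your proof is correct and follows essentially the same approach as the paper: the second equality via \Cref{lem:prodDB2} and associativity, the forward inclusion from \Cref{lem:lowerboundE-new}, and the reverse inclusion by running \Cref{algo:modifedbutterflyalgo} in the non-redundant case (using \Cref{theorem:mainresults}) and reducing the redundant case via \Cref{algo:redundancyremoval}. Your treatment is in fact slightly more careful than the paper's, as you explicitly handle the case $L'=1$ after redundancy removal, which the paper leaves implicit.
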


\begin{proof}
    The second equality in \eqref{eq:low-rank-characterization-new} is a reformulation based on \Cref{lem:prodDB2}, so it only remains to prove the first equality.
    The inclusion $\setButterfly{\arch} \subseteq \bigcap_{\ell = 1}^{L - 1} \setButterfly{\splitarch{\arch}{\ell}}$ is a consequence of \Cref{lem:lowerboundE-new}. We now prove the other inclusion.
    
    First, consider the case of a non-redundant $\arch$.
    For $\bA \in \bigcap_{\ell = 1}^{L - 1} \setButterfly{\splitarch{\arch}{\ell}}$, we have $E^{\splitarch{\arch}{\ell}}(\mat{A}) = 0$ for each $\ell \in \intset{L-1}$. By \Cref{theorem:mainresults}, \Cref{algo:modifedbutterflyalgo} with inputs $(\mat{A}, \arch, \sigma)$ and arbitrary permutation $\sigma$ returns $(\matseq{X}{\ell})_{\ell=1}^L \in \setDBfactor{\arch}$ such that  $\|\bA - \matseq{X}{1} \ldots \matseq{X}{L}\|_F \leq \sum_{\ell=1}^{L-1} E^{\splitarch{\arch}{\ell}}(\mat{A}) = 0$. Thus, $\|\bA - \matseq{X}{1} \ldots \matseq{X}{L}\|_F = 0$ and $\bA = \matseq{X}{1} \ldots \matseq{X}{L} \in \setButterfly{\arch}$. This proves $\bigcap_{\ell = 1}^{L - 1} \setButterfly{\splitarch{\arch}{\ell}} \subseteq \setButterfly{\arch}$.

    For redundant $\arch$, consider $\arch' =(\pattern'_\ell)_{\ell=1}^{L'}$ returned by the redundancy removal algorithm (\Cref{algo:redundancyremoval}) with input $\arch$. By \Cref{prop:procedure-for-redundant-arch}: $\setButterfly{\arch'}=\setButterfly{\arch}$. Moreover, by the same proposition, $\arch'$ is of the form $(\pattern_1 * \ldots * \pattern_{\ell_1}, \pattern_{\ell_1+1} * \ldots * \pattern_{\ell_2}, \ldots, \pattern_{\ell_p+1} * \ldots * \pattern_L)$ for some indices $1 \leq \ell_1 < \ldots < \ell_p < L$ with $p \in \intset{L-1}$. Therefore, for any $s \in {\intset{L'-1}}$, there exists  $\ell(s) \in {\intset{L - 1}}$
    such that 
    $\splitarch{\arch'}{{s}} = \splitarch{\arch}{\ell(s)}$,
{by associativity of the operator $*$ (\Cref{lem:associativity}).} Thus,
    \begin{equation*}
        \bigcap_{\ell = 1}^{L-1} \setButterfly{\splitarch{\arch}{\ell}} \subseteq 
        \bigcap_{s=1}^{L'-1}
        \setButterfly{\splitarch{\arch}{\ell(s)}} 
        =
        \bigcap_{s=1}^{L'-1}
        \setButterfly{\splitarch{\arch'}{s}} 
        = \setButterfly{\arch'} = \setButterfly{\arch},
    \end{equation*}
    where in the first equality we used the result proved above for non-redundant $\arch'$.
\end{proof}

\subsection{Existence of an optimum}
\Cref{cor:characterizationofDBmatrix} 
also allows us to prove the existence of optimal solutions for Problem \eqref{eq:butterfly-approximation-pb} when $\arch$ is chainable.

\begin{theorem} [Existence of optimum in butterfly approximation]
    \label{cor:optimum-exists-in-bf}
    If $\arch$ is chainable, then for any target matrix $\bA$, Problem \eqref{eq:butterfly-approximation-pb} admits a minimizer.
\end{theorem}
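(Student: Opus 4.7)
The plan is to establish that $\setButterfly{\arch}$ is a closed subset of the ambient matrix space, and then conclude by a standard compactness argument on the coercive continuous map $\mbf{B} \mapsto \|\mbf{A} - \mbf{B}\|_F$. The case $L := |\arch| = 1$ is immediate since by convention any single-pattern architecture is chainable, and $\setButterfly{\arch} = \setDBfactor{\pattern_1}$ is a linear subspace (the set of matrices whose support is contained in $\bS_{\pattern_1}$), hence closed; the orthogonal projection $\mat{A} \odot \bS_{\pattern_1}$ is the minimizer.

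For $L \geq 2$, I would invoke \Cref{cor:characterizationofDBmatrix} to rewrite
\begin{equation*}
\setButterfly{\arch} = \setDBfactor{(\pattern_1 * \ldots * \pattern_L)} \cap \bigcap_{\ell=1}^{L-1} \setblocklowrank{\splitarch{\arch}{\ell}}.
\end{equation*}
Each set in this intersection is closed: $\setDBfactor{(\pattern_1 * \ldots * \pattern_L)}$ is a linear subspace, and each $\setblocklowrank{\splitarch{\arch}{\ell}}$ (as defined in \Cref{lem:prodDB2}) is the preimage, under the continuous restriction maps $\mat{M} \mapsto \mat{M}[R_P,C_P]$, of the set of matrices of rank at most $r(\pattern_\ell,\pattern_{\ell+1})$ --- a classical closed set, since rank-at-most-$r$ is defined by the vanishing of all $(r+1)\times(r+1)$ minors. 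Thus $\setButterfly{\arch}$ is closed as an intersection of finitely many closed sets.

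Finally, since $\setButterfly{\arch}$ is non-empty (for instance, it contains the zero matrix), we let $\alpha := \inf_{\mbf{B} \in \setButterfly{\arch}} \|\mbf{A} - \mbf{B}\|_F \leq \|\mbf{A}\|_F$, and observe that the infimum can be restricted to the set $\setButterfly{\arch} \cap \{\mbf{B} : \|\mbf{B}\|_F \leq 2\|\mbf{A}\|_F\}$, which is a closed and bounded subset of a finite-dimensional normed space, hence compact. The map $\mbf{B} \mapsto \|\mbf{A} - \mbf{B}\|_F$ is continuous on this compact set, so the infimum is attained.

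The main (essentially the only) conceptual step is recognizing that the chainability hypothesis is exactly what enables \Cref{cor:characterizationofDBmatrix} to express $\setButterfly{\arch}$ via support and rank constraints, so that closedness follows from elementary properties rather than from a direct analysis of the synthesis definition in \eqref{eq:butterfly-factorization-intro} (where closedness would be less obvious, given that the set of products of matrices with prescribed supports need not be closed in general).
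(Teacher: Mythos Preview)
Your proof is correct and follows essentially the same approach as the paper: use \Cref{cor:characterizationofDBmatrix} to exhibit $\setButterfly{\arch}$ as a finite intersection of closed sets (a linear support constraint and block low-rank constraints), conclude closedness, and then invoke existence of a projection onto a non-empty closed set in finite dimension. The paper's version is terser (it omits the explicit $L=1$ case and the compactness details), but the substance is identical.
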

\begin{proof}
    The set of matrices of rank smaller than a fixed constant is closed, and closed sets are stable under finite intersection, so by the characterization of $\setButterfly{\arch}$ from \Cref{cor:characterizationofDBmatrix}, the set $\setButterfly{\arch}$ is closed. Therefore, Problem \eqref{eq:butterfly-approximation-pb} is equivalent to a projection problem on the non-empty (it contains the zero matrix) closed set $\setButterfly{\arch}$, hence it always admits a minimizer.
\end{proof}

The rest of the section is dedicated to the proofs of \Cref{theorem:mainresults,theorem:bettermainresults}. Readers more interested in numerical aspects of the proposed butterfly algorithms can directly jump to \Cref{sec:experimentbutterfly}.

\subsection{Proof of \Cref{theorem:mainresults,theorem:bettermainresults}}
\label{sec:proof-mainresults-new}
{Consider an iteration number $J \in \intset{L-1}$, and denote  $(I_k)_{k=1}^J$ the partition obtained after line \ref{line:partition_update} and}
$({\bX}_{I_k})_{k = 1}^J$ the list $\texttt{factors}$ obtained \emph{after} the pseudo-orthonormalization operations in lines~\ref{line:beginfor1}-\ref{line:endfor2}, 
at the $J$-th iteration of \Cref{algo:modifedbutterflyalgo}. 
With $s := \sigma_J$ and $j$ defined in line~\ref{line:defjmodifiedalgo} {of \Cref{algo:modifedbutterflyalgo}} and
 $\intset{q,t} := I_j$, 
 denote
\begin{align}
    \bX_{\tleft}^{(J)} := {\bX}_{I_1} \ldots {\bX}_{I_{j - 1}}, \label{eq:productleftright-new}
    \qquad
    \bX_{\tright}^{(J)} := {\bX}_{I_{j+1}} \ldots {\bX}_{I_J}, 
\end{align}
with {the} convention that $\bX_{\tleft}^{(J)}$ (resp.~$\bX_{\tright}^{(J)}$) is the identity matrix of size $a_1b_1d_1$ if $j = 1$ (resp.~$a_L c_L d_L$ if $j = J$).
We also denote $(\matseq{X}{\intset{q,s}}, \matseq{X}{\intset{s+1, t}})$ the matrices computed in line~\ref{line:fsmfmodifiedalgo} {of \Cref{algo:modifedbutterflyalgo}}, as well as
\begin{equation}
\label{eq:DefBJ}
{\mbf{B}}_J := \mat{X}_{\tleft}^{(J)} \matseq{X}{\intset{q,s}} \matseq{X}{\intset{s+1, t}} \mat{X}_{\tright}^{(J)}
\end{equation}
and $R_J := \|\mbf{A} - {\mbf{B}}_J \|_F$. 
{Note that ${\mbf{B}}_J$ is the product of butterfly factors in the list \texttt{factors} at the end of the iteration $J$ (line~\ref{line:list_factors_at_end}).}
In particular, ${\mat{B}}_{L-1} \in \setButterfly{\arch}$ is the product of the butterfly factors returned by the algorithm after $L-1$ iterations. By convention we also define ${\mbf{B}}_0 := \mat{A}$ and $R_0 := 0$. 

Our goal is to control $R_{L-1} = \|\mbf{A} - {\mbf{B}}_{L-1} \|_F$. To this end, it is sufficient to track the evolution of the sequence $(\bB_0, \ldots, \bB_{L-1})$. The following lemma enables a description for the relation between two consecutive matrices $\bB_{J-1}$ and $\bB_J$, $1 \leq J \leq L - 1$.

\begin{lemma}
    \label{lemma:orthogonal-left-n-right}
    Consider a chainable architecture $\arch:= (\pattern_\ell)_{\ell = 1}^L$ and a partition of $\intset{L}$ into consecutive intervals $\{I_1, \ldots, I_J\}$. For each $i \in \intset{J}$, let $I_i = \intset{q_i,t_i}$, $\bX_{I_i}$ be a $(\pattern_{q_i} * 
    \ldots * \pattern_{t_i})$-factor  {and $\bB := \bX_{I_1}\ldots\bX_{I_{J}}$}. Given $j \in \intset{J-1}$, if each $\bX_{I_i}$ for $i = 1, \ldots, j - 1$ is left-$r(\pattern_{t_i},\pattern_{t_{i} + 1})$-unitary, and if each $\bX_{I_i}$ for $i = j + 1, \ldots, J$ is right-$r(\pattern_{q_{i} - 1}, \pattern_{q_{i}})$-unitary, then for any optimal factorization ({solution} of \eqref{eq:fsmf-DB}) $(\matseq{X}{\intset{q,s}}, \matseq{X}{\intset{s+1, t}})$ of $\bX_{I_j}$ with $q = q_j, t = t_j$, 
    {the matrix $$\bB' := (\bX_{I_{1}}\ldots\bX_{I_{j-1}})\matseq{X}{\intset{q,s}} \matseq{X}{\intset{s+1, t}}(\bX_{I_{j+1}}\ldots\bX_{I_J})$$ satisfies}
    \begin{equation}\label{eq:BJprojection}
    \bB' \in \cB^{\arch_{s}}\quad \text{and}\quad 
        \|\bB' - \bB\|_F = E^{{\arch_s}}(\bB),
    \end{equation}
    with $\arch_s$ as in \Cref{def:firstlevelfactorization-new}.
\end{lemma}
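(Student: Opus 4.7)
The first claim $\bB' \in \cB^{\arch_s}$ follows from associativity of the $*$-operator. Writing
$$\bB' = (\bX_{I_1}\ldots\bX_{I_{j-1}}\bX_{\intset{q,s}})\,(\bX_{\intset{s+1,t}}\bX_{I_{j+1}}\ldots\bX_{I_J}),$$
the left group is a product of Kronecker-sparse factors whose patterns concatenate via $*$ into $\pattern_1*\ldots*\pattern_s$ (using \Cref{lem:associativity} iteratively and chainability of $\arch$); the right group similarly yields a $\pattern_{s+1}*\ldots*\pattern_L$-factor. Therefore $\bB'$ belongs to $\setButterfly{\arch_s}$ by \Cref{def:dbarchitecture}.

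For the norm identity, set $L_1 := \bX_{I_1}\ldots\bX_{I_{j-1}}$ and $L_2 := \bX_{I_{j+1}}\ldots\bX_{I_J}$ (with the convention that $L_1$ or $L_2$ is the identity when $j=1$ or $j=J$). Then
$$\bB-\bB' \;=\; L_1\,\bigl(\bX_{I_j}-\bX_{\intset{q,s}}\bX_{\intset{s+1,t}}\bigr)\,L_2.$$
By the composition/stability properties of left- and right-$r$-unitary factors (cf.~\Cref{rem:leftrightnotplainunitary} and \Cref{app:orthonormalDB}), $L_1$ is left-$r(\pattern_{t_{j-1}},\pattern_{q_j})$-unitary and $L_2$ is right-$r(\pattern_{t_j},\pattern_{q_{j+1}})$-unitary; moreover by \Cref{lem:seq-qchainability} these $r$-values coincide with $r(\pattern_{I_{j-1}},\pattern_{I_j})$ and $r(\pattern_{I_j},\pattern_{I_{j+1}})$. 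Since the middle matrix is itself a $\pattern_{I_j}$-factor, two successive applications of \Cref{def:q-unitary} give
$$\|\bB-\bB'\|_F \;=\; \bigl\|\bX_{I_j}-\bX_{\intset{q,s}}\bX_{\intset{s+1,t}}\bigr\|_F.$$
By optimality of $(\bX_{\intset{q,s}},\bX_{\intset{s+1,t}})$ in \eqref{eq:fsmf-DB}, the right-hand side equals $E^{(\pattern_{\intset{q,s}},\pattern_{\intset{s+1,t}})}(\bX_{I_j})$.

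It remains to show that this quantity equals $E^{\arch_s}(\bB)$. The plan is to use the explicit block-wise formula of \Cref{theorem:tractablefsmf} applied to $(\bL,\bR)=(\bS_{\pattern_1*\ldots*\pattern_s},\bS_{\pattern_{s+1}*\ldots*\pattern_L})$: since $\bB\in\setButterfly{\arch}\subseteq\setDBfactor{\pattern_1*\ldots*\pattern_L}$ by \Cref{lem:suppdbfactorprod}, the off-support constant vanishes, giving
$$\bigl[E^{\arch_s}(\bB)\bigr]^2 \;=\; \sum_{P\in\cP(\bS_{\pattern_1*\ldots*\pattern_s},\bS_{\pattern_{s+1}*\ldots*\pattern_L})} \rankprojerrsq{|P|}{\bB[R_P,C_P]}.$$
For each equivalence class $P$, the block structure of Kronecker-sparse factors allows us to write $\bB[R_P,C_P] = L_1[R_P,:]\,\bX_{I_j}[I_P,J_P]\,L_2[:,C_P]$ for suitable row-index set $I_P$ and column-index set $J_P$ that depend on the Kronecker block containing $P$. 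The goal is then to show that pseudo-orthonormality of $L_1,L_2$ translates, block by block, into column-orthonormality of $L_1[R_P,:]$ and row-orthonormality of $L_2[:,C_P]$, so that by the standard invariance of best low-rank approximation under orthonormal pre/post-multiplication, $\rankprojerrsq{|P|}{\bB[R_P,C_P]} = \rankprojerrsq{|P|}{\bX_{I_j}[I_P,J_P]}$. Summing over $P$ and using a bijection between the equivalence classes of the outer split and those of $\cP(\bS_{\pattern_{\intset{q,s}}},\bS_{\pattern_{\intset{s+1,t}}})$ on $\bX_{I_j}$ (which preserves cardinalities $|P|$ thanks to \Cref{lem:seq-qchainability}), a second application of \Cref{theorem:tractablefsmf} identifies the total with $[E^{(\pattern_{\intset{q,s}},\pattern_{\intset{s+1,t}})}(\bX_{I_j})]^2$, completing the proof.

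The main obstacle is the technical block-bookkeeping in the third paragraph: rigorously showing that the purely scalar Frobenius-norm-preservation property defining ``left/right-$r$-unitary'' actually upgrades, upon restriction to the row/column supports of each equivalence class $P$, to genuine column- or row-orthonormality of the appropriate submatrix of $L_1$ or $L_2$. This will likely require unfolding the Kronecker structure $\mat{I}_a\otimes\mat{1}_{b\times c}\otimes\mat{I}_d$ of the intermediate patterns produced by the $*$-operator, and may be most cleanly handled by expressing the restrictions as Kronecker products of identity matrices with a genuinely column-orthonormal ``core'', so that their action on the corresponding block of $\bX_{I_j}$ preserves the singular values needed for the low-rank approximation argument.
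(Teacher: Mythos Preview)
Your outline is essentially the paper's proof: the paper isolates your third paragraph as a separate lemma (\Cref{lem:originalkeylemma-new}) and proves it exactly along the lines you sketch. The ``main obstacle'' you flag is resolved cleanly by the characterization in \Cref{theorem:orthonormalDBfactor}, which says a $\pattern$-factor is left-$r$-unitary \emph{if and only if} each block $\bX[R_P,P]$, $P\in\cP_{\mathtt{col}}(\pattern,r)$, has orthonormal columns; combined with the fact (\Cref{lem:invariance-partition-col,cor:invariant_pc}) that the inner partition $\cP(\bS_{\pattern_{\intset{q,s}}},\bS_{\pattern_{\intset{s+1,t}}})$ and the outer partition $\cP(\bS_{\pattern_1*\ldots*\pattern_s},\bS_{\pattern_{s+1}*\ldots*\pattern_L})$ both equal $\cP_{\mathtt{col}}(\pattern_s,r)$, this yields the genuine block-orthonormality (\Cref{cor:orthonormalsubmatrix-readytouse}) needed for the singular-value invariance. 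One small slip: your block identity should read $L_1[\col{R_P}]\,\bX_{I_j}[R_P,C_P]\,L_2[\row{C_P}]$ (column-restriction of $L_1$, row-restriction of $L_2$, with $R_P,C_P$ from the \emph{inner} partition), not $L_1[R_P,:]$.
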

This lemma {(proved in \Cref{appendix:keylemmaproof-new})} has a direct corollary ({obtained} by combining it with \Cref{lemma:role-pseudo-orthogonality}, {which ensures that each factor $\mathbf{X}_{I_i}$ is indeed $r$-unitary as needed}).
\begin{lemma}
    \label{lemma:relation-bl-bl+1}
    With the setting {of} {\Cref{theorem:mainresults}}, 
    for $J \in \intset{L-1}$, {the matrix} $\bB_J$ {defined in \eqref{eq:DefBJ}} is {a projection} of $\bB_{J - 1}$ onto $\cB^{\arch_{\sigma_J}}$ (cf.~\Cref{def:firstlevelfactorization-new}), i.e., 
    \begin{equation*}
    \bB_J \in \cB^{\arch_{\sigma_J}}\quad \text{and}\quad 
        \|\bB_{J-1} - \bB_J\|_F = E^{
        \arch
        _{\sigma_J}}(\bB_{J-1}).
    \end{equation*}
\end{lemma}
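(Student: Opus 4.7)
The plan is to apply \Cref{lemma:orthogonal-left-n-right} with $\bB := \bB_{J-1}$, the partition $(I_k)_{k=1}^J$ and the orthonormalized factors $({\bX}_{I_k})_{k=1}^J$ available at the $J$-th iteration of \Cref{algo:modifedbutterflyalgo}. For this to yield exactly the matrix $\bB_J$ defined in \eqref{eq:DefBJ}, three conditions have to be verified: (i) the product of the orthonormalized factors equals $\bB_{J-1}$; (ii) the unitarity assumptions required by \Cref{lemma:orthogonal-left-n-right} are satisfied; (iii) the pair $(\matseq{X}{\intset{q,s}}, \matseq{X}{\intset{s+1,t}})$ produced in line \ref{line:fsmfmodifiedalgo} is an optimal solution to the associated instance of Problem \eqref{eq:fsmf-DB}.

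For (i), I will proceed by induction on $J$. At $J=1$, no pseudo-orthonormalization is performed because $j=1$ forces both for-loops in lines \ref{line:beginfor1}--\ref{line:endfor2} to be empty, and $\bX_{I_1}=\bA=\bB_0$ by the initialization in line \ref{line:factorinit}. For $J \geq 2$, the list \texttt{factors} read at line \ref{line:factors_before_ortho} is exactly the list assembled at line \ref{line:list_factors_at_end} of the previous iteration, whose product equals $\bB_{J-1}$ by \eqref{eq:DefBJ}. The complementary fact is that each call to \Cref{algo:exchange} only rescales a consecutive pair of factors without changing their product, so the product of the list is preserved throughout lines \ref{line:beginfor1}--\ref{line:endfor2} and remains equal to $\bB_{J-1}$.

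For (ii), I will invoke \Cref{lemma:role-pseudo-orthogonality} verbatim: after the pseudo-orthonormalization operations, ${\bX}_{I_i}$ is left-$r(\pattern_{t_i},\pattern_{t_i+1})$-unitary for $i<j$ and right-$r(\pattern_{q_i-1},\pattern_{q_i})$-unitary for $i>j$, which is precisely what \Cref{lemma:orthogonal-left-n-right} requires. For (iii), since $\arch$ is chainable, by \Cref{lem:seq-qchainability} so is the pair $(\pattern_{\tleft},\pattern_{\tright})$ appearing in line \ref{line:fsmfmodifiedalgo}; hence \Cref{lem:qperfectcovering} ensures that \Cref{algo:algorithm1} returns an optimal solution to the corresponding instance of \eqref{eq:fsmf-DB}.

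With (i)--(iii) in hand, \Cref{lemma:orthogonal-left-n-right} yields $\bB' \in \cB^{\arch_{\sigma_J}}$ together with $\|\bB' - \bB_{J-1}\|_F = E^{\arch_{\sigma_J}}(\bB_{J-1})$; recognizing that $\bB' = \bB_J$ through \eqref{eq:productleftright-new} and \eqref{eq:DefBJ} closes the argument. I expect the main obstacle to be (i): the product-preservation property of \Cref{algo:exchange} is not stated explicitly in the main text and must be read off the detailed specification of that subroutine in \Cref{app:orthonormalDB}; once it is established, everything else is a clean combination of previously proved results.
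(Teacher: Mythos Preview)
Your proof is correct and follows essentially the same approach as the paper, which describes \Cref{lemma:relation-bl-bl+1} as a direct corollary of \Cref{lemma:orthogonal-left-n-right} combined with \Cref{lemma:role-pseudo-orthogonality}. You are simply more explicit than the paper about the auxiliary facts needed: the product-preservation property of \Cref{algo:exchange} (which the paper states as part of \Cref{lemma:exchange} in \Cref{app:orthonormalDB}) to ensure the orthonormalized factors still multiply to $\bB_{J-1}$, and \Cref{lem:qperfectcovering} to ensure the pair returned by \Cref{algo:algorithm1} is optimal.
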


{These two lemmas show the role of the pseudo-orthonormalization, since without this operation and its consequence, i.e., \Cref{lemma:role-pseudo-orthogonality}, {the conclusions of} \Cref{lemma:relation-bl-bl+1} would not hold. Thus, we can describe the sequence $\bB_j$ for $j = 1, \ldots, L - 1$ as {a sequence of subsequent projections}:
\begin{equation*}
    \bB_0 = \bA \overset{\text{Proj}_{\cB^{\arch_{\sigma_1}}}}{\longrightarrow} \bB_1 \overset{\text{Proj}_{\cB^{\arch_{\sigma_2}}}}{\longrightarrow} \ldots \overset{\text{Proj}_{\cB^{\arch_{\sigma_{L-1}}}}}{\longrightarrow} \bB_{L-1} \in \cB^\arch.
\end{equation*}
where $\text{Proj}_S$ is the projection operator onto the set $S$. We note that since there might be more than one projector, $\text{Proj}_S$ is a set-valued mapping.}

Moreover, the sequence of architectures $(\arch_s)_{s=1}^{L-1}$ defined in \Cref{def:firstlevelfactorization-new} possesses another nice property, described in the following lemma:

\begin{lemma}
    \label{lemma:relation-projectors}
    Consider a chainable architecture $\arch = (\pattern_\ell)_{\ell = 1}^L$, a matrix $\bM$ of appropriate size, and integers $s,q \in \intset{L - 1}$. If $\bN$ is a projection of $\bM$ onto $\cB^{\arch_{q}}$ then
    \begin{equation}
        E^{\arch_s}(\bM) \geq E^{\arch_s}(\bN)
    \end{equation}
\end{lemma}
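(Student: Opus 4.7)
The plan is to reduce the inequality to a block-wise comparison via the explicit formula of \Cref{theorem:tractablefsmf}.

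The two-factor architecture $\splitarch{\arch}{s} = (\pi^L, \pi^R)$ with $\pi^L := \pattern_1 * \ldots * \pattern_s$ and $\pi^R := \pattern_{s+1} * \ldots * \pattern_L$ is chainable by \Cref{lem:seq-qchainability}; applying \Cref{theorem:tractablefsmf} yields, for any matrix $\bX$ of appropriate size,
\begin{equation*}
[E^{\splitarch{\arch}{s}}(\bX)]^2 = \sum_{P \in \cP(\bS_{\pi^L}, \bS_{\pi^R})} \rankprojerrsq{r_s}{\bX[R_P, C_P]} + c_s(\bX),
\end{equation*}
where $r_s := r(\pi^L, \pi^R)$ and $c_s(\bX) := \sum_{(i,j) \notin \supp(\bS_{\pi^L} \bS_{\pi^R})} \bX[i,j]^2$. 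By \Cref{lem:prodDBBis}, $\supp(\bS_{\pi^L} \bS_{\pi^R}) = \supp(\bS_{\pattern_1 * \ldots * \pattern_L})$, a set that does not depend on $s$. Since $\bN \in \setButterfly{\splitarch{\arch}{q}} \subseteq \setDBfactor{\pattern_1 * \ldots * \pattern_L}$ by \Cref{lem:suppdbfactorprod}, we have $c_s(\bN) = 0 \leq c_s(\bM)$. It thus suffices to prove, for each equivalence class $P \in \cP(\bS_{\pi^L}, \bS_{\pi^R})$, the local inequality $\rankprojerrsq{r_s}{\bN[R_P, C_P]} \leq \rankprojerrsq{r_s}{\bM[R_P, C_P]}$.

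The case $s = q$ is trivial since $\bN \in \setButterfly{\splitarch{\arch}{s}}$ gives $E^{\splitarch{\arch}{s}}(\bN) = 0$. For $s \neq q$, without loss of generality assume $s < q$ (the other case is symmetric). Direct inspection of the Kronecker structure of the patterns at both splittings via \Cref{lem:suppdbfactorprod}, combined with the chainability divisibility conditions $d_q \mid d_s$ and $a_{s+1} \mid a_{q+1}$, shows that the row blocks associated with $\splitarch{\arch}{s}$ refine those associated with $\splitarch{\arch}{q}$, while the column blocks associated with $\splitarch{\arch}{q}$ refine those associated with $\splitarch{\arch}{s}$. Consequently, each block $R_P \times C_P$ of $\splitarch{\arch}{s}$ decomposes into disjoint vertical stripes, each being a row-restriction of a single block $R_Q \times C_Q$ of $\splitarch{\arch}{q}$. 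On each such stripe, $\bN$ equals the corresponding row-restriction of a best rank-$r_q$ approximation of $\bM[R_Q, C_Q]$ (thanks to the formula of \Cref{theorem:tractablefsmf} applied to $\splitarch{\arch}{q}$), and the block-wise Pythagorean identity of truncated SVD applies.

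The remaining step, the local inequality $\rankprojerrsq{r_s}{\bN[R_P, C_P]} \leq \rankprojerrsq{r_s}{\bM[R_P, C_P]}$, is the main obstacle. A natural strategy is to build a rank-$r_s$ approximation of $\bN[R_P, C_P]$ from a best rank-$r_s$ approximation $\bE^{\star}$ of $\bM[R_P, C_P]$ by applying the same stripe-wise truncated-SVD procedure, and to control the Frobenius distance using the block-wise Pythagorean identity together with the fact that all stripes share the same row index set $R_P$. The delicate point is to rigorously verify that this construction yields a matrix of rank at most $r_s$ while preserving the error bound; this likely requires a dedicated matrix-analytic lemma describing how stripe-wise rank-$r_q$ truncation interacts with global rank-$r_s$ structure over a refined block partition.
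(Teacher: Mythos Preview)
Your reduction to block-wise inequalities is correct and matches the paper: the formula of \Cref{theorem:tractablefsmf}, the vanishing of $c_s(\bN)$, and the refinement relation between the $\splitarch{\arch}{s}$-blocks and the $\splitarch{\arch}{q}$-blocks (your ``vertical stripe'' picture) are exactly what the paper establishes. The gap is in the last paragraph, and you have correctly flagged it as the obstacle.

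Your proposed strategy, however, is unlikely to close it. Taking a best rank-$r_s$ approximation $\bE^\star$ of $\bM[R_P,C_P]$ and then applying ``the same stripe-wise truncated-SVD procedure'' does not in general produce a matrix of rank at most $r_s$: stripe-wise rank-$r_q$ truncation acts independently on each vertical stripe, and there is no mechanism forcing the resulting stripes to share an $r_s$-dimensional column space. Even if each stripe of $\bE^\star$ has rank $\leq r_s$ to begin with, truncating the parent $\arch_q$-block and then restricting to $R_P$ can produce stripes whose column spaces are unrelated to one another. So the ``construction'' route stalls precisely at the point you identified.

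The paper takes a different, non-constructive route. Writing $\bU = \bM[R_P,C_P]$, $\bV = \bN[R_P,C_P]$, $\bW = \bU - \bV$, the key observation is the orthogonality $\bV\bW^\top = \mbf{0}$ (for $s<q$; for $s>q$ one uses $\bV^\top\bW=\mbf{0}$). This holds stripe by stripe because on each $\arch_q$-block $R_Q\times C_Q$ the truncated SVD satisfies $\rowrange(\bM[R_Q,C_Q]-\bN[R_Q,C_Q]) \perp \rowrange(\bN[R_Q,C_Q])$, and each stripe $R_P\times C_Q$ is a row-restriction of such a block. The orthogonality gives $\bU\bU^\top = \bV\bV^\top + \bW\bW^\top$ as a sum of PSD matrices. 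One then invokes the spectral formula $\rankprojerrsq{r}{\bU} = \Tr(\bU\bU^\top)-\sum_{i=1}^r \lambda_i(\bU\bU^\top)$ together with the monotonicity lemma: for PSD $\bA,\bB$, the tail-sum $\Tr(\cdot)-\sum_{i\leq r}\lambda_i(\cdot)$ is non-decreasing under addition of a PSD matrix (a consequence of convexity and positive homogeneity of $\bA\mapsto\sum_{i\leq r}\lambda_i(\bA)$). Applying this with $\bA = \bV\bV^\top$ and $\bA+\bB = \bU\bU^\top$ yields the local inequality directly, without ever constructing a candidate rank-$r_s$ matrix.
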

\Cref{lemma:orthogonal-left-n-right} and \Cref{lemma:relation-projectors} are proved in \Cref{appendix:keylemmaproof-new} and \Cref{appendix:relation-projectors} respectively. In the following, we admit these results and prove \Cref{theorem:mainresults,theorem:bettermainresults}.

\begin{proof}[Proof of \Cref{theorem:mainresults,theorem:bettermainresults}]
    First, we prove that: 
    \begin{equation}
        \label{eq:comparison-to-A}
        E^{\arch_s}(\bA) \geq E^{\arch_s}(\bB_J), \quad \forall s,J \in \intset{L-1}. 
    \end{equation}
    Indeed, for any $J \in \intset{L-1}$, by \Cref{lemma:relation-bl-bl+1}, $\bB_{J}$ is a projection of $\bB_{J-1}$ onto 
    $\cB^{\splitarch{\arch}{\sigma_J}}$.
    Hence, by~\Cref{lemma:relation-projectors}:
    \begin{equation*}
        E^{\arch_s}(\bB_{J-1}) \geq E^{\arch_s}(\bB_J), \quad \forall s \in \intset{L-1}.
    \end{equation*}
    {Since $\bA=\bB_0$}, applying the above inequality recursively yields \eqref{eq:comparison-to-A}.

    { 
    We now derive the error bound in \Cref{theorem:mainresults}, which is true for an arbitrary permutation $\sigma$, as follows. 
    \begin{equation*}
        \begin{aligned}
            \|\bA - \bB_{L-1}\| &= \|(\bB_0 - \bB_1) 
            + \ldots + (\bB_{L - 2} - \bB_{L - 1})\|_F 
            \leq \sum_{J = 1}^{L-1} \|\bB_{J - 1} - \bB_J\|_F 
             \end{aligned}
    \end{equation*}
    and for each $J \in \intset{L-1}$ it holds:
     \begin{equation*}
           \|\bB_{J - 1} - \bB_J\|_F\overbrace{=}^{\textrm{\Cref{lemma:relation-bl-bl+1}}}  E^{\arch_{\sigma_J}}(\bB_{J-1}) 
            \overbrace{\leq}^{\textrm{\Cref{eq:comparison-to-A}}} E^{\arch_{\sigma_J}}(\bA)  
    \end{equation*}
    and since $\sigma$ is a permutation of $\intset{L-1}$ it holds $\sum_{J = 1}^{L-1} E^{\arch_{\sigma_J}}(\bA) 
            = \sum_{s = 1}^{L-1} E^{\arch_{s}}(\bA) $. }
    The proof when $\sigma$ is the identity or its ``converse" is 
    similar: we only replace the first 
    {equation} by:
    \begin{equation*}
        \|\bA - \bB_{L-1}\|^2 = \sum_{J = 1}^{L-1} \|{\mbf{B}}_{J - 1} - {\mbf{B}}_J\|_F^2
    \end{equation*}
    Indeed, as proved in \Cref{appendix:bettermainresultsproof-new} using an orthogonality argument\footnote{This argument has been used in \cite{liu2021butterfly} to prove \eqref{eq:bound-liu}. We adapt their argument to our context for the self-containedness of our paper.}, we have
	\begin{equation}
		\label{eq:orthogonalpair-new}
		\forall J {\in \intset{L-1}},\quad \forall p \in \intset{J,L-1}, \quad \ip{{\mbf{B}}_{J - 1} - {\mbf{B}}_{J}}{{{\mbf{B}}_{p}}} = 0.
    \end{equation}
    Hence:
	\begin{equation*}
		\begin{aligned}
			\|\mbf{A} - \mbf{B}_{L-1}\|_F^2 &= \|({\mbf{B}}_0 - {\mbf{B}}_1) + \ldots + ({\mbf{B}}_{L - 2} - {\mbf{B}}_{L-1})\|_F^2\\ 
			&= \sum_{J = 1}^{L-1} \|{\mbf{B}}_{J - 1} - {\mbf{B}}_J\|_F^2 + 2\sum_{J = 1}^{L-1} \sum_{p > J}\ip{{\mbf{B}}_{J - 1} - {\mbf{B}}_{J}}{{\mbf{B}}_{p-1} - {\mbf{B}}_{p}}\\
			&\overset{\eqref{eq:orthogonalpair-new}}{=} \|{\mbf{B}}_0 - {\mbf{B}}_1\|_F^2 + \ldots + \|{\mbf{B}}_{L - 2} - {\mbf{B}}_{L-1} \|_F^2.
		\end{aligned}
	\end{equation*}
\end{proof}

\subsection{Comparison with existing error bounds}
\label{sec:comparison}
The result in the literature that 
is most related
to our proposed error bounds (\cref{theorem:mainresults} and \cref{theorem:bettermainresults}) is the bound in  \cite{liu2021butterfly}, which  takes the form 
\begin{equation}
\label{eq:bound-liu}
    \| \mathbf{A} - \hat{\mathbf{A}} \|_F \leq C_n \epsilon_0 \| \mathbf{A} \|_F, \quad \text{with } C_n = \mathcal{O}({\sqrt{\log n}}),
\end{equation}
where $\mathbf{A}$ is an $n \times n$ matrix and $\epsilon_0$ is the maximum relative error $\| \mathbf{M} - \hat{\mathbf{M}} \|_F / \| \mathbf{M} \|_F$ across all blocks $\mathbf{M}$ on which the algorithm performs low-rank approximation, with $\hat{\mathbf{M}}$ a best low-rank approximation of $\mathbf{M}$. 

It is noteworthy that these bounds are not immediately comparable because of the difference in problem formulations. First of all, the sparsity patterns of the factors in \cite{liu2021butterfly} cannot be expressed by four parameters of a pattern $\pattern$. Instead, using the Kronecker supports introduced in this work, the support constraints in \cite{liu2021butterfly} can be described using six parameters $(a,b,c,d,e,f)$ as $\bI_a \otimes \mbf{1}_{b \times c} \otimes \bI_d \otimes \mbf{1}_{e \times f}$, 
a structure also referred to as the ``block butterfly'' \cite{chen2022pixelated}. 
However, these supports can be transformed into supports of the form considered in \cref{def:dbfactorfour}  by multiplying by appropriate permutation matrices $\bP, \bQ$ to the left and right, respectively, such that $\bP(\bI_a \otimes \mbf{1}_{b \times c} \otimes \bI_d \otimes \mbf{1}_{e \times f})\bQ = \bI_{a} \otimes \mbf{1}_{be \times cf} \otimes \bI_{d}$.
Thus, the sparsity structures can be regarded as equivalent.
    But more importantly, in the formulation of the factorization problem, in \cite{liu2021butterfly} the architecture is not fixed {\em a priori} as 
    it is in our case.
   Instead, given a \emph{fixed} level of relative {error} $\epsilon$, \cite{liu2021butterfly} will find {\em a posteriori} a sparse architecture associated to some low-rank constraints that are adapted to the considered input matrix, so that the low-rank approximations in the butterfly algorithm satisfy the target error $\epsilon$ for the considered input matrix.
    
    Even if the bounds are not directly comparable, we can say that our bound improves with respect to  \eqref{eq:bound-liu} because (i) it allows us to compare the approximation error $\| \mathbf{A} - \hat{\mathbf{A}} \|_F$ to the \emph{best} approximation error, that is, the minimal error $\| \mathbf{A} - \mathbf{A}^* \|_F$ with $\mathbf{A}^*$ satisfying \emph{exactly} the complementary low-rank property associated to the prescribed butterfly architecture; (ii) it holds for any factor-bracketing tree (\Cref{def:factor-bracketing-tree}), while \eqref{eq:bound-liu} is tight to a specific tree; (iii) 
    the constant $C_\arch$ is computable \emph{a priori}, while  the quantity $\epsilon_0$ in \eqref{eq:bound-liu} can only be determined algorithmically {\em after} applying the butterfly algorithm on the target matrix $\mathbf{A}$, i.e., when $\hat{\bA}$ is available and the error $\|\bA-\hat{\bA}\|_F$ can be directly computed. 
Moreover, the algorithm in \cite{liu2021butterfly} suffers from the same limitations  of \Cref{algo:recursivehierarchicalalgo} (cf.  \Cref{example:illposedbX1}): $\epsilon_0$ can be {\em strictly positive} (and arbitrarily close to one) for some target matrices $\bA$ {\em even if} they satisfy exactly the complementary low-rank property.
    

\section{{Numerical experiments}}
\label{sec:experimentbutterfly}
We now illustrate the empirical behaviour of the proposed hierarchical algorithm for Problem \eqref{eq:butterfly-approximation-pb}.
All methods are implemented in Python 3.9.7 using the \texttt{PyTorch 2.2.1} package. Our implementation codes are given in \cite{le2024codebutterfly}.
Experiments are conducted on an Apple M3, 2.8 GHz, in float-precision. The following experiments consider the decomposition of real-valued matrices, so we 
implement \Cref{algo:algorithm1,algo:recursivehierarchicalalgo,algo:modifedbutterflyalgo} with real {numbers} instead of complex ones.
\subsection{Hierarchical algorithm \emph{vs.}~existing methods}
\label{subsec:exp-hierarchical-vs-gradient}
We consider Problem \eqref{eq:butterfly-approximation-pb} associated with the square dyadic butterfly architecture with $L=10$ factors. The target matrix $\mat{A}$ is the $1024 \times 1024$ Hadamard matrix. 
The compared methods are:


\begin{itemize}
	\item {\bf \em Our butterfly algorithm
 (
 \Cref{algo:modifedbutterflyalgo}, with or without pseudo-orthonormalization operations)}: we use the permutation $\sigma$ of $\intset{10}$ corresponding to a balanced factor-bracketing tree \cite{zheng2023efficient,le2022fastlearning}
 (see  \Cref{fig:factor-bracketing-tree-balanced} in \Cref{app:MoreNumerical}), i.e., $\sigma = (5, 2, 1, 3, 4, 7, 6, 8, 9)$. Since $\bA$ admits an exact factorization, the results (except computation times) are not expected to depend on $\sigma$ as already documented  \cite{zheng2023efficient,le2022fastlearning}. 

    In line~\ref{line:svd} of the two-factor fixed support matrix factorization algorithm (\Cref{algo:algorithm1}), 
    the best low-rank approximation is computed via truncated SVD: following \cite{le2022fastlearning}, we compute the full\footnote{For the considered matrix dimension this is faster than partial SVD. In higher dimensions, partial SVD can further optimize the algorithm, see the detailed discussion in \cite{zheng2023efficient}.} SVD  $\bU\bD\bV^\top$ {of} the submatrix $\bA[R_P, C_P]$ where the diagonal entries of $\mat{D}$ are the singular values in decreasing order, and we set the factors $\bH = \bU[\col{1}] \mat{D}^{1/2}$  and $\bK = {\bD}^{1/2}\bV[\col{1}]^\top$ {since $r=1$}. 
    
	\item {\bf \em Gradient-based method} \cite{dao2019learning}: 
    Using the parameterization $ \matseq{X}{1} \ldots \matseq{X}{L}$ for a butterfly matrix in  $\setButterfly{\arch}$, this method uses (variants of) gradient descent to optimize all nonzero entries $\matseq{X}{\ell}[i, j]$ for $(i, j) \in \supp(\matseq{S}{\pattern_\ell})$ and $\ell \in \intset{L}$ to minimize \eqref{eq:butterfly-approximation-pb}. We use the protocol of \cite{dao2019learning}: we perform $100$ iterations of ADAM\footnote{The learning rate is set as $0.1$, and we choose $(\beta_1, \beta_2) = (0.9, 0.999)$.} \cite{adam}, followed by $20$ iterations of L-BFGS \cite{liu1989limited}\footnote{L-BFGS terminates when the norm of the gradient is smaller than $10^{-7}$.}. 
    Besides directly benchmarking the implementation from \cite{dao2019learning}, we propose a new implementation of this gradient-based method which is faster than the one of \cite{dao2019learning}. Please consult our codes \cite{le2024codebutterfly} for more details.
	\item {\bf \em Alternating least square{s} (ALS)} \cite{lin2021deformable}: At each iteration of this iterative algorithm, we optimize the nonzero entries of a given factor $\matseq{X}{\ell}$ for some $\ell \in \intset{L}$ while fixing the others, by solving a linear regression problem. 
\end{itemize}

\begin{figure}[t]
	\centering
	\includegraphics[width=\textwidth]{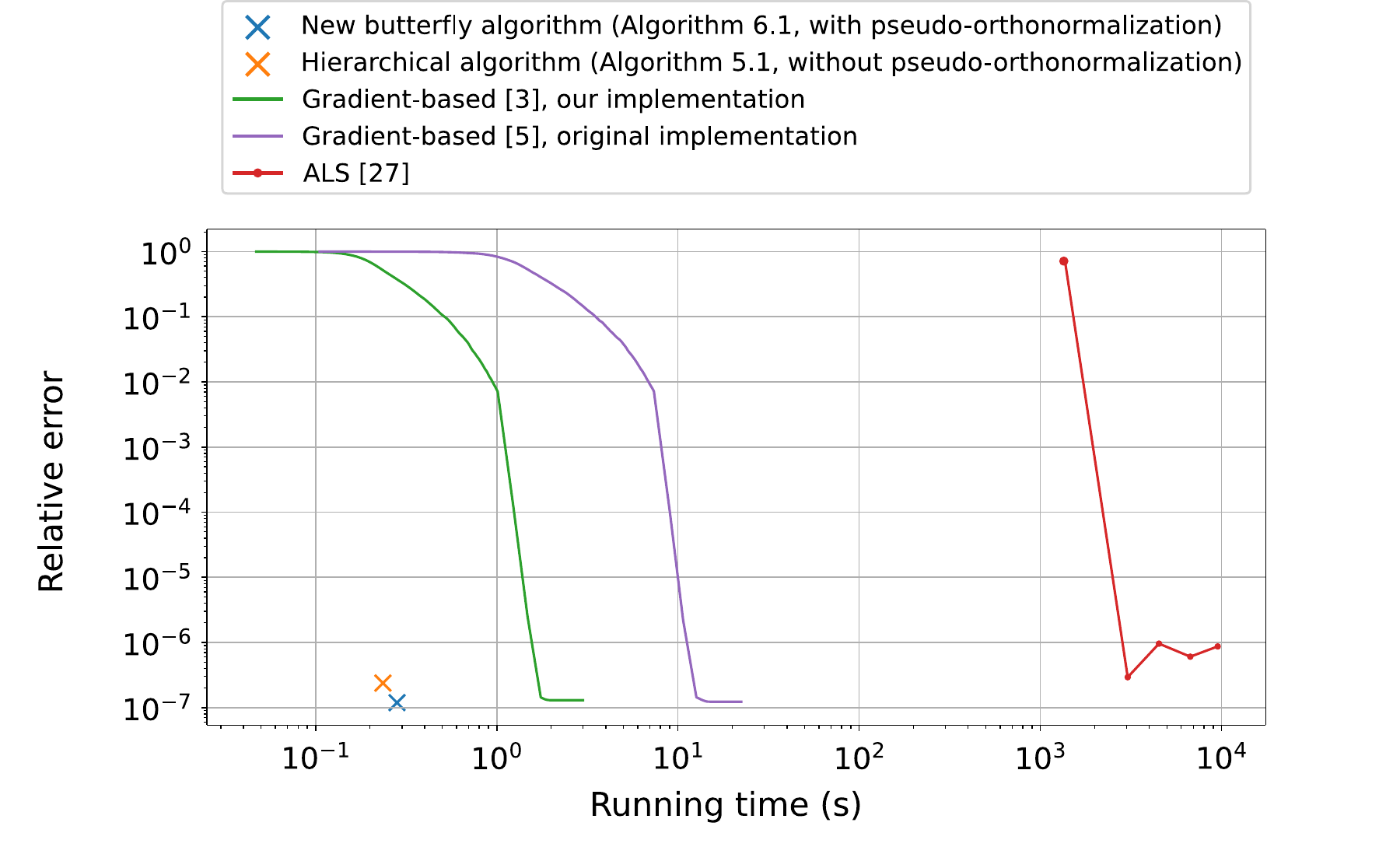}
	\caption{Relative approximation errors {defined} as $\|\bA - \hat{\bA}\|_F / \|\bA\|_F$ \emph{vs.} running time of the different algorithms. The target matrix $\mat{A}$ is the Hadamard matrix of size $1024 \times 1024$, and $\hat{\mat{A}}$ is the computed approximation for Problem \eqref{eq:butterfly-approximation-pb} associated with the square dyadic butterfly architecture.
	}
	\label{fig:comparisonbetweenmethods}
\end{figure}

\Cref{fig:comparisonbetweenmethods} shows that the {different methods find an approximate solution nearly up to machine precision}\footnote{{
{Yet,} hierarchical algorithms and gradient-based methods are more accurate than ALS {by more than one} order of magnitude.}}, 
but hierarchical algorithms are several orders of magnitude faster than the gradient-based method \cite{dao2019learning} and ALS \cite{lin2021deformable}. 
{Using \Cref{algo:modifedbutterflyalgo} {\em without} pseudo-orthonormalization operations}
is also faster than {with these operations.} We however show the positive practical impact of pseudo-orthonormalization on noisy problems in the following section.


\subsection{
To orthonormalize or not to orthonormalize?}\nopunct
\label{subsec:exp-ortho}
We now study in practice the impact of the pseudo-orthonormalization operations in the new butterfly algorithm, in terms of running time and approximation error, at different scales of the matrix size $n$ with 
$n \in \{2^i \mid i = 7, \ldots, 13 \}$.
We consider Problem \eqref{eq:butterfly-approximation-pb} associated with a chainable architecture  $\arch = (\pattern_1, \ldots, \pattern_4)$ such that:
\begin{itemize}
    \item Each factor is of size $(a_\ell b_\ell d_\ell, a_\ell c_\ell d_\ell) = (n, n)$ 
    \item $\setButterfly{\arch}$ contains some dense matrices: $\pattern_1 * \pattern_2 * \pattern_3 * \pattern_4 = (1, n, n, 1)$;
    \item In the complementary low-rank characterization of  $\setButterfly{\arch}$, the rank constraint on the submatrices is $r \geq 2$, i.e., $\mathbf{r}(\arch) = (r, r, r)$. We do not choose $r=1$ because 
    the pseudo-orthonormalization operations would then be 
    equivalent to some simple rescaling.
\end{itemize}

Among all of such architectures  (that can be characterized and found using \cref{lem:archfromsize}), we choose the one with the smallest number of parameters, i.e., yielding the smallest $\|\arch\|_0$.
The considered target matrix is $\bA = \tilde{\bA} + \epsilon \frac{\| \tilde{\bA} \|_F}{\| \bE \|_F} \bE$, 
where $\tilde{\bA} := \bX_1 \bX_2 \bX_3 \bX_4$, the entries of $\bX_\ell \in \setDBfactor{\pattern_\ell}$ for $\ell \in \intset{4}$ are i.i.d.~sampled from the uniform distribution in the interval $[0,1]$, $\bE$ is an i.i.d. centered Gaussian matrix with the standard deviation $1$, and $\epsilon =0.1$ is the noise level.
The permutation $\sigma$ for the butterfly algorithm is $\sigma = (2, 1, 3)$,  which corresponds to the \emph{balanced} factor-bracketing tree of $\intset{4}$.

\begin{figure}[t]
    \begin{subfigure}[b]{0.48\linewidth}
        \centering
        \includegraphics[width=\linewidth]{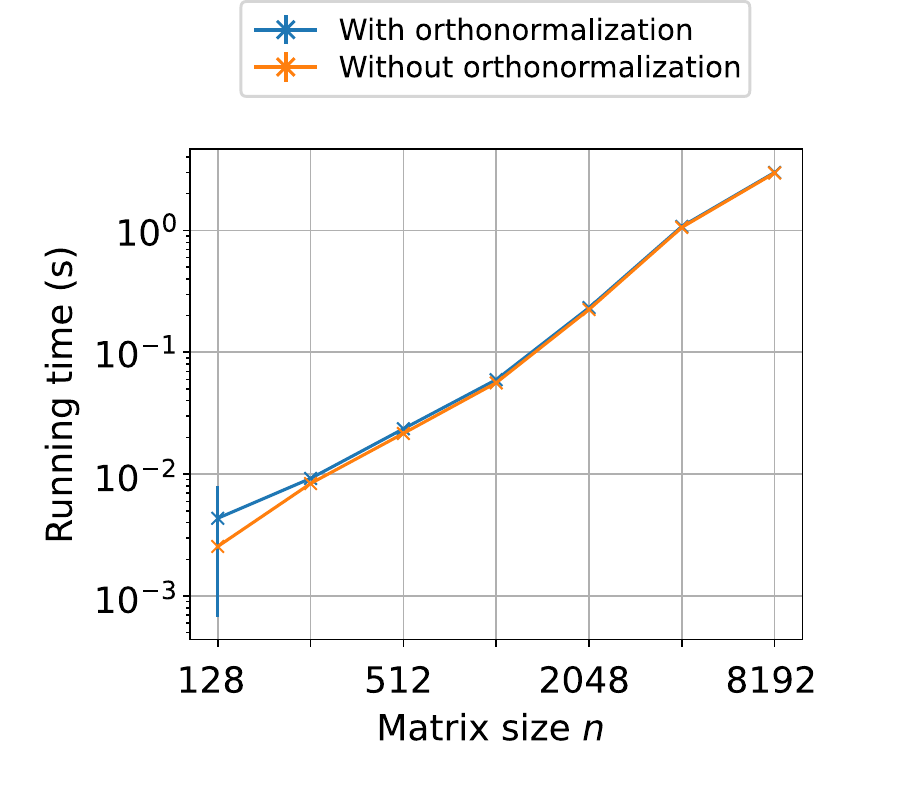}
        \caption{Running time \emph{vs.}~size $n$}
        \label{fig:time-hierarchical}
    \end{subfigure}
    \hfill
    \begin{subfigure}[b]{0.48\linewidth}
        \centering
        \includegraphics[width=\linewidth]{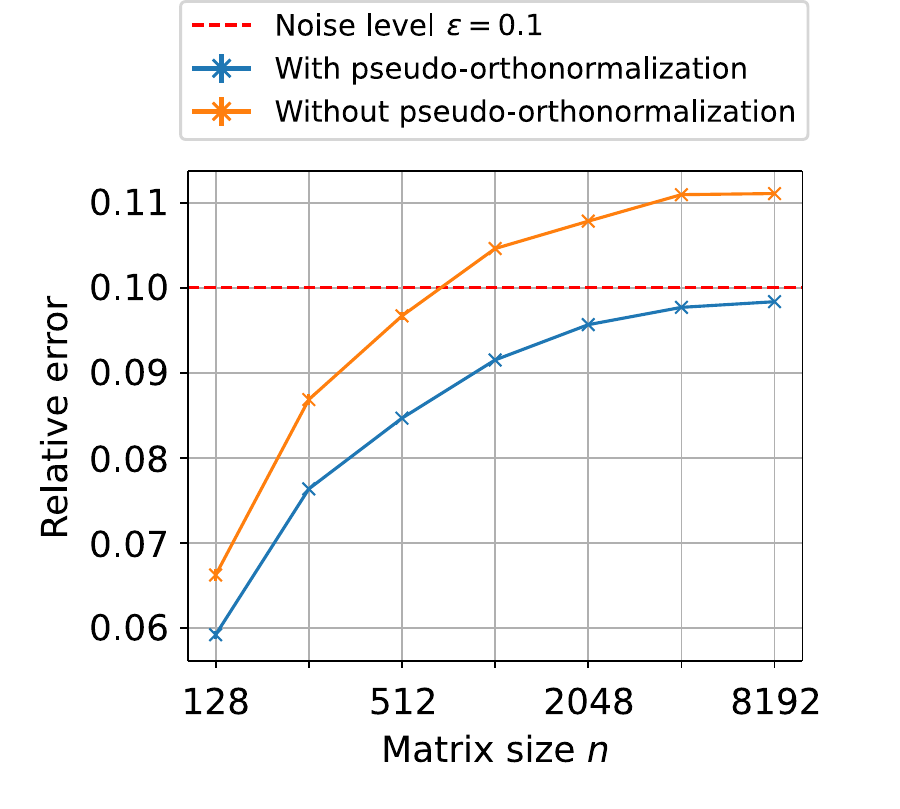}
        \caption{Relative error $\frac{\| \mat{A} - \hat{\mat{A}}\|_F}{\| \mat{A} \|_F}$ \emph{vs.}~size $n$}
        \label{fig:error-hierarchical}
    \end{subfigure}
    \caption{Running time and 
{the relative approximation errors} \emph{vs.} the matrix size $n$, for 
\Cref{algo:modifedbutterflyalgo} (with {or without} pseudo-orthonormalization), for the instance of Problem \eqref{eq:butterfly-approximation-pb} described in \Cref{subsec:exp-ortho} {with $L = 4$, $r=4$}. We show mean and standard deviation on the error bars over $10$ repetitions of the experiment.
}
\end{figure}

\Cref{fig:time-hierarchical} shows that the difference in running time between the new butterfly algorithm (\Cref{algo:modifedbutterflyalgo}) \emph{with} 
and \emph{without} 
pseudo-orthonormalization is negligible, in the regime of large matrix size $n \geq 512$. This means that, asymptotically, the time of orthonormalization operations is not the bottleneck, which is coherent with our complexity analysis given in \Cref{theorem:complexity}. 

In terms of the approximation error, \Cref{fig:error-hierarchical} shows that the hierarchical algorithm {(\Cref{algo:modifedbutterflyalgo})} {\em with} pseudo-orthonormalization 
returns a smaller {(i.e., better)} approximation error. 
Moreover, the {relative} error with pseudo-orthonormalization error is always smaller than the {relative} noise level {$\epsilon = 0.1$ (cf.~\Cref{fig:butterfly-approximation-other-values-epsilon} for other values of $\epsilon$)}, 
which is not the case for the hierarchical algorithm without pseudo-orthonormalization {(\Cref{algo:recursivehierarchicalalgo})}. In conclusion, besides yielding error guarantees of the form \eqref{eq:error-bound-intro}, the pseudo-orthonormalization operations in our experiments also lead to better approximation in practice.

\subsection{Numerical assessment of the bounds }\label{sec:numrectangle}
In this section, we numerically evaluate the bounds given by \cref{theorem:mainresults} and \cref{theorem:bettermainresults}. The setting is identical to that of the previous section, except that:
\begin{enumerate}
    \item We run the new butterfly algorithm (\cref{algo:modifedbutterflyalgo}) with multiple permutations associated to various factor-bracketing trees: identity permutation (left-to-right tree, cf.~\Cref{example:illposedbX1} or \cite{zheng2023efficient}), balance permutation (corresponding to the so-called ``balanced tree'' \cite{zheng2023efficient}) 
    and a random one. 
    \item The number of factors is $L = 5$. The size of the matrix is $4608 \time 4608$. The number $4608 = 3^2 \times 2^9$ is a common matrix size appearing when one replaces convolutional layers by a product of butterfly factors \cite{lin2021deformable} (the factor $3^2$ corresponds to a convolutional kernel of size $3 \times 3$). This matrix size is non-dyadic, which partly illustrates the versatility of butterfly factors in replacing linear operators.
    \item The experiments are repeated $10$ times. {The plots in \cref{fig:comparisonbetweenpermutations} show the average and standard deviation values of the relative error and of the bounds. {The standard deviations are not visible due to their small size and the log scale of the plot.}
    }
\end{enumerate}
\begin{figure}[t]
	\centering
	\includegraphics[width=\textwidth]{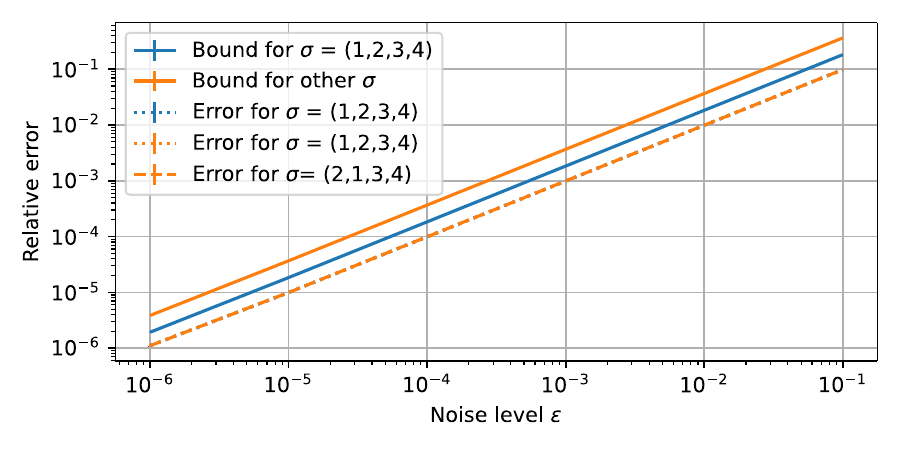}
	\caption{Bounds and numerical errors of \Cref{algo:modifedbutterflyalgo} with different permutations (associated to different factor bracketing trees): identity permutation $(1,2,3,4)$, ``balanced'' permutation $(2, 1, 3, 4)$,  randomized permutation ($(4, 1, 2, 3)$ in this experiment). The bound for the identity permutation is given by \Cref{theorem:bettermainresults} while the others are given by \cref{theorem:mainresults}. {Vertical bars show standard deviations over 10 experiments, but they are not visible on the graph because they are small. Note that all error curves (discontinuous lines) are overlapping, so we are seeing only one of them.}}
	\label{fig:comparisonbetweenpermutations}
\end{figure}
We report the relative errors and the bounds corresponding to each permutation in \cref{fig:comparisonbetweenpermutations}. It is well illustrated by \cref{fig:comparisonbetweenpermutations} that our theoretical results in \cref{theorem:mainresults} and \cref{theorem:bettermainresults} are correct. The bound in  \cref{theorem:bettermainresults} (\cref{fig:comparisonbetweenpermutations}-a) is visibly tighter than the one in \cref{theorem:mainresults} (\cref{fig:comparisonbetweenpermutations}-b,c) although there is no significant difference among the errors obtained by the three permutations corresponding to different factor bracketing trees {(the errors for the three different permutations are almost identical, which makes only one error plot visible). }

\section{Conclusion}
We proposed a {general definition of (deformable) butterfly architectures, together with a} new butterfly algorithm for the problem of (deformable) butterfly factorization \eqref{eq:butterfly-approximation-pb}, endowed with new guarantees on the approximation error of the type \eqref{eq:error-bound-intro}, under the condition that the associated architecture $\arch$ {satisfies a so-called {\em chainability} condition.} 
The proposed algorithm involves some novel orthonormalization operations in the context of butterfly factorization. We discuss some perspectives of this work.

{\bf \em Tightness of the error bound.}
The constants $C_\arch$ in  \Cref{cor:mainresults} scale linearly or even 
sub-linearly with respect to
the depth $L = |\arch|$ of the architecture. 
Note that the quasi-linear constant $C_n$ in the existing bound \eqref{eq:bound-liu} is not comparable with the constants $C_\arch$ in  \Cref{cor:mainresults}, due to the presence of $\epsilon_0$ in the bound \eqref{eq:bound-liu}, whereas the constants $C_\arch$ in  \Cref{cor:mainresults} controls the ratio between the approximation error and the minimal error.
A natural question is whether the constants $C_\arch$ in  \Cref{cor:mainresults} are tight for an error bound of the type \eqref{eq:error-bound-intro}. If not, can the bounds for the proposed be algorithm be sharpened by a refined theoretical analysis, or is there another algorithm that yields a smaller constant $C_\arch$ in the error bound?

{\bf \em Randomized algorithms for low-rank approximation.}
\Cref{algo:recursivehierarchicalalgo,algo:modifedbutterflyalgo} need to access all the elements of the target matrix $\bA \in \CC^{m \times n}$. Thus, the complexity of all algorithms is {at least} $\mathcal{O}(mn)$. This complexity, however, fails to scale for large $m, n$ (e.g., up to $10^6$).
    {Assuming that the target matrix admits a butterfly factorization associated with $\arch$, i.e., $\bA \in \setButterfly{\arch}$}, is it possible to recover the butterfly factors of $\bA$, with a faster algorithm, ideally of complexity $\mathcal{O}(\|\arch\|_0)$? Note that this question was already considered in \cite{li2015butterfly,li2017interpolative} where randomized algorithms for low-rank approximation \cite{halko2011svd,liberty2007randomized} {are leveraged} in the context of butterfly factorization.
    The question is therefore whether we can still prove some  theoretical guarantees of the form \eqref{eq:error-bound-intro} for butterfly algorithms with {such} 
    algorithms. 

{\bf \em {Algorithms beyond the chainability assumption}.} 
{Although chainability is a sufficient condition for which we can design an algorithm with guarantees on the approximation error, it is natural to ask whether it is also a necessary condition. There exist, in fact, non-chainable architectures for which we can still build an algorithm yielding an error bound \eqref{eq:error-bound-intro}. For instance, this is the case of \emph{arbitrary} architectures of depth $L=2$ (see \Cref{lem:qperfectcovering}); or of architectures that satisfy a \emph{transposed} version of \Cref{def:chainableseqDBPs}: such architectures can easily be checked to cover some {\em non-chainable} architectures, and \Cref{algo:modifedbutterflyalgo} can be easily adapted to have guarantees similar to \Cref{theorem:mainresults,theorem:bettermainresults}. Therefore, chainability in the sense of \Cref{def:chainableseqDBPs} is not necessary for theoretical guarantees of the form \eqref{eq:error-bound-intro}.} 
Whether algorithms with performance guarantees can be derived beyond the three above-mentioned cases remains open.

{\bf \em Efficient implementation of butterfly matrix multiplication.}
    {While the complexity of matrix-vector multiplication by a $n \times m$ butterfly matrix associated with $\arch$ is theoretically subquadratic if $\| \arch \|_0 = o(mn)$,} a {practical} fast implementation {of such a matrix multiplication}
    is not straightforward \cite{gonon2024inference}, since dense matrix multiplication algorithms are competitive, e.g., on GPUs. {Future work can focus on efficient implementations of butterfly matrix multiplication on different kinds of hardware, in order to harness all of the benefits of the butterfly structure for large-scale applications, like in machine learning for instance.}

{\bf \em Butterfly factorization with unknown permutations.}
    In general, it is necessary to take into account row and column permutations in the butterfly factorization problem for more flexibility of the {butterfly} model $\setDBfactor{\arch}$ for an architecture $\arch$. For instance, as mentioned above, the DFT matrix admits a square dyadic butterfly factorization \emph{up to the bit-reversal permutation of column indices}. Therefore, as in \cite{zheng2023butterfly}, the general approximation problem that takes into account row and column permutations is: 
    \begin{equation}
    \label{eq:unknown-perm}
        \inf_{(\matseq{X}{\ell})_{\ell=1}^L \in \setDBfactor{\arch}, \mat{P}, \mat{Q}} \| \mat{A} - \mat{Q}^\top \matseq{X}{1} \ldots \matseq{X}{L} \mat{P} \|_F,
    \end{equation} 
    where $\mat{P}$, $\mat{Q}$ are unknown permutations part of the optimization problem. Without any further assumption on the target matrix $\mat{A}$, solving this approximation problem is conjectured to be difficult. Future work can further study this more general problem, based on the existing heuristic proposed in \cite{zheng2023butterfly}.


\section*{Acknowledgement} The authors gratefully acknowledge the support of the Centre Blaise Pascal’s IT test platform
at ENS de Lyon for Machine Learning facilities. The platform operates the SIDUS solution \cite{quemener2013sidus} developed by Emmanuel Quemener. This work was partly supported by the ANR, project AllegroAssai ANR-19-CHIA-0009, and by the SHARP ANR project ANR-23-PEIA-0008 in the context of the France 2030 program. Tung Le was supported by AI Interdisciplinary Institute ANITI funding, through the French "Investments for the Future – PIA3" program under the grant agreement ANR-19-PI3A0004, and Air Force Office of Scientific Research, Air Force Material Command, USAF, under grant numbers FA8655-22-1-7012.

\bibliography{ref}

\begin{thebibliography}{10}

\bibitem{ConvexOptimization}
S.~Boyd and L.~Vandenberghe.
\newblock {\em Convex Optimization}.
\newblock Cambridge University Press, USA, 2004.

\bibitem{candes2009fast}
E.~Candes, L.~Demanet, and L.~Ying.
\newblock A fast butterfly algorithm for the computation of fourier integral
  operators.
\newblock {\em Multiscale Modeling \& Simulation}, 7(4):1727--1750, 2009.

\bibitem{chen2022pixelated}
B.~Chen, T.~Dao, K.~Liang, J.~Yang, Z.~Song, A.~Rudra, and C.~Ré.
\newblock Pixelated butterfly: Simple and efficient sparse training for neural
  network models.
\newblock In {\em International Conference on Learning Representations}, 2022.

\bibitem{dao2022monarch}
T.~Dao, B.~Chen, N.~S. Sohoni, A.~D. Desai, M.~Poli, J.~Grogan, A.~Liu, A.~Rao,
  A.~Rudra, and C.~R{\'{e}}.
\newblock Monarch: Expressive structured matrices for efficient and accurate
  training.
\newblock In {\em International Conference on Machine Learning}, pages
  4690--4721. {PMLR}, 2022.

\bibitem{dao2019learning}
T.~Dao, A.~Gu, M.~Eichhorn, A.~Rudra, and C.~R{\'e}.
\newblock Learning fast algorithms for linear transforms using butterfly
  factorizations.
\newblock In {\em International Conference on Machine Learning}, pages
  1517--1527. PMLR, 2019.

\bibitem{dao2020kaleidoscope}
T.~Dao, N.~Sohoni, A.~Gu, M.~Eichhorn, A.~Blonder, M.~Leszczynski, A.~Rudra,
  and C.~Ré.
\newblock Kaleidoscope: An efficient, learnable representation for all
  structured linear maps.
\newblock In {\em International Conference on Learning Representations}, 2020.

\bibitem{demanet2012fast}
L.~Demanet and L.~Ying.
\newblock Fast wave computation via fourier integral operators.
\newblock {\em Mathematics of Computation}, 81(279):1455--1486, 2012.

\bibitem{dosovitskiy2021an}
Alexey Dosovitskiy, Lucas Beyer, Alexander Kolesnikov, Dirk Weissenborn,
  Xiaohua Zhai, Thomas Unterthiner, Mostafa Dehghani, Matthias Minderer, Georg
  Heigold, Sylvain Gelly, Jakob Uszkoreit, and Neil Houlsby.
\newblock An image is worth 16x16 words: Transformers for image recognition at
  scale.
\newblock In {\em International Conference on Learning Representations}, 2021.

\bibitem{duhamel1990fast}
P.~Duhamel and M.~Vetterli.
\newblock Fast fourier transforms: a tutorial review and a state of the art.
\newblock {\em Signal processing}, 19(4):259--299, 1990.

\bibitem{eckart1936approximation}
C.~Eckart and G.~Young.
\newblock The approximation of one matrix by another of lower rank.
\newblock {\em Psychometrika}, 1(3):211--218, 1936.

\bibitem{engheta1992fast}
N.~Engheta, W.~D. Murphy, V.~Rokhlin, and M.~S. Vassiliou.
\newblock The fast multipole method ({FMM}) for electromagnetic scattering
  problems.
\newblock {\em IEEE {T}ransactions on {A}ntennas and {P}ropagation},
  40(6):634--641, 1992.

\bibitem{matrixcomputation}
G.~H. Golub and C.~F. Van~Loan.
\newblock {\em Matrix Computations}.
\newblock The Johns Hopkins University Press, third edition, 1996.

\bibitem{gonon2024inference}
A.~Gonon, L.~Zheng, P.~Carrivain, and Q.-T. Le.
\newblock Fast inference with kronecker-sparse matrices.
\newblock {\em arXiv preprint arXiv:2405.15013}, 2024.

\bibitem{guo2017butterfly}
H.~Guo, Y.~Liu, J.~Hu, and E.~Michielssen.
\newblock A butterfly-based direct integral-equation solver using hierarchical
  lu factorization for analyzing scattering from electrically large conducting
  objects.
\newblock {\em IEEE {T}ransactions on {A}ntennas and {P}ropagation},
  65(9):4742--4750, 2017.

\bibitem{hackbusch2015hierarchical}
W.~Hackbusch.
\newblock {\em Hierarchical matrices: Algorithms and analysis}.
\newblock Springer, 2015.

\bibitem{halko2011svd}
N.~Halko, P.~G. Martinsson, and J.~A. Tropp.
\newblock Finding structure with randomness: Probabilistic algorithms for
  constructing approximate matrix decompositions.
\newblock {\em SIAM Review}, 53(2):217--288, 2011.

\bibitem{adam}
D.~P. Kingma and J.~Ba.
\newblock Adam: {A} method for stochastic optimization.
\newblock In {\em International Conference on Learning Representations}, 2015.

\bibitem{le2022spurious}
Q.-T. Le, E.~Riccietti, and R.~Gribonval.
\newblock {Spurious valleys, NP-hardness, and tractability of sparse matrix
  factorization with fixed support}.
\newblock {\em {SIAM Journal on Matrix Analysis and Applications}}, 2022.

\bibitem{le2024codebutterfly}
Q.-T. Le, L.~Zheng, R.~Gribonval, and E.~Riccietti.
\newblock {Code for reproducible research - Butterfly factorization with error
  guarantees}, 2024.
\newblock https://inria.hal.science/hal-04720713.

\bibitem{le2022fastlearning}
Q.-T. Le, L.~Zheng, E.~Riccietti, and R.~Gribonval.
\newblock {Fast learning of fast transforms, with guarantees}.
\newblock In {\em IEEE International Conference on Acoustics, Speech and Signal
  Processing}, 2022.

\bibitem{le2016flexible}
L.~Le~Magoarou and R.~Gribonval.
\newblock Flexible multilayer sparse approximations of matrices and
  applications.
\newblock {\em IEEE Journal of Selected Topics in Signal Processing},
  10(4):688--700, 2016.

\bibitem{li2017interpolative}
Y.~Li and H.~Yang.
\newblock Interpolative butterfly factorization.
\newblock {\em SIAM Journal on Scientific Computing}, 39(2):A503--A531, 2017.

\bibitem{li2015butterfly}
Y.~Li, H.~Yang, E.~R. Martin, K.~L. Ho, and L.~Ying.
\newblock Butterfly factorization.
\newblock {\em Multiscale Modeling \& Simulation}, 13(2):714--732, 2015.

\bibitem{li2015multiscale}
Y.~Li, H.~Yang, and L.~Ying.
\newblock A multiscale butterfly algorithm for multidimensional fourier
  integral operators.
\newblock {\em Multiscale Modeling \& Simulation}, 13(2):614--631, 2015.

\bibitem{li2018multidimensional}
Y.~Li, H.~Yang, and L.~Ying.
\newblock Multidimensional butterfly factorization.
\newblock {\em Applied and Computational Harmonic Analysis}, 44(3):737--758,
  2018.

\bibitem{liberty2007randomized}
E.~Liberty, F.~Woolfe, P.-G. Martinsson, V.~Rokhlin, and M.~Tygert.
\newblock Randomized algorithms for the low-rank approximation of matrices.
\newblock {\em Proceedings of the National Academy of Sciences},
  104(51):20167--20172, 2007.

\bibitem{lin2021deformable}
R.~Lin, J.~Ran, K.~H. Chiu, G.~Chesi, and N.~Wong.
\newblock Deformable butterfly: A highly structured and sparse linear
  transform.
\newblock In {\em Advances in Neural Information Processing Systems},
  volume~34, pages 16145--16157, 2021.

\bibitem{liu1989limited}
D.~C. Liu and J.~Nocedal.
\newblock On the limited memory {BFGS} method for large scale optimization.
\newblock {\em Mathematical programming}, 45(1-3):503--528, 1989.

\bibitem{liu2021butterfly}
Y.~Liu, X.~Xing, H.~Guo, E.~Michielssen, P.~Ghysels, and X.~S. Li.
\newblock Butterfly factorization via randomized matrix-vector multiplications.
\newblock {\em SIAM Journal on Scientific Computing}, 43(2):A883--A907, 2021.

\bibitem{michielssen1994multilevel}
E.~Michielssen and A.~Boag.
\newblock Multilevel evaluation of electromagnetic fields for the rapid
  solution of scattering problems.
\newblock {\em Microwave and Optical Technology Letters}, 7(17):790--795, 1994.

\bibitem{michielssen1996multilevel}
E.~Michielssen and A.~Boag.
\newblock A multilevel matrix decomposition algorithm for analyzing scattering
  from large structures.
\newblock {\em IEEE Transactions on Antennas and Propagation},
  44(8):1086--1093, 1996.

\bibitem{munkhoeva2018quadrature}
M.~Munkhoeva, Y.~Kapushev, E.~Burnaev, and I.~Oseledets.
\newblock Quadrature-based features for kernel approximation.
\newblock In {\em Advances in neural information processing systems}, 2018.

\bibitem{o2007new}
M.~O'Neil.
\newblock {\em A new class of analysis-based fast transforms}.
\newblock Yale University, 2007.

\bibitem{oneil2010203}
M.~O'Neil, F.~Woolfe, and V.~Rokhlin.
\newblock An algorithm for the rapid evaluation of special function transforms.
\newblock {\em Applied and Computational Harmonic Analysis}, 28(2):203--226,
  2010.

\bibitem{oseledets2011tensortrain}
I.~V. Oseledets.
\newblock Tensor-train decomposition.
\newblock {\em SIAM Journal on Scientific Computing}, 33(5):2295--2317, 2011.

\bibitem{parker1995random}
D.~S. Parker.
\newblock {\em Random butterfly transformations with applications in
  computational linear algebra}.
\newblock UCLA Computer Science Department, 1995.

\bibitem{quemener2013sidus}
E.~Quemener and M.~Corvellec.
\newblock Sidus—the solution for extreme deduplication of an operating
  system.
\newblock {\em Linux Journal}, 2013(235):3, 2013.

\bibitem{rader_discrete_1968}
C~M Rader.
\newblock Discrete {Fourier} transforms when the number of data samples is
  prime.
\newblock {\em Proceedings of the IEEE}, 56:1107--1108, 1968.

\bibitem{shekofteh2023reducing}
S.-K. Shekofteh, C.~Alles, and H.~Fr{\"o}ning.
\newblock Reducing memory requirements for the ipu using butterfly
  factorizations.
\newblock In {\em Proceedings of the SC'23 Workshops of The International
  Conference on High Performance Computing, Network, Storage, and Analysis},
  pages 1255--1263, 2023.

\bibitem{tygert2010fast}
M.~Tygert.
\newblock Fast algorithms for spherical harmonic expansions, {III}.
\newblock {\em Journal of Computational Physics}, 229(18):6181--6192, 2010.

\bibitem{yang2015deep}
Z.~Yang, M.~Moczulski, M.~Denil, N.~De~Freitas, A.~Smola, L.~Song, and Z.~Wang.
\newblock Deep fried convnets.
\newblock In {\em Proceedings of the IEEE {I}nternational {C}onference on
  {C}omputer {V}ision}, pages 1476--1483, 2015.

\bibitem{ying2009sparse}
L.~Ying.
\newblock Sparse fourier transform via butterfly algorithm.
\newblock {\em SIAM Journal on Scientific Computing}, 31(3):1678--1694, 2009.

\bibitem{zheng2023butterfly}
L.~Zheng, G.~Puy, E.~Riccietti, P.~P{\'e}rez, and R.~Gribonval.
\newblock Butterfly factorization by algorithmic identification of rank-one
  blocks.
\newblock {\em arXiv preprint arXiv:2307.00820}, 2023.

\bibitem{zheng2023efficient}
L.~Zheng, E.~Riccietti, and R.~Gribonval.
\newblock Efficient identification of butterfly sparse matrix factorizations.
\newblock {\em SIAM Journal on Mathematics of Data Science}, 5(1):22--49, 2023.

\end{thebibliography}
\bibliographystyle{plain}
\appendix


\section{Proof for results in \Cref{section:preliminaries}}
\label{appendix:proofpreliminaries}
This section is devoted to the  proof of \Cref{theorem:tractablefsmf}. To that end we first introduce the following technical lemma.
\begin{lemma}
    \label{lemma:submatrixequality}
    Consider support constraints $\bL \in \{0,1\}^{m \times s}$, $\bR\in \{0,1\}^{r \times n}$ satisfying the conditions of \Cref{theorem:tractablefsmf}. With the notations $R_P \subseteq \intset{m}$, $C_P \subseteq \intset{n}$, $\cP[\bL,\bR]$ from \Cref{def:classequivalence}, for each $(\bX, \bY)$ such that $\supp(\bX)\subseteq\bL, \supp(\bY)\subseteq\bR$, we have:
    \begin{equation}
        \label{eq:submatrixequality}
        \forall P \in \cP(\bL,\bR), \quad (\bX\bY)[R_P, C_P] = \bX[R_P, P]\bY[P,C_P].
    \end{equation}
\end{lemma}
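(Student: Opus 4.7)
The plan is to compute the $(i,j)$ entry of $(\bX\bY)[R_P, C_P]$ for an arbitrary $(i,j) \in R_P \times C_P$ using the definition of matrix multiplication, and then use the hypothesis on $\varphi(\bL,\bR)$ to discard every term indexed by $k \notin P$, leaving precisely the entries of $\bX[R_P, P]\bY[P, C_P]$.

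More concretely, I would start from
\begin{equation*}
    (\bX\bY)[i,j] = \sum_{k=1}^{r} \bX[i,k]\bY[k,j],
\end{equation*}
and argue that only the indices $k \in P$ can contribute. Fix $k \in \intset{r}$ and suppose $\bX[i,k]\bY[k,j] \neq 0$. The support inclusions $\supp(\bX) \subseteq \bL$ and $\supp(\bY) \subseteq \bR$ give $i \in \supp(\bL[\col{k}])$ and $j \in \supp(\bR[\row{k}])$, so by Definition 3.1 we have $(i,j) \in R_k \times C_k = \supp(\bU_k)$. On the other hand, by assumption $(i,j) \in R_P \times C_P = \supp(\bU_P)$. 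Hence $\supp(\bU_k) \cap \supp(\bU_P) \neq \emptyset$, which by the pairwise-disjoint-or-identical hypothesis forces $\bU_k = \bU_P$, i.e., $k \sim P$, i.e., $k \in P$.

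The only substantive step is therefore the translation of the structural assumption on $\varphi(\bL, \bR)$ into the implication ``$k \notin P \Rightarrow \bX[i,k]\bY[k,j] = 0$ for $(i,j) \in R_P \times C_P$'', and this is immediate from Definition 3.2. Once this is in hand, the surviving terms yield
\begin{equation*}
    (\bX\bY)[i,j] = \sum_{k \in P} \bX[i,k]\bY[k,j] = \bigl(\bX[R_P, P]\bY[P, C_P]\bigr)[i,j],
\end{equation*}
where the last equality is just the definition of the matrix product $\bX[R_P, P]\bY[P, C_P]$ evaluated at $(i,j) \in R_P \times C_P$. Since this holds for every such $(i,j)$, the desired identity \eqref{eq:submatrixequality} follows, and no further case analysis is required.
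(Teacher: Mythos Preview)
Your proof is correct and takes essentially the same approach as the paper's: both arguments use the pairwise-disjoint-or-identical hypothesis to show that only indices $k \in P$ contribute to the $(R_P,C_P)$ block of $\bX\bY$. The paper organizes this via the rank-one decomposition $\bX\bY = \sum_{\tilde P \in \cP} \bX[:,\tilde P]\bY[\tilde P,:]$ and observes that the summand for $\tilde P \neq P$ has support disjoint from $R_P \times C_P$, whereas you carry out the same computation entry-by-entry; the content is identical.
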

\begin{proof}
    For any pair of matrices $(\bX,\bY) \in \CC^{m \times r} \times \CC^{r \times n}$:
    \begin{equation}
    \label{eq:sum-rankone-contributions}
        \bX\bY = \sum_{i = 1}^r \bX[\col{i}]\bY[\row{i}] = \sum_{P \in \cP(\bL,\bR)} \bX[\col{P}]\bY[\row{P}].
    \end{equation}
    For $P, \tilde{P} \in \cP(\bL,\bR)$ with $P \neq \tilde{P}$, the components $\bU_P$ and $\bU_{\tilde{P}}$ of \Cref{def:classequivalence} are disjoint by assumption on $\mat{L}, \mat{R}$. Since $\supp(\bX[\col{\tilde{P}}]\bY[\row{\tilde{P}}]) \subseteq \bU_{\tilde{P}}$, we have $(\bX[\col{\tilde{P}}]\bY[\row{\tilde{P}}])[R_P,C_P] = \mbf{0}$,
    and {by \eqref{eq:sum-rankone-contributions}} it follows that we have
\(
        (\bX\bY)[R_P, C_P] = (\bX[\col{P}]\bY[\row{P}])[R_P,C_P] = \bX[R_P,P]\bY[P,C_P].
        \)
\end{proof}

The following proof of \Cref{theorem:tractablefsmf} is mainly taken from \cite{le2022spurious}, but we additionally compute the infimum value of Problem \eqref{eq:FSMF}.
\begin{proof}[Proof for \Cref{theorem:tractablefsmf}]
    We use the shorthand $\cP$ for $\cP(\bL,\bR)$, and denote $\Sigma := \{ (\mat{X}, \mat{Y}) \, | \, \supp(\mat{X}) \subseteq \mat{L}, \, \supp(\mat{Y}) \subseteq \mat{R} \}$. Recall that $(\matseq{U}{i})_{i=1}^r := \varphi(\mat{X}, \mat{Y})$ with $\varphi$ from \Cref{def:rankonesupportcontribution}.
    Let $(\mat{X}, \mat{Y}) \in \Sigma$. Then, $\supp(\bX[\col{P}]\bY[\row{P}]) \subseteq \bU_P$ for any $P \in \cP$, hence $\supp(\bX\bY) \subseteq \bigcup_{P \in \cP} \bU_P$, and $(\bX\bY) \odot \overline{\bU_\cP} = \mbf{0}$ where we denote $\overline{\bU_\cP} := (\intset{m} \times \intset{n}) \setminus \left(\bigcup_{P \in \cP} \bU_P\right)$.
 %
 %
    Moreover, by assumption, $\bU_{P}$ and $\bU_{\tilde{P}}$ are disjoint for any $P, \tilde{P} \in \mathcal{P}$ such that $P \neq \tilde{P}$, so:
	\begin{equation}
		\label{eq:decomposedisjointoverlapping}
		\begin{split}
			\|\bA - \bX\bY\|_F^2 &= \left(\sum_{P \in \mc{P}}\|(\bA -\bX\bY) \odot \bU_{P}\|_F^2\right) + \|(\bA -\bX\bY) \odot \overline{\bU_\mc{P}}\|_F^2\\
			&= \left(\sum_{P \in \mc{P}}\|(\bA - \bX\bY)[R_P,C_P]\|_F^2\right) + \|\bA \odot \overline{\bU_\mc{P}}\|_F^2\\
			&= \left(\sum_{P \in \mc{P}} \| \matindex{\bA}{R_P}{C_P} - \matindex{(\bX\bY)}{R_P}{C_P} \|_F^2 \right) + \|\bA \odot \overline{\bU_\mc{P}}\|_F^2\\
	       &\overset{\eqref{eq:submatrixequality}}{=} \left(\sum_{P \in \mc{P}}\|\matindex{\bA}{R_P}{C_P} - \matindex{\mat{X}}{R_P}{P} \matindex{\bY}{P}{C_P} \|_F^2\right) + \|\bA \odot \overline{\bU_\mc{P}}\|_F^2.
		\end{split}
	\end{equation}
 
    Since $\mathcal{P}$ is a partition, this implies that each term in the sum $\sum_{P \in \mc{P}}\|\matindex{\bA}{R_P}{C_P} - \matindex{\mat{X}}{R_P}{P} \matindex{\bY}{P}{C_P} \|^2$ involves columns of $\mat{X}$ and rows of $\mat{Y}$ that are not involved in other terms of the sum. 
    Moreover, we remark that for any $P \in \mathcal{P}$, the matrix $\matindex{\mat{X}}{R_P}{P} \matindex{\bY}{P}{C_P}$ is of rank at most $|P|$. In other words, minimizing the right-hand-side of \eqref{eq:decomposedisjointoverlapping} with respect to $(\mat{X}, \mat{Y}) \in \Sigma$ is equivalent to minimize each term of the sum for $P \in \mathcal{P}$, which is the problem of finding the best rank-$|P|$ approximation of $\matindex{\bA}{R_P}{C_P}$. This yields the claimed equation \eqref{eq:explicit-formula-fsmf}.
%
%
%
    \end{proof}

\section{Proof for results in \Cref{sec:DB-factorization} and \cref{subsec:L=2}}

\subsection{Proof of \Cref{prop:stability}}
\label{app:stability-chainable-pair}

\begin{proof}
    Let $r = r(\pattern_1, \pattern_2)$. By \Cref{def:chainableDBfour}, $a_1 \mid a_2$ and $d_2 \mid d_1$, so there are two integers $q, s$ such  that $a_2 = qa_1$ and $d_1 = sd_2$.  Since $a_1c_1/a_2 = b_2d_2/d_1 = r$ by \Cref{def:chainableDBfour}, this yields $c_1 = a_2r/a_1 = rq$, $b_2 = d_1r / d_2 = rs$. Thus,
	\begin{equation*}
		\begin{aligned}
			\mbf{S}_{\pattern_1} \mbf{S}_{\pattern_2} &= \left(\mbf{I}_{a_1} \otimes \mbf{1}_{b_1 \times rq} \otimes \mbf{I}_{d_1}\right)\left(\mbf{I}_{a_2} \otimes \mbf{1}_{rs \times  c_2} \otimes \mbf{I}_{d_2}\right)\\
            &= \left(\mbf{I}_{a_1} \otimes \mbf{1}_{b_1 \times q} \otimes \mbf{1}_{1 \times r} \otimes \mbf{I}_{d_1}\right)\left(\mbf{I}_{a_2} \otimes \mbf{1}_{r \times 1} \otimes \mbf{1}_{s \times c_2} \otimes \mbf{I}_{d_2}\right)\\
            &= \left[(\mbf{I}_{a_1} \otimes \mbf{1}_{b_1 \times q}) \otimes \mbf{1}_{1 \times r} \otimes \mbf{I}_{d_1}\right]\left[\mbf{I}_{a_2} \otimes \mbf{1}_{r \times 1} \otimes (\mbf{1}_{s \times c_2} \otimes \mbf{I}_{d_2})\right]\\
			&\overset{(\star)}{=} \left(\mbf{I}_{a_1} \otimes \mbf{1}_{b_1 \times q}\right) \otimes \left(\mbf{1}_{1 \times r} \one{r \times 1}\right) \otimes \left(\mbf{1}_{s \times c_2} \otimes \mbf{I}_{d_2}\right)\\
			&= \left(\mbf{I}_{a_1} \otimes \mbf{1}_{b_1 \times q}\right) \otimes \left(r\mbf{1}_{1 \times 1} \right) \otimes \left(\mbf{1}_{s \times c_2} \otimes \mbf{I}_{d_2}\right)\\
            &= r\left(\bI_{a_1} \otimes \one{b_1s \times qc_2} \otimes \bI_{d_2}\right)\\
			&= r\mbf{S}_{\pattern_1 * \pattern_2} \quad \left(\text{because } \frac{b_1d_1}{d_2} = b_1s \text{ and } \frac{a_2c_2}{c_1} = qc_2\right).
		\end{aligned}
	\end{equation*}
    We can use the equality $(\mbf{A} \otimes \mbf{C} \otimes \bE)(\mbf{B} \otimes \mbf{D} \otimes \bF) = (\mbf{AB}) \otimes (\mbf{CD}) \otimes (\bE\bF)$ in $(\star)$ because, according to our conditions for chainability, the sizes of $\mbf{A}, \mbf{B}, \mbf{C}, \mbf{D}, \bE, \bF$ in $(\star)$ make the matrix products $\mbf{AB}, \bC\bD$ and $\mbf{EF}$ well-defined. 
\end{proof}

\subsection{Proof of \Cref{lem:prodDBBis}}
\label{app:prodDBBis}

We prove the claims in \cref{lem:prodDBBis} one by one.

\begin{enumerate}
\item Follows by the definition of $R_P$ and $C_P$.

\item 

 It can be easily verified that 
    the column supports $\{ \matindex{\bS_{\pattern_1}}{:}{j} \}_{j}$ (resp.~the row supports $\{ \matindex{\bS_{\pattern_2}}{i}{:} \}_{i}$) are pairwise disjoint or identical, due to the support structure of the form $\bI_a \otimes \mbf{1}_{b \times c} \otimes \bI_d$ of a Kronecker-sparse factor. Indeed, the columns (resp.~rows) of the Kronecker product $\bA \otimes \bB$ are equal to the Kronecker product of columns and rows of $\bA$ and $\bB$ and the matrices $\bA, \bB$ appearing in our case are either identity matrices or all-one matrices. Thus, for each $P \in \cP(\bS_{\pattern_1}, \bS_{\pattern_2})$ the sets   $ R_P\times C_P=\matindex{\bS_{\pattern_1}}{:}{i} \times  \matindex{\bS_{\pattern_2}}{i}{:}$ are pairwise disjoint.
    

\item For any $P \in \cP(\DBsupport{\pattern_{1}}, \DBsupport{\pattern_{2}})$, by \Cref{def:dbfactorfour} we have $|R_P| = b_1, |C_P| = c_2$.  Setting $r := r(\pattern_1,\pattern_2)$, we have
$$\sum_{P \in \mathcal{P}(\bS_{\pattern_1}, \bS_{\pattern_2})} |P| \, \rankonesuppseq{P} 
\stackrel{\textrm{Def. \ref{def:classequivalence}}}{=} \sum_{\rankonesuppseq{i} \in \varphi(\bS_{\pattern_1}, \bS_{\pattern_2})} \rankonesuppseq{i} = \bS_{\pattern_1} \bS_{\pattern_2}  \stackrel{\textrm{Prop. \ref{prop:stability}}}{=} r \bS_{(\pattern_1 * \pattern_2)},$$ where 
the second equality simply comes from the rank-one decomposition of matrix multiplication.
Since all $\bU_P$'s have pairwise disjoint supports (from point 2), we get $|P| = r$ for each $P \in \cP(\DBsupport{\pattern_{1}}, \DBsupport{\pattern_{2}})$. 

\item $\supp(\bS_{\pattern_1 * \pattern_2}) = \supp(\bS_{\pattern_1}\bS_{\pattern_2})$ is a consequence of \Cref{prop:stability}. The second equality, $\supp(\bS_{\pattern_1}\bS_{\pattern_2}) = \cup_{P \in \cP} R_P \times C_P$, is a consequence of the rank-one decomposition of the matrix multiplication, i.e., $\bX\bY = \sum_{i} \bX[\col{i}]\bY[\row{i}]$, and of the fact that $\bS_{\pattern_1}, \bS_{\pattern_2}$ have non-negative coefficients (thus, the support is equal to the union, and it is not just a subset).
\end{enumerate}

\subsection{Proof for \Cref{lem:associativity}}
\label{appendix:associativityproof}
	Denote $\pattern_\ell = (a_\ell, b_\ell, c_\ell, d_\ell)$ for $\ell \in \intset{3}$. Let us show that $\pattern_1$ and $\pattern_2 * \pattern_3$ are chainable and $r(\pattern_1, \pattern_2*\pattern_3) = r(\pattern_1, \pattern_2)$. Since $(\pattern_2, \pattern_3)$ is chainable, by definition of $*$ (\Cref{def:chainableDBfour}), we have:
	\begin{equation*}
		(\tilde{a}, \tilde{b}, \tilde{c}, \tilde{d}) = \ttheta := \pattern_2 * \pattern_3 = \left(a_2, \frac{b_2d_2}{d_3}, \frac{a_3c_3}{a_2}, d_3\right).
	\end{equation*}
	We then verify that $(\pattern_1, \ttheta)$ satisfies {the} conditions of \Cref{def:chainableDBfour}:
	\begin{enumerate}
        \item By chainability of $(\pattern_1, \pattern_2)$, we have $r(\pattern_1,\pattern_2) := a_1c_1/a_2 = b_2d_2/d_1 \in \NN$. Therefore: $a_1c_1/\tilde{a} = a_1c_1/a_2 = r(\pattern_1,\pattern_2) =  b_2d_2 / d_1 = \tilde{b}\tilde{d} / d_1 \in \mathbb{N}$. This means that $r(\pattern_1, \pattern_2*\pattern_3) = r(\pattern_1, \pattern_2)$. 
		\item By chainability of $(\pattern_1, \pattern_2)$, we have $a_1 \mid a_2 = \tilde{a}$. 
		\item By chainability of $(\pattern_2, \pattern_3)$ (resp.~of $(\pattern_1, \pattern_2)$) we have $\tilde{d} = d_3 \mid d_2 \mid d_1$. 
	\end{enumerate}
	In conclusion, $(\pattern_1, \pattern_2 * \pattern_3)$ is chainable with $r(\pattern_1, \pattern_2 * \pattern_3) = r(\pattern_1, \pattern_2)$. Computing $\pattern_1 * (\pattern_2 * \pattern_3)$ explicitly by \eqref{eq:operatorDBPfour} gives $\pattern_1 * (\pattern_2 * \pattern_3) = \left(a_1, \frac{b_1d_1}{d_3}, \frac{a_3c_3}{a_1}, d_3\right)$.
    Similarly, we can show that $(\pattern_1 * \pattern_2, \pattern_3)$ is also chainable with $r(\pattern_1 * \pattern_2, \pattern_3) = r(\pattern_2, \pattern_3)$, and we can indeed verify that $(\pattern_1 * \pattern_2) * \pattern_3 = \pattern_1 * (\pattern_2 * \pattern_3)$ using \eqref{eq:operatorDBPfour}.

\subsection{Proof of \Cref{lem:suppdbfactorprod} - explicit formula for $(\pattern_1 * \ldots * \pattern_L)$ in \eqref{eq:producttheta}}
\label{appendix:productmultipleDBparam}
    

    We show that $\pattern_1 * \ldots * \pattern_L = \left( a_1, \frac{b_1 d_1}{d_L}, \frac{a_L c_L}{a_1}, d_L \right)$, for each chainable architecture $\arch = (\pattern_\ell)_{\ell=1}^L = (a_\ell, b_\ell, c_\ell, d_\ell)_{\ell=1}^L$ of depth $L \geq 2$.
    The proof is an induction on $L \geq 2$. 
   If $L = 2$, the result comes from \eqref{eq:operatorDBPfour}.
   Let $L \geq 2$, and assume that the statement holds for any chainable architecture of depth $L$. 
        Consider a chainable architecture $\arch := (\pattern_\ell)_{\ell=1}^{L+1} = (a_\ell,b_\ell,c_\ell,d_\ell)_{\ell=1}^{L+1}$ of depth $L+1$. By the induction hypothesis, we have $\pattern_1 * \ldots * \pattern_L = \left( a_1, \frac{b_1 d_1}{d_L}, \frac{a_L c_L}{a_1}, d_L \right)$.
        Therefore,
            \begin{align*}
                \pattern_1 * \ldots * \pattern_L * \pattern_{L + 1} &= \left( a_1, \frac{b_1 d_1}{d_L}, \frac{a_L c_L}{a_1}, d_L \right) * (a_{L+1},b_{L+1},c_{L+1}, d_{L+1})\\
                &= \left(a_1, \frac{b_1d_1d_L}{d_Ld_{L+1}}, \frac{a_{L+1}c_{L+1}}{a_1}, d_{L+1}\right)\\
                &= \left(a_1, \frac{b_1d_1}{d_{L+1}}, \frac{a_{L+1}c_{L+1}}{a_1}, d_{L+1}\right).
            \end{align*}

\subsection{Proof for \Cref{lem:seq-qchainability}}
\label{appendix:seq-qchainabilityproof}
    By \eqref{eq:producttheta}, we have:
    \begin{equation}
    \label{eq:explicit-pattern-r-s-t}
        \begin{aligned}
            (a, b, c, d) &= \pattern := \pattern_{q} * \ldots * \pattern_s = \left(a_{q}, \frac{b_{q}d_{q}}{d_s}, \frac{a_sc_s}{a_{q}}, d_s\right),\\
            (a', b', c', d') &= \pattern' := \pattern_{s+1} * \ldots * \pattern_t = \left(a_{s+1}, \frac{b_{s+1}d_{s+1}}{d_t}, \frac{a_t c_t}{a_{s+1}}, d_t\right).\\
        \end{aligned}
    \end{equation}
    We show the chainability of $(\pattern, \pattern')$ by verifying the conditions of \Cref{def:chainableDBfour}:
    \begin{enumerate}
        \item By chainability of $(\pattern_s, \pattern_{s+1})$, $r(\pattern_s,\pattern_{s+1}) = a_sc_s / a_{s + 1} = b_{s+1}d_{s+1} / d_s \in \mathbb{N}$. This means that $ac / a' = a_sc_s / a_{s + 1}   = r(\pattern_s,\pattern_{s+1}) = b_{s+1}d_{s+1} / d_s = b'd'/d$ and $r(\pattern, \pattern') = r(\pattern_s,\pattern_{s+1}) \in \mathbb{N}$.
        \item By chainability of $\arch$, $a_\ell \mid a_{\ell+1}$ for $\ell \in \intset{L-1}$, so $a= a_{q} \mid a_{s+1} =a'$  since $q \leq s$.
        \item Similarly
        $d_{\ell + 1} \mid d_\ell$ for all $\ell \in \intset{L-1}$, so
        $d'= d_t \mid d_s =d$ 
    \end{enumerate}

\subsection{Proof for \Cref{lem:partialproductnonredundant}}
\label{appendix:partialproductnonredundantproof}

    The explicit formulas for $(a,b,c,d) = \pattern := \pattern_{q} * \ldots * \pattern_{s}$ and $(a',b',c',d') = \pattern' := \pattern_{s+1} * \ldots * \pattern_t$ are given in \eqref{eq:explicit-pattern-r-s-t}. 
    By \Cref{lem:seq-qchainability}, $(\pattern, \pattern')$ is chainable, with $r(\pattern, \pattern') = r(\pattern_s, \pattern_{s+1})$. To show the non-redundancy of $(\pattern, \pattern')$, it remains to show $r(\pattern, \pattern') < \min(b,c')$.
    Let us show $r(\pattern, \pattern') < b$.
    %
%
%
    %
    %
    Because $\arch$ is not redundant, by \Cref{def:redundant}, we have
    $r(\pattern_\ell, \pattern_{\ell+1}) < b_{\ell}$ for any $\ell \in \intset{L-1}$. But $r(\pattern_\ell, \pattern_{\ell+1})= b_{\ell + 1}d_{\ell + 1}/d_{\ell}$ by \Cref{def:chainableDBfour}. Therefore, $b_{\ell + 1}d_{\ell + 1} < b_\ell d_\ell$ for any $\ell \in \intset{L-1}$. Thus, since $q \leq s$, we have $b_{s+1}d_{s+1} < b_{q}d_{q}$. A fortiori, $\frac{b_{s+1}d_{s+1}}{d_s} < \frac{b_{q}d_{q}}{d_s}$. But by \eqref{eq:explicit-pattern-r-s-t}, $\frac{b_{q}d_{q}}{d_s} = b$ and $\frac{b_{s+1}d_{s+1}}{d_s} = \frac{b'd'}{d} = r(\pattern, \pattern')$. In conclusion, $r(\pattern, \pattern') = \frac{b_{s+1}d_{s+1}}{d_s} < \frac{b_{q}d_{q}}{d_s} = b$.
    A similar argument yields $r(\pattern, \pattern') < c'$. This ends the proof.

\subsection{Proof of \Cref{lem:prodDB2}}
\label{app:prodDB2}
{We first show that
 $(\bS_{\pattern_1},\bS_{\pattern_2})$ satisfies the condition of \Cref{theorem:tractablefsmf}. 
As in the proof of point 2 of \Cref{lem:prodDBBis}
    the column supports $\{ \matindex{\bS_{\pattern_1}}{:}{j} \}_{j}$ (resp.~the row supports $\{ \matindex{\bS_{\pattern_2}}{i}{:} \}_{i}$) are pairwise disjoint or identical, hence 
    the components $\bU_i$ of 
    $\varphi(\bS_{\pattern_1}, \bS_{\pattern_2})$ are pairwise disjoint or identical
    (if their 
    column {\em and} row supports coincide).}

Therefore, 
for any matrix $\mat{A}$, we have $\mat{A} \in \setButterfly{\arch}$ if, and only if: 
\begin{equation*}
    \begin{split}
        &\min_{(\mat{X}, \mat{Y}) \in \setDBfactor{\arch}} \| \mat{A} - \mat{X} \mat{Y} \|_F^2 = 0 \\
        \overset{\eqref{eq:explicit-formula-fsmf}}{\iff}& \sum_{P \in \cP(\bS_{\pattern_1},\bS_{\pattern_2})} \min_{\bB, \rank(\bB) \leq |P|}\|\bA[R_P,C_P] - \bB\|_F^2 + \sum_{(i,j) \notin \supp(\bS_{\pattern_1} \bS_{\pattern_2})} \bA[i,j]^2 = 0 \\
        \overset{\eqref{eq:DB-support}}{\iff}& \sum_{P \in \cP(\bS_{\pattern_1},\bS_{\pattern_2})} \min_{\bB, \rank(\bB) \leq |P|}\|\bA[R_P,C_P] - \bB\|_F^2 + \sum_{(i,j) \notin \supp(\bS_{\pattern_1 * \pattern_2})} \bA[i,j]^2 = 0\\
        \iff& \begin{cases}
            \rank(\bA[R_P,C_P]) \leq |P|, \quad \forall P \in \cP(\bS_{\pattern_1},\bS_{\pattern_2}) \\
            \bA \in \setDBfactor{\pattern_1 * \pattern_2}
        \end{cases}.
    \end{split}
\end{equation*}
This proves \eqref{eq:prodDB}.

\subsection{Proof of \Cref{lem:archfromsize}}
\label{app:archfromsize}

   By \Cref{lem:suppdbfactorprod} we have $\pattern 
    := \pattern_1 * \ldots * \pattern_L = (a_1, b_1d_1/d_L,a_Lc_L/a_1,d_L)$ and $\setButterfly{\arch} \subseteq \setDBfactor{\pattern}$. Since  $\setButterfly{\arch}$ contains some dense matrices, by \Cref{def:dbfactorfour} we must have $a_1=d_L=1$ hence $\pattern = (1,b_1d_1,a_Lc_L,1)$, and the matrices in $\setDBfactor{\pattern}$ are of size $b_1d_1 \times a_Lc_L$. As a result $b_1d_1=m$ and $a_Lc_L=n$, so that $a_L \mid n$, $d_1 \mid m$, and $b_1 = m/d_1$ and $c_L = n/a_L$. Now, by chainability (see \Cref{def:chainableDBfour}) we also have $a_1 \mid \ldots \mid a_L$ and $d_L \mid \ldots \mid d_1$ and, $a_\ell c_\ell/a_{\ell+1} = b_{\ell+1}d_{\ell+1}/d_\ell = r(\pattern_\ell,\pattern_{\ell+1}) := r_\ell$ for each $\ell \in \intset{L-1}$.
  
    With the convention $a_{L+1}:=n$ and $d_0 := m$, the quantities $p_\ell := a_{\ell+1}/a_\ell$ and $q_\ell := d_{\ell-1}/d_\ell$, $1 \leq \ell \leq L$ are thus integers, and we have $b_1 = m/d_1 = d_0/d_1 = q_{1}$, $c_L = n/a_L = a_{L+1}/a_L = p_{L}$. We obtain $m = \prod_1^L p_\ell$, $n = \prod_1^L q_\ell$, and \eqref{eq:ADAsProduct}.
    For $1 \leq \ell \leq L-1$ we have
    \begin{align}
        b_{\ell+1} = \frac{d_\ell}{d_{\ell+1}} r_\ell = q_{\ell+1}r_\ell,\quad \text{and}\ 
        &
c_\ell = \frac{a_{\ell+1}}{a_\ell} r_\ell = p_\ell r_\ell. 
    \end{align}
    By convention $r_0=r_L=1$, hence we also have $b_1 = q_1 r_1$ and $c_L = p_L r_L$, so that \eqref{eq:BCAsProduct} indeed holds.
Vice versa it is not difficult to check that given any such integers $p_\ell$, $q_\ell$, $r_\ell$ 
the expressions~\eqref{eq:ADAsProduct}-\eqref{eq:BCAsProduct} yield a chainable architecture enabling the construction of $\one{m \times n}$, which is a dense matrix.
   
We now deal with non-redundancy. By \Cref{def:redundant}, each pair $\pattern_\ell,\pattern_{\ell+1}$, $1 \leq \ell \leq L-1$ is non-redundant if, and only if, $r_\ell := r(\pattern_\ell,\pattern_{\ell+1}) < \min(b_\ell,c_{\ell+1}) = \min(q_\ell r_{\ell-1},p_{\ell+1}r_{\ell+1})$. This reads 
 \begin{align*}
     \frac{r_\ell}{r_{\ell-1}} < q_\ell
     \quad \text{and}\ 
     \frac{r_{\ell+1}}{r_\ell} > \frac{1}{p_{\ell+1}},
     \ 
     1 \leq \ell \leq L-1
     \end{align*}
     or equivalently (recall that $r_0=r_L:=1$ by definition): $r_1<q_1$, $r_{L-1} < p_L$, and \eqref{eq:NonRedundancyConditionAnnex}.

\section{
Pseudo-orthonormalization operations of 
\Cref{section:normalizedbutterflyfactorization}}
\label{app:orthonormalDB}
The goal of this section is to describe
the pseudo-orthonormalization operations mentioned
in {the new butterfly algorithm (\Cref{algo:modifedbutterflyalgo})}, which involve the procedure described in \Cref{algo:exchange}.

\begin{algorithm}[t]
	\centering
	\caption{Column/row-pseudo-orthonormalization} 
	\label{algo:exchange}
	\begin{algorithmic}[1]
		\REQUIRE Non-redundant $(\pattern_1, \pattern_2)$, $\bX \in  \setDBfactor{\pattern_1}$, $\bY \in \setDBfactor{\pattern_2}$, $u \in \{ \texttt{column}, \texttt{row}\}$
        \ENSURE $(\tilde{\bX},\tilde{\bY}) \in \setDBfactor{\pattern_1} \times \setDBfactor{\pattern_2}$ such that $\tilde{\bX} \tilde{\bY} = \mat{X} \mat{Y}$
		\STATE $(\tilde{\bX}, \tilde{\bY}) \gets (\mbf{0}, \mbf{0})$
		\FOR{$P \in \mathcal{P}(\bS_{\pattern_1}, \bS_{\pattern_2})$ (cf.~\Cref{def:classequivalence})} 
		\IF{$u$ is \texttt{column}}
		\STATE {$(\bQ, \bR) \gets$ QR-decomposition of $\bX[R_P, P]$}{\label{line:qrdecomposition}}
		\STATE {$\tilde{\bX}[R_P, P] \gets \bQ$}{\label{line:columnorthonormalization}}
		\STATE {$\tilde{\bY}[P,C_P] \gets \bR\bY[P, C_P]$}{\label{line:matrix-multiplication}}
		\ELSIF{$u$ is \texttt{row}}
		\STATE $(\bQ, \bR) \gets$ QR-decomposition of $\bY[P, C_P]^\top$
		\STATE {$\tilde{\bX}[R_P, P] \gets \bX[R_P, P] \bR^\top$}
		\STATE $\tilde{\bY}[P,C_P] \gets \bQ^\top${\label{line:roworthonormalization}}
		\ENDIF
		\ENDFOR
		\RETURN $(\tilde{\bX},\tilde{\bY})$
	\end{algorithmic}
\end{algorithm}

{First of all, let us highlight that the orthonormalization operations are well-defined  only under the non-redundancy assumption. 
 In \Cref{algo:exchange}, the input pair of patterns $(\pattern_1, \pattern_2)$ is assumed to be chainable and non-redundant, thus by \Cref{lem:prodDBBis} and \Cref{def:redundant}, we have $| R_P | \leq | P |$ and $| C_P | \leq | P |$, which makes the operations at lines  \ref{line:columnorthonormalization} and \ref{line:roworthonormalization} in \Cref{algo:exchange} well-defined.
In \Cref{algo:modifedbutterflyalgo}, the architecture $\arch$ is assumed to be non-redundant. By \Cref{lem:partialproductnonredundant}, this means that the pair $(\pattern_{{I}_k}, \pattern_{{I}_{k+1}})$ at line \ref{line:colorthonormal} or the pair $(\pattern_{{I}_{k-1}}, \pattern_{{I}_k})$ at line \ref{line:roworthonormal} are chainable and non-redundant. This makes the call to \Cref{algo:exchange} at these lines well-defined. }

In the following, we start by providing properties of left/right-$r$-unitary matrices (cf.~\Cref{def:q-unitary}),
while the second part of this section is devoted to the proof of \Cref{lemma:role-pseudo-orthogonality}.

\subsection{Properties of left/right-$r$-unitary factors}
\label{app:prop-ortho-butterfly-factor}
We introduce properties related to left/right-$r$-unitary factor from \Cref{def:q-unitary}.

\subsubsection{Norm preservation under left and right matrix multiplication}
\begin{lemma}
	\label{lem:qunitarypreservesfrob}
	Consider a chainable $\arch := (\pattern_1, \pattern_2, \pattern_3)$ and $\mat{A}_i \in \setDBfactor{\pattern_i}$ {for $i \in \intset{3}$}. If $\mat{A}_1$
    is left-$r(\pattern_1, \pattern_2)$-unitary
    and 
    $\mat{A}_3$ is right-$r(\pattern_2, \pattern_3)$-unitary then: 
	\begin{equation*}
		\|\mat{A}_1\mat{A}_2\mat{A}_3\|_F = \|\mat{A}_2\|_F.
	\end{equation*}  
\end{lemma}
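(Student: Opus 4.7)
The plan is to apply the two pseudo-unitary properties of $\mat{A}_1$ and $\mat{A}_3$ in sequence, peeling off one factor at a time from the triple product, so the only work is verifying that the intermediate products satisfy the chainability conditions required by \Cref{def:q-unitary}.

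First I would handle the left side. By \Cref{lem:associativity} applied to the chainable triple $(\pattern_1,\pattern_2,\pattern_3)$, the pair $(\pattern_1,\pattern_2*\pattern_3)$ is chainable with $r(\pattern_1,\pattern_2*\pattern_3)=r(\pattern_1,\pattern_2)$. Moreover, \Cref{prop:stability} (or equivalently the $L=2$ case of \Cref{lem:suppdbfactorprod}) gives $\mat{A}_2\mat{A}_3 \in \setDBfactor{\pattern_2*\pattern_3}$. Hence $\mat{A}_2\mat{A}_3$ is exactly a factor of the type required by \Cref{def:q-unitary} for the left-$r(\pattern_1,\pattern_2)$-unitary factor $\mat{A}_1$, so
\begin{equation*}
\|\mat{A}_1\mat{A}_2\mat{A}_3\|_F = \|\mat{A}_2\mat{A}_3\|_F.
\end{equation*}

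Next I would peel off $\mat{A}_3$ on the right. Here the argument is even more direct: $\mat{A}_2$ is a $\pattern_2$-factor and, trivially, the pair $(\pattern_2,\pattern_3)$ is chainable with value $r(\pattern_2,\pattern_3)$, so by the right-$r(\pattern_2,\pattern_3)$-unitarity of $\mat{A}_3$ in the sense of \Cref{def:q-unitary},
\begin{equation*}
\|\mat{A}_2\mat{A}_3\|_F = \|\mat{A}_2\|_F.
\end{equation*}
Chaining the two equalities gives the lemma.

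There is no real obstacle here; the only thing to check is that the associativity lemma provides the correct $r$-value so that the hypothesis of left-$r$-unitarity on $\mat{A}_1$ applies to the composite factor $\mat{A}_2\mat{A}_3$. The proof takes two short lines once that identification is stated.
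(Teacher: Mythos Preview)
Your proof is correct and matches the paper's argument essentially line for line: both invoke \Cref{lem:associativity} to identify $r(\pattern_1,\pattern_2*\pattern_3)=r(\pattern_1,\pattern_2)$, apply left-$r$-unitarity of $\mat{A}_1$ to the composite $\mat{A}_2\mat{A}_3\in\setDBfactor{\pattern_2*\pattern_3}$, and then apply right-$r$-unitarity of $\mat{A}_3$ directly to $\mat{A}_2$.
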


\begin{proof}
 By \Cref{lem:associativity} we have $r(\pattern_1, \pattern_2 * \pattern_3) = r(\pattern_1, \pattern_2)$, hence
        \begin{align*}
            \|\mat{A}_1\mat{A}_2\mat{A}_3\|_F &= \|\mat{A}_2\mat{A}_3\|_F \quad (\text{since } \mat{A}_2\mat{A}_3 \in \setDBfactor{\pattern_2 * \pattern_3}, r(\pattern_1, \pattern_2 * \pattern_3) = r(\pattern_1, \pattern_2))\\
            &= \|\mat{A}_2\|_F \quad (\text{by~\Cref{def:q-unitary}}).
        \end{align*}
\end{proof}
\subsubsection{Stability under matrix multiplication} 
Similarly to classical orthonormal matrices, left/right-$r$-unitary factors also enjoy a form of stability under matrix multiplication, in the following sense.

\begin{lemma}
	\label{lemma:stableundermatrixmultiplication}
    Consider a chainable pair $(\pattern_1, \pattern_2)$
    and $\mat{A}_i \in \setDBfactor{\pattern_i}$ {for $i \in \intset{2}$}.
    \begin{enumerate}
        \item If $\mat{A}_1$ is  left-$r(\pattern_1, \pattern_2)$-unitary and $\mat{A}_2$ is left-$r$-unitary for some integer $r$, then the product $\bA_1\bA_2 \in \setDBfactor{\pattern_1 * \pattern_2}$ is left-$r$-unitary.
        \item If $\mat{A}_1$  is  right-$r$-unitary for some integer $r$ and $\mat{A}_2$ is right-$r(\pattern_1, \pattern_2)$-unitary, then the product $\bA_1\bA_2 \in \setDBfactor{\pattern_1 * \pattern_2}$ is right-$r$-unitary.
    \end{enumerate}
\end{lemma}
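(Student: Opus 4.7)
The plan is to verify the two defining conditions in \Cref{def:q-unitary} for the product $\bA_1\bA_2$: first, the divisibility of the relevant dimension of $\pattern_1*\pattern_2$ by $r$; and second, the norm-preservation identity under left- (resp. right-) multiplication by any chainable factor of the appropriate type. I will detail only item 1; item 2 follows by the symmetric transposed argument.

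Writing $\pattern_i = (a_i,b_i,c_i,d_i)$ and using \eqref{eq:operatorDBPfour}, the $c$-component of $\pattern_1*\pattern_2$ equals $a_2 c_2/a_1$. Since $\bA_2$ is left-$r$-unitary we have $r \mid c_2$, and since $(\pattern_1,\pattern_2)$ is chainable we have $a_1 \mid a_2$, so $r$ divides $c_2 \cdot (a_2/a_1) = a_2 c_2/a_1$, settling the divisibility requirement.

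For the norm-preservation, I would pick any pattern $\pattern_3$ such that $(\pattern_1*\pattern_2,\pattern_3)$ is chainable with $r(\pattern_1*\pattern_2,\pattern_3) = r$, and any $\pattern_3$-factor $\bY$. The key subclaim is that $(\pattern_2,\pattern_3)$ is then chainable with $r(\pattern_2,\pattern_3) = r$. Indeed, the chainability of $(\pattern_1*\pattern_2,\pattern_3)$ yields $r = a_2c_2/a_3 = b_3 d_3/d_2$ and $d_3 \mid d_2$ (since $d_{\pattern_1*\pattern_2} = d_2$); the missing divisibility $a_2 \mid a_3$ follows from $a_3 = a_2 (c_2/r)$, which is a positive integer precisely because $r \mid c_2$. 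Once this subclaim is established, the left-$r$-unitarity of $\bA_2$ gives $\|\bA_2 \bY\|_F = \|\bY\|_F$.

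To absorb $\bA_1$, I would invoke \Cref{lem:associativity}: since $(\pattern_1,\pattern_2)$ and $(\pattern_2,\pattern_3)$ are both chainable, $(\pattern_1,\pattern_2*\pattern_3)$ is chainable with $r(\pattern_1,\pattern_2*\pattern_3) = r(\pattern_1,\pattern_2)$. By \Cref{prop:stability}, $\bA_2 \bY \in \setDBfactor{\pattern_2*\pattern_3}$, so applying the left-$r(\pattern_1,\pattern_2)$-unitarity of $\bA_1$ yields $\|\bA_1(\bA_2\bY)\|_F = \|\bA_2\bY\|_F$. Chaining the two equalities gives $\|(\bA_1\bA_2)\bY\|_F = \|\bY\|_F$, which completes item 1. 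The only non-routine step is the upgrade from chainability of $(\pattern_1*\pattern_2,\pattern_3)$ to chainability of $(\pattern_2,\pattern_3)$, and this is precisely where the divisibility $r \mid c_2$ built into the definition of left-$r$-unitary is indispensable — without it one cannot guarantee $a_2 \mid a_3$, and the argument breaks down. Item 2 is obtained by transposing every matrix in the above reasoning and swapping the roles of the $b$ and $c$ parameters throughout.
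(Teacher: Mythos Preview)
Your proof is correct and follows essentially the same approach as the paper: verify the divisibility condition for $\pattern_1*\pattern_2$, then use associativity (\Cref{lem:associativity}) together with the left-unitarity of $\bA_1$ and $\bA_2$ to chain the two norm identities. In fact, you are slightly more careful than the paper: you explicitly verify that $(\pattern_2,\pattern_3)$ is chainable (using $r\mid c_2$ to deduce $a_2\mid a_3$), whereas the paper invokes \Cref{lem:associativity} for $r(\pattern_1*\pattern_2,\pattern_3)=r(\pattern_2,\pattern_3)$ without making that prerequisite explicit.
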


\begin{proof}
	We only prove the first claim. The second one can be dealt with similarly.
Denote $\pattern_i = (a_i, b_i, c_i, d_i)$ {for $i \in \intset{2}$}. By \eqref{eq:operatorDBPfour}, we have:
    \begin{equation*}
        \pattern_1 * \pattern_2 = \left(a_1, \frac{b_1d_1}{d_2}, \frac{a_2c_2}{a_1}, d_2\right).
    \end{equation*}
    One can verify that $r \mid c_2 \mid a_2c_2/a_1$ (since $a_1 \mid a_2$).
    


    Given any pattern $\pattern_3$ satisfying $r(\pattern_1*\pattern_2, \pattern_3) = r$ and any $\pattern_3$-factor $\mat{A}_3$, we have 
    \begin{equation*}
            \|\mat{A}_1\mat{A}_2\mat{A}_3\|_F = \|\mat{A}_2\mat{A}_3\|_F   = \|\mat{A}_3\|_F,
    \end{equation*}
   where the first equality comes from the fact that $ \mat{A}_2\mat{A}_3 \in \setDBfactor{\pattern_2 * \pattern_3}$,  $\mat{A}_1$ is $r(\pattern_1,\pattern_2)$-left-unitary  and $ r(\pattern_1, \pattern_2 * \pattern_3)=r(\pattern_1, \pattern_2) $ from \Cref{lem:associativity}; while the second equality holds because   $\mat{A}_2$ is left-$r$-unitary and $ r(\pattern_1* \pattern_2, \pattern_3)=r(\pattern_2, \pattern_3)=r$ from \Cref{lem:associativity}.
    This concludes the proof.
\end{proof}

\subsection{Characterization of $r$-unitary factors and proof of \Cref{lemma:role-pseudo-orthogonality}}
We explicitly characterize left/right-$r$-unitary factors.
This characterization 
reveals why pseudo-orthonormalization generates left/right-$r$-unitary factors (i.e., why \Cref{lemma:role-pseudo-orthogonality} holds). 
We first explain why $r \mid c$ is required in \Cref{def:q-unitary}.

\begin{lemma}
    Consider $\pattern:= (a,b,c,d)$ a factor pattern and $q\in\mathbb{N}$. 
    \begin{itemize}
        \item 
There exists $\pattern': = (a',b',c',d')$ such that $r(\pattern, \pattern') = r$ if, and only if, $r \mid c$. 
     \item 
There exists $\pattern': = (a',b',c',d')$ such that $r(\pattern', \pattern) = r$ if, and only if, $r \mid b$. 
\end{itemize}
\end{lemma}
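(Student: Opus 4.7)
The plan is to prove the two equivalences directly from the explicit formula for $r(\pattern_1,\pattern_2)$ provided by \Cref{def:chainableDBfour}, treating the two statements as symmetric and focusing on the first. Recall that if $(\pattern,\pattern')$ is chainable with $\pattern' = (a',b',c',d')$, then $a \mid a'$, $d' \mid d$, and $r(\pattern,\pattern') = ac/a' = b'd'/d$.

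For the forward direction of the first claim, I would suppose that some $\pattern'$ with $r(\pattern,\pattern') = r$ exists, write $a' = k a$ for an integer $k \geq 1$ (possible because $a \mid a'$), and deduce $r = c/k$. Since $r$ is an integer, this forces $k \mid c$, hence $r = c/k$ divides $c$. For the backward direction, given $r \mid c$, I would construct $\pattern'$ explicitly by setting
\[
a' := ac/r, \quad b' := r, \quad c' := 1, \quad d' := d,
\]
and then verify the three chainability conditions one by one: $a \mid a'$ follows from $r \mid c$, $d' = d \mid d$ is trivial, and $ac/a' = r = b'd'/d$ holds by construction. The key observation making the construction easy is that $c'$ does not appear anywhere in the chainability constraints on the pair $(\pattern,\pattern')$, so it can be chosen freely.

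The second equivalence is completely symmetric and I would prove it by an analogous argument, relying this time on the identity $r(\pattern',\pattern) = b d/d'$ (with $d \mid d'$) for the forward direction, and, for the backward direction, constructing $\pattern'$ by
\[
a' := a, \quad b' := 1, \quad c' := r, \quad d' := bd/r,
\]
so that $d \mid d'$ holds because $r \mid b$, $a' = a \mid a$ is trivial, and $a'c'/a = r = bd/d'$ by construction. Here the free coordinate is $b'$ instead of $c'$.

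There is no real obstacle in this proof: both directions reduce to rewriting the ratios in \Cref{def:chainableDBfour} and exhibiting an explicit witness. The only subtle point worth stating carefully is the observation that, in the chainability condition, the second pattern $\pattern'$ constrains only $a'$, $b'$, $d'$ (not $c'$) when $\pattern$ is on the left, and symmetrically only $a'$, $c'$, $d'$ (not $b'$) when $\pattern$ is on the right. This degree of freedom is what makes the backward constructions possible for \emph{any} $r$ satisfying the divisibility condition.
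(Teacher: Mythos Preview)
Your proof is correct and follows essentially the same approach as the paper: both argue the forward direction via $c = r\,(a'/a)$, and for the backward direction both exhibit an explicit witness $\pattern' = (ac/r,\, r,\, *,\, d)$; the only difference is the value chosen for the free coordinate $c'$ (you take $c'=1$, the paper takes $c'=c$), which is immaterial, as you yourself observe.
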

\begin{proof}
We prove the first claim, the second one is proved similarly.
    If $(\pattern, \pattern')$ is chainable, then (by \Cref{def:chainableDBfour}) $r(\pattern, \pattern') = ac/a'$ and $a \mid a'$, thus $c = r(a'/a)$ and $r \mid c$. Conversely if $r \mid c$ then $\pattern' = (ac/r, r, c, d)$ satisfies the requirements.
\end{proof}
Next, we consider the partition $\cP = \cP(\bS_{\pattern}, \bS_{\pattern'})$ of $\intset{q}$ (with $q = acd$) and the sets $R_P, C_P$ (for $P \in \cP$) from \Cref{def:classequivalence}, with chainable $(\pattern,\pattern')$. By~\Cref{lem:prodDBBis}, the sets $R_P \times C_P$ are pairwise disjoint, and for any $\pattern$-pattern $\bX$ and $\pattern'$-pattern $\bY$ we have
\(
\supp(\bX\bY) \subseteq 
\supp(\bS_{\pattern}\bS_{\pattern'})
= \supp(\bS_{\pattern * \pattern'}) = 
\cup_{P \in \cP} R_P \times C_P,
\)
so that
\begin{equation}
		\label{eq:decomposeorthonormal}
		\|\bX\bY\|_F^2 =  \sum_{P \in \cP}  \|(\bX\bY)[R_P,C_P]\|_F^2
  \overset{\eqref{eq:submatrixequality}}{=} 
   \sum_{P \in \cP}
  \|\bX[R_P,P]\bY[P,C_P]\|_F^2.
\end{equation}
This hints that the left-$r$-unitary of $\pattern$-factor $\bX$  can be chararacterized via the blocks $\bX[R_P,P]$, $P \in \cP$. To explicit this characterization  we prove
that if $\pattern'$ satisfies $r(\pattern,\pattern') = r$ then 
$\cP = \cP(\bS_{\pattern},\bS_{\pattern'})$ does not depend on $\pattern'$. Since it partitions the set $\intset{q}$, which indexes the columns of $\bX$, we denote it $\cP_{\mathtt{col}}(\pattern,r)$.
\begin{lemma}
	\label{lem:invariance-partition-col}
   Consider a pattern $\pattern := (a,b,c,d)$, an integer $r \mid c$, and $\pattern'$ such that $(\pattern, \pattern')$ is chainable with $r(\pattern, \pattern') = r$.
   The partition $\cP := \cP(\mat{S}_{\pattern}, \mat{S}_{\pattern'})$ of \Cref{def:classequivalence} does not depend on $\pattern'$. We denote it $\mathcal{P}_{\mathtt{col}}(\pattern,r)$.    
   We have $|\cP|=acd/r=a'd$.
\end{lemma}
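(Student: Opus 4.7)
The plan is to make the equivalence relation $i \sim j$ (equality of the rank-one contributions $\bU_i = \bU_j$) fully explicit by decomposing the column indices $j \in \intset{acd}$ of $\bS_\pattern$ using the Kronecker structure, and to verify that the resulting description depends only on $(\pattern,r)$, not on the auxiliary parameters of $\pattern' = (a',b',c',d')$.

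First I would use the chainability assumption and the definition of $r(\pattern,\pattern')$ to write $a' = ua$, $b' = rs$, $c = ur$, $d = sd'$ for some positive integers $u,s$, where $u = c/r$ and $s = d/d'$. This rewrites all the dimensions of $\pattern'$ in terms of $(\pattern,r)$ and the ``free'' parameters $(s,u)$ (equivalently, the choice of $d'$, since $u$ is forced by $r$). Then, using the standard Kronecker indexing, I identify $\intset{acd}$ with $\intset{a}\times \intset{c}\times \intset{d}$ for the columns of $\bS_\pattern$, and with $\intset{a'}\times \intset{b'}\times \intset{d'}$ for the rows of $\bS_{\pattern'}$, and explicitly match the two labelings via the decompositions $\ell = (\ell_1-1)r + \ell_2$ with $(\ell_1,\ell_2) \in \intset{u}\times \intset{r}$, and $m = (m_1-1)d' + m_2$ with $(m_1,m_2) \in \intset{s}\times \intset{d'}$.

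Second, from $\bS_\pattern = \bI_a \otimes \one{b \times c} \otimes \bI_d$ I read off that the column support of $\bS_\pattern[:,j]$ at $j \leftrightarrow (k,\ell,m)$ is $\{k\}\times\intset{b}\times\{m\}$, depending only on $(k,m)$. From $\bS_{\pattern'} = \bI_{a'}\otimes \one{b'\times c'}\otimes \bI_{d'}$, the row support of $\bS_{\pattern'}[j,:]$ at $j \leftrightarrow (k',\ell',m')$ is $\{k'\}\times\intset{c'}\times\{m'\}$, depending only on $(k',m')$. The index matching in the previous step gives $k' = (k-1)u + \ell_1$ and $m' = m_2$. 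Since $\bU_j$ is a non-zero rank-one binary matrix (its row and column supports are both non-empty), two indices are equivalent if and only if they share the same column support and the same row support, i.e.\ the same $(k,m)$ \emph{and} the same $(k',m')$. Combining these, $j \sim j'$ iff $k,m,\ell_1,m_2$ agree; since $(m_1,m_2)$ determines $m$ bijectively, this is equivalent to $k,m,\ell_1$ agreeing.

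Third, I note that $\ell_1 = \lceil \ell/r \rceil$ depends only on $(\ell,r)$, not on $s,u,d'$. Hence the equivalence relation, and therefore the partition $\cP$, is a function of $(\pattern,r)$ alone, which is the first claim. For the cardinality, equivalence classes are naturally indexed by $(k,\ell_1,m) \in \intset{a}\times\intset{c/r}\times\intset{d}$, so $|\cP| = acd/r$; using $r = ac/a'$ we rewrite this as $a'd$. The only step that requires any care is the index translation between the two Kronecker parameterizations in the first step; once this correspondence is nailed down, the rest is bookkeeping. I would encapsulate the index translation in a short preliminary computation to keep the main argument clean.
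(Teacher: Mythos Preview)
Your proof is correct. It takes a genuinely different route from the paper's.

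The paper first writes down an explicit candidate partition $\cP_{\mathtt{col}}(\pattern,r)=\{P_{t,k}\}_{(t,k)\in\intset{ac/r}\times\intset{d}}$ with $P_{t,k}=\{k+(t-1)dr+(j-1)d\}_{j\in\intset{r}}$, and then proves that $\cP(\bS_\pattern,\bS_{\pattern'})$ coincides with it by a two-step containment argument: an interval argument on the block-diagonal structure of $\bS_{\pattern'}$ pins each class $P\in\cP$ inside a unique interval $P_t=\bigcup_k P_{t,k}$; then a congruence-mod-$d$ argument on the columns of $\bS_\pattern$ pins $P$ inside a unique $P_{t,k}$; equality follows from $|P|=r=|P_{t,k}|$.

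You instead parametrize $\intset{acd}$ simultaneously by the two Kronecker labelings, write the index translation $(k,\ell,m)\mapsto(k',\ell',m')$ explicitly, and read off that $i\sim j$ iff they share $(k,\ell_1,m)$ with $\ell_1=\lceil\ell/r\rceil$, a condition that visibly depends only on $(\pattern,r)$. This is a direct one-pass computation and yields the cardinality immediately. The two descriptions match under the identification $t=(k-1)(c/r)+\ell_1$ and the paper's second index equal to your $m$, so your argument also recovers the explicit formula the paper isolates and reuses later (e.g.\ to show $\cP_{\mathtt{col}}(\pattern_1*\cdots*\pattern_L,r)=\cP_{\mathtt{col}}(\pattern_L,r)$). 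Your approach buys a cleaner computation; the paper's buys a standalone description of $\cP_{\mathtt{col}}(\pattern,r)$ that can be cited downstream without re-deriving coordinates.
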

We can similarly define $\mathcal{P}_{\mathtt{row}}(\pattern,r)$ when $r \mid b$, with an analog property $\mathcal{P}(\mat{S}_{\pattern'}, \mat{S}_{\pattern}) = \mathcal{P}_{\mathtt{row}}(\pattern,r)$ whenever $(\pattern',\pattern)$ is chainable with $r(\pattern',\pattern)=r$.
The proof of 	\Cref{lem:invariance-partition-col} is slightly postponed to immediately state and prove our main characterization.
\begin{theorem}
	\label{theorem:orthonormalDBfactor}
	Consider a pattern $\pattern:=(a,b,c,d)$ and a natural number $r$ such that $r \mid c$ (resp.~$r \mid b$). Let $\bX$ be a $\pattern$-factor. The following claims are equivalent:
    \begin{enumerate}
        \item $\bX$ is left-$r$-unitary (resp.~right-$r$-unitary).
        \item For each $P \in \cP_{\mathtt{col}}(\pattern, r)$ (resp.~$P \in \cP_{\mathtt{row}}(\pattern, r)$), 
        $\bX[R_P,P]$
        (resp.~
        $\bX[P,C_P]$) has orthogonal columns (resp.~rows) ($R_P,C_P,P$ are defined as in \Cref{def:classequivalence}).
    \end{enumerate}
\end{theorem}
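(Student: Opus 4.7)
My plan is to prove the two implications of the equivalence separately, focusing only on the left-$r$-unitary case since the right case follows by a transposition argument (transposing a $\boldsymbol{\pi}$-factor gives a factor whose pattern swaps the roles of $b$ and $c$, and of row/column blocks, turning right-$r$-unitary into left-$r$-unitary). Throughout I will use the key identity \eqref{eq:decomposeorthonormal}, combined with the fact that for any $\boldsymbol{\pi}'$-factor $\bY$ the support decomposition $\supp(\bS_{\pattern'}) = \bigsqcup_{P \in \cP} P \times C_P$ (which follows from \Cref{lem:prodDBBis} applied to the pair $(\pattern',\pattern'')$ for auxiliary $\pattern''$, or simply from the Kronecker structure of $\bS_{\pattern'}$) yields $\|\bY\|_F^2 = \sum_{P \in \cP} \|\bY[P,C_P]\|_F^2$.

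For the implication $(2) \Rightarrow (1)$: assume that each block $\bX[R_P,P]$ has orthonormal columns. Let $\pattern'$ be any pattern such that $(\pattern,\pattern')$ is chainable with $r(\pattern,\pattern')=r$, and let $\bY$ be a $\pattern'$-factor. For each $P \in \cP = \cP(\bS_\pattern,\bS_{\pattern'}) = \cP_{\mathtt{col}}(\pattern,r)$ (by \Cref{lem:invariance-partition-col}) we have $\|\bX[R_P,P]\bY[P,C_P]\|_F = \|\bY[P,C_P]\|_F$ since left-multiplication by a matrix with orthonormal columns preserves Frobenius norm. Summing over $P$ and using \eqref{eq:decomposeorthonormal} together with the norm decomposition of $\bY$ gives $\|\bX\bY\|_F = \|\bY\|_F$, which is exactly left-$r$-unitarity.

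For the converse $(1) \Rightarrow (2)$: I will specialize $\pattern'$ to isolate a single block. Since $r \mid c$, the pattern $\pattern' := (ac/r,\, r,\, 1,\, d)$ is chainable with $\pattern$ and satisfies $r(\pattern,\pattern')=r$ (this can be verified directly against \Cref{def:chainableDBfour}). With this choice $|C_P|=c'=1$ and $|P|=r$ for every $P \in \cP_{\mathtt{col}}(\pattern,r)$. Fix $P_0$ and an arbitrary vector $\bv \in \RR^r$; construct a $\pattern'$-factor $\bY$ whose block $\bY[P_0,C_{P_0}]$ equals $\bv$ while all other blocks $\bY[P,C_P]$, $P \neq P_0$, vanish. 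This $\bY$ lies in $\setDBfactor{\pattern'}$ precisely because the blocks $P \times C_P$ are pairwise disjoint and cover $\supp(\bS_{\pattern'})$. Applying \eqref{eq:decomposeorthonormal} and the assumption $\|\bX\bY\|_F=\|\bY\|_F$ then reduces to $\|\bX[R_{P_0},P_0]\bv\|_2 = \|\bv\|_2$ for every $\bv \in \RR^r$, which forces $\bX[R_{P_0},P_0]$ to have orthonormal columns. Since $P_0$ was arbitrary, this completes the proof.

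The main obstacle I anticipate is the construction of the "isolating" $\bY$ in the converse direction: one must ensure that choosing a single block freely while zeroing all others is compatible with the prescribed support $\bS_{\pattern'}$, which in turn relies on the disjointness $\supp(\bS_{\pattern'}) = \bigsqcup_P P \times C_P$. A second, more minor point is justifying the invariance of the partition $\cP_{\mathtt{col}}(\pattern,r)$ so that the same partition governs both directions of the argument; this is exactly the content of \Cref{lem:invariance-partition-col}, which is used as a black box here.
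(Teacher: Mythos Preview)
Your proof is correct and follows essentially the same approach as the paper: both directions rely on the block decomposition \eqref{eq:decomposeorthonormal} and, for $(1)\Rightarrow(2)$, on constructing a $\pattern'$-factor $\bY$ that isolates a single block $P_0$ so that left-$r$-unitarity forces $\bX[R_{P_0},P_0]$ to be norm-preserving. Your specialization $\pattern'=(ac/r,r,1,d)$ (so that the isolated block is a vector) is a minor simplification; the paper argues with a generic $\pattern'$ and arbitrary matrix-valued blocks $\bY[P,C_P]$, but the logic is identical.
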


\begin{proof}[Proof of~\Cref{theorem:orthonormalDBfactor}]
    We prove the claim for left-$r$-unitarity (right-$r$-unitarity is proved similarly). Let $\pattern'$ be a pattern satisfying $r(\pattern,\pattern') = r$. 
    Denoting $\cP := \cP(\bS_{\pattern},\bS_{\pattern'})$, by \Cref{lem:invariance-partition-col} we have $\cP=\cP_{\mathtt{col}}(\pattern,r)$. We exploit~\eqref{eq:decomposeorthonormal}.
	\begin{enumerate}[leftmargin=*]
	    \item Assume that $\bX$ is left-$r$-unitary, and fix an arbitrary $P \in \cP$. For any $\pattern'$-factor $\bY$ such that $\bY[R_{P'},P'] = \mbf{0}$ for all $P' \in \cP$, $P' \neq P$, by left-$r$-unitarity of $\bX$ we have:
        \begin{equation*}
            \begin{aligned}
                \|\bX[R_P,P]\bY[P,C_P]\|_F^2
                \stackrel{\eqref{eq:decomposeorthonormal}}{=}
                \|\bX\bY\|_F^2 = 
                \|\bY\|_F^2 =  \|\bY[P,C_P]\|_F^2.
            \end{aligned}
        \end{equation*}
        Thus $\bX[R_P,P]$ preserves the Frobenius norm of $\bY[P,C_P]$ upon left multiplication. Since this holds for any choice of $\bY[P,C_P]$, the matrix $\bX[R_P,P]$ has orthogonal columns. 
        This shows the first implication.
        \item Assume that $\bX[R_P,P]$ has orthogonal columns for each $P \in \cP$. For each $\pattern'$-factor $\bY$ and $P \in \cP$ we have 
        $\|\bX[R_P,P]\bY[P,C_P]\|_F^2 = \|\bY[P,C_P]\|_F^2$, thus,
        \begin{align*}
            \|\bX\bY\|_F^2                  \stackrel{\eqref{eq:decomposeorthonormal}}{=}
 \sum_{P \in \cP} \|\bX[R_P,P]\bY[P,C_P]\|_F^2 
 & = \sum_{P \in \cP} \|\bY[P,C_P]\|_F^2 \\
&  = \sum_{P \in \cP} \|\bY[P,:]\|_F^2
 = \|\bY\|_F^2.
        \end{align*}
        where the first equality of the second row results from the fact that the sub-matrix $\bY[
        \row{P}]$ has the form  $[\begin{smallmatrix} \bY[P, C_P] & \mbf{0}\end{smallmatrix}]$ up to some column permutations. To see this fact, we remind readers that by definition of $\cP = \cP(\bS_\pattern,\bS_{\pattern'})$ (see \Cref{def:classequivalence}) all rows of 
        $\bS_{\pattern'}[\row{P}]$ are equal and their support is $C_P$, and since $\bY \in \setDBfactor{\pattern'}$ the corresponding rows have support included in
        $C_P$ . 
	\end{enumerate}
\end{proof}

To prove \Cref{lem:invariance-partition-col} we will actually show that under its assumptions we have $\cP(\bS_{\pattern}, \bS_{\pattern'}) = \cP_{\mathtt{col}}(\pattern,r)$ where $\cP_{\mathtt{col}}(\pattern,r)$ is specified as follows:
\begin{definition}
    \label{def:formula-partition}
    Consider a pattern $\pattern := (a,b,c,d)$ and a natural number $r \mid c$. For each pair of integers $(t,k) \in \intset{ac/r} \times \intset{d}$ denote
\begin{equation}
		\label{eq:partitionformuledef}
      P_{t,k}  :=   \{k + (t-1) dr + (j-1) d \}_{j \in \intset{r}} \subseteq \intset{q}\ \text{with}\ q:= acd,
	\end{equation} 
and define $\mathcal{P}_{\mathtt{col}}(\pattern,r) := \{P_{t,k}\}_{t,k}$.
An illustration for these sets is given on \cref{fig:classequivalence}.
\begin{figure}[H]
	\centering
	\includegraphics[width=1.0\textwidth]{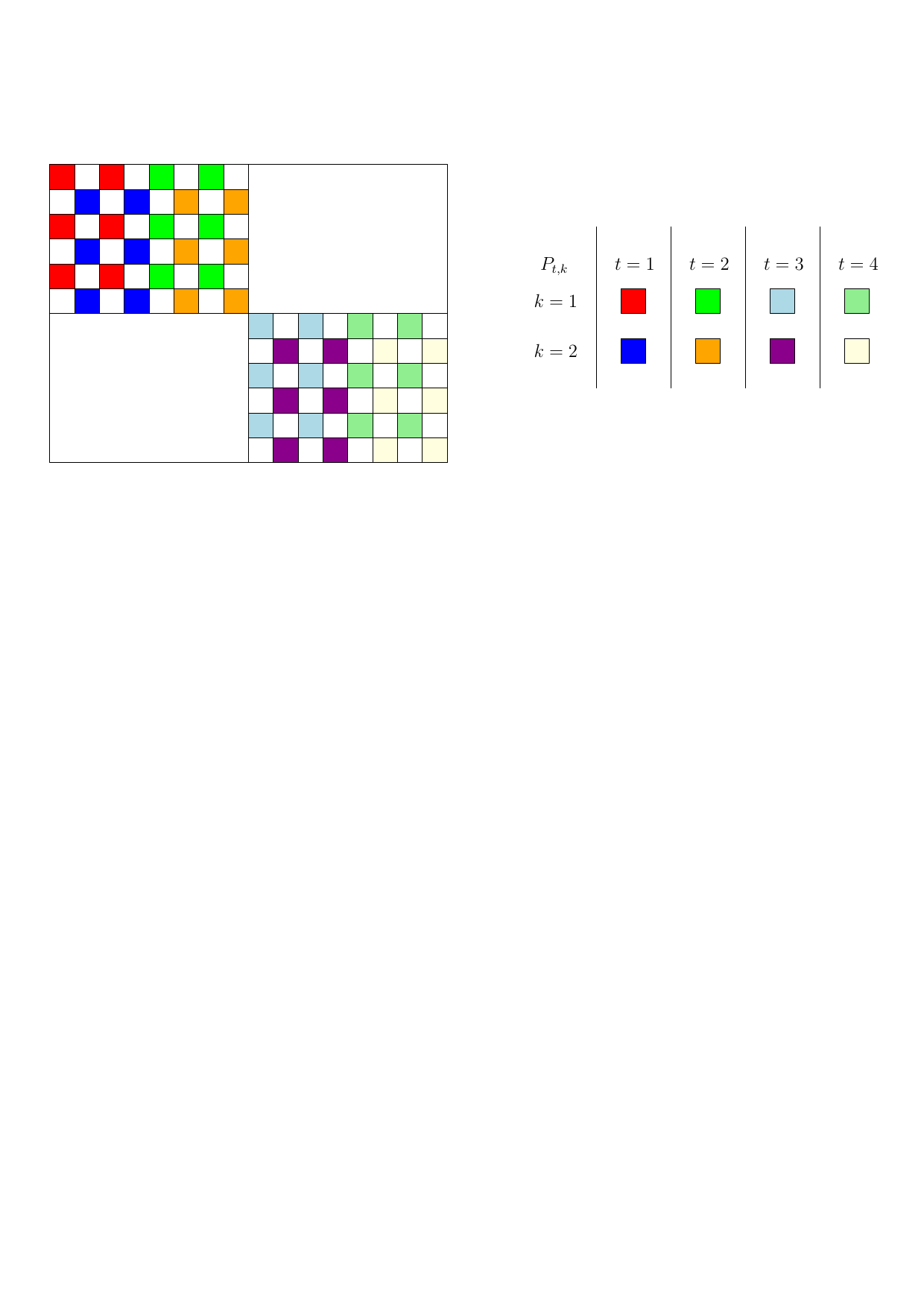}
	\caption{The partition $\cP_{\mathtt{col}}(\pattern, r)$ with $\pattern = (2, 3, 4, 2)$ and $r = 2$. Each set $P_{t,k}, (t,k) \in \intset{4} \times \intset{2}$ gathers the indices of the columns of $\bS_\pattern$ of a given color. See also \Cref{fig:DBfactorillu}. Same color indicates columns of the same sets $P_{t,k}$ (cf.~\cref{def:formula-partition}).}
	\label{fig:classequivalence}
\end{figure}
\end{definition}

\begin{proof}[Proof of \Cref{lem:invariance-partition-col}]
It is easy to check that $\cP' := \cP_{\mathtt{col}}(\pattern,r)$ partitions the index set $\intset{q}$, with $q = acd$, into $|\cP'|=acd/r$ components $P = P_{t,k} \in \cP'$ of equal cardinality $|P|=r$. Since, by \Cref{lem:prodDBBis}, the partition $\cP := \cP(\mat{S}_{\pattern}, \mat{S}_{\pattern'})$ of $\intset{q}$ has the same property, it is sufficient to prove that, for any $P \in \cP$, there exists $(t, k) \in \intset{ac/r} \times \intset{d}$ such that $P = P_{t,k}$ (such a pair $(t,k)$ is necessarily unique since we consider partitions).

    Given an arbitrary $P \in \mathcal{P}(\mat{S}_{\pattern}, \mat{S}_{{\pattern}'})$, we prove first that there exists a unique $t \in \intset{ac/r}$ such that $P \subseteq P_t := \bigcup_{k \in \intset{d}} P_{t,k}$.  
    Notice (see also \Cref{fig:classequivalence}) that each $P_t $ is an interval of length $dr$ and since (by chainability of $(\pattern, {\pattern}')$) we have $acd = {a}' {b}' {d}'$, $ac/r = {a}'$, and $dr = {b}' {d}'$, 
 \begin{equation}
    \label{eq:partition-It}
        P_t
        = \intset{(t-1)dr + 1, tdr} = \intset{(t-1)b'd' + 1, tb'd'}.
    \end{equation}
    Further observe that $\{P_t\}_{t \in \intset{ac/r}}$ partitions
    $\intset{q} = \intset{{a}' {b}' {d}'}$ into ${a}'$ consecutive intervals of length ${b}' {d}'$.     In other words, $P_t$ indexes the rows of the $t$-th block of $\bS_{\pattern'}$, cf.~\Cref{fig:DBfactorillu}.
    As a result, there exists at least one index $t$ such that $P_t \cap P \neq \emptyset$. We next prove the uniqueness of such a $t$, which implies $P \subseteq P_t$ as we consider a partition of $\intset{q}$.
    Consider two indices $t,t'$ such that $P_t \cap P \neq \emptyset$ and $P_{t'} \cap P \neq \emptyset$, and $i \in P_t \cap P$, $i' \in P_{t'} \cap P$. By \Cref{def:classequivalence}, since $i,i' \in P$ and $P \in \mathcal{P}(\mat{S}_{\pattern}, \mat{S}_{{\pattern}'})$, 
    we have $\mat{S}_{{\pattern}'}[i, :] = \mat{S}_{{\pattern}'}[i',:]$. 
    Since $\bS_{\pattern'} = \bI_{a'} \otimes \mbf{1}_{b' \times c'} \otimes \bI_{d'}$ is block-diagonal with $a'$ blocks of size $b'd' \times c'd'$, the row $\mat{S}_{{\pattern}'}[i, :]$ is supported in a subset of the interval $\intset{(\ell-1)c'd' + 1, \ell c'd'}$ with $\ell \in \intset{a'}$ the unique integer such that $i \in \intset{(\ell-1)b'd'+1,\ell b'd'} = P_\ell$. Since $i \in P_t$ we have $\ell=t$. The same holds for $\mat{S}_{{\pattern}'}[i', :]$ with $\ell'=t'$. Since both rows are identical, we deduce that $\ell=\ell'$, hence $t=t'$.
    
     Considering now the unique $t \in \intset{ac/r}$ such that $P \subseteq{P_t}$, since $\{P_{t,k}\}_{k \in \intset{d}}$ partitions $P_t$, there must exist $k \in \intset{d}$ such that $P_{t,k} \cap P \neq \emptyset$. Again, we now prove the uniqueness of such a $k$, which implies that $P \subseteq P_{t,k}$, and since $| P | = q = |P_{t,k}|$, we will obtain  $P = P_{t,k}$ as claimed. 
     Consider two indices $k,k' \in \intset{d}$ such that $P_{t,k} \cap P \neq \emptyset$ and $P_{t,k'} \cap P \neq \emptyset$, and $i \in P \cap P_{t,k}$ and $i' \in P \cap P_{t,k'}$.
     By construction (see~\cref{def:formula-partition}), we have $
     {i \equiv k \mod d}
     $ and $
     {i \equiv k' \mod d}
     $.
      To continue we observe that $\{P_{t,k}\}_{k \in \intset{d}}$ partitions the interval $P_t$ into $d$ (disjoint) subsets of integers of cardinality $r$ and the elements in such subsets are equally spaced by a distance $d$.
    Moreover, as above, since $i,i' \in P$ and $P \in \cP(\bS_\pattern,\bS_{\pattern'})$, we have
     $\mat{S}_{\pattern}[:,i] = \mat{S}_{\pattern}[:,i']$.
     By the structure 
     $\mat{S}_{\pattern} = \mat{I}_{a} \otimes \mat{1}_{b \times c} \otimes \mat{I}_d$, 
     for each $k$ the columns of $\bS_\pattern$ such that $\supp(\bS[\col{i}]) \subseteq P_{t,k}$
     share in fact the same support, which is disjoint from the other column supports 
     (see illustration of \cref{fig:classequivalence}). 
 
      Finally, by \Cref{def:chainableDBfour} and chainability of $(\pattern, \pattern')$, we have $r = r(\pattern,\pattern') = ac / a'$, so that $|\cP|=|\cP'| = acd/r = a'd$ as claimed.
\end{proof}

We conclude the section with a lemma that relates $\cP_{\mathtt{col}}(\cdot,r)$ for various patterns. 
\begin{lemma}
    \label{cor:invariant_pc}
    Consider a chainable architecture $\arch = (\pattern_\ell)_{\ell = 1}^L$ and a natural number $r \mid c_L$ where $\pattern_L = (a_L,b_L,c_L,d_L)$.
    Then $r \mid c$ where $\pattern_1 * \ldots * \pattern_L = (a,b,c,d)$, and $\cP_{\mathtt{col}}(\pattern_1 * \ldots * \pattern_L, r) = 
    \cP_{\mathtt{col}}(\pattern_L, r)$.
\end{lemma}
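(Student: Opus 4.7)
The plan is to reduce the statement to a direct computation using the explicit formula for $P_{t,k}$ in \Cref{def:formula-partition} together with the closed form for $\pattern_1 * \ldots * \pattern_L$ given in \Cref{lem:suppdbfactorprod}.

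First, I would check the divisibility claim. By \Cref{lem:suppdbfactorprod} we have $\pattern_1 * \ldots * \pattern_L = (a_1, b_1 d_1/d_L, a_L c_L/a_1, d_L)$, so with the notation of the statement $a=a_1$, $c=a_Lc_L/a_1$, $d=d_L$. Chainability of $\arch$ forces $a_1 \mid a_L$ (by iterating condition~2 of \Cref{def:chainableDBfour}), hence $a_L/a_1$ is an integer, and since $r \mid c_L$ we get $r \mid c_L (a_L/a_1) = c$.

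Next, I would compare the two partitions. Both $\cP_{\mathtt{col}}(\pattern_L,r)$ and $\cP_{\mathtt{col}}(\pattern_1*\ldots*\pattern_L,r)$ are partitions of $\intset{q}$ where $q = a_Lc_Ld_L = acd$, so they live on the same index set. Applying the formula \eqref{eq:partitionformuledef} of \Cref{def:formula-partition}: for $\pattern_L$ the parameters are $(d,ac/r) = (d_L, a_Lc_L/r)$, and for $\pattern_1 * \ldots * \pattern_L$ the parameters are $(d, ac/r) = (d_L, a_1 \cdot (a_Lc_L/a_1)/r) = (d_L, a_Lc_L/r)$. These are identical, and since the sets $P_{t,k}$ depend \emph{only} on $(d, ac/r, r)$ and the index ranges of $(t,k)$, the two families $\{P_{t,k}\}_{t,k}$ coincide.

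I do not foresee a real obstacle here: the lemma is essentially a bookkeeping observation. The only subtlety is ensuring that the divisibility $r \mid c$ holds so that $\cP_{\mathtt{col}}(\pattern_1 * \ldots * \pattern_L, r)$ is even well-defined, which is handled in the first step.
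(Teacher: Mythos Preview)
Your proof is correct and follows essentially the same approach as the paper: both reduce to checking that the parameters $(d, ac/r, r)$ determining the sets $P_{t,k}$ in \Cref{def:formula-partition} coincide for $\pattern_L$ and for $\pattern_1*\ldots*\pattern_L$. The only cosmetic difference is that the paper argues by induction on $L$ (treating the case $L=2$ and iterating), whereas you invoke the closed form from \Cref{lem:suppdbfactorprod} directly, which is slightly more concise.
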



\begin{proof}
	We prove the result for $L=2$. The general case follows by induction. Consider $\pattern_i = (a_i,b_i,c_i,d_i)$, with $i \in \intset{2}$, and an integer $r \mid c_2$.
	By definition of the $*$ operator in \Cref{eq:operatorDBPfour} we have
	\begin{equation*}
		(a,b,c,d):=\pattern_1 * \pattern_2 = \left(a_1, \frac{b_1d_1}{d_2}, \frac{a_2c_2}{a_1}, d_2\right) 
	\end{equation*}
	Since $(\pattern_1,\pattern_2)$ are chainable, we have $a_1 \mid a_2$. Since $r \mid c_2$, we have $r \mid (a_2/a_1)c_2$, i.e., $r \mid c$ as claimed.
	A direct calculation then shows that $\cP_{\mathtt{col}}(\pattern_2, r) = \cP_{\mathtt{col}}(\pattern_1 * \pattern_2, r)$, 
	since for each $t \in \intset{ac/r} = \intset{a_2c_2/r}$, $k \in \intset{d}=\intset{d_2}$, \eqref{eq:partitionformuledef}  yields the very same set $P_{t,k}$ (as $dr = d_2r$ and $d=d_2$).
\end{proof}

\subsubsection{Proof of \Cref{lemma:role-pseudo-orthogonality}}
\label{app:role-pseudo-orthogonality}
This 
is a direct 
corollary of \Cref{lemma:exchange} below.

With the notations of \Cref{lemma:role-pseudo-orthogonality}, the constant $r$ involved in \Cref{lemma:exchange} applied to $\pattern = \pattern_{I_i}$ and $\pattern' = \pattern_{I_{i+1}}$ for $i = 1, \ldots, j-1$ is indeed equal to $r(\pattern_{t_i}, \pattern_{t_{i} + 1})$:
\begin{enumerate}
    \item 
    The intervals $I_i:=\intset{q_i,t_i}$  in $\texttt{partition}$ are a sorted partition of $\intset{1,L}$. Therefore, $t_i + 1 = q_{i+1}$ for each $i$.
    \item By~ \Cref{lem:seq-qchainability}, $r(\pattern_{I_i},\pattern_{I_{i+1}}) = r(\pattern_{t_i},\pattern_{q_{i+1}}) = r(\pattern_{t_i},\pattern_{t_i + 1})$.
\end{enumerate}

\begin{lemma}
	\label{lemma:exchange}
    Consider a non-redundant chainable pair $(\pattern, \pattern')$, and $(\bX, \bY) \in \setDBfactor{\pattern} \times \setDBfactor{\pattern'}$.
    Denote $r := r(\pattern,\pattern')$.
    \Cref{algo:exchange} with input $u \in \{ \texttt{column},\texttt{row}\}$ returns $(\tilde{\bX}, \tilde{\bY}) \in \setDBfactor{\pattern} \times \setDBfactor{\pattern'}$ such that $\tilde{\bX} \tilde{\bY} = \bX\bY$ and:
	\begin{itemize}
		\item if  $u = \texttt{column}$ then $\tilde{\bX}$ is left-$r$-unitary;
		\item otherwise if $u = \texttt{row}$ then $\tilde{\bY}$ is right-$r$-unitary.
	\end{itemize}
\end{lemma}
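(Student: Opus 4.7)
The strategy is to verify four claims in turn: (i) each QR decomposition invoked by \Cref{algo:exchange} is well-defined (this is where non-redundancy is used); (ii) the outputs $\tilde{\bX}, \tilde{\bY}$ lie in $\setDBfactor{\pattern}, \setDBfactor{\pattern'}$; (iii) $\tilde{\bX}\tilde{\bY} = \bX\bY$; (iv) the pseudo-orthogonality conclusion holds, by appealing to \Cref{theorem:orthonormalDBfactor}.

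\textbf{Well-posedness of QR.} Fix $P \in \cP(\bS_\pattern, \bS_{\pattern'})$ and write $\pattern = (a,b,c,d)$, $\pattern' = (a',b',c',d')$. By \Cref{lem:prodDBBis}, $|R_P| = b$, $|C_P| = c'$, and $|P| = r$. A thin-QR of a matrix with $m$ rows and $n$ columns yielding an $m \times n$ factor with orthonormal columns requires $m \geq n$. By non-redundancy of the chainable pair $(\pattern, \pattern')$, we have $r \leq \min(b-1, c'-1, b', c) < \min(b, c')$, so $|R_P| \geq |P|$ (needed for the \texttt{column} branch) and $|C_P| \geq |P|$ (needed for the \texttt{row} branch).

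\textbf{Support preservation and product invariance.} In both branches, the algorithm only writes into the sub-blocks $\tilde{\bX}[R_P, P]$ and $\tilde{\bY}[P, C_P]$. Since $R_P \times P \subseteq \supp(\bS_\pattern)$ and $P \times C_P \subseteq \supp(\bS_{\pattern'})$ by \Cref{def:classequivalence}, we get $\tilde{\bX} \in \setDBfactor{\pattern}$ and $\tilde{\bY} \in \setDBfactor{\pattern'}$. For the product, \Cref{lemma:submatrixequality} applied to $(\bS_\pattern, \bS_{\pattern'})$ (whose rank-one supports are pairwise disjoint or identical, as noted in the proof of \Cref{lem:prodDB2}) gives $(\tilde{\bX}\tilde{\bY})[R_P, C_P] = \tilde{\bX}[R_P, P]\tilde{\bY}[P, C_P]$ for every $P$, and likewise for $\bX\bY$. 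In the \texttt{column} branch, $\tilde{\bX}[R_P, P]\tilde{\bY}[P, C_P] = \bQ\bR\bY[P, C_P] = \bX[R_P, P]\bY[P, C_P]$ since $\bQ\bR = \bX[R_P, P]$; the \texttt{row} branch is symmetric via $\bY[P, C_P]^\top = \bQ\bR$. Since the sets $R_P \times C_P$ cover $\supp(\bS_\pattern \bS_{\pattern'}) = \supp(\bS_{\pattern * \pattern'})$ (point 4 of \Cref{lem:prodDBBis}) and both products have support in this set, we conclude $\tilde{\bX}\tilde{\bY} = \bX\bY$.

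\textbf{Pseudo-orthonormality.} By \Cref{theorem:orthonormalDBfactor}, it suffices to show that in the \texttt{column} branch the sub-block $\tilde{\bX}[R_{P'}, P']$ has orthonormal columns for every $P' \in \cP_{\mathtt{col}}(\pattern, r)$, and analogously for \texttt{row}. By \Cref{lem:invariance-partition-col}, $\cP_{\mathtt{col}}(\pattern, r) = \cP(\bS_\pattern, \bS_{\pattern'})$, so the equivalence classes the algorithm iterates over are exactly those indexing the pseudo-orthonormality condition. In the \texttt{column} branch, by construction $\tilde{\bX}[R_P, P] = \bQ$ is the orthonormal-columns factor of the QR decomposition, giving the desired property directly; the \texttt{row} branch is handled identically with $\tilde{\bY}[P, C_P] = \bQ^\top$ having orthonormal rows.

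The main conceptual check is step (iv), which relies crucially on the invariance \Cref{lem:invariance-partition-col}: the partition that governs the definition of left/right-$r$-unitarity is the \emph{same} partition that the algorithm iterates over, so that block-wise QR orthogonalization is exactly what is needed. Non-redundancy is the one quantitative ingredient and is used only to make the QR well-defined in step (i).
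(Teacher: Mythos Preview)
Your proof is correct and follows essentially the same approach as the paper's own proof: both rely on \Cref{lem:prodDBBis} for block sizes and disjointness, \Cref{lemma:submatrixequality} for the block-wise product identity, \Cref{lem:invariance-partition-col} to identify the partition iterated over with $\cP_{\mathtt{col}}(\pattern,r)$, and \Cref{theorem:orthonormalDBfactor} for the characterization of $r$-unitarity. Your write-up is in fact slightly more thorough, as it makes the support-preservation step $(\tilde{\bX},\tilde{\bY}) \in \setDBfactor{\pattern}\times\setDBfactor{\pattern'}$ explicit, which the paper leaves implicit.
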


\begin{proof}
	We only prove the first point, as the second point can be addressed similarly. Denoting $\cP = \cP(\bS_{\pattern}, \bS_{\pattern'})$, by \Cref{lem:prodDBBis}, for any $(\tilde{\mat{X}}, \tilde{\mat{Y}}) \in \setDBfactor{\pattern} \times \setDBfactor{\pattern'}$, $\supp(\bX\bY)$ and $\supp(\tilde{\bX} \tilde{\bY})$ are both included in $\bigcup_{P \in \cP} R_P \times C_P$ where $\{ R_P \times C_P \}_{P \in \cP}$ are pairwise disjoint. As a result, we have:
     \begin{equation*}
     \begin{split}
          \bX\bY = \tilde{\bX}\tilde{\bY} &\iff \forall P \in \cP, \quad (\bX\bY)[R_P,C_P] = (\tilde{\bX}\tilde{\bY})[R_P,C_P] \\
         &\iff \forall P \in \cP, \quad (\bX\bY)[R_P,C_P] = \tilde{\bX}[R_P,P]\tilde{\bY}[P,C_P],
     \end{split}
     \end{equation*}
    where the second equivalence comes by \Cref{lemma:submatrixequality} and by chainability of $(\pattern_1, \pattern_2)$.
    Moreover, by \Cref{theorem:orthonormalDBfactor}, $\tilde{\bX} \in \setDBfactor{\pattern}$ is left-$r$-unitary if, and only if, $\tilde{\bX}[R_P,P]$ has orthonormal columns for each $P \in \cP_{\mathtt{col}}(\pattern, r) = \cP(\bS_{\pattern},\bS_{\pattern'}) = \cP$. 

    The output $(\tilde{\mat{X}}, \tilde{\mat{Y}})$ of  \Cref{algo:exchange} 
is built via the QR-decomposition of ${\bX}[R_P,P] = \bQ\bR$, and setting 
    $\tilde{\bX}[R_P,P] = \bQ$, $\tilde{\bY}[P,C_P] = \bR\bY[P,C_P]$, which is possible because $(\pattern, \pattern')$ is assumed to be non-redundant. 
    Indeed, to enable 
    $\bQ \in \RR^{|R_P| \times |P|}$ to have 
    orthonormal columns, we must have $|R_P| \geq |P|$, or equivalently, $b \geq q$ (assuming that $\pattern = (a,b,c,d)$, see \Cref{lem:prodDBBis}). This is the non-redundancy criterion (cf.~\Cref{def:redundant}).
    Thus, $\tilde{\bX}[R_P,P]$ has orthonormal columns and $\tilde{\bX}[R_P,P]\tilde{\bY}[P,C_P] = \bQ\bR\bY[P,C_P] = \bX[R_P,P]\bY[P, C_P] = (\bX\bY)[R_P,C_P]$.
\end{proof}


\subsubsection{Further useful technical properties of $r$-unitary factors}

\begin{lemma}
	\label{lemma:orthonormasubmatrix}
	Let $\bX$ be a left-$r$-unitary $\pattern$-factor with $\pattern = \paramfour$. 
	For any $t \in \intset{ac/r}$,
	the submatrix $\bX[:,P_t]$ 
	with $P_t$ as in~\eqref{eq:partition-It} has orthonormal columns.
\end{lemma}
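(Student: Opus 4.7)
The plan is to decompose the index set $P_t$ according to the finer partition $\mathcal{P}_{\mathtt{col}}(\pattern,r)$ and then use two complementary arguments: orthonormality \emph{within} each class (given by \Cref{theorem:orthonormalDBfactor}), and disjointness of row-supports \emph{across} classes (forced by the Kronecker structure of $\mathbf{S}_\pattern$).

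First I would observe that $P_t = \bigcup_{k \in \intset{d}} P_{t,k}$, this union being disjoint, with each $P_{t,k}$ a member of $\mathcal{P}_{\mathtt{col}}(\pattern,r)$. Since $\bX$ is left-$r$-unitary, \Cref{theorem:orthonormalDBfactor} applied to each $P_{t,k}$ gives that $\bX[R_{P_{t,k}}, P_{t,k}]$ has orthonormal columns. Because $\supp(\bX) \subseteq \mathbf{S}_\pattern$ and all columns indexed by $P_{t,k}$ share the row-support $R_{P_{t,k}}$ (by the first item of \Cref{lem:prodDBBis}, applied to any chainable partner $\pattern'$), this is in fact equivalent to saying that the full columns $\{\bX[:,i]\}_{i \in P_{t,k}}$ are orthonormal in $\RR^{abd}$.

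Next I would show that the row supports $R_{P_{t,k}}$ and $R_{P_{t,k'}}$ are disjoint whenever $k \neq k'$. Using the structure $\mathbf{S}_\pattern = \mathbf{I}_a \otimes \mathbf{1}_{b\times c} \otimes \mathbf{I}_d$, any column index $i \in \intset{acd}$ can be written uniquely as $i = (\alpha-1)cd + (\gamma-1)d + \delta$ with $(\alpha,\gamma,\delta) \in \intset{a}\times\intset{c}\times\intset{d}$, and the support of the corresponding column of $\mathbf{S}_\pattern$ is $\{(\alpha-1)bd + (\beta-1)d + \delta : \beta \in \intset{b}\}$, which depends only on $(\alpha,\delta)$. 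From the explicit formula \eqref{eq:partitionformuledef} for $P_{t,k}$ one checks $i \equiv k \pmod{d}$ for every $i \in P_{t,k}$, hence $\delta = k$. Therefore for $k \neq k'$ the supports have different $\delta$-residue and are disjoint, which forces $\langle \bX[:,i], \bX[:,i']\rangle = 0$ for every $i \in P_{t,k}$ and $i' \in P_{t,k'}$.

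Combining the two points, the columns of $\bX[:,P_t]$ split into $d$ groups of $r$ columns each; inner products within a group are given by the identity (by \Cref{theorem:orthonormalDBfactor}) and inner products across groups vanish (by disjoint supports). This yields orthonormality of the full set of $dr$ columns. The only step with any subtlety is the disjointness claim, which reduces to bookkeeping on residues modulo $d$ from \eqref{eq:partitionformuledef}; everything else is a direct invocation of already-established results.
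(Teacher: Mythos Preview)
Your proposal is correct and follows essentially the same approach as the paper's proof: both decompose $P_t$ as the disjoint union of the classes $P_{t,k}$ for $k\in\intset{d}$, invoke \Cref{theorem:orthonormalDBfactor} for orthonormality within each class, and use the residue-mod-$d$ argument on the Kronecker structure of $\mathbf{S}_\pattern$ to get disjoint row supports (hence orthogonality) across classes. Your explicit coordinate decomposition $i=(\alpha-1)cd+(\gamma-1)d+\delta$ makes the modular bookkeeping slightly more transparent than the paper's version, but the argument is the same.
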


\begin{proof}
	From the proof of \Cref{lem:invariance-partition-col} the sets $(P_{t,k})_{k \in \intset{d}}$ from \Cref{def:formula-partition} partition $P_{t}$ into $d$ subsets of integers of cardinality $r$ and the elements of each subsets are equally spaced by a distance $d$. We first show that the columns of $\bX[\col{P_t}]$ coming from distinct blocks $P:=P_{t,k}$, $P':=P_{t,k'}$ (where $1 \leq k \neq k' \leq d$) are mutually orthogonal, as they have disjoint supports. 
	Due to the structure of $\bS_\pattern = \bI_a \otimes \mbf{1}_{b \times c} \otimes \bI_d$, if two columns share the same support, the difference of their indices is divisible by $d$. Otherwise, their supports are disjoint. Thus, $P$ and $P'$ have disjoint column supports. 
	
		To conclude we show that each submatrix $\bX[\col{P_{t,k}}]$ has orthonormal columns.
	Indeed, by \Cref{def:formula-partition}, we have $P := P_{t,k} \in \cP_{\mathtt{col}}(\pattern,r)$, hence by \Cref{theorem:orthonormalDBfactor} the block $\bX[R_P,P]$ has orthonormal columns. Since $R_P$ is the support of the columns of $\bS_{\pattern}[\col{P}]$, and as $\supp(\bX) \subseteq \supp(\bS_\pattern)$, the submatrix $\bX[\col{P}]$ is zero outside of the block $\bX[R_P,P]$ hence it also has orthonormal columns.




\end{proof}
\begin{remark}
Although the columns of $\bX[\col{P_t}]$ are orthonormal and the sets $P_t$ are pairwise disjoint, it does not imply that $\bX$ has orthonormal columns. For example, consider the sets $P_1$ (indices of columns in red and blue) and $P_2$ (columns in green and orange)
illustrated in \cref{fig:classequivalence}.
Even though these two sets
are disjoint 
the supports of their columns are not. Thus, the matrix can be left-$2$-unitary without having orthonormal columns. This example also explains our earlier 
\Cref{rem:leftrightnotplainunitary}.
\end{remark}

\begin{corollary}
	\label{cor:orthonormalsubmatrix-new}
	Consider a chainable pair of patterns $(\pattern, \pattern')$. If $\bX \in \setDBfactor{\pattern}$ is left-$r(\pattern, \pattern')$-unitary, then for any column index $i$ of $\bS_{\pattern'}$ the submatrix $\bX[\col{T_i}]$, where $T_i = \supp(\bS_{\pattern'}[\col{i}])$, has orthonormal columns.
\end{corollary}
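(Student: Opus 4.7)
The plan is to reduce the statement to Lemma~\ref{lemma:orthonormasubmatrix}, which already establishes column-orthonormality of $\bX[\col{P_t}]$ for each of the ``large'' index blocks $P_t = \intset{(t-1)b'd'+1, tb'd'}$, where $\pattern' = (a',b',c',d')$. The key observation is that, because $\bS_{\pattern'} = \bI_{a'} \otimes \one{b' \times c'} \otimes \bI_{d'}$ is block-diagonal with $a'$ blocks of size $b'd' \times c'd'$, any single column $\bS_{\pattern'}[\col{i}]$ has its support contained in one of these diagonal blocks. Hence there exists a unique $t \in \intset{a'}=\intset{ac/r}$ such that $T_i := \supp(\bS_{\pattern'}[\col{i}]) \subseteq P_t$.

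With this inclusion, I would write $\bX[\col{T_i}]$ as a column-submatrix of $\bX[\col{P_t}]$. By Lemma~\ref{lemma:orthonormasubmatrix}, the latter has orthonormal columns (using the left-$r$-unitarity hypothesis on $\bX$, with $r = r(\pattern, \pattern')$). Since extracting a subset of columns preserves the orthonormality of the remaining columns, the conclusion $\bX[\col{T_i}]$ has orthonormal columns follows immediately.

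The only minor verification is the index bookkeeping: confirming that $T_i$ really sits inside a single $P_t$. For this I would write $i = (t-1) c' d' + j$ with $j \in \intset{c'd'}$ (uniquely determined), and use the explicit Kronecker structure of $\bS_{\pattern'}$ to see that all nonzero entries of $\bS_{\pattern'}[\col{i}]$ lie in row indices $\intset{(t-1)b'd'+1, tb'd'} = P_t$ (indeed, this is exactly what the block-diagonal structure induced by the leftmost Kronecker factor $\bI_{a'}$ entails). No technical obstacle is anticipated: the result is essentially a bookkeeping corollary of Lemma~\ref{lemma:orthonormasubmatrix}, and the statement holds even without invoking the full strength of chainability beyond what is needed to make $r(\pattern,\pattern')$ well-defined.
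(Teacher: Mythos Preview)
Your proposal is correct and matches the paper's proof essentially line for line: both identify the unique $t \in \intset{a'}$ such that $T_i \subseteq P_t$ via the block-diagonal structure of $\bS_{\pattern'}$, then invoke Lemma~\ref{lemma:orthonormasubmatrix} and extract a column-submatrix. The only detail the paper makes slightly more explicit is that the chainability identity $b'd' = dr$ is what identifies the interval $\intset{(t-1)b'd'+1, tb'd'}$ with the $P_t$ of Lemma~\ref{lemma:orthonormasubmatrix} (defined there via $dr$), which you use implicitly.
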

\begin{proof}
    Denote $\pattern = (a,b,c,d)$, $\pattern' = (a', b', c', d')$, $r = r(\pattern,\pattern')$. 
    Since $\bS_{\pattern'} = \bI_{a'} \otimes \mbf{1}_{b' \times c'} \otimes \bI_{d'}$ is block-diagonal with $a'$ blocks of size $b'd' \times c'd'$, the set $T_i$ is a subset of the interval
     $P_t = \intset{(t-1)b'd'+1, t b'd'}$ with $t \in \intset{a'}$ the unique integer such that $i \in \intset{(t-1)c'd',t c'd'}$. By chainability of $(\pattern,\pattern')$ we have $b'd'=dr$.
    %
 %
    By \Cref{lemma:orthonormasubmatrix}, $\bX[:,P_t]$ has orthonormal columns, and therefore so does $\bX[:,T_i]$.
\end{proof}

The following corollary will be handy to prove \eqref{eq:AnnexProofLocalBlock}, in the proof of \Cref{lem:originalkeylemma-new}.

\begin{corollary}
\label{cor:orthonormalsubmatrix-readytouse}
    Consider a chainable architecture of $\arch = (\pattern_1, \ldots, \pattern_L)$. Denote 
    $$\pattern_1' := \pattern_1 * \ldots * \pattern_{q-1}, \quad \pattern_2' := \pattern_{q} * \ldots * \pattern_t, \quad \pattern_3' := \pattern_{t + 1} * \ldots * \pattern_L.$$ 
    If $\bX \in \setDBfactor{\pattern'_1}$ and $\bZ \in \setDBfactor{\pattern'_3}$ are respectively left-$r(\pattern_1', \pattern_2')$-unitary and right-$r(\pattern_2', \pattern_3')$-unitary, then $\bX[\col{R}]$ (resp.~$\bZ[\row{C}]$) has orthonormal columns (resp.~rows) for any column (resp.~row) support $R$ (resp.~$C$) of $\bS_{\pattern_2'}$, i.e., whenever $R = \supp(\bS_{\pattern_2'}[\col{i}])$ (resp.~$C = \supp(\bS_{\pattern_2'}[\row{i}])$) for some row (resp.~column) index $i$.
\end{corollary}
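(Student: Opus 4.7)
The strategy is to reduce the statement to \Cref{cor:orthonormalsubmatrix-new} by identifying the correct chainable pair of patterns in each case. For the first claim on $\mathbf{X}$, we take the pair $(\pattern_1',\pattern_2')$; for the second claim on $\mathbf{Z}$, we take the pair $(\pattern_2',\pattern_3')$ and apply a transposed version of \Cref{cor:orthonormalsubmatrix-new}.

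\textbf{Step 1: Chainability of the relevant pairs.} Since $\arch=(\pattern_1,\ldots,\pattern_L)$ is chainable, \Cref{lem:seq-qchainability} (applied with the appropriate splitting indices) ensures that the pairs $(\pattern_1',\pattern_2') = (\pattern_1*\ldots*\pattern_{q-1},\,\pattern_{q}*\ldots*\pattern_t)$ and $(\pattern_2',\pattern_3') = (\pattern_{q}*\ldots*\pattern_t,\,\pattern_{t+1}*\ldots*\pattern_L)$ are chainable. In the degenerate cases $q=1$ or $t=L$, one of $\pattern_1'$ or $\pattern_3'$ is empty and the corresponding claim is vacuous, so we may assume $q\geq 2$ and $t\leq L-1$.

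\textbf{Step 2: First claim.} Let $R = \supp(\bS_{\pattern_2'}[\col{i}])$ for some column index $i$. Since $(\pattern_1',\pattern_2')$ is chainable, $\mathbf{X}\in\setDBfactor{\pattern_1'}$, and $\mathbf{X}$ is left-$r(\pattern_1',\pattern_2')$-unitary by hypothesis, we are exactly in the setting of \Cref{cor:orthonormalsubmatrix-new} (with $\pattern\leftarrow\pattern_1'$ and $\pattern'\leftarrow\pattern_2'$). The conclusion of \Cref{cor:orthonormalsubmatrix-new} directly gives that $\mathbf{X}[\col{R}]$ has orthonormal columns.

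\textbf{Step 3: Second claim by transposition.} Let $C = \supp(\bS_{\pattern_2'}[\row{i}])$ for some row index $i$. We note that \Cref{cor:orthonormalsubmatrix-new} admits a straightforward transposed version: if $(\pattern,\pattern')$ is chainable and $\mathbf{Z}\in\setDBfactor{\pattern'}$ is right-$r(\pattern,\pattern')$-unitary, then $\mathbf{Z}[\row{T_i'}]$ has orthonormal rows for any row support $T_i'=\supp(\bS_{\pattern}[\row{i}])$ of $\bS_{\pattern}$. This follows by applying \Cref{cor:orthonormalsubmatrix-new} to $\mathbf{Z}^\top$, after observing that left/right-$r$-unitarity is exchanged under transposition, that the row support pattern $\bS_{\pattern_2'}$ of $\mathbf{Z}$ transposes to a column support pattern of $\bS_{\pattern_2'^\top}$, and that transposition preserves chainability of a pair when the order is reversed. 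Applying this transposed statement with $\pattern\leftarrow\pattern_2'$ and $\pattern'\leftarrow\pattern_3'$ yields that $\mathbf{Z}[\row{C}]$ has orthonormal rows, since $C$ is a row support of $\bS_{\pattern_2'}$.

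\textbf{Main obstacle.} The argument is essentially a direct invocation of \Cref{cor:orthonormalsubmatrix-new}; the only point requiring care is justifying the transposed version used in Step 3, which can be handled either by a one-line transposition argument or by revisiting the proof of \Cref{theorem:orthonormalDBfactor} for the right-unitary case (which is itself noted to follow symmetrically). No substantive new calculation is needed.
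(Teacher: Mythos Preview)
Your proposal is correct and follows essentially the same approach as the paper: both verify that $(\pattern_1',\pattern_2')$ is chainable via \Cref{lem:seq-qchainability} and then directly invoke \Cref{cor:orthonormalsubmatrix-new} for the column claim, handling the row claim on $\mathbf{Z}$ by symmetry. The paper simply writes ``the other case can be dealt with similarly,'' whereas you spell out the transposition argument a bit more; this is harmless elaboration rather than a different method.
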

\begin{proof}
    We consider the case of columns of $\bX[\col{R}]$. The other case can be dealt with similarly. The proof follows from~\Cref{cor:orthonormalsubmatrix-new} because:
    \begin{enumerate}
        \item By~\Cref{lem:seq-qchainability}, $(\pattern_{1}', \pattern_{2}')$ is chainable with $r(\pattern_{1}', \pattern_{2}') = r(\pattern_{q-1}, \pattern_q)$;
        \item $\mat{X}$ is a left-$r(\pattern_1', \pattern_2')$-unitary $\pattern'_1$-factor;
        \item $R = \supp(\mat{S}_{\pattern_2'}[:,i])$ for some column index $i$.
    \end{enumerate}
\end{proof}

\section{Complexity of hierarchical algorithms - proof of \Cref{theorem:complexity}}
\label{appendix:complexity}
We analyze the complexity of each of the components involved in the proposed hierarchical algorithms.

\subsection{Complexity of \Cref{algo:algorithm1}}
This algorithm essentially performs several low-rank approximations that are typically computed using truncated SVD.

\begin{lemma}
\label{lem:complexity-fsmf}
    Consider $(\mat{L}, \mat{R})$ satisfying the condition of \Cref{theorem:tractablefsmf}.
    For any matrix $\mat{A}$, the complexity of the two-factor fixed support matrix factorization algorithm (\Cref{algo:algorithm1}) with input $\mat{A}, \mat{L}, \mat{R}$ is $$\mathcal{O} \left( \sum_{P \in \cP(\bL,\bR)} |P||R_P||C_P| \right).$$
\end{lemma}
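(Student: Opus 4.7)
The plan is to inspect \Cref{algo:algorithm1} line by line and identify the dominant cost, which arises from the inner low-rank approximation step.

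First, I would note that the initialization in line~1 (zeroing out two sparse matrices) is done once and can be absorbed into the overall cost, since the total number of entries to fill, $\sum_P |R_P||P| + |P||C_P|$, is dominated by $\sum_P |P||R_P||C_P|$ once one observes that the blocks are non-trivial (when $|R_P|,|C_P|\geq 1$).

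Next, for the loop over $P \in \cP(\bL,\bR)$, the only non-trivial operation is the best rank-$|P|$ approximation of the submatrix $\bA[R_P,C_P] \in \CC^{|R_P|\times|C_P|}$ in line~\ref{line:svd}. I would invoke a standard complexity estimate: a truncated (partial) SVD computing the best rank-$k$ approximation of an $m\times n$ matrix costs $\mathcal{O}(mnk)$ arithmetic operations; applied with $m=|R_P|$, $n=|C_P|$, $k=|P|$, this yields a per-block cost of $\mathcal{O}(|P||R_P||C_P|)$.

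Summing over $P \in \cP(\bL,\bR)$ gives the announced complexity
\[
\mathcal{O}\!\left(\sum_{P \in \cP(\bL,\bR)} |P||R_P||C_P|\right).
\]
I expect no real obstacle here: the argument is just a direct accounting of the truncated SVD cost, combined with the fact that the equivalence classes $P \in \cP(\bL,\bR)$ are treated independently and that the submatrices $\bA[R_P,C_P]$ lie in pairwise disjoint index blocks (by \Cref{lem:prodDBBis} in the chainable case, and by assumption of \Cref{theorem:tractablefsmf} in general), so that extracting the submatrices and writing back the factors also fits inside this cost bound. The only subtlety worth stating explicitly is the choice of partial (rather than full) SVD, which is essential for obtaining the bound in $|P|$ rather than $\min(|R_P|,|C_P|)$; this matters when the target rank $|P|$ is much smaller than the block dimensions.
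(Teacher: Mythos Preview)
Your proposal is correct and follows essentially the same approach as the paper: account for the cost of each best rank-$|P|$ approximation via truncated SVD as $\mathcal{O}(|P||R_P||C_P|)$ and sum over $P \in \cP(\bL,\bR)$. The paper's proof is in fact terser than yours, omitting the discussion of initialization and block extraction and simply citing the $\mathcal{O}(kmn)$ truncated-SVD complexity.
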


\begin{proof}
    The algorithm performs the best rank-$|P|$ approximation of a submatrix of size $|R_P| \times |C_P|$ for each $P \in \cP(\bL,\bR)$ and the complexity of the truncated SVD at order $k$ for an $m \times n$ matrix is $\mathcal{O}(kmn)$ \cite{halko2011svd}.
\end{proof}

We apply this complexity analysis to the case where $(\mat{L}, \mat{R})$ are butterfly supports corresponding to a chainable pair of patterns.

\begin{lemma}
\label{lem:complexity-fsmf-butterfly}
    Consider chainable patterns 
    $\pattern = (a,b,c,d)$, $\pattern'=(a',b',c',d')$. Denoting $r := r(\pattern, \pattern')$, the complexity $C(\pattern,\pattern')$ of {the two-factor fixed support matrix factorization algorithm (\Cref{algo:algorithm1})} with input $\mat{A}, \mat{S}_{\pattern}, \mat{S}_{\pattern'}$ is
    \begin{equation}
        C(\pattern, \pattern') = \mathcal{O}(r
        a'bc'd
        ).
    \end{equation}
\end{lemma}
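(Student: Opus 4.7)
The plan is to apply the generic complexity bound from \Cref{lem:complexity-fsmf} with $(\bL,\bR) = (\bS_\pattern, \bS_{\pattern'})$, and then use the structural information about the equivalence classes $\cP(\bS_\pattern,\bS_{\pattern'})$ provided by \Cref{lem:prodDBBis} together with the cardinality of this partition established in \Cref{lem:invariance-partition-col}.

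First, I would invoke \Cref{lem:qperfectcovering} to note that $(\bS_\pattern,\bS_{\pattern'})$ satisfies the assumptions of \Cref{theorem:tractablefsmf}, so that \Cref{lem:complexity-fsmf} applies and the complexity is bounded by $\mathcal{O}\bigl(\sum_{P \in \cP(\bS_\pattern,\bS_{\pattern'})} |P|\,|R_P|\,|C_P|\bigr)$. By \Cref{lem:prodDBBis}, since $(\pattern,\pattern')$ is chainable, every equivalence class $P \in \cP(\bS_\pattern, \bS_{\pattern'})$ satisfies $|P| = r(\pattern,\pattern') = r$, $|R_P| = b$ and $|C_P| = c'$. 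Hence each summand equals $r\,b\,c'$, and the total complexity reduces to $\mathcal{O}(|\cP(\bS_\pattern, \bS_{\pattern'})| \cdot r\,b\,c')$.

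It remains to evaluate $|\cP(\bS_\pattern,\bS_{\pattern'})|$. By \Cref{lem:invariance-partition-col} we have $\cP(\bS_\pattern,\bS_{\pattern'}) = \cP_{\mathtt{col}}(\pattern,r)$, and the explicit enumeration in \Cref{def:formula-partition} gives $|\cP_{\mathtt{col}}(\pattern,r)| = (ac/r) \cdot d = acd/r = a'd$, where the last equality uses chainability (namely $r = ac/a'$). Substituting this count yields the complexity $\mathcal{O}(a'd \cdot r\,b\,c') = \mathcal{O}(r\, a'bc'd)$, as claimed.

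No step is expected to be delicate: the whole argument is bookkeeping that combines three previously established results (\Cref{lem:complexity-fsmf}, \Cref{lem:prodDBBis}, and \Cref{lem:invariance-partition-col}). The only minor point to double-check is that the stated equality $|\cP| = a'd$ is consistent whether one derives it from the ``row side'' or the ``column side'' of the chainability identities, which follows immediately from $a_1 c_1 d_1 = a_2 b_2 d_2$ recalled in \Cref{def:chainableDBfour}.
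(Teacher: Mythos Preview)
Your proposal is correct and follows essentially the same approach as the paper: apply \Cref{lem:complexity-fsmf}, plug in $|P|=r$, $|R_P|=b$, $|C_P|=c'$ from \Cref{lem:prodDBBis}, and use $|\cP|=a'd$ from \Cref{lem:invariance-partition-col}. Your extra step of invoking \Cref{lem:qperfectcovering} to justify that \Cref{lem:complexity-fsmf} applies is a harmless (and arguably helpful) clarification that the paper omits.
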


\begin{proof}
      By \Cref{lem:prodDBBis}, for each $P \in \cP := \cP(\bS_\pattern,\bS_{\pattern'})$ we have $|P|=r$, $|R_P|=b$ and $|C_P|=c'$. 
      By \Cref{lem:invariance-partition-col} we have $|\cP|=a'd$.
      By
      \Cref{lem:complexity-fsmf} it follows that
      $C(\pattern, \pattern') = \mathcal{O}(a'drbc')$.
  
    
\end{proof}

\begin{remark}
    In practice, best low-rank approximations in \Cref{algo:algorithm1} via truncated SVDs can be computed in parallel. For {arbitrary} $\bL,\bR$ as in \Cref{lem:complexity-fsmf} this can decrease the complexity down 
    to $\mathcal{O}(\max_{P \in \cP(\bL,\bR)} |P||R_P||C_P|)$ when parallelizing accross $| \cP(\bL,\bR) |$ processes. For butterfly factors as in \Cref{lem:complexity-fsmf-butterfly}, this decreases the complexity down to $\mathcal{O}(rbc')$.
\end{remark}

\subsection{Complexity of \Cref{algo:recursivehierarchicalalgo}} 

This algorithm is based on \Cref{algo:algorithm1}.
%
We prove the first point of \Cref{theorem:complexity} in the following lemma.
\begin{lemma}
    \label{lem:complexityalgorecursive}
    Consider a non-redundant chainable architecture $\arch$ and a matrix $\bA$ of size $m \times n$. With notations of \Cref{theorem:complexity}, the complexity of {the hierarchical algorithm (\Cref{algo:recursivehierarchicalalgo})} 
    with inputs $\arch$, $\bA$ and any factor-bracketing tree $\mathcal{T}$ is at most: 
    \begin{itemize}
        \item $\mathcal{O}(\|\mathbf{r}(\arch)\|_1M_\arch N_\arch)$ in the general case; 
        \item $\mathcal{O}(\|\mathbf{r}(\arch)\|_1mn)$ if $\arch$ is non-redundant.
    \end{itemize}
\end{lemma}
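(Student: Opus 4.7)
The plan is to apply the complexity analysis of the two-factor fixed-support matrix factorization algorithm from \Cref{lem:complexity-fsmf-butterfly} at each non-leaf node of the factor-bracketing tree $\mathcal{T}$, and then sum these contributions over the tree.

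First I would observe, by a straightforward induction on $L = |\arch|$, that \Cref{algo:recursivehierarchicalalgo} with input tree $\mathcal{T}$ performs exactly one call to \Cref{algo:algorithm1} per non-leaf node of $\mathcal{T}$ (besides negligible bookkeeping), hence $L-1$ calls in total. Fix a non-leaf node $\intset{q,t}$ of $\mathcal{T}$ with children $\intset{q,s}$ and $\intset{s+1,t}$. The call at this node runs \Cref{algo:algorithm1} with supports $\bS_{\pattern_{\tleft}}$, $\bS_{\pattern_{\tright}}$ where $\pattern_{\tleft} := \pattern_q * \ldots * \pattern_s$ and $\pattern_{\tright} := \pattern_{s+1} * \ldots * \pattern_t$. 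By \Cref{lem:seq-qchainability} the pair $(\pattern_{\tleft},\pattern_{\tright})$ is chainable and $r(\pattern_{\tleft},\pattern_{\tright}) = r(\pattern_s,\pattern_{s+1})$. Using the closed-form expression \eqref{eq:producttheta}, one has $\pattern_{\tleft} = (a_q,\, b_q d_q/d_s,\, a_s c_s/a_q,\, d_s)$ and $\pattern_{\tright} = (a_{s+1},\, b_{s+1} d_{s+1}/d_t,\, a_t c_t/a_{s+1},\, d_t)$.

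Plugging these into the cost bound of \Cref{lem:complexity-fsmf-butterfly}, the complexity of the call at node $\intset{q,t}$ is
\[
\mathcal{O}\Big(r(\pattern_s,\pattern_{s+1})\cdot a_{s+1} \cdot \tfrac{b_q d_q}{d_s} \cdot \tfrac{a_t c_t}{a_{s+1}} \cdot d_t\Big)
= \mathcal{O}\Big(r(\pattern_s,\pattern_{s+1})\cdot (b_q d_q)\cdot (a_t c_t)\cdot \tfrac{d_t}{d_s}\Big).
\]
Since chainability yields $d_L \mid d_{L-1} \mid \ldots \mid d_1$ (so $d_\ell$ is non-increasing in $\ell$) and $s<t$, the factor $d_t/d_s$ is at most $1$. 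Hence this cost is bounded by $\mathcal{O}\big(r(\pattern_s,\pattern_{s+1})\cdot N_\arch \cdot M_\arch\big)$, using $b_q d_q \leq N_\arch$ and $a_t c_t \leq M_\arch$ by definition of $M_\arch, N_\arch$.

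Summing over all non-leaf nodes of $\mathcal{T}$: by \Cref{def:factor-bracketing-tree} the splitting indices of the non-leaf nodes form precisely a permutation of $\intset{L-1}$, so the total cost is
\[
\mathcal{O}\Big(\sum_{s=1}^{L-1} r(\pattern_s,\pattern_{s+1}) \cdot M_\arch N_\arch\Big) = \mathcal{O}(\|\mathbf{r}(\arch)\|_1\, M_\arch N_\arch),
\]
yielding the first bound. For the second bound, assume $\arch$ is non-redundant. Then \Cref{def:redundant} and \Cref{def:chainableDBfour} give $a_\ell c_\ell < a_{\ell+1} c_{\ell+1}$ and $b_{\ell+1} d_{\ell+1} < b_\ell d_\ell$ for all $\ell \in \intset{L-1}$, hence $M_\arch = a_L c_L \leq a_L c_L d_L = n$ and $N_\arch = b_1 d_1 \leq a_1 b_1 d_1 = m$, so $M_\arch N_\arch \leq mn$ and the complexity is $\mathcal{O}(\|\mathbf{r}(\arch)\|_1\, mn)$. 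There is no real obstacle in this proof; the only care needed is the bookkeeping of the Kronecker parameters of $\pattern_{\tleft}, \pattern_{\tright}$ and the use of divisibility relations from chainability and non-redundancy to bound $M_\arch, N_\arch$.
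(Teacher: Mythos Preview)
Your argument is essentially the paper's own proof: apply \Cref{lem:complexity-fsmf-butterfly} at each of the $L-1$ non-leaf nodes, identify the rank via \Cref{lem:seq-qchainability}, bound each call by $\mathcal{O}(r_s M_\arch N_\arch)$, sum, and then use non-redundancy to get $M_\arch \leq n$, $N_\arch \leq m$. One small slip: when you plug into the formula $\mathcal{O}(r\,a'bc'd)$ of \Cref{lem:complexity-fsmf-butterfly}, the last factor should be $d$, the fourth parameter of $\pattern_{\tleft}$, which is $d_s$, not $d_t$; with the correct substitution the product is exactly $r(\pattern_s,\pattern_{s+1})\,b_q d_q\, a_t c_t$ and the extra factor $d_t/d_s$ never appears (your bound is still valid because $d_t/d_s \leq 1$, but the detour is unnecessary).
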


\begin{proof}
    Since \Cref{algo:recursivehierarchicalalgo} performs $(L-1)$ factorizations of the form of Problem \eqref{eq:fsmf-DB} {using 
    the two-factor fixed-support matrix factorization algorithm (\Cref{algo:algorithm1})}, its complexity is equal to the sum of the complexity of each of these $(L-1)$ factorizations.

    {Fix $1 \leq q \leq s < t \leq L$. By \Cref{lem:seq-qchainability}, $r_s := r(\pattern_q * \ldots * \pattern_s, \pattern_{s + 1} * \ldots * \pattern_t) = r(\pattern_s, \pattern_{s+1})$.}
    {By \eqref{eq:producttheta} we have}
    \begin{equation*}
        \pattern_q * \ldots * \pattern_s = \left( a_q, \frac{b_q d_q}{d_s}, \frac{a_s c_s}{a_q}, d_s \right),\ 
        \pattern_{s+1} * \ldots * \pattern_t = \left( a_{s+1}, \frac{b_{s+1} d_{s+1}}{d_t}, \frac{a_t c_t}{a_{s+1}}, d_t \right),
    \end{equation*}
    hence,  \Cref{lem:complexity-fsmf-butterfly} yields  $C(\pattern_q * \ldots * \pattern_s, \pattern_{s + 1} * \ldots * \pattern_t) = \mathcal{O}(r_s a_t c_t b_q d_q)$, which is upper bounded by $\mathcal{O}(r_s M_\arch N_\arch)$, by definition of $M_\arch := \max_\ell a_\ell c_\ell, N_\arch := \max_\ell b_\ell d_\ell$.
    Therefore, the overall complexity of \Cref{algo:recursivehierarchicalalgo}
    is upper bounded by: 
    \begin{equation}
        \label{eq:boundonq}
        \sum_{s = 1}^{L-1} \mathcal{O}(r_s M_\arch N_\arch) = \mathcal{O}(\|\mathbf{r}(\arch)\|_1M_\arch N_\arch).
    \end{equation}
    When $\arch$ is non-redundant, by \Cref{def:redundant} we have $a_\ell c_\ell < a_{\ell + 1}c_{\ell + 1}$ and $b_\ell d_\ell > b_{\ell + 1}d_{\ell + 1}$ for all $\ell \in \intset{L-1}$. Since $m=a_1b_1d_1$ and $n = a_Lc_Ld_L$ (cf.~\Cref{lem:suppdbfactorprod} and
    \Cref{def:dbfactorfour}) we obtain
            $M_\arch = \max_{\ell} a_\ell c_\ell < a_Lc_L = n/d_L \leq n$ and
            $N_\arch = \max_{\ell} b_\ell d_\ell < b_1d_1 = m/a_1 \leq m$.
    \end{proof}

    \subsection{Complexity of \Cref{algo:exchange}}
    We now analyze the complexity of the construction of orthonormal butterfly factors in \Cref{algo:exchange}.
    \begin{lemma}
        \label{lem:complexityexchange}
        {The complexity of {column/row-pseudo-orthonormalization (\Cref{algo:exchange})} with input $(\pattern_1, \pattern_2, \bX, \bY, u)$ for any $u \in \{\texttt{column}, \texttt{row} \}$ and any non-redundant chainable pair $(\pattern_1, \pattern_2)$ is $$\mathcal{O}(r(\pattern_1, \pattern_2)(\| \pattern_1 \|_0 + \| \pattern_2 \|_0)),$$
        with the notation $\|\pattern\|_0$ from \Cref{lemma:numberofparameters}.}
    \end{lemma}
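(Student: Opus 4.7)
The plan is to determine the cost of a single iteration of the outer \textbf{for} loop of \Cref{algo:exchange} (i.e., for one $P \in \mathcal{P}(\mathbf{S}_{\pattern_1},\mathbf{S}_{\pattern_2})$), then multiply by the number of equivalence classes, and finally rewrite the resulting expression in terms of $\|\pattern_1\|_0$ and $\|\pattern_2\|_0$ using the chainability identities of \Cref{def:chainableDBfour}.

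First, I would fix $\pattern_1 = (a_1,b_1,c_1,d_1)$, $\pattern_2 = (a_2,b_2,c_2,d_2)$, and set $r := r(\pattern_1,\pattern_2)$. By \Cref{lem:prodDBBis}, for each $P \in \mathcal{P}(\mathbf{S}_{\pattern_1},\mathbf{S}_{\pattern_2})$ we have $|P|=r$, $|R_P|=b_1$, $|C_P|=c_2$. Under non-redundancy we additionally have $r \leq \min(b_1,c_2)$, so the QR-decomposition at line~\ref{line:qrdecomposition} of an $b_1\times r$ matrix (resp.~of $(\bY[P,C_P])^\top$, an $c_2\times r$ matrix) costs $\mathcal{O}(b_1 r^2)$ (resp.~$\mathcal{O}(c_2 r^2)$), and the subsequent update at line~\ref{line:matrix-multiplication} (resp.~its row analogue) is a product of an $r\times r$ triangular factor with an $r\times c_2$ (resp.~$b_1\times r$) matrix, costing $\mathcal{O}(r^2 c_2)$ (resp.~$\mathcal{O}(b_1 r^2)$). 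So in both modes ($u=\texttt{column}$ or $u=\texttt{row}$), one iteration costs $\mathcal{O}(r^2(b_1+c_2))$.

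Next, by \Cref{lem:invariance-partition-col} applied to $(\pattern,\pattern') = (\pattern_1,\pattern_2)$, the number of equivalence classes is
\[
|\mathcal{P}(\mathbf{S}_{\pattern_1},\mathbf{S}_{\pattern_2})| = a_1c_1d_1/r = a_2 d_1,
\]
using $r = a_1c_1/a_2 = b_2d_2/d_1$. The total cost is therefore $\mathcal{O}(a_2 d_1 r^2 (b_1 + c_2))$. It then remains to rewrite this as a $\mathcal{O}$ of $r(\|\pattern_1\|_0+\|\pattern_2\|_0)$.

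The main (but routine) step is this algebraic rewriting, which uses the two key chainability identities $r\, a_2 = a_1 c_1$ and $r\, d_1 = b_2 d_2$. These give
\[
a_2 d_1 r^2 b_1 = r \cdot (r a_2) \cdot d_1 b_1 = r \cdot a_1 c_1 b_1 d_1 = r\,\|\pattern_1\|_0,
\]
\[
a_2 d_1 r^2 c_2 = r \cdot (r d_1) \cdot a_2 c_2 = r \cdot b_2 d_2 a_2 c_2 = r\,\|\pattern_2\|_0,
\]
so that the total cost is $\mathcal{O}\bigl(r(\|\pattern_1\|_0+\|\pattern_2\|_0)\bigr)$, as claimed. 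No step presents a real obstacle; the only subtlety is correctly identifying the block sizes $|R_P|,|P|,|C_P|$ and the count $|\mathcal{P}|$ via \Cref{lem:prodDBBis,lem:invariance-partition-col}, which is why we assume chainability and non-redundancy throughout.
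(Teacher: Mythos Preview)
Your proof is correct and follows essentially the same approach as the paper's: identify the block sizes via \Cref{lem:prodDBBis}, bound the per-iteration cost as $\mathcal{O}(r^2(b_1+c_2))$, count the classes via \Cref{lem:invariance-partition-col}, and use the chainability identities to rewrite the product as $r(\|\pattern_1\|_0+\|\pattern_2\|_0)$. The only cosmetic difference is that the paper writes the class count as $a_1c_1d_1/r = a_2b_2d_2/r$ rather than $a_2 d_1$, but these are all equal.
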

    \begin{proof}
    We only consider the case $u = \texttt{column}$, since the other case can be dealt with similarly. Denote $r = r(\pattern_1, \pattern_2)$ and $\pattern_\ell = (a_\ell, b_\ell, c_\ell, d_\ell)$ for $\ell \in \intset{2}$. By \Cref{lem:prodDBBis} for each $P \in \cP(\bS_\pattern,\bS_{\pattern'})$ we have $|P|=r$, $|R_P|=b_1$, $|C_P|=c_2$, and we have $r \leq \min(b_1,c_2)$ (cf.~\Cref{def:chainableDBfour}). Thus, 
    at each iteration of \Cref{algo:exchange}: 
    \begin{itemize}
        \item the complexity of line~\ref{line:qrdecomposition} is $\mathcal{O}(|R_P||P|^2) = \mathcal{O}(b_1r^2)$, since the complexity of QR-decomposition of an $m \times n$ matrix is $\mathcal{O}( \min(m,n)mn)$ \cite[Section 5.2]{matrixcomputation};
        \item the complexity of line~\ref{line:columnorthonormalization} is $\mathcal{O}(|R_P||P|) = \mathcal{O}(b_1r)$;
        \item the complexity of line~\ref{line:matrix-multiplication} is $\mathcal{O}(|P|^2|C_P|) = \mathcal{O}(c_2r^2)$.
    \end{itemize}
    Overall, the complexity of an iteration is $\mathcal{O}(r^2(b_1 + c_2))$. There are $a_1c_1d_1/r = a_2b_2d_2/r$ equivalence classes (by \Cref{lem:invariance-partition-col} and chainability, that implies $a_1c_1d_1=a_2b_2d_2$, see \Cref{def:chainableDBfour}), and by \Cref{lemma:numberofparameters} $a_\ell b_\ell c_\ell d_\ell = \|\pattern_\ell\|_0$, hence the overall complexity is
    \begin{equation*}
    \begin{split}
        \mathcal{O}\left(\frac{a_1c_1d_1}{r}r^2b_1 + \frac{a_2b_2d_2}{r}r^2c_2\right) &= \mathcal{O}(r(a_1b_1c_1d_1 + a_2b_2c_2d_2)) \\
        &= \mathcal{O}(r(\pattern_1, \pattern_2)(\| \pattern_1 \|_0 + \| \pattern_2 \|_0)).
    \end{split}
    \end{equation*}
\end{proof}

\subsection{Complexity of \Cref{algo:modifedbutterflyalgo}}
We can now prove the second point of \Cref{theorem:complexity}, where we assume that $\arch$ is not redundant so that $M_\arch \leq m$ and $N_\arch \leq n$.
    By \Cref{lem:complexityexchange} 
    the complexity for each call to \Cref{algo:exchange} in lines~\ref{line:colorthonormal} and \ref{line:roworthonormal} of the new butterfly algorithm (\Cref{algo:modifedbutterflyalgo}) is given by $\mathcal{O}(r(\pattern_{I_{k-1}}, \pattern_{I_k})(\|\pattern_{I_{k-1}}\|_0 + \|\pattern_{I_k}\|_0))$ where $\pattern_{I} = \pattern_q * \ldots \pattern_s$ if $I := \intset{q,s}$. 
    By \cref{lem:seq-qchainability}, $r(\pattern_{I_{k-1}}, \pattern_{I_k}) = r(\pattern_s, \pattern_{s+1})$ for some $1 \leq s \leq L - 1$, thus is bounded by $\|r(\arch)\|_\infty$ (cf.~\cref{def:chainableseqDBPs}). 
    Moreover, $\pattern_{I_{k}}$ (resp.~$\pattern_{I_{k-1}}$) has the form $(a_q, \frac{b_qd_q}{d_s}, \frac{a_sc_s}{a_q}, d_s)$ for some $1 \leq q \leq s \leq L$ (cf.~\cref{lem:suppdbfactorprod}). 
    Thus, $\|\pattern_{I_{k}}\|_0$ (resp.~$\|\pattern_{I_{k-1}}\|_0$) is bounded (recall the notation $\|\pattern\|_0$ from \Cref{lemma:numberofparameters}) by $\max_{q,s} b_qd_qa_sc_s \leq M_\arch N_\arch \leq mn$. In conclusion, the complexity of each call to \cref{algo:exchange} is at most 
    $\mathcal{O}(mn\|\mathbf{r}(\arch)\|_{\infty})$.
    {At the $J$-th iteration for some $J \in \intset{L-1}$, }
    there are at most $(J - 1)$ calls to \Cref{algo:exchange}, so the {total} complexity for the orthonormalization operations accross all the $|\arch|-1$ iterations is at most $\mathcal{O}(|\arch|^2mn\|\mathbf{r}(\arch)\|_{\infty})$. Therefore, the complexity of \Cref{algo:modifedbutterflyalgo} is given by $$\cO\left((|\arch|^2\|\mathbf{r}(\arch)\|_{\infty} + \|\mathbf{r}(\arch)\|_{1}) mn\right).$$

\section{Proof of the results of \Cref{sec:error}}
\subsection{Proof of \Cref{lemma:orthogonal-left-n-right}}
\label{appendix:keylemmaproof-new}

   First, we prove that $\bB' \in \cB^{\arch_s}$. By definition 
    \begin{equation*}
        \bB' = \underbrace{\left(\bX_{P_1}\ldots\bX_{P_{j-1}}\bX_{\intset{q,s}}\right)}_{\bB_{(\text{left})}'}\underbrace{\left(\bX_{\intset{s+1,t}}\bX_{P_{j+1}}\ldots\bX_{P_J}\right)}_{\bB_{(\text{right})}'}
    \end{equation*}
and since by assumption $\mathbf{X}_{P_i} \in \setDBfactor{\pattern_{q_i} * \ldots * \pattern_{t_i}}$ for every $i$, and
     $\arch$ is chainable, a recursive application of \Cref{prop:stability} yields that $\bB'_{(\text{left})} \in \setDBfactor{\pattern_1 * \ldots * \pattern_{s}}$, $\bB'_{(\text{right})} \in \setDBfactor{\pattern_{s+ 1} * \ldots * \pattern_{L}}$. Hence, $\bB' \in \cB^{\arch_s}$.
     

We now prove~\Cref{eq:BJprojection}. 
By \Cref{lem:seq-qchainability} and chainability of $\arch = (\pattern_\ell)_{\ell=1}^L$, the patterns $\pattern'_1 := \pattern_1 * \ldots * \pattern_{q-1}$, $\pattern'_2:=\pattern_{q} * \ldots * \pattern_t$ and $\pattern'_3:=\pattern_{t+1} * \ldots * \pattern_{L}$ are well-defined, and the two-factor architectures $\arch_\mathtt{left}:=(\pattern'_1,\pattern'_2)$ and  $\arch_\mathtt{right}:=(\pattern'_2,\pattern'_3)$ are both chainable with \[
r(\pattern'_1,\pattern'_2) = r(\pattern_{q-1},\pattern_{q})
\ \text{and}\ 
r(\pattern'_2,\pattern'_3) = r(\pattern_{t},\pattern_{t+1}).
\]
Moreover, by assumption, for each $i \in \intset{j-1}$ the factor $\bX_{P_i}$  is left-$r(\pattern_{t_i},\pattern_{t_i+1})$-unitary hence, 
using the notations $\bX_{\tleft}^{(J)}$, $\bX_{\tright}^{(J)}$ of \eqref{eq:productleftright-new}, multiple applications of \Cref{lemma:stableundermatrixmultiplication} yield that $\bX_{\tleft}^{(J)} \in \setDBfactor{\pattern'_1}$ is left-$r(\pattern_{r-1},\pattern_r)$-unitary if $r \geq 2$ (if $r = 1$ then by convention $\bX_{\tleft}^{(J)}$ is the identity). Similarly, $\bX_{\tright}^{(J)} \in \setDBfactor{\pattern'_3}$ is right-$r(\pattern_{t},\pattern_{t+1})$-unitary when $t \leq L-1$ (and the identity when $t = L$).
Finally, by \Cref{prop:stability}, both ${\bX}_{\intset{q,t}}$ and  $\matseq{X}{\intset{q,s}} \matseq{X}{\intset{s+1, t}}$ (and therefore their difference) belong to $\setDBfactor{(\pattern_{q} * \ldots * \pattern_t)}=\setDBfactor{\pattern'_2}$, and denoting $\tilde{\pattern}_1 := \pattern_{q} * \ldots * \pattern_{s-1}$ and $\tilde{\pattern}_2 = \pattern_{s+1} * \ldots * \pattern_t$, since $\matseq{X}{\intset{q,s}}$ and $\matseq{X}{\intset{s+1, t}}$ solve~\eqref{eq:fsmf-DB} we obtain
    \begin{equation*}
        \begin{aligned}
        \|{\mbf{B}}' - {\mbf{B}}\|_F &
        = \| \bX_{\tleft}^{(J)} ({{\bX}_{\intset{q,t}}} - \matseq{X}{\intset{q,s}} \matseq{X}{\intset{s+1, t}}) \bX_{\tright}^{(J)}\|_F
        \hfill \quad \text{by definitions of } \mat{B},\mat{B}'
       \\
       &
       =
        \| {{\bX}_{\intset{q,t}}} - \matseq{X}{\intset{q,s}} \matseq{X}{\intset{s+1, t}} \|_F
        \hfill \quad  \text{by \Cref{lem:qunitarypreservesfrob}}
        \\
        &
        =
        \inf_{\mat{Y}_i \in \setDBfactor{\tilde{\pattern}_i},i=1,2}
        \| {{\bX}_{\intset{q,t}}} - \mat{Y}_1 \mat{Y}_2 \|_F
        \hfill \quad \text{by }  \eqref{eq:fsmf-DB}\\
        &
        =
        E^{\splitarch{\arch}{s}}(\bX_{\tleft}^{(J)}\bX_{\intset{q,t}}\bX_{\tright}^{(J)})
        \quad \text{by \Cref{lem:originalkeylemma-new} below} 
        \\
        & = E^{\splitarch{\arch}{s}}({\mbf{B}}) 
        \hfill \quad \text{by definition of } \mat{B}.
    \end{aligned}
  \end{equation*}

The penultimate line used the following result, which we prove next.
\begin{lemma}
	\label{lem:originalkeylemma-new}
	Consider a non-redundant chainable architecture $\arch:= (\pattern_\ell)_{\ell = 1}^L$, integers $(q,s,t)$ such that $1 \leq q \leq s < t \leq L$, and denote 
 \begin{align*}
\pattern'_1 := \pattern_1 * \ldots * \pattern_{q-1}, \quad \pattern'_2  := \pattern_{q} * \ldots * \pattern_{t}, \quad
\pattern'_3 := \pattern_{t+1} * \ldots * \pattern_{L},\\ 
\tilde{\pattern}_1 := \pattern_{q} * \ldots \pattern_{s}, \quad
\tilde{\pattern}_2 := \pattern_{s+1} * \ldots * \pattern_t.
 \end{align*}
    Assume that $\bX \in \setDBfactor{\pattern'_1}$ is left-$r(\pattern_{{q}
    -1}, \pattern_{{q}
    })$-unitary (if ${q}
    > 1$) or  the identity matrix of size $a_1 b_1 d_1$ (if ${q}
    =1$), and that $\bZ \in \setDBfactor{\pattern'_3}$ is right-$r(\pattern_{t}, \pattern_{t+1})$-unitary (if $t < L$) or  the identity matrix of size $a_L c_L d_L$ (if $t=L$). 
Then for any $\mat{Y} \in \setDBfactor{\pattern'_2}$ we have :
	\begin{equation}
    \label{eq:orthonormality-on-left-right-equivalent-problem-new}
		E^{\splitarch{\arch}{s}}(\mbf{XYZ}) = \inf_{\mat{Y}_i \in \setDBfactor{\tilde{\pattern}_i}, i=1,2}
  \|\mbf{Y} - \mbf{Y}_1\mbf{Y}_2\|_F. 
	\end{equation}
\end{lemma}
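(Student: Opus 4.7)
The plan is to prove both inequalities in~\eqref{eq:orthonormality-on-left-right-equivalent-problem-new} separately. Throughout, I would use the observation that by \Cref{lem:associativity} combined with \Cref{lem:seq-qchainability}, the pairs $(\pattern'_1, \pattern'_2)$ and $(\pattern'_1, \tilde{\pattern}_1)$ are both chainable with the same $r$-value $r(\pattern_{q-1}, \pattern_q)$, and analogously $(\pattern'_2, \pattern'_3)$ and $(\tilde{\pattern}_2, \pattern'_3)$ share the value $r(\pattern_t, \pattern_{t+1})$. Consequently, the left-/right-$r$-unitarity hypotheses on $\bX$ and $\bZ$ apply uniformly whether one multiplies a $\pattern'_2$-factor or a $\tilde{\pattern}_1$-/$\tilde{\pattern}_2$-factor (in the degenerate cases $q=1$ or $t=L$, where $\bX$ or $\bZ$ is the identity, every subsequent claim becomes trivial).

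The easy direction $(\leq)$ is immediate: given $(\bY_1, \bY_2) \in \setDBfactor{\tilde{\pattern}_1} \times \setDBfactor{\tilde{\pattern}_2}$, I would set $\mathbf{A} := \bX\bY_1$ and $\mathbf{B} := \bY_2\bZ$. By \Cref{prop:stability} and \Cref{lem:associativity}, $(\mathbf{A}, \mathbf{B}) \in \setDBfactor{\pattern_1 * \ldots * \pattern_s} \times \setDBfactor{\pattern_{s+1} * \ldots * \pattern_L}$, so it is admissible for the left-hand side of~\eqref{eq:orthonormality-on-left-right-equivalent-problem-new}. Since $\bY - \bY_1\bY_2 \in \setDBfactor{\pattern'_2}$, \Cref{lem:qunitarypreservesfrob} yields $\|\bX\bY\bZ - \mathbf{A}\mathbf{B}\|_F = \|\bX(\bY - \bY_1\bY_2)\bZ\|_F = \|\bY - \bY_1\bY_2\|_F$, and taking the infimum gives the inequality.

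For the reverse direction $(\geq)$, the main tool is the block decomposition of \Cref{theorem:tractablefsmf}. Setting $r_s := r(\pattern_s, \pattern_{s+1})$, both pairs $(\pattern_1 * \ldots * \pattern_s, \pattern_{s+1} * \ldots * \pattern_L)$ and $(\tilde{\pattern}_1, \tilde{\pattern}_2)$ are chainable and satisfy the hypothesis of \Cref{theorem:tractablefsmf} by \Cref{lem:prodDBBis}, and by \Cref{lem:invariance-partition-col} together with \Cref{cor:invariant_pc} the two induced partitions coincide with a common $\cP = \cP_\mathtt{col}(\pattern_s, r_s)$. Since $\bX\bY\bZ \in \setDBfactor{\pattern_1 * \ldots * \pattern_L}$ and $\bY \in \setDBfactor{\pattern'_2}$, the ``out-of-support'' constants in~\eqref{eq:explicit-formula-fsmf} vanish on both sides, so both sides of~\eqref{eq:orthonormality-on-left-right-equivalent-problem-new} squared reduce to sums over $P \in \cP$ of best rank-$r_s$ approximation errors of the blocks $(\bX\bY\bZ)[R_P^{\mathrm{L}}, C_P^{\mathrm{L}}]$ and $\bY[R_P^{\mathrm{R}}, C_P^{\mathrm{R}}]$, where $R^{\mathrm{L}}, C^{\mathrm{L}}$ and $R^{\mathrm{R}}, C^{\mathrm{R}}$ denote the supports from \Cref{def:classequivalence} for the LHS and RHS splits respectively.

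The main obstacle, and the crux of the argument, is to prove the block-wise isometric identity
\[(\bX\bY\bZ)[R_P^{\mathrm{L}}, C_P^{\mathrm{L}}] = \bX[R_P^{\mathrm{L}}, R_P^{\mathrm{R}}] \cdot \bY[R_P^{\mathrm{R}}, C_P^{\mathrm{R}}] \cdot \bZ[C_P^{\mathrm{R}}, C_P^{\mathrm{L}}]\]
for each $P \in \cP$, together with the facts that $\bX[R_P^{\mathrm{L}}, R_P^{\mathrm{R}}]$ has orthonormal columns and $\bZ[C_P^{\mathrm{R}}, C_P^{\mathrm{L}}]$ has orthonormal rows. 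For orthonormality, I would invoke \Cref{cor:orthonormalsubmatrix-new}: since $R_P^{\mathrm{R}}$ is a column support of $\bS_{\tilde{\pattern}_1}$ and $\bX$ is left-$r(\pattern'_1, \tilde{\pattern}_1)$-unitary, $\bX[:, R_P^{\mathrm{R}}]$ has orthonormal columns, and the support structure of $\bX$ confines its nonzero rows to $R_P^{\mathrm{L}}$, so removing zero rows preserves orthonormality. The identity itself follows from tracking nonzero contributions in $(\bX\bY\bZ)[a, b] = \sum_{j, k} \bX[a, j]\bY[j, k]\bZ[k, b]$ for $(a, b) \in R_P^{\mathrm{L}} \times C_P^{\mathrm{L}}$: the sparsity of $\bY \in \setDBfactor{\tilde{\pattern}_1 * \tilde{\pattern}_2}$ forces the existence of a ``middle'' index $\ell$ with $j \in \supp(\bS_{\tilde{\pattern}_1}[:, \ell])$ and $k \in \supp(\bS_{\tilde{\pattern}_2}[\ell, :])$, and the pairwise disjointness of $\{R_{P'}^{\mathrm{L}} \times C_{P'}^{\mathrm{L}}\}_{P' \in \cP}$ from \Cref{lem:prodDBBis} then forces $\ell \in P$, whence $j \in R_P^{\mathrm{R}}$ and $k \in C_P^{\mathrm{R}}$ as required. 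Once the identity is proved, left-multiplication by an isometry and right-multiplication by a coisometry preserve singular values, so the block-wise best rank-$r_s$ approximation errors coincide, and summing over $P \in \cP$ concludes.
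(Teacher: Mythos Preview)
Your proposal is correct and follows essentially the same approach as the paper's proof: both handle the easy direction via admissibility plus \Cref{lem:qunitarypreservesfrob}, then reduce both sides of~\eqref{eq:orthonormality-on-left-right-equivalent-problem-new} to block sums over the common partition $\cP_{\mathtt{col}}(\pattern_s,r_s)$ and match blockwise rank-$r_s$ errors via orthonormal left/right multiplication. The only cosmetic difference is that the paper phrases the key block relation as ``$(\bX\bY\bZ)[R'_P,C'_P]$ equals $\bX[:,R_P]\,\bY[R_P,C_P]\,\bZ[C_P,:]$ up to permutation and zero-padding'' (via \eqref{eq:decomposition-xyz-new2}--\eqref{eq:support-xyz}), whereas you state the sharper pointwise identity $(\bX\bY\bZ)[R_P^{\mathrm L},C_P^{\mathrm L}]=\bX[R_P^{\mathrm L},R_P^{\mathrm R}]\bY[R_P^{\mathrm R},C_P^{\mathrm R}]\bZ[C_P^{\mathrm R},C_P^{\mathrm L}]$; both are equivalent once one observes (as you do) that the nonzero rows of $\bX[:,R_P^{\mathrm R}]$ lie in $R_P^{\mathrm L}$, and the paper's \Cref{cor:orthonormalsubmatrix-readytouse} is exactly the packaged form of the \Cref{cor:orthonormalsubmatrix-new} invocation you use.
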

Before proving the lemma observe that for each pair of factors $\bY_i \in \setDBfactor{\tilde{\pattern}_i}$, $i = 1,2$, by \Cref{lem:prodDB2} we have $(\bX\bY_1, \bY_2\bZ) \in \setDBfactor{\arch_s}$ (cf.~\Cref{def:firstlevelfactorization-new}), hence the inequality
     \begin{equation*}
         \begin{aligned}
            E^{\arch_s}(\bX\bY\bZ) 
            \leq \|\bX\bY\bZ - \bX\bY_1\bY_2\bZ\|_F^2
            &= \|\bX\underbrace{(\bY - \bY_1\bY_2)}_{\in \setDBfactor{\pattern_2'}}\bZ\|_F^2\\
            &= \|\bY - \bY_1\bY_2\|_F^2 \quad (\text{by \Cref{lem:qunitarypreservesfrob}}).
        \end{aligned}
     \end{equation*}
     Rather than proving the converse inequality, we proceed by characterizing both hand sides of~\eqref{eq:orthonormality-on-left-right-equivalent-problem-new} via spectral properties of appropriate blocks of $\bX\bY\bZ$ (resp.~of $\bY$).
     
 \begin{proof}[Proof of \Cref{lem:originalkeylemma-new}]
We denote $\cP := \cP(\bS_{\tilde{\pattern}_1},\bS_{\tilde{\pattern}_2})$ and $\cP' = \cP(\bS_{\pattern'_1 * \tilde{\pattern}_1},\bS_{\tilde{\pattern}_2*\pattern'_3})$
the partitions of $\intset{{p}
}$ (where ${p}
$ is the number of columns of matrices in $\setDBfactor{\pattern'_1 *\tilde{\pattern}_1}$ and in $\setDBfactor{\tilde{\pattern}_1}$, as well as the number of rows of matrices in $\setDBfactor{\tilde{\pattern}_2}$ and in $\setDBfactor{\tilde{\pattern}_2 * \pattern'_3}$).
By \Cref{lem:prodDBBis}, the sets $R_P,C_P$ for $P \in \cP$ satisfy  
\begin{align}
    R_P &= \supp(\bS_{\tilde{\pattern}_1}[\col{i}]),\quad C_P =\supp(\bS_{\tilde{\pattern}_2}[\row{i}]),\quad \forall i \in P.\label{eq:Partition1Supports}
    \end{align}
To avoid confusions, for $P \in \cP'$ we use the distinct notation/property
\begin{align}
    R'_P &= \supp(\bS_{\pattern'_1 * \tilde{\pattern}_1}[\col{i}]),\quad 
C'_P =\supp(\bS_{\tilde{\pattern}_2 * \pattern'_3}[\row{i}]),\quad \forall i \in P.\label{eq:Partition2Supports}
    \end{align}
    With $r := r(\pattern_s,\pattern_{s+1})$, 
    by \Cref{lem:seq-qchainability} we have 
    $r(\tilde{\pattern}_1,\tilde{\pattern}_2) = r(\pattern'_1*\tilde{\pattern}_1,\tilde{\pattern}_2*\pattern'_3) = r$. 
    
    With these notations, and denoting $\prank_{r}(\cdot)$
    a best rank-$r$ approximation (in the Frobenius norm) to a matrix, we first characterize the left-hand side (LHS) and the right-hand side (RHS) of \eqref{eq:orthonormality-on-left-right-equivalent-problem-new} separately. Exploiting the (left/right)-*-unitarity of $\bX$ and $\bZ$, by \Cref{lem:qunitarypreservesfrob} we have $\|\bX\bY\bZ\|_F^2=\|\bY\|_F^2$. Moreover, by~\Cref{lem:qperfectcovering}, all constraint support patterns satisfy the assumptions of \Cref{theorem:tractablefsmf}. Decomposing $\mbf{XYZ}$ (resp.~$\bY$) into the corresponding blocks $R_P \times C_P$, using \Cref{theorem:tractablefsmf}, and summing the resulting equalities, we obtain 
    \begin{align}
        \mathtt{LHS} 
        &= E^{\splitarch{\arch}{s}}(\mbf{XYZ})^2 
        = \underbrace{\|\mat{X} \mat{Y} \mat{Z} \|_F^2}_{= \| \mat{Y} \|_F^2} -  \sum_{P \in \cP'} \|\mathtt{rank}_{r}\big((\mat{X} \mat{Y} \mat{Z})[R'_P,C'_P]\big)\|_F^2\\
        \mathtt{RHS}
        & = \|\bY\|_F^2 -  \sum_{P \in \cP}\|\mathtt{rank}_{r}\big(\bY[R_{P},C_{P}]\big)\|_F^2.
    \end{align}
    To conclude, we prove that $\cP = \cP' = \cP_{\mathtt{col}}(\pattern_s,{r})$, and that 
    \begin{equation}
    \label{eq:AnnexBlockLowRankApproxEquality}    
    \|\mathtt{rank}_{r}\big((\mat{X} \mat{Y} \mat{Z})[R'_P,C'_P]\big)\|_F^2 =\|\mathtt{rank}_{r}\big(\bY[R_{P},C_{P}]\big)\|_F^2,\quad \forall P \in \cP.
    \end{equation}

{\bf Proof that $\cP=\cP'$.} Since $r(\tilde{\pattern}_1,\tilde{\pattern}_2) = r(\pattern'_1*\tilde{\pattern}_1,\tilde{\pattern}_2*\pattern'_3) = r$ (this implies that the assumption $r \mid c$ holds where $c$ is such that $\pattern'_1*\tilde{\pattern}_1 = (a,b,c,d)$ -- respectively such that $\tilde{\pattern}_1 = (a,b,c,d)$ -- for some $a,b,d$), by~\Cref{lem:invariance-partition-col} and \Cref{cor:invariant_pc} we have
\begin{align*}
        \cP'
        & = \cP(\bS_{\pattern'_1*\tilde{\pattern}_1}, \bS_{\tilde{\pattern}_2 * \pattern'_3}) 
        = \cP_{\mathtt{col}}(\pattern'_1*\tilde{\pattern}_1, r)
        = \cP_{\mathtt{col}}(\pattern_1 * \ldots * \pattern_s, r) 
        = \cP_{\mathtt{col}}(\pattern_s, r)\\
         \cP
        & = \cP(\bS_{\tilde{\pattern}_1}, \bS_{\tilde{\pattern}_2}) 
        = \cP_{\mathtt{col}}(\tilde{\pattern}_1, r)
        = \cP_{\mathtt{col}}(\pattern_{{q}
        } * \ldots * \pattern_s, r) 
        = \cP_{\mathtt{col}}(\pattern_s, r).
\end{align*}

{\bf Proof of~\eqref{eq:AnnexBlockLowRankApproxEquality}.}
An important step is to show that for each $P \in \cP$, the matrix $(\mbf{XYZ})[R'_P,C'_P]$ is, up to some permutation of rows and columns and addition of zero rows and columns, equal to $\mbf{X}[\col{R_P}]\mbf{Y}[R_P,C_P]\mbf{Z}[\row{C_P}]$. 
For this, we prove that
    \begin{align}
        \label{eq:decomposition-xyz-new2}
        &\bX\bY\bZ = \sum_{P \in \cP} \bX[\col{R_P}]\bY[R_P, C_P]\bZ[\row{C_P}]\\
    \label{eq:support-xyz}        
&\supp(\bX[\col{R_P}]\bY[R_P, C_P]\bZ[\row{C_P}])  \subseteq R'_P \times C'_P.
    \end{align}
Indeed, since $\pattern'_2 = \tilde{\pattern}_1 * \tilde{\pattern}_2$, by \Cref{lem:prodDBBis}, we have 
      \(
      \supp(\bS_{\pattern'_2}) = \bigcup_{P \in \cP} R_P \times C_P
      \)
      where we recall that $\cP := \cP(\bS_{\tilde{\pattern}_1},\bS_{\tilde{\pattern}_2})$, and the sets $R_P \times C_P$ are pairwise disjoint.       
      Since $\bY \in \setDBfactor{\pattern'_2}$ it follows that $\supp(\bY) \subset \bigcup_{P \in \cP} R_P \times C_P$, hence the decomposition~\eqref{eq:decomposition-xyz-new2}. Moreover, by \Cref{lem:prodDBBis},  the integers $k:=|R_P|$ and $\ell := |C_P|$ are independent of $P \in \cP$, and since $\bX \in \setDBfactor{\pattern'_1}$ and $\bZ \in \setDBfactor{\pattern'_3}$, for each $P \in \mathcal{P}$ we have
	\begin{align*}
			\supp(\bX[\col{R_{P}}] \, \bY[R_{P},C_{P}]  \, \bZ[\row{C_{P}}]) 
			& \subseteq \supp(\bS_{\pattern'_1}[\col{R_P}] \, \mbf{1}_{k  \times \ell} \, \bS_{\pattern'_3}[\row{C_P}]) \\
            &=\supp(\bS_{\pattern'_1}[\col{R_P}] \, \mbf{1}_{k \times 1} \, \mbf{1}_{1 \times \ell} \, \bS_{\pattern'_3}[\row{C_P}]).
		\end{align*}
By~\eqref{eq:Partition1Supports}-\eqref{eq:Partition2Supports}, for any $i \in P$, $\bS_{\tilde{\pattern}_1}[\col{i}] = \mbf{1}_{R_P}$ (the indicator vector of the set $R_P$), and $\supp(\bS_{\pattern'_1 * \tilde{\pattern}_1}) =  \mbf{1}_{R'_P}$, hence by elementary linear algebra and \Cref{prop:stability}
\begin{align*}
\bS_{\pattern'_1}[\col{R_P}] \mbf{1}_{k \times 1}
=
\bS_{\pattern'_1} \mbf{1}_{R_P}
=
\bS_{\pattern'_1} \bS_{\tilde{\pattern}_1}[\col{i}]
\propto
\bS_{\pattern'_1 *\tilde{\pattern}_1}[\col{i}] = \mbf{1}_{R'_P}.
\end{align*}
Similarly $ \mbf{1}_{1 \times \ell} \, \bS_{\pattern'_3}[\row{C_P}] \propto \mbf{1}_{C'_P}^\top$. This establishes~\eqref{eq:support-xyz}: as a consequence the supports of the summands in the right-hand side of \eqref{eq:decomposition-xyz-new2} are pairwise disjoint. 

Thus, for any $P \in \cP$, the product $\mbf{X}[\col{R_P}]\mbf{Y}[R_P,C_P]\mbf{Z}[\row{C_P}]$ is, up to some permutation of rows and columns and deletion of zero rows and columns, equal to $(\mbf{XYZ})[R'_P,C'_P]$, as claimed. This implies that their best approximation of rank $r$ has the same Frobenius norm. 
 Therefore, to establish~\eqref{eq:AnnexBlockLowRankApproxEquality}, we  only need to prove 
\begin{equation}
\label{eq:AnnexProofLocalBlock}    
\|\prank_{r}\big(\mbf{X}[\col{R_P}]\mbf{Y}[R_P,C_P]\mbf{Z}[\row{C_P}]\big)\|_F^2 = 
\|\prank_{r}\big(\bY[R_P,C_P]\big)\|_F^2, 
\ \forall P \in \cP.
\end{equation}
This is again a consequence of the left/right-$r$-unitarity of $\bX$/$\bZ$: by~\Cref{cor:orthonormalsubmatrix-readytouse}, the columns (resp.~rows) of $\bX[\col{R_P}]$ (resp.~$\bZ[\row{C_P}]$) are orthonormal. 
\end{proof}

\subsection{Proof of \Cref{lemma:relation-projectors}}
\label{appendix:relation-projectors}
    We consider a chainable $\arch = (\pattern_\ell)_{\ell=1}^L$ and 
    $1 \leq s,{q} \leq L-1$. Our goal is to show that $E^{\arch_s}(\bM) \geq E^{\arch_s}(\bN)$ where $\bN$ is a projection of $\bM$ onto $\cB^{\splitarch{\arch}{{q}}}$.
    When $s = {q}$ the result is trivial as $E^{\splitarch{\arch}{{s}}}(\bM) \geq 0 = E^{\splitarch{\arch}{q}}(\bN) = E^{\splitarch{\arch}{s}}(\bN)$, so we focus on the case $s \neq q$.  We give the detailed proof when $s<q$ and outline its adaptation when $s>q$.
    
    Assume that $s<q$ and  denote $(\pattern,\pattern')$ the chainable patterns such that $\splitarch{\arch}{s} = (\pattern,\pattern')$ and $\cP := \cP(\bS_\pattern,\bS_{\pattern'})$ (cf.~\Cref{def:classequivalence}). By \Cref{lem:prodDBBis}, all classes $P \in \cP$ have the same cardinality, denoted $r$. By \Cref{theorem:tractablefsmf}, recalling that $\rankprojerrsq{r}{\cdot}$ denotes the squared error of best rank-$r$ approximation in the Frobenius norm,
    we have
    \(
      E^{\arch_s}(\bM) 
      = c_\bM + \sum_{P \in \cP}
            \rankprojerrsq{r}{\bM[R_P,C_P]}
    \)
 where $c_\bM \geq 0$, and similarly for $E^{\arch_s}(\bN)$ with $c_\bN=0$ since $\supp(\bN) \subseteq \bS_{\pattern_1 * \ldots * \pattern_L}$ (by \Cref{prop:stability},
 since $\bN \in \cB^{\arch_{q}}$).
 Considering an arbitrary
   $P \in \cP$, we will prove that
 \begin{equation}
\label{eq:AnnexLowRankErrorIneq}
            \rankprojerrsq{r}{\bM[R_P,C_P]}
        \geq 
        \rankprojerrsq{r}{\bN[R_P,C_P]}.
 \end{equation} 
  This will yield the conclusion.
 To establish~\eqref{eq:AnnexLowRankErrorIneq} we first observe that, as a simple consequence of Eckart–Young–Mirsky theorem on low rank matrix approximation \cite{eckart1936approximation}, for  $\bU \in \RR^{m \times n}$ and $r \leq \min(m,n)$ we have
    \begin{equation}
        \label{eq:best-rank-r-distance}
        \rankprojerrsq{r}{\bU} 
        = 
        \Tr(\bU\bU^\top) - \sum_{i = 1}^{r} \lambda_i(\bU\bU^\top) =
        \Tr(\bU^\top\bU) - \sum_{i = 1}^{r} \lambda_i(\bU^\top\bU),
    \end{equation}
    with $\lambda_i(\cdot)$ the $i$-th largest eigenvalue of a symmetric matrix. We also will use the following lemma (the proof of all intermediate lemmas is slightly postponed).
\begin{lemma}
    \label{lemma:sub-additive}
    If $\bA, \bB \in \RR^{n \times n}$ are symmetric positive semi-definite (PSD) then
    \begin{equation}
        \label{eq:monotone}
        \Tr(\bA) - \sum_{i = 1}^{q} \lambda_i(\bA) \leq \Tr(\bA + \bB) - \sum_{i = 1}^{q} \lambda_i(\bA + \bB),
        \quad \forall 1 \leq q \leq n.
    \end{equation}
\end{lemma}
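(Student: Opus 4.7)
The plan is to first rewrite the inequality in a form that makes the content more transparent. Since $\bA$ is symmetric, $\Tr(\bA) = \sum_{i=1}^n \lambda_i(\bA)$, so \eqref{eq:monotone} is equivalent to the statement that the tail sum of eigenvalues is monotone under addition of a PSD matrix:
\begin{equation*}
\sum_{i=q+1}^n \lambda_i(\bA) \;\leq\; \sum_{i=q+1}^n \lambda_i(\bA+\bB).
\end{equation*}

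To prove this, I would invoke Ky Fan's variational characterization of the sum of the smallest eigenvalues of a symmetric matrix $\bM \in \RR^{n \times n}$:
\begin{equation*}
\sum_{i=q+1}^n \lambda_i(\bM) \;=\; \min_{V \in \RR^{n \times (n-q)},\; V^\top V = \bI_{n-q}} \Tr(V^\top \bM V).
\end{equation*}
Applied to $\bM = \bA + \bB$, this gives, for any orthonormal $V$, $\Tr(V^\top(\bA+\bB)V) = \Tr(V^\top \bA V) + \Tr(V^\top \bB V) \geq \Tr(V^\top \bA V)$, because $V^\top \bB V$ is PSD (as $\bB$ is PSD) and hence has nonnegative trace. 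Taking the minimum over $V$ on both sides yields exactly the desired tail inequality, and rearranging gives \eqref{eq:monotone}.

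An even shorter route, which I would mention in passing, is Weyl's monotonicity inequality: for symmetric $\bA,\bB$ we have $\lambda_i(\bA+\bB) \geq \lambda_i(\bA) + \lambda_n(\bB)$ for every $i$, and when $\bB$ is PSD the term $\lambda_n(\bB)$ is nonnegative, hence $\lambda_i(\bA+\bB) \geq \lambda_i(\bA)$ for all $i$; summing over $i=q+1,\ldots,n$ concludes. Either argument is elementary; there is no substantive obstacle, the only ``care'' needed is to convert the stated form $\Tr - \sum_{\text{top }q}$ into the tail-sum form where the monotonicity is manifest and to apply a standard fact about eigenvalues of Hermitian matrices.
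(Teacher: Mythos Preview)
Your proof is correct. It differs from the paper's argument, which instead observes that the function $f(\bM) = \sum_{i=1}^q \lambda_i(\bM)$ is convex and positively homogeneous on symmetric matrices, whence subadditive: $f(\bA)+f(\bB) \geq f(\bA+\bB)$. The paper then rewrites the difference of the two sides of \eqref{eq:monotone} as $\Tr(\bB) - [f(\bA+\bB)-f(\bA)] \geq \Tr(\bB) - f(\bB) \geq 0$, the last step using that $\bB$ is PSD. Your route via the tail-sum form and Ky Fan's variational principle (or Weyl's monotonicity) is more direct and arguably more elementary, avoiding the convexity detour; the paper's argument has the mild advantage of isolating subadditivity of $f$ as the key structural fact, which could be reused elsewhere. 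Both proofs reveal that only $\bB$ (not $\bA$) needs to be PSD.
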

Denoting $\bK := \bM-\bN$ and
\[
\bU := \bM[R_P,C_P],\quad \bV := \bN[R_P,C_P],\quad  \bW := \bU-\bV = \bK[R_P,C_P],
\]
we will soon show that 
\begin{equation}\label{eq:OrthoConditionLowRankAnnex}
    \bV\bW^\top = 
    \mat{0}_{|R_P|\times |R_P|}.
\end{equation} 
Since $\bU = \bV+\bW$, this implies
$\bU\bU^\top = \bV\bV^\top + \bV\bW^\top + \bW\bV^\top + \bW\bW^\top = \bV\bV^\top+\bW\bW^\top$, hence $\bB := \bU\bU^\top-\bV\bV^\top = \bW\bW^\top \succeq 0$. 
With $\bA := \bV\bV^\top$ we have $\bA+\bB = \bU\bU^\top$ and the following derivation then yields~\eqref{eq:AnnexLowRankErrorIneq} as claimed:
     \begin{align*}
            \rankprojerrsq{r}{\bM[R_P,C_P]}
            &\overset{\eqref{eq:best-rank-r-distance}}{=} \Tr(\bU\bU^\top) - \sum_{i = 1}^{r} \lambda_i(\bU\bU^\top)
            =  \Tr(\bA+\bB) - \sum_{i = 1}^{r} \lambda_i(\bA + \bB)\\
            &\overset{\eqref{eq:monotone}}{\geq} \Tr(\bB) - \sum_{i = 1}^{r} \lambda_i(\bB)
            \overset{\eqref{eq:best-rank-r-distance}}{=} 
            \rankprojerrsq{r}{\bN[R_P,C_P]}.
    \end{align*}
   
To prove~\eqref{eq:OrthoConditionLowRankAnnex} we use the following lemma.
  \begin{lemma}
    \label{lemma:inclusion}
    Let $\arch=(\pattern_\ell)_{\ell=1}^L$ be a chainable architecture, and $1 \leq i < j \leq L-1$. For $\ell \in \{i,j\}$ denote $\cP_\ell := \cP(\bS_{\pattern_1 * \ldots * \pattern_\ell}, \bS_{\pattern_{\ell+1} * \ldots * \pattern_L})$.
%
        For each $P \in \cP_i$, $Q \in \cP_j$
        \begin{enumerate}
        \item 
        If $R_P \cap R_Q \neq \emptyset$ then $R_P \subseteq R_Q$
        \item 
                If $C_P \cap C_Q \neq \emptyset$ then $C_Q \subseteq C_P$ (reverse inclusion compared to $R_P$ and $R_Q$).
        \end{enumerate}
        For each $P \in \cP_i$ denote \(
    \cP_j(P):= \{Q \in \cP_j: R_P \cap R_Q \neq \emptyset\ \text{and}\ C_P \cap C_Q \neq \emptyset\},
    \). Similarly, for $Q \in \cP_j$, \(
    \cP_i(Q):= \{P \in \cP_i: R_P \cap R_Q \neq \emptyset\ \text{and}\ C_P \cap C_Q \neq \emptyset\}
    \). We have
        \begin{enumerate}[resume]
           \item $C_P$ is the disjoint union of $C_Q$, $Q \in \cP_j(P)$, and $R_P \subseteq R_Q$ for each $Q \in \cP_j(P)$.
           \item $R_Q$ is the disjoint union of $R_P$, $P \in \cP_i(Q)$, and $C_Q \subseteq C_P$ for each $P \in \cP_i(Q)$.
    \end{enumerate}
\end{lemma}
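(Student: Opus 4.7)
My plan rests on two ingredients. First, by the Kronecker structure $\bS_\pattern = \bI_a \otimes \mbf{1}_{b\times c} \otimes \bI_d$, any two columns (resp.~rows) of $\bS_\pattern$ have supports that are either identical or disjoint. Applied to $\bS_{\pattern_1 * \ldots * \pattern_\ell}$ and $\bS_{\pattern_{\ell+1} * \ldots * \pattern_L}$ at each level $\ell \in \{i,j\}$, this yields a ``disjoint-or-equal'' dichotomy: $R_P = R_{P'}$ or $R_P \cap R_{P'} = \emptyset$, and likewise for $C_P, C_{P'}$, whenever $P, P' \in \cP_\ell$. Second, using associativity of $*$ (\Cref{lem:associativity}) together with \Cref{prop:stability} and \Cref{lem:seq-qchainability}, I obtain the two factorizations $\bS_{\pattern_1 * \ldots * \pattern_i} \cdot \bS_{\pattern_{i+1} * \ldots * \pattern_j} \propto \bS_{\pattern_1 * \ldots * \pattern_j}$ and $\bS_{\pattern_{i+1} * \ldots * \pattern_j} \cdot \bS_{\pattern_{j+1} * \ldots * \pattern_L} \propto \bS_{\pattern_{i+1} * \ldots * \pattern_L}$ with positive integer proportionality constants. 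Expanding these identities column-by-column and row-by-row, and reading off supports (which is legitimate since all entries are nonnegative), yields the key union formulas
\begin{equation*}
    R_Q = \bigcup_{\delta \in T_\gamma} R_{P_\delta},
    \qquad
    C_P = \bigcup_{\gamma \in S_\alpha} C_{Q_\gamma},
\end{equation*}
where $\alpha$ is any element of $P$, $\gamma$ any element of $Q$, $T_\gamma$ and $S_\alpha$ denote the supports of the $\gamma$-th column and $\alpha$-th row of $\bS_{\pattern_{i+1} * \ldots * \pattern_j}$, and $P_\delta, Q_\gamma$ denote the classes of $\delta$ in $\cP_i$ and of $\gamma$ in $\cP_j$.

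With these two ingredients in hand, Claims 1 and 2 become one-liners. For Claim 1: if $R_P \cap R_Q \neq \emptyset$, the $R_Q$-union formula produces some $\delta \in T_\gamma$ with $R_P \cap R_{P_\delta} \neq \emptyset$, and the dichotomy on $\cP_i$ forces $R_P = R_{P_\delta} \subseteq R_Q$. Claim 2 is its mirror image, obtained by swapping rows and columns and the roles of $\cP_i, \cP_j$.

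For Claim 3, the inclusion $R_P \subseteq R_Q$ for $Q \in \cP_j(P)$ is simply Claim 1 restated. The decomposition $C_P = \bigsqcup_{Q \in \cP_j(P)} C_Q$ goes as follows. The $C$-union formula already writes $C_P$ as a union of $C_{Q_\gamma}$, and each such $Q_\gamma$ lies in $\cP_j(P)$: observe that $\gamma \in S_\alpha$ is equivalent to $\alpha \in T_\gamma$, so the $R$-union formula gives $R_P = R_{P_\alpha} \subseteq R_{Q_\gamma}$, yielding $R_P \cap R_{Q_\gamma} \neq \emptyset$, while $C_P \cap C_{Q_\gamma} = C_{Q_\gamma}$ is trivially non-empty. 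Conversely, any $Q \in \cP_j(P)$ must coincide with some $Q_\gamma$: from $C_P \cap C_Q \neq \emptyset$ and the decomposition of $C_P$, the dichotomy on $\cP_j$ gives $C_Q = C_{Q_\gamma}$ for some $\gamma \in S_\alpha$; and from $R_P \subseteq R_Q \cap R_{Q_\gamma}$ combined with $R_P \neq \emptyset$, the same dichotomy on $\cP_j$ forces $R_Q = R_{Q_\gamma}$, whence $Q = Q_\gamma$ since a class in $\cP_j$ is determined by the pair $(R_Q, C_Q)$. Disjointness of $\{C_Q\}_{Q \in \cP_j(P)}$ then follows because all such classes share a common $R$-support (both $R_Q$ and $R_{Q'}$ contain the nonempty $R_P$, hence are equal by the dichotomy), so distinct $Q, Q'$ must differ in $C_Q, C_{Q'}$, which are then disjoint by the dichotomy. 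Claim 4 is the mirror statement of Claim 3, obtained by swapping rows/columns and the roles of $i$ and $j$ throughout.

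The main obstacle I anticipate is purely bookkeeping: at several steps one must switch between viewing an index in $\cP_\ell$ as a column of $\bS_{\pattern_1 * \ldots * \pattern_\ell}$ or as a row of $\bS_{\pattern_{\ell+1} * \ldots * \pattern_L}$, and apply the dichotomy to the correct partition ($R$-side or $C$-side) at each use. Chainability of $\arch$ enters only through \Cref{prop:stability} to license the two factorizations above; the rest of the argument is purely combinatorial.
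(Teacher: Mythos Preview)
Your proof is correct and takes a genuinely different route from the paper's. The paper argues by explicit index arithmetic: it writes out each $R_P$ and $C_P$ as the intersection of a block interval with a congruence class (e.g., $R_P = \intset{(k-1)b_1d_1+1,\,kb_1d_1} \cap \{t : t \equiv \ell \bmod d_i\}$), and then uses the divisibility relations $d_j \mid d_i$ and $a_{i+1} \mid a_{j+1}$ coming from chainability to compare congruence classes at levels $i$ and $j$. For Claims~3--4 the paper invokes \Cref{lem:prodDBBis} to get pairwise disjointness of the full rectangles $R_Q \times C_Q$, and the common covering $\bigcup_P R_P \times C_P = \bigcup_Q R_Q \times C_Q = \supp(\bS_{\pattern_1*\ldots*\pattern_L})$ to obtain the disjoint-union decompositions. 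Your argument, by contrast, never computes an explicit index: you exploit \Cref{prop:stability} to factor $\bS_{\pattern_1*\ldots*\pattern_j}$ through $\bS_{\pattern_1*\ldots*\pattern_i}$ (and dually for the right factors), read off the union formulas $R_Q = \bigcup_{\delta\in T_\gamma} R_{P_\delta}$ and $C_P = \bigcup_{\gamma\in S_\alpha} C_{Q_\gamma}$ from nonnegativity, and combine these with the column/row ``equal-or-disjoint'' dichotomy. The paper's approach is more concrete and makes the role of the divisibility conditions transparent; yours is more structural, avoids all index computations, and would carry over verbatim to any family of supports satisfying an analogue of \Cref{prop:stability} together with the equal-or-disjoint property. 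One minor simplification: for the reverse inclusion in Claim~3 you do not actually need to identify $Q$ with some $Q_\gamma$; it suffices to invoke Claim~2 (which you have already proved) to get $C_Q \subseteq C_P$ directly for any $Q \in \cP_j(P)$.
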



To prove \eqref{eq:OrthoConditionLowRankAnnex} for an arbitrary $P \in \cP$, first recall that $\splitarch{\arch}{s} = (\pattern,\pattern')$ with $\pattern = \pattern_1 * \ldots * \pattern_s$, $\pattern' = \pattern_{s+1} * \ldots * \pattern_{L}$ and that $\cP = \cP(\bS_{\pattern},\bS_{\pattern'})$. Similarly $\splitarch{\arch}{q} = (\tilde{\pattern}, \tilde{\pattern}')$ with
$\tilde{\pattern} = \pattern_1 * \ldots * \pattern_{q}$, $\tilde{\pattern}' = \pattern_{q+1} * \ldots * \pattern_{L}$, and we introduce $\cP' := \cP(\bS_{\tilde{\pattern}}, \bS_{\tilde{\pattern}'})$.
With the notation of \Cref{lemma:inclusion}, we have $\cP = \cP_i$ and $\cP' = \cP_j$ with $i=s<q=j$, and we denote $\cP'(P) := \cP_j(P)$.
  %
     By \Cref{lemma:inclusion}:
     \begin{itemize}
         \item $C_P$ is the disjoint union of $C_Q$, $Q \in \cP'(P)$;
         \item $R_P \subseteq R_Q$ for each $Q \in \cP'(P)$.
         \end{itemize}
     As a  result of the first fact, $\bV := \bN[R_P,C_P]$ is (up to column permutation) the horizontal concatenation of blocks $\bN[R_P,C_Q]$, $Q \in \cP'(P)$, and similarly for $\bW := \bK[R_P,C_P]$. Establishing \eqref{eq:OrthoConditionLowRankAnnex} is thus equivalent to proving that 
     \begin{equation}\label{eq:BlockOrthoConditionAppendix}
     \bN[R_P, C_Q]\bK[R_{P}, C_{Q}]^\top = \bK[R_{P}, C_Q]\bN[R_{P}, C_{Q}]^\top = \mbf{0},
     \quad \forall Q \in \cP'(P)
     \end{equation}
    since
        \begin{equation*}
            \bV\bW^\top = \sum_{Q \in \cP'(P)} \bN[R_P, C_Q]\bK[R_{P}, C_{Q}]^\top.
        \end{equation*}
    To prove \eqref{eq:BlockOrthoConditionAppendix}, consider 
    $Q \in \cP'(P)$: since $R_P \subseteq R_Q$, we have
    \begin{equation*}
        \begin{aligned}
            \rowrange(\bK[R_{P}, C_{Q}]) &\subseteq \rowrange(\bK[R_{Q}, C_{Q}]),\\
            \rowrange(\bN[R_{P}, C_{Q}]) &\subseteq \rowrange(\bN[R_{Q}, C_{Q}])\\
        \end{aligned}
    \end{equation*}
    with $\rowrange(\cdot)$ the span of the rows of a matrix. 
    We use the following classical lemma.
    \begin{lemma}
    \label{lemma:orthogonal-space}
    Consider a non-redundant and chainable architecture $\arch=(\pattern_\ell)_{\ell=1}^L$.
    Denote $\cP_j =\cP(\bS_{\pattern_1 * \ldots * \pattern_j}, \bS_{\pattern_{j+1} * \ldots * \pattern_L})$, where $j \in \intset{L-1}$. If $\bB$ is a projection of a matrix $\bA$ onto $\arch_j$, then we have:
        \begin{align*}
            \colrange(\bA[R_Q, C_Q] - \bB[R_Q,C_Q]) &\perp \colrange(\bB[R_Q, C_Q]), & \forall Q \in \cP_j,\\
            \rowrange(\bA[R_Q, C_Q] - \bB[R_Q,C_Q]) &\perp \rowrange(\bB[R_Q, C_Q]), & \forall Q \in \cP_j.
        \end{align*}
\end{lemma}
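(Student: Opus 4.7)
The plan is to reduce the claim to the classical orthogonality property of best low-rank approximations, applied blockwise. First, by \Cref{lem:seq-qchainability} the two-factor architecture $\splitarch{\arch}{j} = (\pattern_1 * \ldots * \pattern_j, \pattern_{j+1} * \ldots * \pattern_L)$ is chainable with $r := r(\pattern_j, \pattern_{j+1})$. By \Cref{lem:prodDB2} the set $\setButterfly{\splitarch{\arch}{j}}$ equals $\setDBfactor{\pattern_1 * \ldots * \pattern_L} \cap \setblocklowrank{\splitarch{\arch}{j}}$; in particular every matrix $\bC$ in this set is supported in $\bS_{\pattern_1 * \ldots * \pattern_L}$ and satisfies $\rank(\bC[R_Q, C_Q]) \leq r$ for each $Q \in \cP_j$, while the blocks $R_Q \times C_Q$ are pairwise disjoint by \Cref{lem:prodDBBis}.

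Next, by \Cref{lem:qperfectcovering} the pair $(\bS_{\pattern_1 * \ldots * \pattern_j}, \bS_{\pattern_{j+1} * \ldots * \pattern_L})$ satisfies the hypothesis of \Cref{theorem:tractablefsmf}, so the squared error admits the decomposition
\[
\|\bA - \bC\|_F^2 = \sum_{Q \in \cP_j} \|\bA[R_Q, C_Q] - \bC[R_Q, C_Q]\|_F^2 + c_{\bA}
\]
for all $\bC \in \setButterfly{\splitarch{\arch}{j}}$, where $c_{\bA}$ depends only on $\bA$. Since the summands touch disjoint entries of $\bC$ and the only coupling through the constraint is the independent rank-$r$ bound on each block, the global minimization decouples: any projection $\bB$ of $\bA$ onto $\cB^{\splitarch{\arch}{j}}$ has the property that, for every $Q \in \cP_j$, the block $\bB[R_Q, C_Q]$ is a best rank-$r$ approximation of $\bA[R_Q, C_Q]$.

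To conclude, I would invoke the classical orthogonality property of best low-rank approximations: if $\bM$ is a best rank-$r$ approximation of a matrix $\bN$ in the Frobenius norm, then $\colrange(\bN - \bM) \perp \colrange(\bM)$ and $\rowrange(\bN - \bM) \perp \rowrange(\bM)$. This is a direct consequence of the Eckart--Young theorem: writing a truncated singular value decomposition $\bM = \bU_r \mathbf{\Sigma}_r \bV_r^\top$ obtained by keeping the top $r$ singular components of $\bN$, the residual $\bN - \bM$ is supported on the complementary singular subspaces, which are orthogonal to $\colrange(\bM)$ and $\rowrange(\bM)$ respectively. Applying this fact blockwise with $\bN = \bA[R_Q, C_Q]$ and $\bM = \bB[R_Q, C_Q]$ for every $Q \in \cP_j$ yields the two required orthogonality statements. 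The main bookkeeping step is the identification of the correct block decomposition via \Cref{lem:prodDBBis} and the decoupling via \Cref{theorem:tractablefsmf}; once this is established, the remaining SVD orthogonality argument is standard and presents no further obstacle. Note also that the non-redundancy assumption plays no role in this proof — it is carried from the context in which this lemma is used.
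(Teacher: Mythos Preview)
Your proof is correct and follows essentially the same route as the paper: reduce to the fact that each block $\bB[R_Q,C_Q]$ is a best rank-$r$ approximation of $\bA[R_Q,C_Q]$, then invoke the classical orthogonality of the residual of a best low-rank approximation. The only minor methodological difference is in how this last classical fact is justified: the paper derives it from the first-order optimality conditions of the factored problem $\min_{\bX,\bY}\|\bX\bY-\bD\|_F^2$ (which automatically covers every minimizer, including the non-unique case of repeated singular values), whereas you invoke Eckart--Young and truncated SVD directly. Your argument is fine once one notes that every Frobenius-best rank-$r$ approximation can be realized as a truncated SVD for \emph{some} choice of singular vectors; the paper's variational argument sidesteps this small point. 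Your closing remark that non-redundancy plays no role here is also correct.
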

    By~\Cref{lemma:orthogonal-space} with $j=q$ we further have:
    \begin{equation*}
    \rowrange(\bK[R_{Q}, C_{Q}]) = 
    \rowrange(\bM[R_{Q}, C_{Q}] - \bN[R_{Q}, C_{Q}]) \perp \rowrange(\bN[R_{Q}, C_{Q}]) 
    \end{equation*}
    hence
    \(
        \rowrange(\bK[R_{P}, C_{Q}]) \perp 
        \rowrange(\bN[R_{P}, C_{Q}]) 
    \)
    and~\eqref{eq:BlockOrthoConditionAppendix} holds 
    as claimed.
 

 We proceed similarly when $s>q$: we prove an analog of \eqref{eq:AnnexLowRankErrorIneq} for each $Q \in \cP'$ instead of each $P \in \cP$. For this we establish a variant of \eqref{eq:OrthoConditionLowRankAnnex}, where $\bU,\bV,\bW$ are the $R_Q \times C_Q$ blocks instead of $R_P \times C_P$:
    \begin{equation}
    \label{eq:OrthoConditionLowRankAnnexBis}
    \bV^\top\bW = 
    \mat{0}_{|C_{q}|\times|C_{q}|},
\end{equation} 
so that $\bU^\top\bU = \bA + \bB$ where $\bA := \bV^\top\bV$ and $\bB = \bW^\top\bW\succeq 0$. 
Since $s > q$, by \Cref{lemma:inclusion} again, $\bV=\bN[R_P, C_P]$ and $\bW=\bK[R_P,C_P]$ are \emph{vertical} concatenations of $\bN[R_Q,C_P]$ and $\bK[R_Q, C_P]$, $P \in \cP'(Q)$, respectively,
\eqref{eq:OrthoConditionLowRankAnnexBis} can be deduced from an analog to   
\eqref{eq:BlockOrthoConditionAppendix}:
\begin{equation}
    \label{eq:BlockOrthoConditionAppendixBis}
    \bN[R_Q, C_P]^\top\bK[R_{Q}, C_{P}] = \bK[R_{Q}, C_P]^\top\bN[R_{Q}, C_{P}] = \mbf{0},
    \forall P \in \cP(Q) := \cP_i(Q),
\end{equation}
(with the notations of \Cref{lemma:inclusion}), which is a direct consequence of
\begin{equation*}
    \begin{aligned}
        \colrange(\bK[R_{Q}, C_{P}]) &\subseteq \colrange(\bK[R_{Q}, C_{Q}]),\\
        \colrange(\bN[R_{Q}, C_{P}]) &\subseteq \colrange(\bN[R_{Q}, C_{Q}])\\
        \colrange(\bK[R_{Q}, C_{P}]) &\perp 
        \colrange(\bN[R_{Q}, C_{P}]).
    \end{aligned}
\end{equation*}
where $\colrange(\cdot)$ is the linear span of the columns of a matrix. 

To conclude, we now prove \Cref{lemma:sub-additive}, \Cref{lemma:inclusion}, and \Cref{lemma:orthogonal-space}.

\begin{proof}[Proof of \Cref{lemma:sub-additive}]
    The set $\mathbb{S}^n$ of symmetric {$n \times n$} matrices is convex, and the function $f: \mathbb{S}^n \mapsto \RR: \bA \mapsto \sum_{i = 1}^q \lambda_i(\bA)$ is convex \cite[Problem 3.26]{ConvexOptimization}. In addition, $f$ is positively homogeneous, i.e., $f(t\bA) = tf(\bA), \forall t \geq 0$. Therefore, for any $\bA, \bB \in \mathbb{S}^n$, we have:
    \begin{align*}
        f(\bA) + f(\bB) 
        &= 
        2\cdot\frac{1}{2}
        \left(f(\bA) + f(\bB)\right) 
        \geq 
        2f\left(\frac{\bA + \bB}{2}\right) 
        = f(\bA + \bB)\\
        \intertext{so that if in addition $\bB \in \mathbb{S}^n$ is positive semi-definite (PSD) we get}
    \Big[\Tr(\bA + \bB) - \sum_{i = 1}^{q} \lambda_i(\bA + \bB)\Big] &- \Big[\Tr(\bA) - \sum_{i = 1}^{q} \lambda_i(\bA)\Big]\\
            &= [\Tr(\bA + \bB) - f(\bA + \bB)] - [\Tr(\bA) - f(\bA)]\\
            &= \Tr(\bB) - [f(\bA + \bB) - f(\bA)]\\
            &\geq \Tr(\bB) - f(\bB) \geq 0.  
    \end{align*}
    The fact that $\bB$ is PSD was only used in the last inequality.
\end{proof}


  
\begin{proof}[Proof of~\Cref{lemma:inclusion}]
    {\bf Preliminaries.} Denote $\pattern_\ell = (a_\ell, b_\ell, c_\ell, d_\ell)$ for $\ell \in \intset{L}$. By \Cref{def:firstlevelfactorization-new} and the definition \eqref{eq:operatorDBPfour} of the operator $*$, we have
    \begin{align*}
        \splitarch{\arch}{i} = \Big(\underbrace{\big(a_1, \tfrac{b_1d_1}{d_i}, \tfrac{a_i c_i}{a_1}, d_i\big)}_{=:\pattern = (a,b,c,d)}, \underbrace{\big(a_{i+1}, \tfrac{b_{i+1}d_{i+1}}{d_L}, \tfrac{a_Lc_L}{{a_{i+1}}}, d_L\big)}_{=:\pattern'=(a',b',c',d')}\Big).
    \end{align*}

{Consider an arbitrary column $\mat{s}$ of $\bS_{\pattern}$. By the structure of $\bS_{\pattern}$ (cf.~\Cref{fig:DBfactorillu}) there exists a block index $1 \leq k \leq a=a_1$ and an index $1 \leq \ell \leq d=d_i$ such that $\mat{s}$ is equal to the $\ell$-th column of the $k$-th ``group'' of $cd$ columns of $\bS_{\pattern}$, so that
    \begin{align*}
    \supp(\mat{s}) 
    &= \intset{(k-1)bd+1,kbd} \cap \{t \in \mathbb{Z}: 
    {t \equiv \ell\mod d} 
    \}\\
    &= \underbrace{\intset{(k-1)b_1d_1 + 1,kb_1d_1}}_{=:T_k} \cap \{t \in \mathbb{Z}: 
    {t\equiv \ell\mod d_i} 
    \}
    =: R_{k,\ell}^i.
\end{align*}
    Similarly, for each row $\mat{s}'$ of $\bS_{\pattern'}$, we have
       \begin{align*}
    \supp(\mat{s}') 
    &= 
    \intset{(k-1)c'd'+1,kc'd'} \cap \{t \in \mathbb{Z}: 
    {t \equiv \ell\mod d'}
    \}\\
    &= \intset{(k-1)\tfrac{a_Lc_Ld_L}{a_{i+1}} + 1,k\tfrac{a_Lc_Ld_L}{a_{i+1}}} \cap \underbrace{\{t \in \mathbb{Z}: 
    {t \equiv \ell\mod d_L}
    \}}_{=:T'_\ell} =:C^i_{k,\ell}
    \end{align*}
    for some $1 \leq k \leq a'= a_{i+1}$, $1 \leq \ell \leq d'= d_L$.
    The same holds with $j$ instead of $i$.
}

    {\bf Claims 1 and 2.}    
    Consider now $P \in \cP_i$ and $Q \in \cP_j$. 
    Assume that $R_P \cap R_Q \neq \emptyset$. By the definition of $\cP_i$ and $\cP_j$ (\Cref{def:classequivalence}) and the above considerations, there exists indices $k,k',\ell,\ell'$ such that $R_P=R^i_{k,\ell}$ and $R_Q = R^j_{k',\ell'}$, hence $R_P \cap R_Q \subset T_k \cap T_{k'}$, and therefore $k = k'$. As a result, $R_P$ (resp.~$R_Q$) is exactly the subset of all integers in $T_k$ satisfying a certain congruence modulo $d_i$ (resp.~modulo $d_j$). Since $i < j$, by chainability, we have $d_j \mid d_i$, hence 
    there are only two possibilities for $\{t \in \Z, 
    {t \equiv \ell\mod d_i}
    \}$ and $\{t \in \Z, 
    {t \equiv \ell'\mod d_j}
    \}$:
    they are either disjoint or the former is a subset of the latter. Since $R_P \cap R_Q \neq \emptyset$ the case of an empty intersection is excluded, and 
    we obtain $R_P \subseteq R_Q$ as claimed.
    
    A similar reasoning shows that if $C_P \cap C_Q \neq \emptyset$ then  $C_P = C^i_{k,\ell}$ and $C_Q = C^j_{k',\ell'}$ with $\ell = \ell'$. Denoting $p = a_Lc_Ld_L/a_{i+1}$ and $q = a_Lc_L d_L/a_{j+1}$, we have $p/q = a_{j+1}/a_{i+1}$. By chainability, since $i<j$, we have $a_{i+1} \mid a_{j+1}$ hence $q \mid p$. It follows that $C_P$ (resp.~$C_Q$) is exactly the subset of all integers in $T'_\ell$ belonging to $\intset{{(k-1)}
    p+1,{k}
    p}$ (resp.~$\intset{{(k'-1)}
    q+1,{k'}
    q}$). Since $C_P$ intersects $C_Q$, these two intervals must intersect, and it is easy to check that since $q \mid p$ the second must then be a subset of the first. We obtain $C_Q \subseteq C_P$ as claimed.
    

{\bf Claims 3 and 4.} We prove Claim 3, the proof of Claim 4 is analogous. Consider $P \in \cP_i$. The fact that $R_P \subseteq R_Q$ for each $Q \in \cP_j(P)$ is a direct consequence of Claim 1 and the definition of $\cP_j(P)$. Let us now show that the sets $C_Q$, $Q \in \cP_j(P)$ are pairwise disjoint. For this consider $Q, Q' \in \cP_j(P)$
    such that $C_Q \cap C_{Q'} \neq \emptyset$. By the first claim of \Cref{lemma:inclusion}
    and the definition of $\cP_j(P)$, we have $R_P \subseteq R_Q$ and $R_P \subseteq R_{Q'}$, hence 
    \[
    (R_{Q} \times C_{Q}) \cap (R_{Q'} \times C_{Q'})  \supseteq R_{P} \times (C_Q \cap C_{Q'}) \neq \emptyset. 
    \]
     By \Cref{lem:seq-qchainability} the pair $(\pattern_1 * \ldots * \pattern_\ell, \pattern_{\ell + 1} * \ldots, \pattern_L)$ is chainable, hence by \Cref{lem:qperfectcovering} it satisfies the assumptions of     \Cref{theorem:tractablefsmf}, i.e., the sets $R_Q \times C_Q$, $Q \in \cP_j$ are pairwise disjoint.
    This shows that $Q=Q'$.

   Finally, since $\cup_{P \in \cP_i} R_P \times C_P = \cup_{Q \in \cP_j} R_Q \times C_Q = \supp(\bS_{\pattern_1 * \ldots * \pattern_L})$ (see, e.g., \Cref{lem:prodDBBis}),
   we have $\cup_{P \in \cP_i} R_P = \cup_{Q \in \cP_{j}} R_Q = \intset{a_1b_1d_1}$ and $\cup_{P \in \cP_i} C_P = \cup_{Q \in \cP_{j}} C_Q = \intset{a_Lb_Ld_L}$. Combined with the proved inclusions $R_P \subseteq R_Q$ and $C_Q \subseteq C_P$ (under non-empty intersection conditions), this implies that $R_{Q}$ (resp.~$C_{P}$) 
   is the (disjoint) union of all
   $R_{P}$ (resp.~$C_{Q}$) that intersect it.
\end{proof}

\begin{proof}[Proof of~\Cref{lemma:orthogonal-space}]
    By~\eqref{eq:decomposedisjointoverlapping}, $\bB$ is a projection of $\bA$ onto $\cB^{\arch_j}$ if and only if $\supp(\bB) \subseteq \bS_{\pattern_1 * \ldots * \pattern_L}$ and its subblocks satisfy
\[
        \bB[R_P, C_P] \in 
        \prank_{|P|}(\bA[R_P,C_P])
\]
where we recall that $\prank_{r}(\mat{X}) := \argmin_{\bM: \rank(\bM) \leq r} \|\mat{X}-\mat{M}\|_F$. It is thus sufficient to prove the following claim for the low-rank matrix approximation problem: Given a matrix $\bD \in \RR^{m \times n}$, if a matrix $\bC$ is a projection of $\bD$ onto the set $\cM_r$ of matrices of rank at most $r$, then:
    \begin{align*}
            \colrange(\bD - \bC) \perp \colrange(\bC)\quad \text{and}\quad
            \rowrange(\bD - \bC) \perp \rowrange(\bC).
    \end{align*}
    This result is classical. We re-prove it here for self-completeness. 
        One can re-write the projection onto $\cM_r$ as the following optimization problem:
    \[
            \underset{\bX \in \RR^{m \times r}, \bY \in \RR^{r \times n}}{\text{Minimize}} \; f(\bX, \bY)\quad \text{where}\quad f(\bX,\bY) := \tfrac{1}{2}\|\bX\bY-\bD\|_F^2.
    \]
    At an optimum $\bC = \bX\bY$ (which always exists, take the truncated SVD for example): 
    \[
        \frac{\partial f}{\partial \bX} = (\bX\bY-\bD)\bY^\top  = \mbf{0}, \qquad \frac{\partial f}{\partial \bY} = \bX^\top(\bX\bY-\bD)  = \mbf{0}. 
    \]
        or equivalently: 
    \(
            \rowrange(\bC - \bD) \perp \rowrange(\bY)
            \quad \text{and}\quad
            \colrange(\bC - \bD) \perp \colrange(\bX).
    \)
    Since $\colrange(\bC) \subseteq \colrange(\bX), \rowrange(\bC) \subseteq \rowrange(\bY)$, this yields the claim. 
\end{proof}


\subsection{Proof for \eqref{eq:orthogonalpair-new}}
\label{appendix:bettermainresultsproof-new}
\begin{proof}
The proof is given when $\sigma$ is the identity permutation $\sigma = (1,\ldots,L-1)$. The proof when $\sigma$ is the ``converse" permutation $\sigma = (L-1,\ldots,1)$ is similar, replacing rows by columns, left-*-unitarity by right-*-unitarity, etc.
We begin by preliminaries on key matrices involved in the expression of the matrices $\bB_J$, $\bB_{J-1}$ and $\bB_p$ appearing in \eqref{eq:orthogonalpair-new}.
We then highlight simple orthogonality conditions which imply~\eqref{eq:orthogonalpair-new}. Finally we prove these orthogonality conditions.

{\bf Preliminaries.} Since $\sigma_\ell=\ell$, $\ell \in \intset{L-1}$, it is not difficult to check by induction on $J \in \intset{L-1}$
that at the $J$th iteration of \cref{algo:modifedbutterflyalgo}, at line~\ref{line:fsmfmodifiedalgo}, we have 
    \begin{enumerate}
    \item $\mathtt{partition} = (I_1,\ldots,I_J)$ where $I_\ell=\{\ell\}$, $\ell \in \intset{J-1}$ and $I_J = \intset{J,L}$;
        \item $s = J$ (from line~\ref{line:s}).
        \item $j = J$ and $I_j = I_J = \intset{J, L}$, i.e., $q = J$, $t = L$ (line~\ref{line:defjmodifiedalgo}).
    \end{enumerate}
    
    Denoting $(\matseq{\hat{\mat{X}}}{\ell})_{\ell=1}^L$ the final output of \Cref{algo:modifedbutterflyalgo},
 one can also check by induction 
 that the list \texttt{factors} obtained at the end of the $J$-th iteration (cf.~lines~\ref{line:fsmfmodifiedalgo} and \ref{line:list_factors_at_end} of \Cref{algo:modifedbutterflyalgo}) is a tuple of the form:
    \begin{equation*}
        (\hat{\bX}_{1}, \ldots, \hat{\bX}_{J - 1}, \bX_{\intset{J,J}},\bX_{\intset{J+1,L}}).
    \end{equation*}
    Indeed, the value of the first $J-1$ factors in the list \texttt{factors} are left-$r_\ell$-unitary factors for appropriate $r_\ell$, $\ell \in \intset{J - 1}$ (cf.~\Cref{lemma:role-pseudo-orthogonality}). Therefore, their values during pseudo-orthonormalization operations in the next iterations $J+1, J+2, \ldots, L-1$ do not change anymore, which means that they are equal to $\hat{\bX}_{1}, \ldots, \hat{\bX}_{J - 1}$. The factor $\bX_{\intset{J,J}}=\bX_{\intset{q,s}}$ will be pseudo-orthonormalized at the next iteration if $J<L-1$.

    {\bf Expression of $\bB_J$.} By the convention of \eqref{eq:productleftright-new} and the above observation, we have
    $\bX_{\tleft}^{(J)} = {\bX}_{\intset{1,1}} \ldots {\bX}_{\intset{J-1,J-1}}
    = \prod_{\ell = 1}^{J - 1} \hat{\mbf{X}}_\ell
    $ (by convention this is the identity if $J=1$) and $\bX_{\tright}^{(J)} = \bI_{a_Lb_Ld_L}$.  
    The matrices $\bX_{\intset{q,s}}$ $\bX_{\intset{s+1,t}}$, $\bX_{I_j}$ of line~\ref{line:fsmfmodifiedalgo} of \Cref{algo:modifedbutterflyalgo}
    correspond to $\matseq{\mat{X}}{\intset{J, J}}$, $\matseq{\mat{X}}{\intset{J+1, L}}$ and $\matseq{\mat{X}}{\intset{J, L}}$, hence the matrix $\bB_J$ from~\eqref{eq:DefBJ} is
    \begin{equation}
    \label{def:matrice-P_J-new}
        \mat{B}_J 
         := \mat{X}_\tleft^{(J)} \matseq{\mat{X}}{\intset{q, s}} \matseq{\mat{X}}{\intset{s+1, t}}\mat{X}_\tright^{(J)}
            = 
            \mat{X}_\tleft^{(J)} \matseq{\mat{X}}{\intset{J, J}} \matseq{\mat{X}}{\intset{J+1, L}},\quad \forall J \in \intset{L-1}.
    \end{equation}
    Given the nature of \eqref{eq:orthogonalpair-new} we also express $\bB_{J-1}$ and $\bB_p$ for $p \in \intset{J,L-1}$.
    
  Equation~\eqref{def:matrice-P_J-new} also reads $\bB_J = \bX_{\intset{1,1}} \ldots \bX_{\intset{J,J}}\bX_{\intset{J+1,L}}$, hence for $J \in \intset{2,L-1}$
    \begin{align}
        \mat{B}_{J-1} &= \bX_{\intset{1,1}} \ldots \bX_{\intset{J-1,J-1}}\bX_{\intset{J,L}}\notag\\
        & = \mat{X}_\tleft^{(J)} \matseq{\mat{X}}{\intset{J, L}} \label{def:matrice_BJ-1-new}.
    \end{align}
    This indeed holds for all $J \in \intset{L-1}$: for $J=1$, the convention below~\eqref{eq:DefBJ} yields $\bB_{J-1} = \bB_0 := \bA$, and $\bX_{\intset{1,L}} = \bA$ (line~\ref{line:factorinit} of \Cref{algo:modifedbutterflyalgo}). As  $\mat{X}_\tleft^{(J)}$ is the identity, this shows \eqref{def:matrice_BJ-1-new} for $J=1$.
    
    For $p>J$ we have $\bX_{\tleft}^{(p)} = \bX_{\tleft}^{(J)} \matseq{\hat{\mat{X}}}{J} \ldots \matseq{\hat{\mat{X}}}{p-1}$, and one easily deduces from \eqref{def:matrice-P_J-new} that
    \begin{equation}
        \mat{B}_{p} = \bX_{\tleft}^{(J)} \left( \prod_{\ell = J}^{p - 1} \hat{\mbf{X}}_\ell \right) \matseq{\mat{X}}{\intset{p,p}} \matseq{\mat{X}}{\intset{p+1,L}},
        \quad \forall J,p \in \intset{L-1}\ \text{such that}\  p \geq J,
    \end{equation}
    where again by convention an empty matrix product is the identity.

    {\bf Expression of $\langle \bB_{J-1}-\bB_J,\bB_p\rangle$.}
    From now on we fix $J \in \intset{L}$ and $p \in \intset{J,L-1}$, and rewrite $\langle \bB_{J-1}-\bB_J,\bB_p\rangle$ where we recall that the inner-product is associated to the Frobenius norm on matrices. Denote
    \begin{equation}
    \label{eq:xleftJ-appearing-new}
    \begin{split}
         \widetilde{\bB}_{J-1} &:= \matseq{\mat{X}}{\intset{J,L}}, \\
         \widetilde{\bB}_J &:= \matseq{\mat{X}}{\intset{J,J}} \matseq{\mat{X}}{\intset{J+1,L}}, \\
         \widetilde{\bB}_{p} &:= \left( \prod_{\ell = J}^{p - 1} \hat{\bX}_\ell \right) \matseq{\mat{X}}{\intset{p,p}}
         \matseq{\mat{X}}{\intset{p+1,L}}.
    \end{split}
    \end{equation}
    Note that the expression of $\widetilde{\bB}_p$ falls back on $\bB_J$ when $p=J$.
    We observe that for $k \in \{J-1,J,p\}$, $\widetilde{\bB}_k$ is a $\pattern'$-factor with $\pattern'=\pattern_J * \ldots * \pattern_L$ by \cref{prop:stability} since $\bX_{\intset{p,p}} \in \setDBfactor{\pattern_p}$, $\bX_{\intset{p+1,L}} \in \setDBfactor{\pattern_{p+1} * \ldots * \pattern_L}$ and (when $p>J$)
    $\hat{\bX}_\ell \in \setDBfactor{\pattern_\ell}$ for $\ell \in \intset{J, p - 1}$. 
    
    Since the factors $\hat{\bX}_\ell$ for $\ell \in \intset{J-1}$ are $\pattern_\ell$-factors, with chainable patterns $\pattern_\ell$, and left-$r_\ell$-unitary for appropriate $r_\ell$'s, by \Cref{lem:suppdbfactorprod} and a recursive use of \Cref{lemma:stableundermatrixmultiplication} 
    their product $\bX:= \bX_{\tleft}^{(J)}$ is a left-$r$-unitary $\pattern$-factor with $\pattern := (\pattern_1 * \ldots * \pattern_{J - 1})$ and
    $r=r(\pattern_{J-1},\pattern_J)$. As $\bY_1 := \widetilde{\bB}_{J-1}-\widetilde{\bB}_{J}$ and $\bY_2 := \widetilde{\bB}_{p}$ are $\pattern'$-factors, and since by \cref{lem:seq-qchainability}, $r(\pattern,\pattern') =r(\pattern_{J-1},\pattern_J)=r$,  left-$r$-unitarity of $\bX$ and the parallelogram law yield
 
      \begin{equation*}
        \langle \mbf{X}\mbf{Y}_1, \mbf{X}\mbf{Y}_2 \rangle = \frac{1}{2}(\|\bX\bY_1\|_F^2 + \|\bX\bY_2\|_F^2 - \|\bX(\bY_1 + \bY_2)\|_F^2)
        = \langle \bY_1,\bY_2\rangle.
    \end{equation*}
    %
    %
    %
    Combining this with the expressions of $\bB_{J-1},\bB_J,\bB_p$, we obtain:
    \[
    \ip{{\mbf{B}}_{J - 1} - {\mbf{B}}_{J}}{\mbf{B}_{p}} = 
    \ip{\bX_{\tleft}^{(J)} ( \widetilde{\bB}_{J-1} - \widetilde{\bB}_J)}{\bX_{\tleft}^{(J)} \widetilde{\bB}_{p}}
    =
    \ip{\widetilde{\bB}_{J-1} - \widetilde{\bB}_J}{ \widetilde{\bB}_{p}}.
    \]
    
    {\bf Orthogonality conditions and their proof.}
    Consider the partition $\cP := \cP(\bS_{\pattern_{\tleft}}, \bS_{\pattern_{\tright}}
    )$ with $\pattern_\tleft := \pattern_J$ and $\pattern_\tright = \pattern_{J+1} * \ldots * \pattern_L$.
       For each $k \in \{J-1,J,p\}$, since $\widetilde{\bB}_k$ is a $\pattern'$-factor and $\pattern' = \pattern_\tleft * \pattern_\tright$, 
       by \Cref{lem:prodDBBis} we have $\supp(\widetilde{\bB}_k) \subseteq \bigcup_{P \in \cP} R_P \times C_P$ 
       hence
    \[
    \ip{\widetilde{\bB}_{J-1} - \widetilde{\bB}_J}{ \widetilde{\bB}_{p}}
    = 
    \sum_{P \in \cP} \ip{(\widetilde{\bB}_{J-1} - \widetilde{\bB}_J)[R_P,C_P]}{ \widetilde{\bB}_{p}[R_P,C_P]}.
    \]
    It follows that $\ip{{\mbf{B}}_{J - 1} - {\mbf{B}}_{J}}{\mbf{B}_{p}} =0$ 
     is implied by the orthogonality conditions
     \begin{equation}
		\label{eq:orthogonalcolumnspanblock-new}
		\ip{(\widetilde{\bB}_{J - 1} - \widetilde{\bB}_J)[R_P,C_P]}{\widetilde{\bB}_{p}[R_P, C_P]}  = 0, \quad \forall P \in \cP. 
	\end{equation}

    To conclude it remains to prove that \eqref{eq:orthogonalcolumnspanblock-new} indeed holds for a fixed $P \in \cP$. By the definition of $\matseq{\mat{X}}{\intset{J, J}}=\bX_{\intset{q,s}}$, $\matseq{\mat{X}}{\intset{J+1, L}}=\bX_{\intset{s+1,t}}$ and $\matseq{\mat{X}}{\intset{J, L}} = \bX_{\intset{q,t}}$ at line~\ref{line:fsmfmodifiedalgo} of \Cref{algo:modifedbutterflyalgo}
    and line~\ref{line:svd} of \Cref{algo:algorithm1}, we obtain that 
    \begin{align*}
    \widetilde{\bB}_J[R_P, C_P] 
    & \stackrel{\eqref{eq:xleftJ-appearing-new}}{=}
    (\matseq{\mat{X}}{\intset{J, J}} \matseq{\mat{X}}{\intset{J+1, L}})[R_P,C_P]\\
    & \stackrel{    \textrm{\Cref{lemma:submatrixequality}}
    }{=}
    \bX_{\intset{J,J}}[R_P,P]\bX_{\intset{J+1,L}}[P,C_P]
\intertext{is the best rank-$|P|$ approximation of}
    \widetilde{\bB}_{J-1}[R_P, C_P]
        & \stackrel{\eqref{eq:xleftJ-appearing-new}}{=}
\matseq{\mat{X}}{\intset{J, L}}[R_P, C_P].
    \end{align*}
    
    If 
    the two matrices are equal then \eqref{eq:orthogonalcolumnspanblock-new} trivially holds, otherwise we have 
    \begin{align*}
         \rank(\widetilde{\bB}_J[R_P,C_P]) 
        & = \rank(\mat{X}_{\intset{J, J}}[R_P, P]) = |P|\\        \colrange(\widetilde{\bB}_J[R_P,C_P]) 
        & = \colrange(\mat{X}_{\intset{J, J}}[R_P, P]).
        \end{align*}
    Since $\mat{X}_{\intset{J, J}}$ is pseudo-orthonormalized into $\hat{\bX}_J$ at the next iteration $J+1$ (by design of \Cref{algo:exchange}), we also have
    \[
    \colrange(\mat{X}_{\intset{J, J}}[R_P, P]) = \colrange(\hat{\bX}_J[R_P, P]).
    \]
    Combining these equalities with \Cref{lemma:orthogonal-space} 
    we obtain
     \begin{align*}
     \colrange(\hat{\bX}_J[R_P, P])
     =\colrange(\widetilde{\bB}_J[R_P,C_P]) \perp
      \colrange((\widetilde{\bB}_J-\widetilde{\bB}_{J-1})[R_P, C_P]).
        \end{align*}
    The orthogonality condition     \eqref{eq:orthogonalcolumnspanblock-new} is then implied by the fact that
    \begin{align*}
       \colrange(\widetilde{\bB}_{p}[R_P, C_P]) & \subseteq
       \colrange(\hat{\bX}_J[R_P, P])
        \end{align*}
    which is trivial if $p=J$, and if $p>J$
    follows from the fact by \eqref{eq:xleftJ-appearing-new} we have $\widetilde{\bB}_p = \hat{\bX}_J \bZ$
    for some $\bZ := \in \setDBfactor{\pattern_\tright}$.
\end{proof}

\section{On the generalization of the complementary low-rank property}
\label{app:low-rank-characterization}
We show that the generalized complementary low-rank property associated with a chainable $\arch$ given in \Cref{def:general-clr-new} coincides, under some assumption on $\arch$, with the classical definition of the complementary low-rank property given in \cite{li2015butterfly}. 

\subsection{Definition of the classical complementary low-rank property \cite{li2015butterfly}}
We start by reformulating the definition of \cite{li2015butterfly}, based on the following terminology.
    A cluster tree $T$ of a set of indices $\intset{n}$ with depth $L$ is a tree where:
    \begin{itemize}
        \item the nodes 
        are subsets of $\intset{n}$;
        \item the root is $\intset{n}$;
        \item each non-leaf node has non-empty children that 
        partition 
        their parent;
        \item the only leaves are at level $L$. 
    \end{itemize}
    By convention 
    the root nodes are at level 0. The set of all nodes at level $\ell \in \intset{L}$ is denoted $\treelevel{T}{\ell}$.
Notice that, 
by definition of a cluster tree $T$, the set of nodes $\treelevel{T}{\ell}$ form a partition of the root node for each level $\ell$, and $\treelevel{T}{\ell+1}$ is finer than $\treelevel{T}{\ell}$, in the following sense.

\begin{definition}[{Finer partitions \cite[Definition 1.11]{hackbusch2015hierarchical}}]
    Given two partitions $P$ and $\tilde{P}$ of $\intset{n}$, $P$ is \emph{finer} than $\tilde{P}$ if for all $I \in P$ there is a $\tilde{I} \in \tilde{P}$ with $I \subseteq \tilde{I}$. 
\end{definition}

We can now give a definition that covers the classical notion of complementary low-rank property \cite{li2015butterfly}.
\begin{definition}[``Classical'' complementary low-rank property]
\label{def:complementary-low-rank}
    Consider two cluster trees $\treerow$ and $\treecol$ of $\intset{m}$ and $\intset{n}$ with the same depth $L$, and a set of integer {\em rank constraints} $\mathcal{R} := \{ r_{R, C} \, | \, (R, C) \in  \treelevel{\treerow}{\ell} \times \treelevel{\treecol}{L - \ell + 1}, \, \ell \in \intset{L} \} \subseteq \mathbb{N}$.
    A matrix $\mat{A}$ of size $m \times n$ satisfies the \emph{complementary low-rank property for $\treerow$ and $\treecol$ with rank constraints $\mathcal{R}$
    if
    the submatrix $\matindex{\mat{A}}{R}{C}$ 
    has rank at most $r_{R,C}$
    for each $(R, C) \in \treelevel{\treerow}{\ell} \times \treelevel{\treecol}{L - \ell + 1}$ and $\ell \in \intset{L}$.}
\end{definition}

With this definition, a matrix $\bA$ satisfies the complementary low-rank property of \cite{li2015butterfly,li2018multidimensional} if it is $\epsilon$-close in the Frobenius norm to a matrix $\tilde{\bA}$ satisfying \Cref{def:complementary-low-rank} in the particular case where $\treerow$ and $\treecol$ are {\em dyadic trees} or {\em quadtrees}, where it is assumed that $\max_{R,C} r_{R,C}$ is bounded poly-logarithmically in $1/\epsilon$ and in  matrix size.

\subsection{Relation with the generalized complementary low-rank property (\Cref{def:general-clr-new})}
A cluster tree yields a hierarchical partitioning of a given set of indices. Similarly, 
under some appropriate conditions, a chainable architecture $\arch$ also yields a hierarchical partitioning of the row and column indices, which leads to two cluster trees.


\begin{proposition}
\label{prop:partitions-for-clr}
    Consider a chainable architecture $\arch = (\pattern_\ell)_{\ell=1}^L$ where $\pattern_\ell := (a_\ell, b_\ell, c_\ell, d_\ell)$ for $\ell \in \intset{L}$,  and denote $m \times n$ the size of the matrices in $\setButterfly{\arch}$, as well as
    \begin{equation}
    \label{eq:DefSupportAggregated}
     \mat{S}_{q,t} := \matseq{S}{\pattern_{q} * \ldots * \pattern_t},\quad 1 \leq q \leq t \leq L.    \end{equation}
     Recalling the notation from \Cref{def:classequivalence}, define for all $\ell \in \intset{L-1}$: 
    \begin{equation}\label{eq:DefBlockPartitionTrees}
        \begin{split}
            \partitionrow_{L-\ell} &:= \{ R_{P} \, | \, P \in \cP(\bS_{1, \ell}, \bS_{\ell+1,L}) \}, \\
            \partitioncol_{\ell} &:= \{ C_{P} \, | \, P \in \cP(\bS_{1, \ell}, \bS_{\ell+1,L}) \}.
        \end{split}
    \end{equation}
    Assume that $a_1 = d_L = 1$. Then:
    \begin{enumerate}[leftmargin=*]
        \item    For each $\ell \in \intset{L-1}$, $\partitionrow_{\ell}$ is a partition of $\intset{m}$, and $\partitioncol_{\ell}$ is a partition of $\intset{n}$. Moreover
     $\partitionrow_{\ell + 1}$ (resp.~$\partitioncol_{\ell + 1}$) is finer than $\partitioncol_{\ell}$ (resp.~than $\partitionrow_{\ell}$) when $\ell \leq L-2$. 
           \item Consequently, when completed by $\partitionrow_0 := \intset{m}$ and $\partitioncol_0 := \intset{n}$, the collections $\{ \partitionrow_{\ell} \}_{\ell=0}^{L-1}$ and $\{ \partitioncol_{\ell} \}_{\ell=0}^{L-1}$ yield two cluster trees, denoted $\treerow_\arch$ and $\treecol_\arch$, of depth $L-1$ with root node $\intset{m}$ and $\intset{n}$, respectively.
            \item For each $\ell \in \intset{L-1}$ we have
    \begin{equation}
        \label{eq:BlockPartitionEquality}
        \{(R_P,C_P): P \in \cP(\bS_{1, \ell}, \bS_{\ell+1,L})\} =
    \treelevel{\treerow_\arch}{L - \ell} \times \treelevel{\treecol_\arch}{\ell}.
    \end{equation}
                \end{enumerate}

\end{proposition}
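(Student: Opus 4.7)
The plan is to prove the three points in order, essentially relying on (i) the structural description of $\bS_{(a,b,c,d)} = \bI_a \otimes \one{b \times c} \otimes \bI_d$, (ii) the already-available facts on the sets $R_P, C_P$ from \Cref{lem:prodDBBis}, \Cref{lem:invariance-partition-col}, and (iii) the nesting property \Cref{lemma:inclusion} proven in the appendix. The chainability of the pair $(\pattern_1 * \ldots * \pattern_\ell, \pattern_{\ell+1} * \ldots * \pattern_L)$ on which these results will be applied is guaranteed by \Cref{lem:seq-qchainability}. Throughout, the hypothesis $a_1 = d_L = 1$ will play the crucial role of ensuring that the $R_P$'s actually cover $\intset{m}$ and the $C_P$'s cover $\intset{n}$ (and later of making the counting in point 3 work out).

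For point 1, I would first observe that by \Cref{lem:prodDBBis}(1), each $R_P$ is a column support of $\bS_{1,\ell}$; a direct inspection of $\bS_{(a,b,c,d)}$ shows that its column supports are pairwise either identical or disjoint, so the distinct $R_P$'s form a disjoint family. To show they cover $\intset{m}$, I would note that every row of $\bS_{1,\ell}$ has at least one nonzero entry (since the product pattern has $a_1 = 1$, the block-diagonal structure reduces to a single block spanning all rows), hence every row index appears in some column support. The argument for $\partitioncol_\ell$ is symmetric, using $d_L = 1$. For point 2 (the nesting), I would invoke \Cref{lemma:inclusion} applied to the two split positions $i = L-\ell-1 < j = L-\ell$: the lemma gives $R_{P} \subseteq R_{Q}$ whenever their row supports meet, which is exactly the statement that $\partitionrow_{\ell+1}$ (coming from split $i$) refines $\partitionrow_\ell$ (coming from split $j$); the column nesting follows from the symmetric inclusion $C_Q \subseteq C_P$ in the same lemma. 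The cluster tree $\treerow_\arch$ (resp.~$\treecol_\arch$) is then built by declaring $\treelevel{\treerow_\arch}{\ell} = \partitionrow_\ell$ with $\partitionrow_0 := \{\intset{m}\}$ as root, and parent-child links induced by set inclusion; that each non-leaf node has non-empty children partitioning it is precisely the content of points 1–2.

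For point 3, I plan to proceed by a cardinality argument. The explicit form of $\pattern_1 * \ldots * \pattern_\ell$ from \eqref{eq:producttheta} combined with $a_1 = 1$ shows the distinct column supports of $\bS_{1,\ell}$ are parameterized by a single residue in $\intset{d_\ell}$, giving $|\partitionrow_{L-\ell}| = d_\ell$; similarly $|\partitioncol_\ell| = a_{\ell+1}$ using $d_L = 1$, so $|\partitionrow_{L-\ell} \times \partitioncol_\ell| = a_{\ell+1} d_\ell$. On the other hand, \Cref{lem:invariance-partition-col} applied to the chainable pair gives $|\cP(\bS_{1,\ell},\bS_{\ell+1,L})| = a_{\ell+1} d_\ell$ as well; and distinct equivalence classes $P \neq P'$ produce distinct pairs $(R_P, C_P) \neq (R_{P'}, C_{P'})$ because by \Cref{def:classequivalence} the class is determined by the rank-one support $R_P \times C_P$. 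Since the map $P \mapsto (R_P, C_P)$ is injective, has image contained in the Cartesian product, and both sides have cardinality $a_{\ell+1}d_\ell$, equality \eqref{eq:BlockPartitionEquality} follows. The main bookkeeping obstacle will be keeping straight the three different indexings—the split position $s$ parametrizing $\cP$, the tree level for rows ($L-\ell$) versus columns ($\ell$), and the "finer when index grows" convention—so I will introduce the shorthand $\cQ(s) := \cP(\bS_{1,s}, \bS_{s+1,L})$ at the outset and rewrite all three statements in terms of $\cQ(s)$ before invoking the supporting lemmas.
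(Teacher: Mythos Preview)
Your proposal is correct and follows essentially the same approach as the paper: both establish point~1 by identifying the $R_P$'s (resp.~$C_P$'s) with column supports of $\bS_{1,\ell}$ (resp.~row supports of $\bS_{\ell+1,L}$), using $a_1=d_L=1$ to get coverage, and both prove point~3 by the cardinality match $|\partitionrow_{L-\ell}|\cdot|\partitioncol_\ell| = d_\ell\, a_{\ell+1} = |\cP(\bS_{1,\ell},\bS_{\ell+1,L})|$ together with injectivity of $P\mapsto(R_P,C_P)$. The only minor difference is that for the nesting in point~1 you invoke \Cref{lemma:inclusion}, whereas the paper redoes the underlying divisibility computation directly (explicitly computing $\pattern_{\ell+1}*\ldots*\pattern_L$ via \eqref{eq:producttheta} and noting that $a_{\ell+1}\mid a_{\ell+2}$ makes the interval lengths nest).
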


We postpone the proof of \Cref{prop:partitions-for-clr} to immediately highlight that under its assumptions
the general 
complementary low-rank property 
of \Cref{def:general-clr-new} coincides with the classical one 
of \Cref{def:complementary-low-rank}.



\begin{corollary}
\label{cor:equivalence-clr}
Consider a chainable architecture $\arch = (\pattern_\ell)_{\ell=1}^L$ where $\pattern_\ell := (a_\ell, b_\ell, c_\ell, d_\ell)$ for $\ell \in \intset{L}$, and assume that $a_1=d_L=1$.
    For any matrix $\mat{A}$ of appropriate size, the following are equivalent:
    \begin{itemize}
        \item $\mat{A}$ satisfies the {\em generalized} complementary low-rank property  
        (\Cref{def:general-clr-new})
        associated with $\arch$;
        \item $\mat{A}$ satisfies 
        the \emph{classical} complementary low-rank property (\Cref{def:complementary-low-rank}) for the cluster trees $(\treerow_\arch, \treecol_\arch)$ defined in \Cref{prop:partitions-for-clr}, 
        with rank constraint $\mathcal{R}$ such that for each $\ell \in \intset{L-1}$ and every
        $(R,C) \in \treelevel{\treerow_\arch}{L-\ell} \times \treelevel{\treecol_\arch}{\ell}$ we have $r_{R,C} = r(\pattern_\ell, \pattern_{\ell + 1})$.
    \end{itemize}
\end{corollary}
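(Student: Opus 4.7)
The plan is to verify that, under the assumption $a_1 = d_L = 1$, the two properties match condition by condition. Both definitions consist of (potentially) a support condition together with rank constraints on specific submatrices indexed by levels $\ell \in \intset{L-1}$; the non-trivial block-level identification between the two parameterizations has already been established in \Cref{prop:partitions-for-clr}, so what remains is a direct comparison.

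First I would dispense with the support condition. The classical property (\Cref{def:complementary-low-rank}) imposes no support constraint on $\mat{A}$, while the generalized one (\Cref{def:general-clr-new}) requires $\supp(\mat{A}) \subseteq \bS_{\pattern_1 * \ldots * \pattern_L}$. Using \Cref{lem:suppdbfactorprod} with $a_1 = d_L = 1$, I would compute $\pattern_1 * \ldots * \pattern_L = (1, b_1 d_1, a_L c_L, 1) = (1, m, n, 1)$, so that $\bS_{\pattern_1 * \ldots * \pattern_L} = \mbf{1}_{m \times n}$ is the matrix of all ones. Hence the support condition is satisfied by every $m \times n$ matrix and contributes nothing to the generalized property, matching the absence of a support condition on the classical side.

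Next I would compare the families of rank inequalities, one level $\ell \in \intset{L-1}$ at a time. \Cref{def:general-clr-new} asks $\rank(\bA[R_P, C_P]) \leq r(\pattern_\ell, \pattern_{\ell+1})$ for every $P \in \cP(\bS_{1,\ell}, \bS_{\ell+1,L})$, while the classical property with the prescribed choice $r_{R,C} = r(\pattern_\ell, \pattern_{\ell+1})$ asks $\rank(\bA[R, C]) \leq r(\pattern_\ell, \pattern_{\ell+1})$ for every $(R, C) \in \treelevel{\treerow_\arch}{L-\ell} \times \treelevel{\treecol_\arch}{\ell}$. The identity \eqref{eq:BlockPartitionEquality} of \Cref{prop:partitions-for-clr} states exactly that $\{(R_P,C_P) : P \in \cP(\bS_{1, \ell}, \bS_{\ell+1,L})\} = \treelevel{\treerow_\arch}{L - \ell} \times \treelevel{\treecol_\arch}{\ell}$, so the two sets of rank inequalities are literally identical. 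The only potentially delicate point in the plan is this block-level identification, but since it is already settled by \Cref{prop:partitions-for-clr}, no further obstacle remains and the equivalence follows immediately.
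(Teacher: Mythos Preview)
Your proposal is correct and follows essentially the same approach as the paper: first showing that the support condition in \Cref{def:general-clr-new} becomes vacuous under $a_1=d_L=1$ via \Cref{lem:suppdbfactorprod}, then invoking the block-level identification \eqref{eq:BlockPartitionEquality} from \Cref{prop:partitions-for-clr} to match the two families of rank constraints. The paper's proof is slightly terser but proceeds identically.
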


\begin{proof}
    Since $\arch$ is chainable and $a_1 = d_L = 1$, by
    \Cref{lem:suppdbfactorprod} we have $\pattern_1 * \ldots * \pattern_L = (1, m, n, 1)$ for some integers $m,n$ (which turn out to be the dimensions of matrix $\bA$) hence $\bS_{\pattern_1 * \ldots * \pattern_L} = \one{m \times n}$. Therefore, by \Cref{def:general-clr-new}, a matrix $\mat{A}$ satisfies the general complementary low-rank property associated with $\arch$ if, and only if, 
    $\rank(\bA[R_P,C_P]) \leq r(\pattern_\ell, \pattern_{\ell + 1})$ for each $ P \in \cP(\bS_{1, \ell}, \bS_{\ell+1,L})$ and $\ell \in \intset{L-1}$ (we use the shorthand \eqref{eq:DefSupportAggregated}).
    By \Cref{prop:partitions-for-clr}, this is precisely a reformulation of the classical complementary low-rank property (\Cref{def:complementary-low-rank}) for the trees $(\treerow_\arch, \treecol_\arch)$.
    
 
\end{proof}

\begin{proof}[Proof of \Cref{prop:partitions-for-clr}]
The second claim is an immediate consequence of the first one.
    We only prove the first claim for the column partitions $\{ \partitioncol_{\ell} \}_{\ell=1}^{L-1}$, since the proof is similar for the row partitions $\{ \partitionrow_{\ell} \}_{\ell=1}^{L-1}$. 
        Given $\ell \in \intset{L-1}$, let us first show that $\partitioncol_{\ell}$ is a partition of $\intset{n}$ for any $\ell \in \intset{L-1}$, where $n := a_L c_L d_L$.
    By \eqref{eq:producttheta}: $\pattern_{\ell+1} * \ldots * \pattern_{L} = \left( a_{\ell+1}, \frac{b_{\ell+1} d_{\ell+1}}{d_L}, \frac{a_L c_L}{a_{\ell+1}}, d_L \right) = \left( a_{\ell+1}, b_{\ell+1} d_{\ell+1}, \frac{a_L c_L}{a_{\ell+1}}, 1 \right)$ since $d_L = 1$. 
    Therefore, by \Cref{def:dbfactorfour}, the elements of $\partitioncol_{\ell}$ 
    partition the integer set $\intset{n}$ (with
    $n := a_L c_L = a_Lc_Ld_L$) into consecutive intervals of length $c_\ell$, where 
    $c_\ell := \frac{a_L c_L}{a_{\ell+1}}$.
        It remains to show that each element of $\partitioncol_{\ell}$ can itself be partitioned into elements 
    of $\partitioncol_{\ell + 1}$. Since the latter are consecutive intervals of length $c_{\ell+1}$,
    this is a direct consequence of the fact that $c_{\ell}$ is an integer multiple of $c_{\ell+1}$: indeed,
    since $(\pattern_{\ell+1}, \pattern_{\ell +2})$ is chainable, we have: $a_{\ell + 1} \mid a_{\ell + 2}$, $\gamma = a_{\ell + 2} / a_{\ell + 1} \in \NN$ and $c_{\ell} = \gamma c_{\ell + 1}$. 

       To prove~\eqref{eq:BlockPartitionEquality} observe that the left hand side is trivially a subset of $\treelevel{\treerow_\arch}{L - \ell} \times \treelevel{\treecol_\arch}{\ell}$, by the definition of $\treelevel{\treerow_\arch}{L - \ell} = \partitionrow_{L-\ell}$ and $\treelevel{\treecol_\arch}{\ell}= \partitioncol_{\ell}$
    (cf~\eqref{eq:DefBlockPartitionTrees}). The equality will follow from the fact that both sides share the same number of elements: this is a direct consequence of the fact that   
    $|\treelevel{\treerow_\arch}{L - \ell}| = d_{\ell}$, $|\treelevel{\treecol_\arch}{\ell}| = a_{\ell + 1}$, a property that we prove immediately below.
    Indeed, this property implies that  $|\treelevel{\treerow_\arch}{L - \ell} \times \treelevel{\treecol_\arch}{\ell}| = a_{\ell+1}d_\ell$, and by \cref{lem:seq-qchainability}, the number of equivalence classes $P$ of $\cP(\bS_{1,\ell}, \bS_{\ell+1,L})$ is also $\frac{a_{\ell}c_\ell d_\ell}{r(\pattern_\ell, \pattern_{\ell+1})} = a_{\ell+1}d_\ell$. 
        To conclude, we prove that $|\treelevel{\treerow_\arch}{L - \ell}| = d_{\ell}$, $|\treelevel{\treecol_\arch}{\ell}| = a_{\ell + 1}$.
    By~\Cref{lem:suppdbfactorprod}, 
    we have $\pattern_{\ell+1} * \ldots * \pattern_{L} = \left(a_{\ell+1}, b_{\ell+1} d_{\ell+1}, \frac{a_L c_L}{a_{\ell+1}}, 1  \right)$ 
    since $d_L=1$
    hence $\bS_{\ell+1,L}$
    is a block diagonal matrix with $a_{\ell+1}$ dense blocks of the same size. Tracing back all definitions this implies that  $\treelevel{\treecol_\arch}{\ell} = \partitioncol_{\ell}$ is made of exactly $a_{\ell+1}$ consecutive intervals of the same size. The proof is similar for $\treelevel{\treerow_\arch}{L-\ell}$. 
\end{proof}

\subsection{\revision{Illustration of the generalized complementary low-rank property (\Cref{def:general-clr-new})}}

\revision{We give an illustration of the generalized complementary low-rank property for the square dyadic butterfly architecture $\arch = (\pattern_\ell)_{\ell=1}^L$ for matrices of size $n \times n$ with $n=16$ and $L=\log_2(n)=4$, i.e., $\pattern_\ell = (2^{L - \ell}, 2, 2, 2^{\ell - 1})$ for $\ell \in \intset{L}$. Let $\mat{A}$ be a matrix satisfying such a property.
By the second property of \Cref{def:general-clr-new}, for each $ P \in 
         \cP(
         \bS_{\pattern_1 * \ldots * \pattern_\ell},\bS_{\pattern_{\ell+1} * \ldots * \pattern_L}
         )$ and $\ell \in \intset{L-1}$, the rank of the submatrix $\bA[R_P,C_P]$ is at most $r(\pattern_\ell, \pattern_{\ell+1})$, which is equal to $1$ by \Cref{def:chainableDBfour}.
\Cref{fig:complementary-low-rank} illustrates these rank-one submatrices, by remarking that $(\pattern_1, \pattern_2 * \pattern_3 * \pattern_4) = ((1, 2, 2, 8), (2, 8, 8, 1))$, $(\pattern_1 * \pattern_2, \pattern_3 * \pattern_4) = ((1, 4, 4, 4), (4, 4, 4, 1))$ and $(\pattern_1 * \pattern_2 * \pattern_3, \pattern_4) = ((1, 8, 8, 2), (8, 2, 2, 1))$, due to \Cref{lem:suppdbfactorprod}.}

\begin{figure}[h]
    \centering
		\begin{subfigure}[t]{0.32\textwidth}
			\centering
			\includegraphics[width=\textwidth]{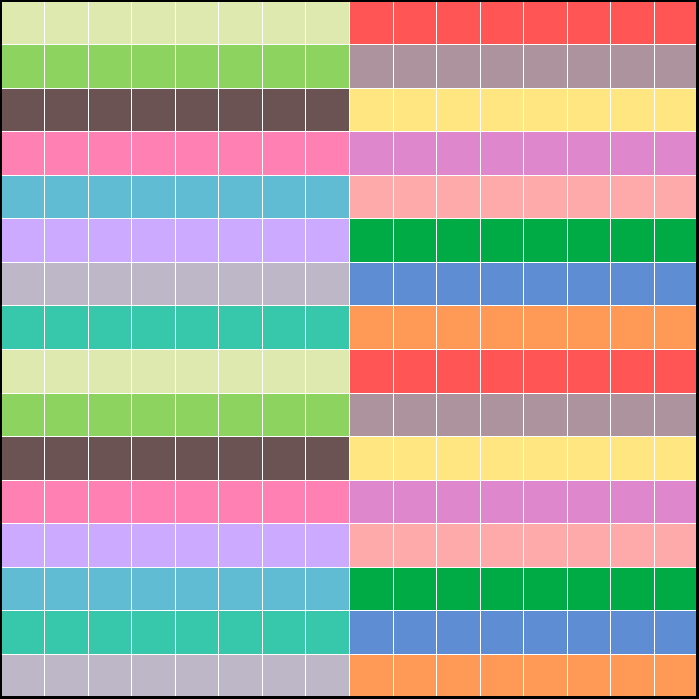}
			\caption{$\ell=1$}
		\end{subfigure}%
		~ 
		\begin{subfigure}[t]{0.32\textwidth}
			\centering
			\includegraphics[width=\textwidth]{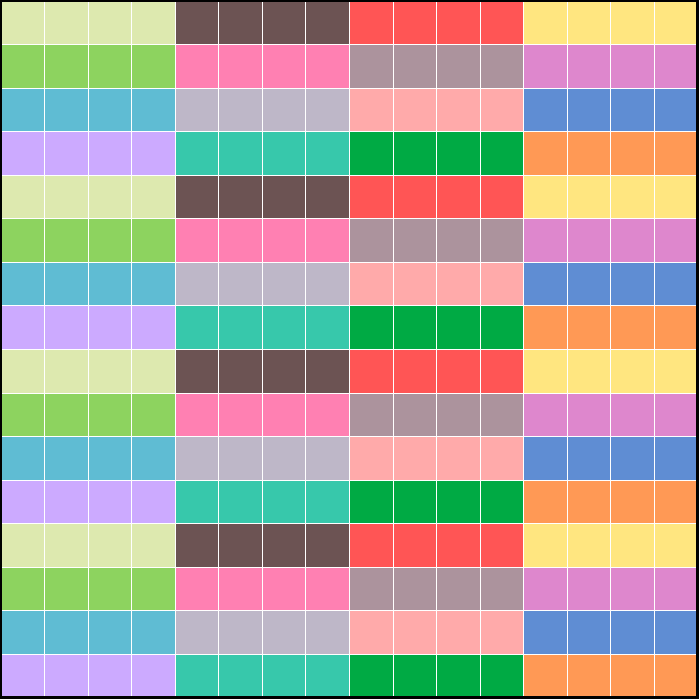}
			\caption{$\ell=2$}
		\end{subfigure}%
		~
		\begin{subfigure}[t]{0.32\textwidth}
			\centering
			\includegraphics[width=\textwidth]{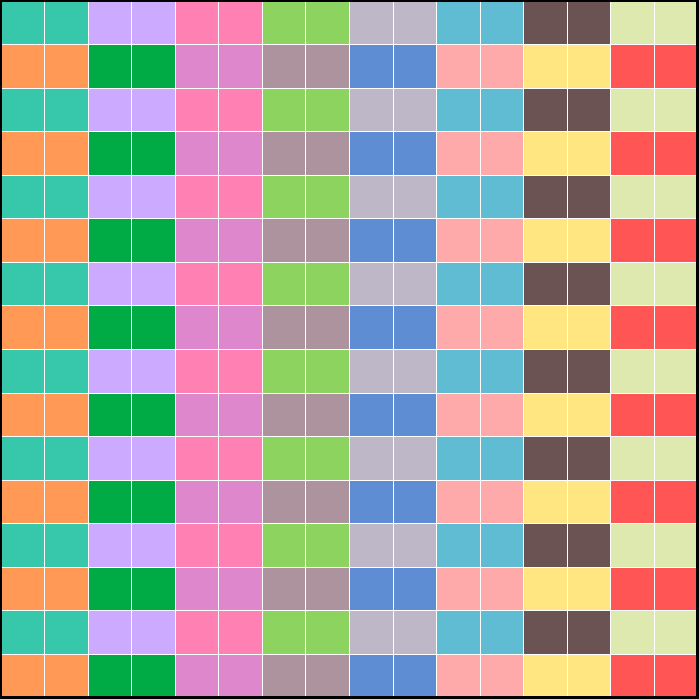}
			\caption{$\ell=3$}
		\end{subfigure}%
		\caption{\revision{Illustration of the complementary low-rank property associated to the square dyadic butterfly architecture $\arch = (\pattern_\ell)_{\ell=1}^L$ for matrices of size $n \times n$ with $n=16$ and $L=\log_2(n)=4$. For each $\ell \in \intset{L-1}$, we represent the partition of the matrix indices $\intset{n} \times \intset{n}$ into $\{ R_P \times C_P, \, P \in \cP(\bS_{\pattern_1 * \ldots * \pattern_\ell},\bS_{\pattern_{\ell+1} * \ldots * \pattern_L} ) \}$ using different colors, where each color represent one element of the partition.}}
    \label{fig:complementary-low-rank}
\end{figure}

\section{Numerical experiments: additionnal details and results}
\label{app:MoreNumerical}
In complement to \Cref{sec:experimentbutterfly}, we include further experimental details and results.
%
\begin{figure}
    \centering
    \includegraphics[width=\textwidth]{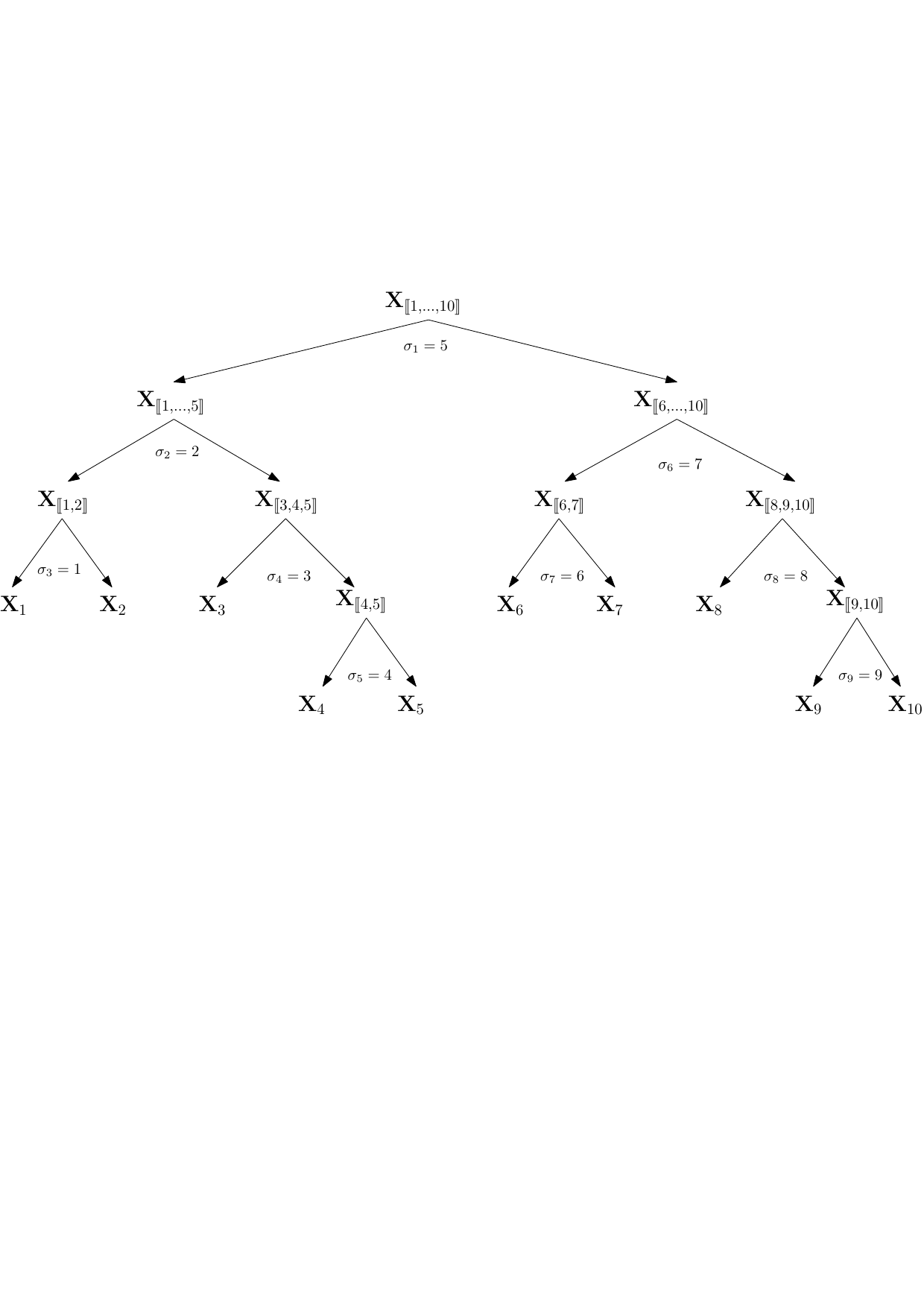}
    \caption{Illustration of the permutation $\sigma = (5,2,1,3,4,7,6,8,9)$ corresponding to the balanced factor-bracketing tree of $\intset{10}$.}
    \label{fig:factor-bracketing-tree-balanced}
\end{figure}
%
%
%
%
In \Cref{fig:error-hierarchical}, we observed that the approximation error obtained by the hierarchical algorithm \emph{with} orthonormalization operations (\Cref{algo:modifedbutterflyalgo}) is always smaller than the noise level $\epsilon = 0.1$, as opposed to the one obtained without orthonormalization.
In fact, we have the same observation for any values of $\epsilon \in \{ 0.01, 0.03, 0.1, 0.3 \}$, as shown in \Cref{fig:butterfly-approximation-other-values-epsilon}.

\begin{figure}[htbp]
		\centering
		\begin{subfigure}[t]{0.48\textwidth}
			\centering
			\includegraphics[width=\textwidth]{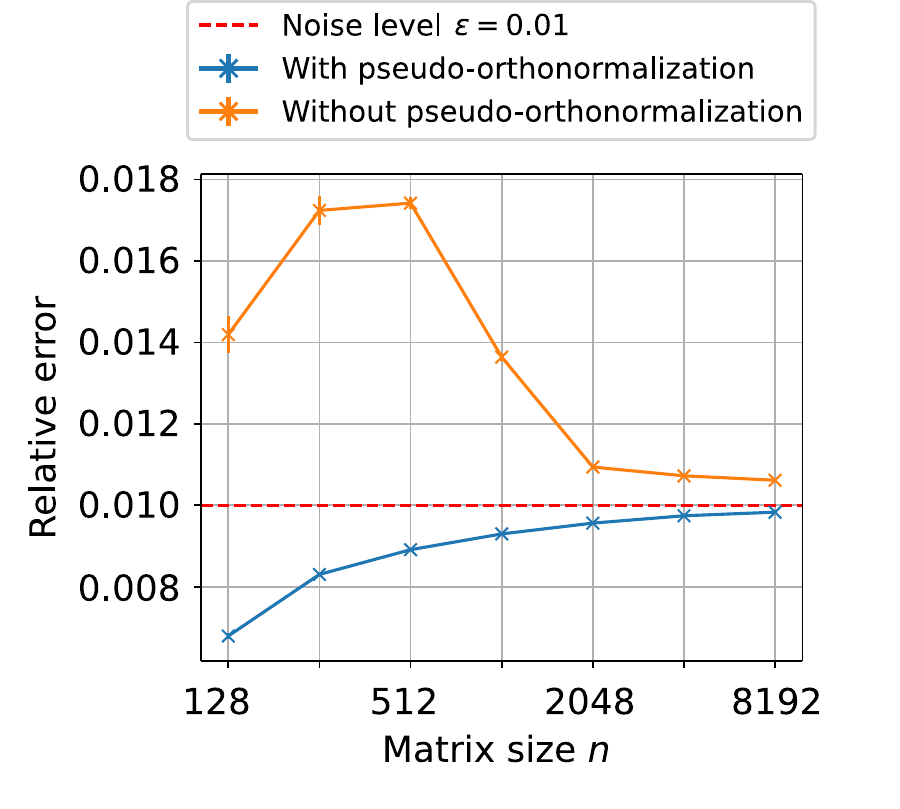}
			\caption{$\epsilon=0.01$}
		\end{subfigure}%
		~ 
		\begin{subfigure}[t]{0.48\textwidth}
			\centering
			\includegraphics[width=\textwidth]{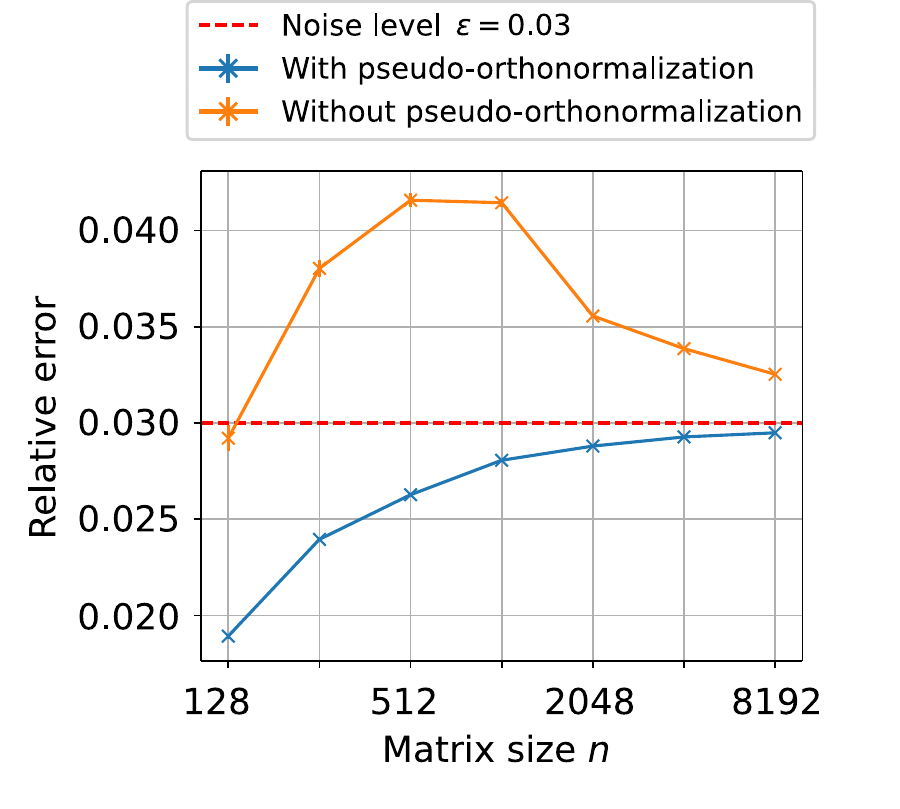}
			\caption{$\epsilon=0.03$}
		\end{subfigure}%
		~
		\newline
		\begin{subfigure}[t]{0.48\textwidth}
			\centering
			\includegraphics[width=\textwidth]{noise_0.1.pdf}
			\caption{$\epsilon=0.1$}
		\end{subfigure}%
		~ 
		\begin{subfigure}[t]{0.48\textwidth}
			\centering
			\includegraphics[width=\textwidth]{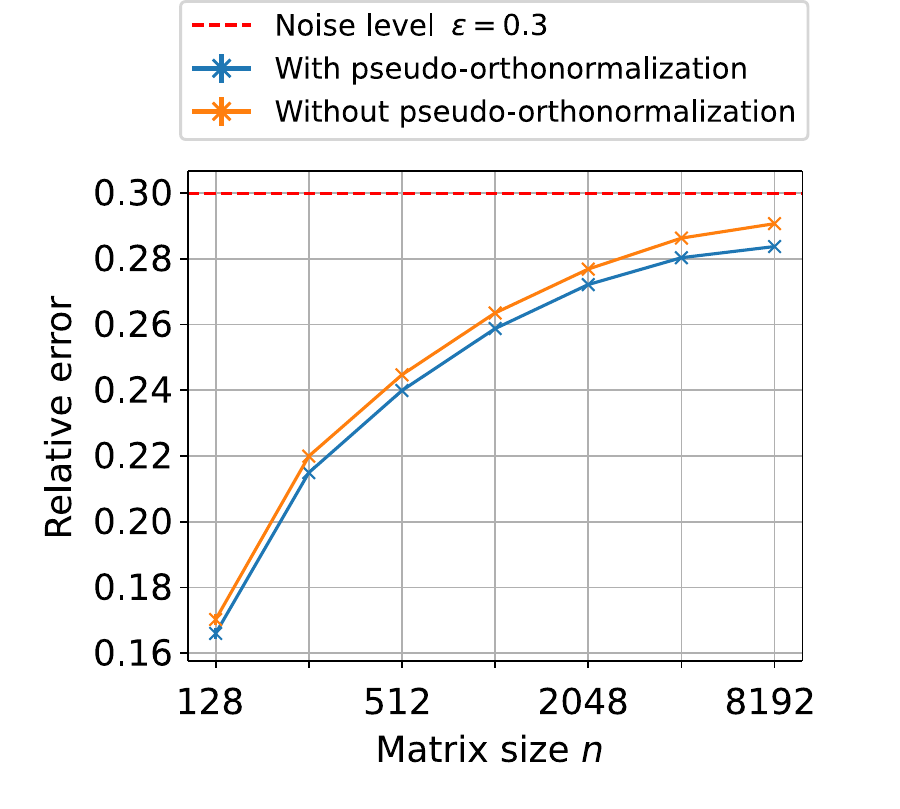}
			\caption{$\epsilon=0.3$}
		\end{subfigure}%
		\caption{Relative approximation errors \emph{vs.} the matrix size $n$, for \Cref{algo:recursivehierarchicalalgo} (without orthonormalization) and \Cref{algo:modifedbutterflyalgo} (with orthonormalization), for the instance of Problem \eqref{eq:butterfly-approximation-pb} described in \Cref{subsec:exp-ortho} {with $r=4$}. We show mean and standard deviation on the error bars over $10$ repetitions of the experiment.}
		\label{fig:butterfly-approximation-other-values-epsilon}
	\end{figure}

\end{document}